\newtheorem{theorem}{Theorem}[section]
\newtheorem{lemma}{Lemma}[section]
\newtheorem{corollary}{Corollary}[section]
\newtheorem{condition}{Condition}[section]
\theoremstyle{definition}
\newtheorem{definition}{Definition}[section]
\newtheorem{example}{Example}[section]
\theoremstyle{remark}
\newtheorem{remark}{Remark}[section]
\numberwithin{equation}{section} \numberwithin{figure}{section}
\begin{document}

\renewcommand{\Im}{{\rm Im\,}}
\renewcommand{\Re}{{\rm Re\,}}
\newcommand{\loc}{{\rm loc}}
\newcommand{\ind}{{\rm ind\,}}
\newcommand{\supp}{{\rm supp\,}}
\newcommand{\Dom}{{\rm D}}
\renewcommand{\ker}{{\rm ker\,}}
\renewcommand{\dim}{{\rm dim\,}}
\newcommand{\codim}{{\rm codim\,}}
\newcommand{\Orb}{{\rm Orb}}
\newcommand{\rank}{{\rm rank\,}}
\newcommand{\const}{{\rm const}}
\newcommand{\dist}{{\rm dist}}
\newcommand{\n}{{|\!|\!|}}
\newcommand{\Bn}{{\Big|\!\Big|\!\Big|}}

%[Solvability of Nonlocal Elliptic Problems]
\title{On the Fredholm and Unique Solvability of Nonlocal Elliptic Problems in Multidimensional
Domains}

\author{Pavel Gurevich, Alexander Skubachevskii}

\date{}

%\thanks{Supported by the
%Russian Foundation for Basic Research (project No.~04-01-00256)
%and the Russian President Grant (project No.~MK-980.2005.1). The
%first author was also supported by the Alexander von Humboldt
%Foundation.}

% \email{pavel.gurevich@iwr.uni-heidelberg.de}

% author two information
%\author{A.~L. Skubachevskii}
%\address{Peoples' Friendship University of Russia,
%Department of Differential Equations and Mathematical Physics, Miklukho-Maklaya str. 6,
%117198  Moscow, Russia}
%%\curraddr{}
%\email{skub@lector.ru}

% This form of \subjclass will not work if you are using a version
% of amsart.cls older than version 2.0 (December 1999).
%\subjclass[2000]{35J40}

%\keywords{Elliptic problems,  nonlocal boundary conditions, solvability, nonsmooth
%domains}

%\date{}

\maketitle

\abstract{We consider elliptic equations of order $2m$ in a
bounded domain $Q\subset\mathbb R^n$ with nonlocal boundary-value
conditions connecting the values of a solution and its derivatives
on $(n-1)$-dimensional smooth manifolds $\Gamma_i$ with the values
on manifolds $\omega_{i}(\Gamma_i)$, where
$\bigcup_i\overline{\Gamma_i}=\partial Q$ is a boundary of $Q$ and
$\omega_i$ are $C^\infty$ diffeomorphisms. By proving a priori
estimates for solutions and  constructing a right regularizer, we
show the Fredholm solvability in weighted space. For nonlocal
elliptic problems with a parameter,   we prove the unique
solvability.}

\setcounter{tocdepth}{1} \tableofcontents

\section*{Introduction}

We consider elliptic equations of order $2m$ in a bounded domain $Q\subset\mathbb R^n$
with nonlocal boundary-value conditions connecting the values of a solution and its
derivatives on $(n-1)$-dimensional smooth manifolds $\Gamma_i$ with the values on
manifolds $\omega_{i}(\Gamma_i)$, where $\bigcup_i\overline{\Gamma_i}=\partial Q$ is a
boundary of $Q$ and $\omega_i$ are $C^\infty$ diffeomorphisms. The presence of nonlocal
terms leads to appearing power-law singularities of solutions and their derivatives at
the points of the set $\mathcal K_1=\bigcup_i(\overline{\Gamma_i}\setminus\Gamma_i)$
(which is called the {\it set of conjugation points}). Therefore, nonlocal elliptic
problems are naturally studied in weighted spaces $H_a^{l+2m}(Q)$, $a\in\mathbb R$,
$l\ge0$ is an integer (see definition~\eqref{1.5}), originally introduced in the theory
of elliptic problems in nonsmooth domains~\cite{Kondr}. Because of the transformations
$\omega_i$ occurring in nonlocal terms, the points of the set $\mathcal K_1$ turn out to
be connected with the points of the set
$$
\Big\{\bigcup\limits_i \omega_i(\mathcal K_1)\Big\}\cup \Big\{\bigcup\limits_{i,j}
\omega_j(\omega_i(\mathcal K_1)\cap\Gamma_j)\Big\}.
$$
The latter points belong to $Q$ or $\partial Q$. Therefore, we must consider certain
consistency conditions at the points of the set
$$
\mathcal K=\mathcal K_1\cup\Big\{\bigcup\limits_i \omega_i(\mathcal K_1)\Big\}\cup
\Big\{\bigcup\limits_{i,j} \omega_j(\omega_i(\mathcal K_1)\cap\Gamma_j)\Big\}.
$$
The following two approaches are possible. First, one can consider all the points of the
set $\mathcal K$ as singular points in the definition of weighted spaces, which allows
one to study nonlocal elliptic problems for any value of the parameter $a\in\mathbb R$
(see~\cite{SkMs86} for the case $n=2$). Second, one can assume that only the points of
the set $\mathcal K_1$ or the set $\mathcal K\cap\partial Q$ are singular points in the
definition of weighted spaces, which allows one to study nonlocal elliptic problems only
for $a>l+2m-1$ (see~\cite{SkDu91,SkJMAA}).

It is proved in~\cite{MP,NP} that ``local'' elliptic problems in bounded domains have the
Fredholm property (see Definition~\ref{defFredholm}) if some model elliptic operators
(depending on a parameter $\omega$) in plane angles have a trivial kernel and cokernel
for all $\omega\in S^{n-3}$, where
$$
S^{n-3}=\{\omega\in\mathbb R^{n-2}:\ |\omega|=1\}.
$$

Similarly, elliptic operators in $\mathbb R^2\setminus\{0\}$ with the parameter
$\omega\in S^{n-3}$ arise if the points of the set $\mathcal K\cap Q$ are considered as
singular points in the definition of weighted spaces. However, we prove in this paper
that these operators are not isomorphisms, see Sec.~\ref{sec3}. Therefore, unlike the
case of plane domains, if $n\ge3$ and $\mathcal K\cap Q\ne\varnothing$, only the points
of the set $\mathcal K_1$ or the set $\mathcal K\cap\partial Q$ can be considered as
singular points. This leads to the restriction $a>l+2m-1$ (see Sec.~\ref{subsec4.1} for
details).

The paper is organized as follows.

The setting of nonlocal elliptic problems is presented in Sec.~\ref{sec1}. In the same
section, we define model problems in dihedral angles and introduce function spaces. In
Sec.~\ref{sec2}, we consider the solvability of nonlocal problems in dihedral angles. In
particular, we give an example of a nonlocal elliptic problem in a dihedral angle which
is uniquely solvable on weighted spaces for $0\le a\le 2$. In Sec.~\ref{sec3}, we show
that an elliptic operator of order $2m$ acting from $H_a^{l+2m}(\mathbb R^n)$ to
$H_a^{l}(\mathbb R^n)$ is not an isomorphism for any $a\in\mathbb R$ and integer $l\ge0$.
Here we suppose that the points of the set
$$
\mathcal P=\{x=(y,z)\in\mathbb R^n:\ y=0,\ z\in\mathbb R^{n-2}\}
$$
are singular in the definition of weighted spaces $H_a^{l+2m}(\mathbb R^n)$ and
$H_a^{l}(\mathbb R^n)$. In Sec.~\ref{sec4}, we prove a priori estimates for solutions of
nonlocal problems in bounded domains. In Sec.~\ref{sec5}, we construct a right
regularizer for those problems. Thus, we prove a theorem on the Fredholm solvability of
nonlocal elliptic problems in bounded domains. Section~\ref{sec6} is devoted to
generalizations of nonlocal elliptic problems to the case of nonlocal terms supported on
the manifolds $\omega_{is}(\Gamma_i)$ near the set $\mathcal K_1$ and abstract nonlocal
terms supported outside the set $\mathcal K_1$. In Secs.~\ref{sec7} and~\ref{sec8}, we
prove the unique solvability of nonlocal elliptic problems with a parameter.

Note that it was A.~V. Bitzadze and A.~A. Samarskii~\cite{BitzSam} who first considered
an elliptic equation with nonlocal conditions imposed on the shifts of different parts of
the boundary of rectangular. In the general situation, they formulated this problem as an
unsolved problem. Solvability and regularity of solutions for higher-order elliptic
equations with general nonlocal conditions supported near the boundary were studied
in~\cite{SkMs86,SkDu90,SkDu91,SkJMAA}. Second-order elliptic equations with nonlocal
conditions near the boundary were also considered in~\cite{Kishkis,GM,Gusch}. One can
find various applications of nonlocal elliptic problems as well as comprehensive
bibliography of the question in~\cite{SkMs86,SkJMAA,SkBook,GurRJMP03}.

This research was partially supported by the
Russian Foundation for Basic Research (project No.~04-01-00256)
and the Russian President Grant (project No.~MK-980.2005.1). The
first author was also supported by the Alexander von Humboldt
Foundation.

\section{Setting of Nonlocal Elliptic Problems}\label{sec1}

\subsection{}\label{subsec1.1}
Let $Q\subset\mathbb R^n$ ($n\ge3$) be a bounded domain with boundary $\partial
Q=\bigcup\limits_{i=1}^{N_0}\overline{\Gamma_i}$, where $\Gamma_i$ are open connected, in
the topology of $\partial Q$, $(n-1)$-dimensional $C^\infty$ manifolds. Assume that, in a
neighborhood of each point $g\in\overline{\Gamma_i}\setminus\Gamma_i$, the domain $Q$ is
diffeomorphic to an $n$-dimensional dihedral angle
$$
\Theta=\{x=(y,z)\in\mathbb R^n:\ d_1<\varphi<d_2,\ z\in\mathbb R^{n-2}\},
$$
where $r,\varphi$ are the polar coordinates of the point $y\in\mathbb R^2$, $d_j=d_j(g)$,
$j=1,2$.

Introduce the differential operators
$$
A(x,D)=\sum\limits_{|\alpha|\le 2m}a_\alpha(x)D^\alpha,\qquad B_{i\mu
s}(x,D)=\sum\limits_{|\alpha|\le m_{i\mu}}b_{i\mu s\alpha}(x)D^\alpha,
$$
where $a_\alpha, b_{i\mu s\alpha}\in C^\infty(\mathbb R^n)$ are complex-valued functions
($i=1,\dots,N_0$; $\mu=1,\dots,m$; $s=0,\dots,S_i$), $m_{i\mu}\le 2m-1$,
$\alpha=(\alpha_1,\dots,\alpha_n)$, $|\alpha|=\alpha_1+\dots+\alpha_n$,
$D^\alpha=D_1^{\alpha_1}\dots D_n^{\alpha_n}$, $D_j=-i\partial/\partial x_j$. If it is
necessary to indicate the variables with respect to which a function $u$ is
differentiated, we write $D_y^\alpha u$, $D_z^\alpha u$, etc.

Let $\omega_{is}$ ($i=1,\dots,N_0$; $s=1,\dots,S_i$) denote a $C^\infty$ diffeomorphism
mapping some neighborhood $\Omega_i$ of the manifold $\Gamma_i$ onto the set
$\omega_{is}(\Omega_i)$ in such a way that $\omega_{is}(\Gamma_i)\subset Q$. Assume that
the set
\begin{equation}\label{1.1}
\mathcal K=\Big\{\bigcup\limits_i(\overline{\Gamma_i}\setminus\Gamma_i)\Big\}\cup
\Big\{\bigcup\limits_{i,s}\omega_{is}(\overline{\Gamma_i}\setminus\Gamma_i)\Big\}\cup
\Big\{\bigcup\limits_{j,p}\bigcup\limits_{i,s}
\omega_{jp}\big(\omega_{is}(\overline{\Gamma_i}\setminus\Gamma_i)\cap\Gamma_j\big)\Big\}
\end{equation}
can be represented as follows:
\begin{equation}\label{1.2}
\mathcal K=\mathcal K_1\cup\mathcal K_2\cup\mathcal K_3,
\end{equation}
where $\mathcal K_1=\bigcup_\nu\mathcal K_{1\nu}=\partial Q\setminus\bigcup_i\Gamma_i$,
$\mathcal K_2=\bigcup_\nu\mathcal K_{2\nu}\subset\bigcup_i\Gamma_i$, $\mathcal
K_3=\bigcup_\nu\mathcal K_{3\nu}\subset Q$ ($\nu=1,\dots,N_j$; $j=1,2,3$), $\mathcal
K_{j\nu}$ are mutually disjoint $(n-2)$-dimensional connected $C^\infty$ manifolds
without a boundary. In particular, the sets $\mathcal K_2$ and $\mathcal K_3$ may be
empty.

We study the following nonlocal problem:
\begin{equation}\label{1.3}
Au\equiv A(x,D)u(x)=f_0(x),\qquad x\in Q,
\end{equation}
\begin{equation}\label{1.4}
B_{i\mu}u\equiv\sum\limits_{s=0}^{S_i}(B_{i\mu
s}(x,D)u)(\omega_{is}(x))|_{\Gamma_i}=f_{i\mu}(x),\qquad x\in\Gamma_i;\ i=1,\dots,N_0;\
\mu=1,\dots,m,
\end{equation}
where $(B_{i\mu s}(x,D)u)(\omega_{is}(x))|_{\Gamma_i}=B_{i\mu s}(\hat x,D)u(\hat
x)|_{\hat x=\omega_{is}(x),\ x\in\Gamma_i}$, $\omega_{i0}(x)\equiv x$.

We assume throughout that the operators $A(x,D)$ and $B_{i\mu 0}(x,D)$ satisfy the
following conditions.
\begin{condition}\label{cond1.1}
The operator $A(x,D)$ is properly elliptic for each $x\in\overline{Q}$.
\end{condition}
\begin{condition}\label{cond1.2}
The system $\{B_{i\mu 0}(x,D)\}_{\mu=1}^m$ satisfies the Lopatinskii condition with
respect to $A(x,D)$ and is normal for all $i=1,\dots,N_0$ and $x\in\overline{\Gamma_i}$
\rm{(}\it{see} \rm{\cite[Chap.~2, Sec.~1.4]{LM})}.
\end{condition}

Denote by $\omega_{is}^{+1}$ the transformation
$\omega_{is}:\Omega_i\to\omega_{is}(\Omega_i)$ and by
$\omega_{is}^{-1}:\omega_{is}(\Omega_i)\to\Omega_i$ the inverse transformation.

\begin{definition}
The set of all points that can be obtained from a point $g\in\mathcal K_1$ by
consecutively applying to it the transformations $\omega_{is}^{+1}$ or $\omega_{is}^{-1}$
mapping the points of the set $\mathcal K_1$ to $\mathcal K_1$ is called an {\em orbit}
of the point $g$ and denoted by $\mathcal O(g)$.
\end{definition}

In other words, the orbit $\mathcal O(g)$ consists of the point $g\in\mathcal K_1$ and
the points that can be obtained from $g$ in the following way: given a point
$h\in\mathcal O(g)$, the point $\omega_{is}(h)$ belongs to $\mathcal O(g)$ iff
$h\in\overline{\Gamma_i}\cap\mathcal K_1$ and $\omega_{is}(h)\in\mathcal K_1$, while the
point $\omega_{is}^{-1}(h)$ belongs to $\mathcal O(g)$ iff
$h\in\omega_{is}(\overline{\Gamma_i})\cap\mathcal K_1$.

Assume that the following condition holds.
\begin{condition}[finiteness of the orbits]\label{cond1.3}
\begin{enumerate}
\item
For each point $g\in\mathcal K_1$, the orbit $\mathcal O(g)$ consists of finitely many
points $g_j$, $j=1,\dots,N=N(g)$.
\item
There are neighborhoods $\hat V(g_j)\subset V(g_j)\subset\mathbb R^n\setminus(\mathcal
K_2\cup\mathcal K_3)$, $V(g_j)\cap V(g_p)=\varnothing$ $(j\ne p)$, of the points
$g_j\in\mathcal O(g)$ such that, if $g_j\in\overline{\Gamma_i}$ and
$\omega_{is}(g_j)=g_p$, then $\hat V(g_j)\subset\Omega_i$ and $\omega_{is}(\hat
V(g_j))\subset V(g_p)$.
\end{enumerate}
\end{condition}

The following condition means that the support of nonlocal terms intersects the boundary
at the points of the set $\mathcal K_1$ in a nontangential way.
\begin{condition}[nontangential approach]\label{cond1.4}
For each point $g\in\mathcal K_1$ and $j=1,\dots,N(g)$, there exists a smooth
nondegenerate change of variables $x\to x'=x'(g,j)$ such that the neighborhood $V(g_j)$
reduces, under this change of variables, to some neighborhood of the origin $V(0)$ and,
moreover$:$
\begin{enumerate}
\item
The sets $Q\cap V(g_j)$ and $\Gamma_i\cap V(g_j)$ reduce to the intersection of a
dihedral angle $\Theta_j$ with $V(0)$ and to the intersection of a side $\Gamma_{j\rho}$
$(\rho=1$ or $\rho=2)$ of $\Theta_j$ with $V(0)$, respectively$;$
\item
Each transformation $\omega_{is}(x)$, for $x\in\hat V(g_j)$, $g_j\in\overline{\Gamma_i}$,
reduces to the composition of rotation and homothety on the plane $\{y'\}$ in the new
variables $x'=(y',z')$, where $y'\in\mathbb R^2$ and $z'\in\mathbb R^{n-2}$.
\end{enumerate}
\end{condition}

\begin{remark}\label{remN=1}
If $N(g)=1$, then the orbit of $g\in\mathcal K_1$ consists of the unique point $g$. This
is the case iff the following two conditions are fulfilled:
\begin{enumerate}
\item
$\omega_{is}(g)=g$ for all $i$ and $s$ such that $g\in\overline{\Gamma_i}\cap\mathcal
K_1$ and $\omega_{is}(g)\in\mathcal K_1$;
\item
there do not exist indices $i$ and $s$ and a point $h\in\overline{\Gamma_i}\cap\mathcal
K_1$ such that $h\ne g$ and $\omega_{is}(h)=g$.
\end{enumerate}
\end{remark}

\subsection{}
For any domain $\Omega$, denote by $W^k(\Omega)=W_2^k(\Omega)$ ($k\ge0$ is an integer)
the Sobolev space. Denote by $W^{k-1/2}(\Gamma)$ ($k\ge1$ is an integer) the space of
traces on a smooth $(n-1)$-dimensional manifold $\Gamma\subset\overline{\Omega}$, with
the norm
$$
\|\psi\|_{W^{k-1/2}(\Gamma)}=\inf\|v\|_{W^k(\Omega)}\qquad (v\in W^k(\Omega):\
v|_\Gamma=\psi).
$$

If $X$ is a domain in $\mathbb R^n$, $n=1,2,\dots$, we denote by $C_0^\infty(X)$ the set
of functions infinitely differentiable on $\overline{ X}$ and compactly supported on $X$.
If $M$ is a union of finitely many $(n-l)$-dimensional manifolds ($l=1,\dots,n$) lying in
$\overline X$, we denote by $C_0^\infty(\overline X\setminus M)$ the set of functions
infinitely differentiable on $\overline{ X}$ and compactly supported on $\overline
X\setminus M$.

Now we introduce different weighted spaces for different domains $\Omega$. Consider the
following cases:
\begin{enumerate}
\item $\Omega=Q$; denote either $K=\mathcal K_1$  or
$K=\mathcal K_1\cup\mathcal K_2$ (cf. Condition~\ref{cond4.1} in Sec.~\ref{sec4}), and
let $\rho(x)$ be a function such that $\rho\in C^\infty(\mathbb R^n\setminus K)$,
$\rho(x)>0$ for $x\in\mathbb R^n\setminus K$ and it is equivalent, in a neighborhood of
the set $K$, to the distance from a point $x\in\Omega$ to the set $K$;
\item $\Omega\subset\mathbb R^n$ is a bounded domain with
boundary $\partial\Omega\in C^\infty$; denote by $K$ some $(n-2)$-dimensional manifold of
class $C^\infty$ lying in $\overline{\Omega}$, and let $\rho$ be the same function as in
case~1;
\item
$\Omega$ is an $n$-dimensional dihedral angle $\Theta$; denote $K=\mathcal P$, where
$$
\mathcal P=\{x=(y,z)\in\mathbb R^n:\ y=0,\ z\in\mathbb R^{n-2}\},
$$
and let $\rho(x)=|y|$;
\item
$\Omega=\mathbb R^n$; denote $K=\mathcal P$, and let $\rho(x)=|y|$.
\end{enumerate}
Introduce the weighted space $H_a^k(\Omega)=H_a^k(\Omega,K)$ as the completion of the set
$C_0^\infty(\overline{\Omega}\setminus K)$ with respect to the norm
\begin{equation}\label{1.5}
\|u\|_{H_a^k(\Omega)}=\bigg(\sum\limits_{|\alpha|\le k}\,\int\limits_\Omega
\rho^{2(a-k+|\alpha|)}|D^\alpha u|^2\, dx\bigg)^{1/2},
\end{equation}
where $k\ge0$ is an integer and $a\in\mathbb R$.

Denote by $H_a^{k-1/2}(\Gamma)$ ($k\ge1$ is an integer) the space of traces on a smooth
$(n-1)$-dimensional manifold $\Gamma\subset\overline{\Omega}$, with the norm
\begin{equation}\label{1.6}
\|\psi\|_{H_a^{k-1/2}(\Gamma)}=\inf\|v\|_{H_a^k(\Omega)}\qquad (v\in H_a^k(\Omega):\
v|_\Gamma=\psi).
\end{equation}

One can similarly introduce the weighted spaces $H_a^k(\Omega)$ and $H_a^{k-1/2}(\Gamma)$
for $n=2$. In particular, we set $K=\{0\}$ for $\Omega=\theta=\{y\in\mathbb R^2:\
d_1<\varphi<d_2,\ 0<r\}$ or $\Omega=\mathbb R^2$.

In what follows, we assume that $u\in H_a^{l+2m}(Q)$ and $f=\{f_0,f_{i\mu}\}\in\mathcal
H_a^l(Q,\Gamma)$ in problem~\eqref{1.3}, \eqref{1.4}, where
$$
\mathcal H_a^l(Q,\Gamma)=H_a^l(Q)\times \prod\limits_{i,\mu}
H_a^{l+2m-m_{i\mu}-1/2}(\Gamma_i)
$$
and $l\ge0$ is an integer.

\subsection{}
Fix an arbitrary point $g\in\mathcal K_1$. By Condition~\ref{cond1.3}, the orbit
$\mathcal O(g)$ consists of finitely many points $g_j$, $j=1,\dots,N=N(g)$. We now reduce
problem~\eqref{1.3}, \eqref{1.4} to a system of $N$ elliptic equations in dihedral angles
with nonlocal boundary-value conditions. To do this, we suppose that
$$
\supp u\subset\Big(\bigcup\limits_j\hat V(g_j)\Big)\cap\overline{Q}.
$$
Denote by $u_j(x)$ the function $u(x)$ for $x\in Q\cap V(g_j)$. If
$g_j\in\overline{\Gamma_i}$ and $x\in\hat V(g_j)$, then $\omega_{is}(x)\in V(g_p)$ for
some $p$, $1\le p\le N$, by Condition~\ref{cond1.3}. Denote the function
$u(\omega_{is}(x))$ by $u_p(\omega_{is}(x))$. It is clear that
$u(\omega_{i0}(x))=u(x)=u_j(x)$. In the above notation, problem~\eqref{1.3}, \eqref{1.4}
takes the form
\begin{equation}\label{1.7}
A(x,D)u_j(x)=f_0(x),\qquad x\in Q\cap\hat V(g_j),
\end{equation}
\begin{equation}\label{1.8}
\begin{aligned}
\sum\limits_{s\in S_{ij}^g}(B_{i\mu
s}(x,D)u_p)(\omega_{is}(x))|_{\Gamma_i}=f_{i\mu}(x),\qquad
x\in\hat V(g_j)\cap\Gamma_i;\\
i\in\{1\le i\le N_0:\ \hat V(g_j)\cap\Gamma_i\ne\varnothing\};\ j=1,\dots,N;\
\mu=1,\dots,m,
\end{aligned}
\end{equation}
where $S_{ij}^g=\{0\le s\le S_i:\ \omega_{is}(g_j)=g_p\in\mathcal O(g)\ \text{for some}\
p=1,\dots,N\}$.

Using the change of variables $x\to x'(g,j)$ from Sec.~\ref{subsec1.1}, we introduce the
functions $v_j(x')=u_j(x(x'))$. By Condition~\ref{cond1.4}, problem~\eqref{1.7},
\eqref{1.8} takes the following form:
\begin{equation}\label{1.9}
A_j(x',D_{y'},D_{z'})v_j(x')=f_j(x'),\qquad x'\in \Theta_j;\ j=1,\dots,N,
\end{equation}
\begin{equation}\label{1.10}
\begin{aligned}
\sum\limits_{k=1}^N\sum\limits_{s=0}^{S_{j\rho k}}(B_{j\rho\mu
ks}(x',D_{y'},D_{z'})v_k)(\mathcal G_{j\rho
ks}y',z')|_{\Gamma_{j\rho}}=f_{j\rho\mu}(x'),\qquad
x'\in\Gamma_{j\rho};\\
j=1,\dots,N;\ \rho=1,2;\ \mu=1,\dots,m.
\end{aligned}
\end{equation}
Here the operators $A_j$ and $B_{j\rho\mu ks}$ have variable coefficients of class
$C^\infty$;
$$
\begin{aligned}
\Theta_j&=\{x'=(y',z'): 0<d_{j1}<\varphi<d_{j2},\ z'\in\mathbb
R^{n-2}\},\\
 \Gamma_{j\rho}&=\{x'=(y',z'): \varphi=d_{j\rho},\
z'\in\mathbb R^{n-2}\};
\end{aligned}
$$
$\mathcal G_{j\rho ks}$ is the operator of rotation by an angle $\varphi_{j\rho ks}$ and
homothety with a coefficient $\chi_{j\rho ks}$ in the $y'$-plane so that
$d_{k1}<d_{j\rho}+\varphi_{j\rho ks}<d_{k2}$ and $0<\chi_{j\rho ks}$ for $(k,s)\ne(j,0)$,
while $\varphi_{j\rho j0}=0$ and $\chi_{j\rho j0}=1$ (i.e., $\mathcal G_{j\rho
j0}y'\equiv y'$); $v=(v_1,\dots v_N)$.

\begin{remark}
If $g\in\overline{\Gamma_i}$, $N=N(g)=1$ (cf. Remark~\ref{remN=1}), and
$\omega_{is}(g)\ne g$ for all $s=1,\dots,S_i$, then model problem~\eqref{1.9},
\eqref{1.10} contains no nonlocal terms due to the fact that the manifolds $\mathcal
K_{j\nu}$ are mutually disjoint.
\end{remark}

Introduce the following spaces of vector-valued functions:
$$
\mathcal H_a^k(\Theta)=\prod\limits_{j=1}^N H_a^k(\Theta_j),\quad \mathcal
H_a^l(\Theta,\Gamma)=\mathcal
H_a^l(\Theta)\times\prod\limits_{j=1}^N\prod\limits_{\rho=1,2}\prod\limits_{\mu=1}^m
H_a^{l+2m-m_{j\rho\mu}-1/2}(\Gamma_{j\rho}),
$$
where $m_{j\rho\mu}$ is the order of the operator $B_{j\rho\mu ks}(x',D_{y'},D_{z'})$.

Consider the linear bounded operator $\mathcal L_g: \mathcal
H_a^{l+2m}(\Theta)\to\mathcal H_a^l(\Theta,\Gamma)$ given by
\begin{equation}\label{1.11}
\mathcal L_g v=\Big\{A_j(D_{y'},D_{z'})v_j(y',z'),\
\sum\limits_{k=1}^N\sum\limits_{s=0}^{S_{j\rho k}}(B_{j\rho\mu
ks}(D_{y'},D_{z'})v_k)(\mathcal G_{j\rho ks}y',z')|_{\Gamma_{j\rho}}\Big\},
\end{equation}
where $A_j(D_{y'},D_{z'})$ and $B_{j\rho\mu ks}(D_{y'},D_{z'})$ are principal homogeneous
parts of the operators $A_j(0,D_{y'},D_{z'})$ and $B_{j\rho\mu ks}(0,D_{y'},D_{z'})$,
respectively. The subscript $g$ means that the operator $\mathcal L_g$ depends on the
choice of the point $g\in\mathcal K_1$ (and therefore, it depends on the orbit $\mathcal
O(g)$). Clearly, each of the operators $A_j(D_{y'},D_{z'})$ is properly elliptic, while
the system $\{B_{j\rho\mu j0}(D_{y'},D_{z'})\}_{\mu=1}^m$ satisfies the Lopatinskii
condition with respect to $A_j(D_{y'},D_{z'})$ and is normal for all $j=1,\dots,N$ and
$\rho=1,2$.

\begin{example}\label{ex1.1}
Let $Q\subset \mathbb R^3$ be a bounded domain with boundary $\partial Q\in C^\infty$
which is a surface of revolution about the axis $x_3$. Set
$P=\{(0,0,3)\}\cup\{(0,0,-3)\}\cup\{x:\ x_3=0,\ \sqrt{x_1^2+x_2^2}=3\}$ and
$P^{1/4}=\{x:\ \dist (x,P)<1/4\}$. Assume that, outside the set $P^{1/4}$, the boundary
$\partial Q$ coincides with the boundary of the domain
$$
\left\{x:\ x_3<3-\sqrt{x_1^2+x_2^2}\right\}\cap\left\{x:\
x_3>-3+\sqrt{x_1^2+x_2^2}\right\}.
$$
Denote
$$
\Gamma_1=\{x\in\partial Q:\ x_3<-2\},\quad \Gamma_2=\{x\in\partial Q:\ x_3>2\},\quad
\Gamma_3=\partial Q\setminus(\overline{\Gamma_1}\cup\overline{\Gamma_2}).
$$

We consider the following nonlocal boundary-value problem:
\begin{equation}\label{ex1.1-1.12}
-\Delta u=f_0(x),\qquad x\in Q,
\end{equation}
\begin{equation}\label{ex1.1-1.13}
\begin{aligned}[]
[u(x)+\alpha_j u(x+h_j)+\beta_j u(\mathcal G_\pi
x+h_j)]|_{\Gamma_j}&=0,\qquad j=1,2,\\
u(x)|_{\Gamma_3}&=0,
\end{aligned}
\end{equation}
where $\alpha_j,\beta_j\in\mathbb R$, $h_j=(-1)^{j+1}(0,0,4)$, $j=1,2$, and $\mathcal
G_\pi$ is the operator of rotation by the angle $\pi$ about the axis $x_3$. Clearly, we
have (see Fig.~\ref{fig1.1})
\begin{figure}[ht]
{ \hfill\epsfxsize80mm\epsfbox{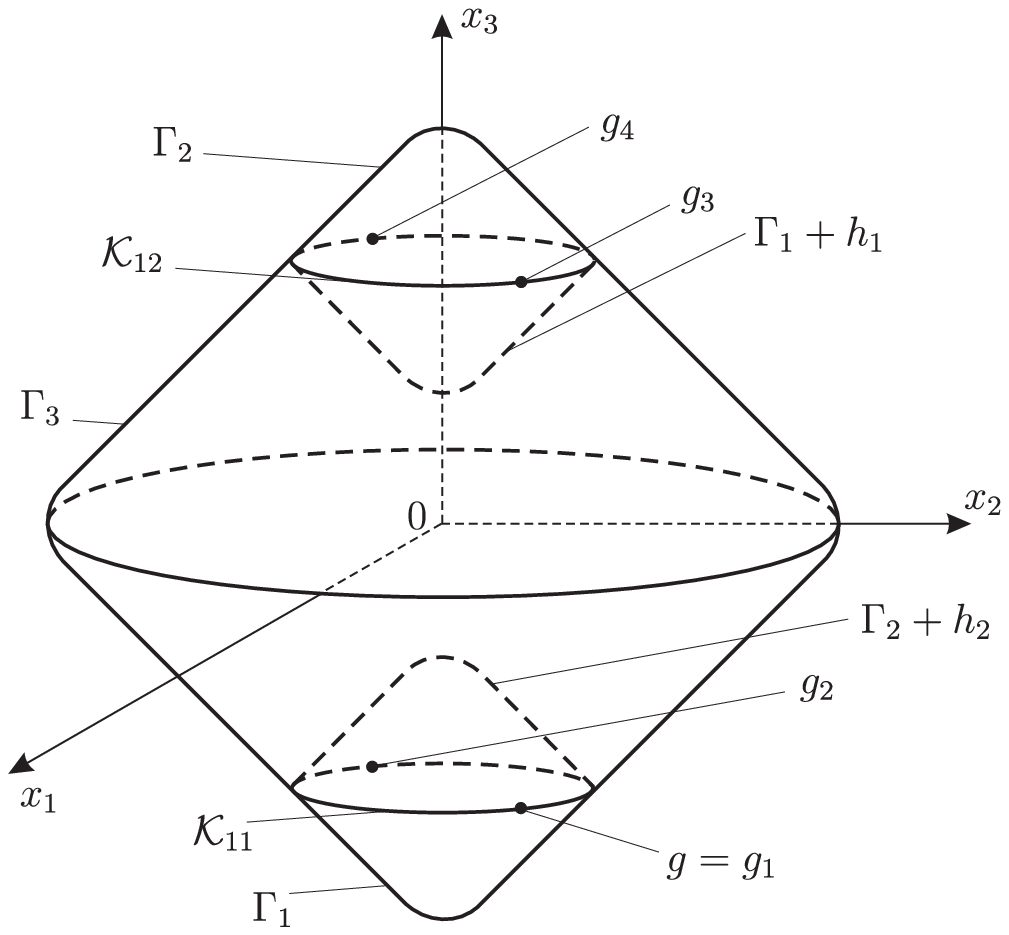}\hfill\ } \caption{Problem~\eqref{ex1.1-1.12},
\eqref{ex1.1-1.13}}\label{fig1.1}
\end{figure}
$$
\mathcal K=\mathcal K_1=\mathcal K_{11}\cup\mathcal K_{12},\qquad \mathcal
K_{1\nu}=\{x\in\partial Q:\ x_3=(-1)^\nu 2\},\ \nu=1,2.
$$

The orbit of each point $g\in\mathcal K_{11}$ consists of the four points: $g_1=g$,
$g_2=\mathcal G_\pi g_1$, $g_3=g_1+h_1$, and $g_4=\mathcal G_\pi g_1+h_1$. Let $\hat
V(g_j)=V(g_j)=\{x:\ |x-g_j|<\varepsilon\}$, where $\varepsilon$ is sufficiently small,
and let $\supp u\subset\left(\bigcup_j V(g_j)\right)\cap\overline{Q}$. For $x\in V(g_j)$,
we introduce the new variables $x'=(y_1',y_2',z')$ by the formulas
$$
y_1'=r-1,\quad y_2'=x_3-g_{j3},\quad z'=\varphi-\varphi_j,
$$
where $r,\varphi,x_3$ and $1,\varphi_j,g_{j3}$ are the cylindrical coordinates of the
points $x$ and $g_j$, respectively. Clearly, the transformation $x\mapsto x'(g,j)$ is
nondegenerate for $x\ne0$ and each open set $V(g_j)$ is taken onto one and the same
neighborhood of the origin $V(0)$ under this transformation. We define the vector-valued
function $v(x')$ such that $v_j(x')=u_j(x(x'))$ for $x'\in V(0)$, where $u_j(x)=u(x)$ for
$x\in V(g_j)\cap Q$. Denote $x'=(y_1',y_2',z')$ by $x=(y_1,y_2,z)$ again. Then the
boundary-value problem~\eqref{ex1.1-1.12}, \eqref{ex1.1-1.13} takes the form (see
Fig.~\ref{fig1.2})
\begin{equation}\label{ex1.1-1.14}
-\dfrac{\partial^2 v_j}{\partial y_1^2}-\dfrac{\partial^2 v_j}{\partial
y_2^2}-\dfrac{1}{(1+y_1)^2}\dfrac{\partial^2 v_j}{\partial
z^2}-\dfrac{1}{1+y_1}\dfrac{\partial v_j}{\partial y_1}=f_j(x),\qquad x\in\Theta_j,\
j=1,\dots,4,
\end{equation}
\begin{equation}\label{ex1.1-1.15}
\begin{aligned}
&v_j|_{\Gamma_{j1}}=0,\qquad j=1,\dots,4,\\
&(v_1+\alpha_1v_3+\beta_1v_4)|_{\Gamma_{12}}=(v_2+\beta_1v_3+\alpha_1v_4)|_{\Gamma_{22}}=0,\\
&(v_3+\alpha_2v_1+\beta_2v_2)|_{\Gamma_{32}}=(v_4+\beta_2v_1+\alpha_2v_2)|_{\Gamma_{42}}=0.
\end{aligned}
\end{equation}
Here
$$
\Theta_1=\Theta_2=\{x\in\mathbb R^3:\ y_2>y_1\},\qquad \Theta_3=\Theta_4=\{x\in\mathbb
R^3:\ y_2<-y_1\},
$$
$$
\Gamma_{11}=\Gamma_{21}=\{x\in\mathbb R^3:\ y_2=y_1,\ y_1>0\},\qquad
\Gamma_{31}=\Gamma_{41}=\{x\in\mathbb R^3:\ y_2=-y_1,\ y_1>0\},
$$
$$
\Gamma_{12}=\Gamma_{22}=\{x\in\mathbb R^3:\ y_2=y_1,\ y_1<0\},\qquad
\Gamma_{32}=\Gamma_{42}=\{x\in\mathbb R^3:\ y_2=-y_1,\ y_1<0\}.
$$

\begin{figure}[ht]
{ \hfill\epsfxsize120mm\epsfbox{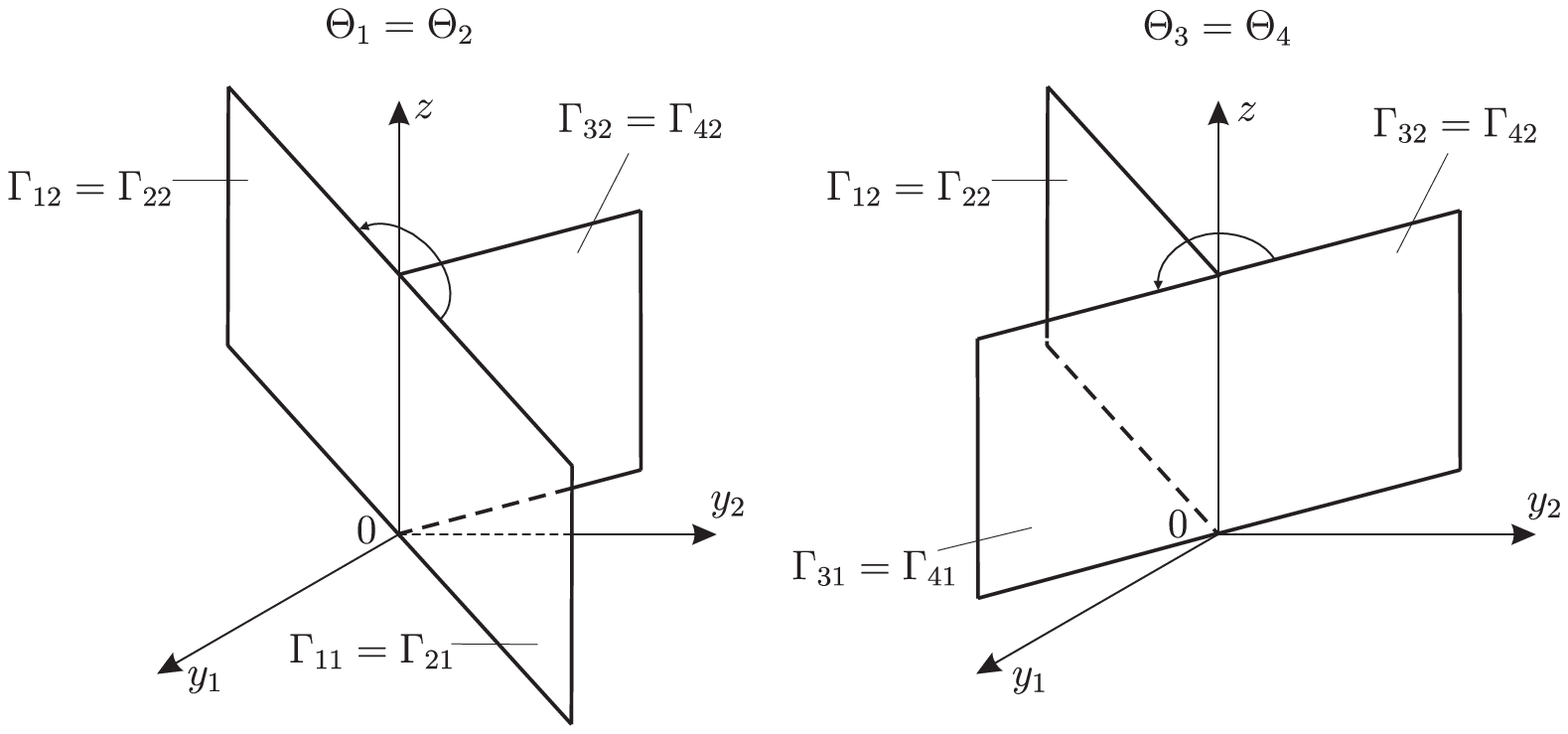}\hfill\ } \caption{Problem~\eqref{ex1.1-1.14},
\eqref{ex1.1-1.15}}\label{fig1.2}
\end{figure}

Clearly, Conditions~\ref{cond1.1}--\ref{cond1.4} hold in this example.

Passing to the principal homogeneous parts in Eqs.~\eqref{ex1.1-1.14} and freezing the
coefficients at the origin, we obtain
$$
-\Delta v_j=f_j(x),\qquad x\in\Theta_j,\ j=1,\dots,4.
$$
Nonlocal boundary conditions~\eqref{ex1.1-1.15} do not change.
\end{example}

\subsection{}\label{subsec1.4}
Now fix an arbitrary point $g\in\mathcal K_2$. Clearly, $g\in\mathcal
K_{2\nu}\cap\Gamma_i$ for some $1\le\nu\le N_2$ and $1\le i\le N_0$. By virtue of the
smoothness of $\Gamma_i$ and $\mathcal K_{2\nu}$, there exists a $C^\infty$
diffeomorphism $x\to x'=x'(g)$ defined on a small neighborhood $V(g)$ of the point $g$,
such that the images of $Q\cap V(g)$ and $\mathcal K_{2\nu}\cap V(g)$ are the
intersection of the half-space $\mathbb R_+^n=\{x:\ |\varphi|<\pi/2,\ z\in\mathbb
R^{n-2}\}$ with some neighborhood $V(0)$ and the intersection of the set $\mathcal P$
with $V(0)$, respectively.

Let $A(D_{y'},D_{z'})$ and $B_{i\mu0}(D_{y'},D_{z'})$ be the principal homogeneous parts
of the operators $A(g,D_{y},D_{z})$ and $B_{i\mu0}(g,D_{y},D_{z})$, respectively, written
in the new coordinates $x'=x'(g)$.

We introduce the linear bounded operator
$$
\begin{aligned}
&\mathcal L_g: H_a^{l+2m}(\mathbb R_+^n)\to\mathcal H_a^l(\mathbb
R_+^n,\Gamma)\\
&\qquad=H_a^l(\mathbb R_+^n)\times H_a^{l+2m-m_{i\mu}-1/2}(\mathbb R_-^{n-1})\times
H_a^{l+2m-m_{i\mu}-1/2}(\mathbb R_+^{n-1})
\end{aligned}
$$
given by
\begin{equation}\label{1.12}
\begin{aligned}
 \mathcal
L_g u & =\big(A(D_{y'},D_{z'})u(y',z'),\\
&\qquad B_{i\mu0}(D_{y'},D_{z'})u(y',z')|_{\varphi=-\pi/2},\
B_{i\mu0}(D_{y'},D_{z'})u(y',z')|_{\varphi=\pi/2}\big),
\end{aligned}
\end{equation}
where $\mathbb R_{\pm}^{n-1}=\{x'=(y',z')\in\mathbb R^n:\ \varphi=\pm\pi/2,\ z'\in\mathbb
R^{n-2}\}$.

\section{Nonlocal Elliptic Problems in Dihedral Angles}\label{sec2}

\subsection{}
When studying nonlocal problems in bounded domains, we will represent the nonlocal
operators as the sum of three operators. The first operator will correspond to nonlocal
terms supported near the set $\mathcal K_1$, the second operator to nonlocal terms
supported outside the set $\mathcal K_1$, and the third one to lower-order terms (compact
perturbations). In this section, we consider a model operator corresponding to the
problem with nonlocal terms supported near the set $\mathcal K_1$.

By using the Fourier transform with respect to $z$, one can reduce the study of the
operator $\mathcal L_g$ in dihedral angles to the study of a model operator $\mathcal
L_g(\omega)$ in plane angles, where $\omega$ is a parameter belonging to the unit sphere
$$
S^{n-3}=\{\omega\in\mathbb R^2:\ |\omega|=1\},
$$
see~\cite{SkDu90,GurGiess}. In this section, we formulate some results (mostly proved
in~\cite{SkDu90,GurGiess}) which we need below and illustrate them by an example. Note
that the Fourier transform approach was earlier proposed for the study of local elliptic
problems in dihedral angles~\cite{MP}.

To introduce the operator $\mathcal L_g(\omega)$, we preliminarily consider weighted
spaces with nonhomogeneous weight. Denote by $E_a^k(\Omega)$ the completion of the set
$C_0^\infty(\overline{\Omega}\setminus\{0\})$ with respect to the norm
$$
\|u\|_{E_a^k(\Omega)}=\bigg(\sum\limits_{|\alpha|\le k}\,\int\limits_\Omega
r^{2a}(r^{2(|\alpha|-k)}+1)|D_y^\alpha u(y)|^2\, dy\bigg)^{1/2},
$$
where either $\Omega=\theta=\{y\in\mathbb R^2:\ d_1<\varphi<d_2\}$ or $\Omega=\mathbb
R^2$; $r,\varphi$ are the polar coordinates of the point $y$; $k\ge0$ is an integer. Let
$\gamma\subset\overline{\Omega}$ be a half-line given by $\gamma=\{y\in\mathbb R^2:\
\varphi=\varphi_0\}$, where $d_1\le\varphi_0\le d_2$ for $\Omega=\theta$. Denote by
$E_a^{k-1/2}(\gamma)$ ($k\ge1$ is an integer) the space of traces on $\gamma$ with the
norm
$$
\|\psi\|_{E_a^{k-1/2}(\gamma)}=\inf\|v\|_{E_a^k(\Omega)}\qquad (v\in E_a^k(\Omega):\
v|_\gamma=\psi).
$$

Introduce the following spaces of vector-valued functions:
$$
\mathcal E_a^k(\theta)=\prod\limits_{j=1}^N E_a^k(\theta_j),\quad \mathcal
E_a^l(\theta,\gamma)=\mathcal E_a^l(\theta)\times\prod\limits_{j=1}^N
\prod\limits_{\rho=1,2}\prod\limits_{\mu=1}^m
E_a^{l+2m-m_{j\rho\mu}-1/2}(\gamma_{j\rho}),
$$
where $\theta_j=\{y\in\mathbb R^2:\ d_{j1}<\varphi<d_{j2}\}$ and
$\gamma_{j\rho}=\{y\in\mathbb R^2:\ \varphi=d_{j\rho}\}$.

For a fixed point $g\in\mathcal K_1$, we consider the linear bounded operator $$\mathcal
L_g(\omega): \mathcal E_a^{l+2m}(\theta)\to\mathcal E_a^l(\theta,\gamma)$$ given by
\begin{equation}\label{2.1}
\mathcal L_g(\omega) V=\Big\{A_j(D_{y},\omega)V_j(y),\ \sum\limits_{k,s}(B_{j\rho\mu
ks}(D_{y},\omega)V_k)(\mathcal G_{j\rho ks}y)|_{\gamma_{j\rho}}\Big\},
\end{equation}
where $\omega\in S^{n-3}$ and $V=(V_1,\dots,V_N)$, cf.~\eqref{1.11}.

\subsection{}
We write the operators $A_j(D_{y},0)$ and $B_{j\rho\mu ks}(D_{y},0)$ in the polar
coordinates:
\begin{equation*}
A_j(D_{y},0)=r^{-2m}\hat A_j(\varphi,D_\varphi,rD_r),\qquad B_{j\rho\mu
ks}(D_{y},0)=r^{-m_{j\rho\mu}}\hat B_{j\rho\mu ks}(\varphi,D_\varphi,rD_r),
\end{equation*}
where $D_\varphi=-i\partial/\partial\varphi$, $D_r=-i\partial/\partial r$.

Introduce the following spaces of vector-valued functions:
$$
\mathcal W^k(d_1,d_2)=\prod\limits_{j=1}^N W^k(d_{j1},d_{j2}),\quad \mathcal
W^l[d_1,d_2]=\mathcal W^l(d_1,d_2)\times\mathbb C^{mN}\times\mathbb C^{mN}.
$$
Consider the analytic operator-valued function $\hat{\mathcal L}_g(\lambda): \mathcal
W^{l+2m}(d_1,d_2)\to\mathcal W^l[d_1,d_2]$ given by
\begin{equation}\label{2.4}
\begin{aligned}
\hat{\mathcal L}_g(\lambda) w & =\Big\{\hat A_j(\varphi,D_\varphi,\lambda)w_j,\\
&\qquad\sum\limits_{k,s}e^{(i\lambda-m_{j\rho\mu})\ln\chi_{j\rho ks}}(\hat B_{j\rho\mu
ks}(\varphi,D_\varphi,\lambda)w_k)(\varphi+\varphi_{j\rho
ks})|_{\varphi=d_{j\rho}}\Big\},
\end{aligned}
\end{equation}
where $w=(w_1,\dots,w_N)$.

By~Lemmas 2.1 and 2.2 in~\cite{SkDu90}, there exists a finite-meromorphic operator-valued
function $\hat{\mathcal R}_g(\lambda): \mathcal W^l[d_1,d_2]\to \mathcal
W^{l+2m}(d_1,d_2)$ such that $\hat{\mathcal L}_g^{-1}(\lambda)=\hat{\mathcal
R}_g(\lambda)$ for any $\lambda$ which is not a pole of $\hat{\mathcal R}_g(\lambda)$.
Moreover, if $\lambda_0=\mu_0+i\nu_0$ is a pole of $\hat{\mathcal R}_g(\lambda)$, then
$\lambda_0$ is an eigenvalue of $\hat{\mathcal L}_g(\lambda)$, and there exists a number
$\delta>0$ such that the set $\{\lambda\in\mathbb C:\ 0<|\Im\lambda-\nu_0|<\delta\}$
contains no eigenvalues of $\hat{\mathcal L}_g(\lambda)$.

\subsection{}\label{subsec2.3}
\begin{definition}\label{defFredholm}
Let $H_1$ and $H_2$ denote Hilbert spaces. A linear bounded operator $L:H_1\to H_2$ is
said to have the {\em Fredholm property} if $\dim\mathcal N(L)<\infty$, $\codim\mathcal
R(L)<\infty$, and $\mathcal R(L)$ is closed, where $\mathcal N(L)$ and $\mathcal R(L)$
are the kernel and the image of the operator $L$, respectively.
\end{definition}

The following theorem shows that spectral properties of the operator-valued function
$\hat{\mathcal L}_g(\lambda)$ affect whether or not the operator $\mathcal L_g(\omega)$
has the Fredholm property.

\begin{theorem}\label{t2.1}
Let Conditions~$\ref{cond1.1}$--$\ref{cond1.4}$ hold. If the line $\Im\lambda=a+1-l-2m$
contains no eigenvalues of the operator-valued function $\hat{\mathcal L}_g(\lambda)$,
then the operator $\mathcal L_g(\omega): \mathcal E_a^{l+2m}(\theta)\to\mathcal
E_a^l(\theta,\gamma)$ has the Fredholm property for all $\omega\in S^{n-3}$.

If the operator $\mathcal L_g(\omega)$ has the Fredholm property for a certain $\omega\in
S^{n-3}$, then the line $\Im\lambda=a+1-l-2m$ contains no eigenvalues of $\hat{\mathcal
L}_g(\lambda)$.
\end{theorem}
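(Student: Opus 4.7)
My plan is to work in polar coordinates $(r,\varphi)$ on each plane angle $\theta_j$ and combine the substitution $t=\ln r$ with the Mellin transform in $r$ (i.e.\ the Fourier transform in $t$) to connect the operator $\mathcal L_g(\omega)$ with the operator pencil $\hat{\mathcal L}_g(\lambda)$. Under this change of variables the space $E_a^k(\theta_j)$ is isomorphic to a weighted Sobolev space on the cylinder $\mathbb R_t\times(d_{j1},d_{j2})$, and the Mellin image of a function with radial weight $r^{a-k}$ corresponds to a Sobolev function on the line $\Im\lambda=a+1-l-2m$. The key reduction is that, for the formal $\omega=0$ problem (whose coefficients are radially homogeneous), $\mathcal L_g(0)$ becomes pointwise multiplication by $\hat{\mathcal L}_g(\lambda)$ on the Mellin side.

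For the sufficient direction, I would argue as follows. By hypothesis the line $\Im\lambda=a+1-l-2m$ contains no poles of $\hat{\mathcal R}_g(\lambda)=\hat{\mathcal L}_g(\lambda)^{-1}$; together with standard parameter-dependent elliptic estimates for the pencil, this yields polynomial bounds on $\hat{\mathcal R}_g(\lambda)$ as $|\Re\lambda|\to\infty$, uniform on that line. The inverse Mellin transform of $\hat{\mathcal R}_g(\lambda)$ then defines a bounded operator $\mathcal M_g:\mathcal E_a^l(\theta,\gamma)\to\mathcal E_a^{l+2m}(\theta)$ which is a two-sided inverse of $\mathcal L_g(0)$. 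For general $\omega\in S^{n-3}$, the difference $\mathcal L_g(\omega)-\mathcal L_g(0)$ consists of terms of strictly lower radial order, and the nonhomogeneous weight built into $E_a^k$ — with its $r^{2a}(r^{2(|\alpha|-k)}+1)$ structure — produces Rellich-type local compactness so that composing this difference with $\mathcal M_g$ yields a compact operator. Hence $\mathcal M_g$ is a two-sided regularizer of $\mathcal L_g(\omega)$ and the Fredholm property follows.

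For the necessary direction, I would prove the contrapositive via a Weyl-type singular sequence. Suppose $\lambda_0=\mu_0+i(a+1-l-2m)$ is an eigenvalue with eigenvector $w_0=(w_0^1,\dots,w_0^N)(\varphi)$ of $\hat{\mathcal L}_g(\lambda_0)$. Consider the test functions $V^{(k)}_j(y)=k^{-1/2}\zeta\!\left(\tfrac{\ln r - t_k}{k}\right)r^{i\lambda_0}w_0^j(\varphi)$ with $\zeta\in C_0^\infty(\mathbb R)$ fixed and $k\to\infty$, where the centers $t_k$ are chosen so that the support is pushed to $r\to 0$. A direct computation shows that $\{V^{(k)}\}$ is bounded but has no convergent subsequence in $\mathcal E_a^{l+2m}(\theta)$ (because translates in $t=\ln r$ are essentially orthogonal in the transformed norm), while $\mathcal L_g(\omega)V^{(k)}\to 0$ in $\mathcal E_a^l(\theta,\gamma)$: the formal action of $\mathcal L_g(0)$ annihilates $r^{i\lambda_0}w_0$; commutators with the cutoff contribute $O(k^{-1/2})$; and the $\omega$-dependent lower-order part becomes negligible by localising to small $r$. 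Such a sequence contradicts the Fredholm property.

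The main obstacle is the presence of the parameter $\omega$: when $\omega\ne 0$ the Mellin transform no longer diagonalises $\mathcal L_g(\omega)$, so one must treat the $\omega$-dependent lower-order contributions as compact perturbations in the sufficient direction and as perturbations that can be defeated by radial localisation in the necessary direction. The delicate point is the interplay between the nonhomogeneous weight — which supplies local compactness through Rellich's theorem and controls behaviour both as $r\to 0$ and as $r\to\infty$ — and the radial scaling that governs how the $\omega$-terms compare with the principal part. This is the technical heart of the argument, carried out along the lines of \cite{SkDu90,GurGiess}.
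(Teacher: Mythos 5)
The necessary direction of your argument is sound: for the singular sequence $V^{(k)}_j=k^{-1/2}\zeta\big((\ln r-t_k)/k\big)\,r^{i\lambda_0}w_0^j(\varphi)$ pushed toward $r\to0$, the $E_a$-norm is equivalent near the origin to the homogeneous Kondratiev norm, the $\omega$-dependent terms carry extra positive powers of $r$ and vanish in the limit, and a bounded, weakly null, noncompact sequence with $\mathcal L_g(\omega)V^{(k)}\to0$ does contradict the Fredholm property. (Note that the paper does not prove Theorem~\ref{t2.1} itself; it quotes it from~\cite{GurGiess}.)

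The sufficient direction, however, has a genuine gap, located exactly at what you call the technical heart. Two claims fail. First, the inverse Mellin transform $\mathcal M_g$ of $\hat{\mathcal R}_g(\lambda)$ along the line $\Im\lambda=a+1-l-2m$ is bounded only on the homogeneous scale $\mathcal H_a^l(\theta,\gamma)\to\mathcal H_a^{l+2m}(\theta)$; it does not map $\mathcal E_a^l$ into $\mathcal E_a^{l+2m}$, since membership in $E_a^{l+2m}$ requires the additional integrability $\int r^{2a}|D^\alpha u|^2\,dy<\infty$ for all $|\alpha|\le l+2m$ (extra decay as $r\to\infty$) which the one-line Mellin calculus does not supply. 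Second, and more fundamentally, $\mathcal L_g(\omega)-\mathcal L_g(0)$ is \emph{not} a compact perturbation on the $E$-scale: a term $\omega^\beta D^\alpha$ with $|\alpha|+|\beta|=2m$, $|\beta|\ge1$, is of lower radial order only as $r\to0$; as $r\to\infty$ it is $r^{|\beta|}$ times \emph{larger} than the principal part, and with respect to the weight $r^{2a}(r^{2(|\alpha|-k)}+1)$, which flattens to the order-independent weight $r^{2a}$ at infinity, the embedding $E_a^{l+2m}\hookrightarrow E_a^{l+2m-1}$ restricted to $\{r>R\}$ is not compact (translates toward infinity give a bounded family with no convergent subsequence), so Rellich compactness is only local and cannot be globalized. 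In fact these $\omega$-terms must not be discarded at infinity: with $|\omega|=1$ they are precisely what makes $A_j(D_y,\omega)$ parameter-elliptic and controllable for large $r$ --- this is the role of the ``$+1$'' in the weight, and it is how the paper's Lemma~\ref{l3.9} handles the dyadic annuli with $s>0$, via the parameter-dependent estimate with large parameter $2^s\omega$ rather than via the Mellin calculus. The correct construction glues a Mellin/Kondratiev parametrix near $r=0$ (where the hypothesis on $\hat{\mathcal L}_g(\lambda)$ enters and the $\omega$-terms really are small) with a parameter-elliptic parametrix near $r=\infty$; the mismatch is supported in a fixed annulus and is therefore compact. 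This is the route taken in~\cite{MP,SkDu90,GurGiess}.
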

Theorem~\ref{t2.1} was proved in~\cite{GurGiess}. This result is a generalization of
Theorem~3.2 in~\cite{SkDu90}, where one additionally assumes that the line
$\Im\lambda=a+1-l-2m$ contains no eigenvalues of the corresponding localized operator
with a parameter $\lambda$.

The following theorem results from Theorems~3.3, 9.2, and~9.3 in~\cite{GurGiess}.
\begin{theorem}\label{t2.2}
Let Conditions~$\ref{cond1.1}$--$\ref{cond1.4}$ hold. Then the operator $\mathcal L_g:
\mathcal H_a^{l+2m}(\Theta)\to\mathcal H_a^l(\Theta,\Gamma)$ is an isomorphism iff the
operator $\mathcal L_g(\omega): \mathcal E_a^{l+2m}(\theta)\to\mathcal
E_a^l(\theta,\gamma)$ is an isomorphism for each $\omega\in S^{n-3}$.
\end{theorem}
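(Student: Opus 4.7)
The plan is to reduce the operator $\mathcal L_g$ on the dihedral angles $\Theta_j$ to the family $\mathcal L_g(\omega)$ on the plane angles $\theta_j$ by combining the partial Fourier transform in $z$ with a parameter-dependent dilation in $y$. First, apply the Fourier transform $\mathcal F_{z\to\zeta}$ to $v\in\mathcal H_a^{l+2m}(\Theta)$. Since the coefficients in~\eqref{1.11} are constant and the transformations $\mathcal G_{j\rho ks}$ act only on the $y$-plane, the equation $\mathcal L_g v = f$ becomes a $\zeta$-pointwise family of equations on the plane angles $\theta_j$ with symbols $A_j(D_y,\zeta)$ and $B_{j\rho\mu ks}(D_y,\zeta)$.

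Second, for each fixed $\zeta\ne 0$, perform the homogeneous rescaling $\eta=|\zeta|y$ and set $V_j(\eta,\zeta)=\tilde v_j(\eta/|\zeta|,\zeta)$. Using the $2m$- and $m_{j\rho\mu}$-homogeneity of the principal parts one gets
$$
A_j(D_y,\zeta)=|\zeta|^{2m}A_j(D_\eta,\omega),\qquad B_{j\rho\mu ks}(D_y,\zeta)=|\zeta|^{m_{j\rho\mu}}B_{j\rho\mu ks}(D_\eta,\omega),
$$
with $\omega=\zeta/|\zeta|\in S^{n-3}$, while the linear operators $\mathcal G_{j\rho ks}$ commute with the dilation. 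Combined with Parseval's identity, the map $v\mapsto V$ should induce an isomorphism of Hilbert spaces from $\mathcal H_a^{l+2m}(\Theta)$ onto a direct-integral space $L^2(\mathbb R^{n-2};\mathcal E_a^{l+2m}(\theta);\,|\zeta|^{\kappa}d\zeta)$ for an appropriate exponent $\kappa$, and analogously for $\mathcal H_a^{l}(\Theta,\Gamma)$, under which $\mathcal L_g$ is carried (after factoring out the appropriate powers of $|\zeta|$) to the direct integral of the operators $\mathcal L_g(\omega)$ defined in~\eqref{2.1}. The crux is to verify that the pull-back of the homogeneous weight $|y|^{2(a-k+|\alpha|)}$ by the dilation-plus-Fourier transform is equivalent, uniformly in $\omega\in S^{n-3}$, to the inhomogeneous weight $r^{2a}(r^{2(|\alpha|-k)}+1)$ entering the $E_a^k$ norm; this boils down to controlling the polynomials $\sum_{|\alpha''|\le j}\omega^{2\alpha''}|\eta|^{2|\alpha''|}$ from above and below by $1+|\eta|^{2j}$ with constants independent of $\omega\in S^{n-3}$, which follows from the multinomial identity $\sum_{|\alpha''|=j}\binom{j}{\alpha''}\omega^{2\alpha''}=|\omega|^{2j}=1$ together with $0\le\omega^{2\alpha''}\le 1$.

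Third, once the isomorphism onto the direct integral is established, $\mathcal L_g$ is an isomorphism if and only if the family $\{\mathcal L_g(\omega)\}_{\omega\in S^{n-3}}$ consists of isomorphisms with inverses uniformly bounded in operator norm. Continuity (even analyticity) of $\omega\mapsto\mathcal L_g(\omega)$ together with compactness of $S^{n-3}$ upgrades pointwise invertibility for every $\omega$ to the required uniform bound, giving the ``if'' direction. The ``only if'' direction follows because pointwise invertibility is necessary for the direct integral to be invertible. The main obstacle will be the careful matching of the weighted norms in Step 2, including the trace spaces on the $\gamma_{j\rho}$, since the trace norm $\|\cdot\|_{H_a^{l+2m-m_{j\rho\mu}-1/2}(\Gamma_{j\rho})}$ is defined by infimum over extensions and one must show this infimum is compatible with the Fourier–dilation transform used in the interior.
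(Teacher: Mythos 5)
Your approach---partial Fourier transform in $z$, the dilation $Y=|\zeta|y$, and the resulting direct-integral decomposition together with the uniform-in-$\omega$ equivalence of the pulled-back homogeneous weight with the inhomogeneous $E_a^k$ weight---is exactly the route of the reference \cite{GurGiess} that the paper cites for Theorem~\ref{t2.2}, and it is carried out almost verbatim in the paper's own proof of the parameter-dependent analogue (Lemma~\ref{lLpIsomE} and Theorem~\ref{tLpIsomH}; see \eqref{eqLpIsomE8} and \eqref{eqLpIsomE7}). The two technical points you flag are settled there in the standard way: the multinomial identity gives the two-sided weight comparison uniformly in $\omega\in S^{n-3}$, and the trace-norm compatibility is obtained by replacing the infimum norms with the equivalent intrinsic Slobodetskii-type norms of Lemmas~1.3 and~9.1 in \cite{MP}.
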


Denote
\begin{equation}\label{4.8}
\mathcal L_g' v=\Big\{A_j^0(x,D_{y},D_{z})v_j(y,z),\ \sum\limits_{k,s}(B_{j\rho\mu
ks}^0(x,D_{y},D_{z}) v_k)(\mathcal G_{j\rho ks}y,z)|_{\Gamma_{j\rho}}\Big\},
\end{equation}
where $A_j^0(x,D_{y},D_{z})$ and $B_{j\rho\mu ks}^0(x,D_{y},D_{z})$ are principal
homogeneous parts of the operators $A_j(x,D_{y},D_{z})$ and $B_{j\rho\mu
ks}(x,D_{y},D_{z})$, respectively. Note that $A_j^0(0,D_{y},D_{z})=A_j(D_{y},D_{z})$ and
$B_{j\rho\mu ks}^0(0,D_{y},D_{z})=B_{j\rho\mu ks}(D_{y},D_{z})$.

Let
$$
B_\varepsilon=\{x\in\mathbb R^n:\ |x|<\varepsilon\},\qquad\varepsilon>0,
$$
be a ball of radius $\varepsilon$ centered at the origin.

For each $\delta>0$, we define a function $\eta=\eta_\delta\in C_0^\infty(\mathbb R^n)$
such that $\eta(x)=1$ for $x\in B_{\delta}$, $\supp\eta\subset B_{2\delta}$, and
\begin{equation}\label{2.D_beta_eta}
|D^\beta\eta(x)|\le k_1\delta^{-|\beta|}, \qquad x\in\mathbb R^n,
\end{equation}
where $k_1=k_1(\beta)>0$ does not depend on $\delta$.

The number $\delta$ will be specified in Secs.~\ref{sec4} and~\ref{sec5}, where we prove
a priori estimates and construct a right regularizer for the nonlocal problem in a
bounded domain.

Introduce the linear bounded operator $\mathcal L_g'': \mathcal
H_a^{l+2m}(\Theta)\to\mathcal H_a^l(\Theta,\Gamma)$ by the formula
$$
\mathcal L_g'' v=\mathcal L_g v+\eta(\mathcal L_g'-\mathcal L_g)v.
$$

\begin{corollary}\label{cort2.2'}
Let Conditions~$\ref{cond1.1}$--$\ref{cond1.4}$ hold. Assume that the line
$\Im\lambda=a+1-l-2m$ contains no eigenvalues of $\hat{\mathcal L}_g(\lambda)$ and
$\dim\mathcal N(\mathcal L_g(\omega))=\codim\mathcal R(\mathcal L_g(\omega))=0$ for any
$\omega\in S^{n-3}$. Then the operator $\mathcal L_g'': \mathcal
H_a^{l+2m}(\Theta)\to\mathcal H_a^l(\Theta,\Gamma)$ is an isomorphism for all
sufficiently small $\delta>0$ and $\|(\mathcal L_g'')^{-1}\|\le c_0$, where $c_0>0$ does
not depend on $\delta$.
\end{corollary}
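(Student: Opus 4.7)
The plan is to treat $\mathcal L_g''$ as a small perturbation of the isomorphism provided by Theorem~\ref{t2.2}. Since the hypotheses of the corollary are exactly those of Theorem~\ref{t2.2}, the operator $\mathcal L_g:\mathcal H_a^{l+2m}(\Theta)\to\mathcal H_a^l(\Theta,\Gamma)$ is an isomorphism; set $C_0:=\|\mathcal L_g^{-1}\|$. Writing
$$
\mathcal L_g'' = \mathcal L_g\bigl(I + \mathcal L_g^{-1}\eta(\mathcal L_g'-\mathcal L_g)\bigr),
$$
it is enough to show that the operator norm of $\mathcal L_g^{-1}\eta(\mathcal L_g'-\mathcal L_g)$ is at most $C_0C\delta$ for a constant $C$ independent of $\delta$. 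For $\delta$ so small that $C_0C\delta<1/2$, the Neumann series then inverts $I+\mathcal L_g^{-1}\eta(\mathcal L_g'-\mathcal L_g)$ and yields $(\mathcal L_g'')^{-1}$ with norm at most $2C_0$, which is the required $\delta$-independent bound.

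The core of the argument is therefore the estimate
\begin{equation*}
\|\eta(\mathcal L_g'-\mathcal L_g)v\|_{\mathcal H_a^l(\Theta,\Gamma)}\le C\delta\,\|v\|_{\mathcal H_a^{l+2m}(\Theta)}.
\end{equation*}
Since $A_j(D)=A_j^0(0,D)$ and $B_{j\rho\mu ks}(D)=B_{j\rho\mu ks}^0(0,D)$ as noted after~\eqref{4.8}, the $j$th volume component of $(\mathcal L_g'-\mathcal L_g)v$ reads $\sum_{|\alpha|=2m}(a_{j\alpha}(x)-a_{j\alpha}(0))D^\alpha v_j$, and the boundary components share this structure up to composition with the rotation--homothety $\mathcal G_{j\rho ks}$, which acts as a bounded isomorphism of all the weighted spaces involved and can be absorbed into constants. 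Because the coefficients $a_{j\alpha}$, $b_{j\rho\mu ks,\alpha}$ are smooth and $\supp\eta\subset B_{2\delta}$, the coefficient differences are $O(\delta)$ in the $C^0$-norm on $\supp\eta$.

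To estimate the $H_a^l$ norm I would apply the Leibniz rule to $D^\beta(\eta(a_{j\alpha}(x)-a_{j\alpha}(0))D^\alpha v_j)$ for $|\beta|\le l$, writing $\beta=\beta_1+\beta_2+\beta_3$ according to whether derivatives fall on $\eta$, on the coefficient, or on $v_j$. Using $|\alpha|=2m$, the target weight $\rho^{2(a-l+|\beta|)}$ and the source weight $\rho^{2(a-l+|\beta_3|)}$ for the derivative $D^{\beta_3+\alpha}v_j$ differ exactly by the factor $\rho^{2(|\beta_1|+|\beta_2|)}$, bounded by $(2\delta)^{2(|\beta_1|+|\beta_2|)}$ on $\supp\eta$. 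This weight compensation combines with the cut-off estimate $|D^{\beta_1}\eta|\le k_1\delta^{-|\beta_1|}$ from~\eqref{2.D_beta_eta} and with the $O(\delta)$ smallness of the coefficient when $|\beta_2|=0$ to give each Leibniz term a clean factor of $\delta^2$ in the squared weighted norm. The boundary components are reduced to a volume-type estimate by taking the infimum over extensions as in~\eqref{1.6}, with the same exponent arithmetic. The main technical point is precisely this bookkeeping---verifying that the positive powers of $\delta$ from coefficient smallness and weight compensation always outweigh the negative powers produced by differentiating $\eta$---but the three ingredients conspire for this to hold in every case, and no idea beyond Theorem~\ref{t2.2} and Taylor's formula is required.
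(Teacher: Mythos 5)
Your proposal is correct and follows essentially the same route as the paper: the paper likewise deduces from Theorems~\ref{t2.1} and~\ref{t2.2} that $\mathcal L_g$ is an isomorphism, proves $\|\eta(\mathcal L_g'-\mathcal L_g)\|=O(\delta)$ via the bound $|r^{|\alpha|}D^\alpha(b\eta_1)|\le k_3\delta$ (which encodes exactly your Leibniz/weight-compensation bookkeeping), and concludes by a small-perturbation argument. The only cosmetic difference is that you make the Neumann-series step and the exponent arithmetic explicit where the paper leaves them implicit.
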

\begin{proof}
Let us show that
\begin{equation}\label{2.L'-Lto0}
\|\eta(\mathcal L_g'-\mathcal L_g)\|\to0\qquad\text{as}\qquad\delta\to0.
\end{equation}
To do so, we first prove that
\begin{equation}\label{2.B'-Bto0}
\big\|\eta_1\big(B_{j\rho\mu ks}^0(x,D_{y},D_{z})u-B_{j\rho\mu
ks}(D_{y},D_{z})u\big)\big\|_{H_a^{l+2m-m_{j\rho\mu}}(\Theta_k)}\le
k_2\delta\|u\|_{H_a^{l+2m}(\Theta_k)}
\end{equation}
for all $u\in H_a^{l+2m}(\Theta_k)$, where $\eta_1(x)=\eta(\mathcal G_{j\rho
ks}^{-1}y,z)$, while $k_2>0$ does not depend on $u$ and $\delta$.

Let
$$
b\eta_1 D^\beta u,\qquad |\beta|= m_{j\rho\mu},
$$
be an arbitrary term of the expression
$$
\eta_1\big(B_{j\rho\mu ks}^0(x,D_{y},D_{z})u-B_{j\rho\mu ks}(D_{y},D_{z})u\big),
$$
where $b\in C^\infty(\mathbb R^n)$ and $b(0)=0$. It follows from~\eqref{2.D_beta_eta} and
from the relation $b(0)=0$ that
\begin{equation}\label{4.10}
\big|r^{|\alpha|} D^\alpha(b\eta_1)\big|\le k_3\delta,\qquad x\in\mathbb R^n,\
|\alpha|\le l+2m-m_{j\rho\mu},
\end{equation}
where $k_3=k_3(\alpha)>0$ does not depend on $\delta$. Using~\eqref{4.10} and the
definition of the weighted spaces, we directly derive~\eqref{2.B'-Bto0}.

Analogous relations for the pairs of the operators $A_j^0(x,D_{y},D_{z})$ and
$A_j(D_{y},D_{z})$ can be proved in the same way. Thus, we have proved~\eqref{2.L'-Lto0}.

It follows from the conditions of this corollary and from Theorem~\ref{t2.1} that the
operator $\mathcal L_g(\omega)$ is an isomorphism for any $\omega\in S^{n-3}$. Therefore,
by Theorem~\ref{t2.2}, the operator $\mathcal L_g$ is an isomorphism. Combining this fact
with relation~\eqref{2.L'-Lto0}, we complete the proof.
\end{proof}

\subsection{}
In this subsection, we give an example of an operator which corresponds to a nonlocal
elliptic problem in a dihedral angle and is an isomorphism.
\begin{example}\label{ex2.1}
Let
$$
\Theta=\{x=(y,z)\in\mathbb R^3:\ 0<\varphi<d,\ 0<r,\ z\in\mathbb R\}
$$
be a three-dimensional dihedral angle, where $r,\varphi$ are the polar coordinates of the
point $y$. Let
$$
\begin{aligned}
\Gamma_1&=\{x=(y,z)\in\mathbb R^3:\ \varphi=0,\ 0<r,\ z\in\mathbb R\},\\
\Gamma_2&=\{x=(y,z)\in\mathbb R^3:\ \varphi=d,\ 0<r,\ z\in\mathbb R\}
\end{aligned}
$$
be the sides of the angle $\Theta$. Consider the nonlocal elliptic problem
\begin{equation}\label{2.5}
-\Delta v(x)=f_0(x),\qquad x\in\Theta,
\end{equation}
\begin{equation}\label{2.6}
\begin{aligned}
v(\varphi,r,z)|_{\Gamma_1}-\alpha_1
v(\varphi+d/2,r,z)|_{\Gamma_1}&=f_1(x),\qquad x\in\Gamma_1,\\
v(\varphi,r,z)|_{\Gamma_2}-\alpha_2 v(\varphi-d/2,r,z)|_{\Gamma_2}&=f_2(x),\qquad
x\in\Gamma_2,
\end{aligned}
\end{equation}
where $\alpha_1,\alpha_2\in\mathbb R$. Thus, the values of the unknown function $v$ on
the sides $\Gamma_1$ and $\Gamma_2$ are connected with the values of $v$ on the
half-plane $\{x=(y,z)\in\mathbb R^3:\ \varphi=d/2,\ 0<r,\ z\in\mathbb R\}$ lying strictly
inside the angle $\Theta$. The nonlocal transformations are rotations in $y$-plane only,
while transformations with respect to the variables $r$ and $z$ are absent.

Introduce the linear bounded operator
$$
\mathcal L: H_a^{2}(\Theta)\to\mathcal H_a^0(\Theta,\Gamma)=H_a^0(\Theta)\times
H_a^{3/2}(\Gamma_1)\times H_a^{3/2}(\Gamma_2)
$$
by the formula
$$
\mathcal L v=\big(-\Delta v,\ v(\varphi,r,z)|_{\Gamma_1}-\alpha_1
v(\varphi+d/2,r,z)|_{\Gamma_1},\ v(\varphi,r,z)|_{\Gamma_2}-\alpha_2
v(\varphi-d/2,r,z)|_{\Gamma_2}\big),
$$
cf.~\eqref{1.11}. Along with the operator $\mathcal L$, we consider the linear bounded
operator
$$
\mathcal L(\omega): E_a^{2}(\theta)\to\mathcal E_a^0(\theta,\gamma)=E_a^0(\theta)\times
E_a^{3/2}(\gamma_1)\times E_a^{3/2}(\gamma_2)
$$
given by
$$
\mathcal L(\omega) V=\big(-\Delta_y V+V,\ V(\varphi,r)|_{\gamma_1}-\alpha_1
V(\varphi+d/2,r)|_{\gamma_1},\ V(\varphi,r)|_{\gamma_2}-\alpha_2
V(\varphi-d/2,r)|_{\gamma_2}\big),
$$
where
$$
\theta=\{y\in\mathbb R^2:\ 0<\varphi<d,\ 0<r\},
$$
$$
\gamma_1=\{y\in\mathbb R^2:\ \varphi=0,\ 0<r\},\qquad \gamma_2=\{y\in\mathbb R^2:\
\varphi=d,\ 0<r\},
$$
$\omega=\pm1$, cf.~\eqref{2.1}. (Actually, one must write $-\Delta_y V+\omega^2 V$
instead of $-\Delta_y V+V$ in the definition of the operator $\mathcal L(\omega)$, but
$\omega^2=1$ for $\omega=\pm1$. Thus, the operator $\mathcal L(\omega)$ does not depend
on $\omega$ in this example.)

The operator-valued function $\hat{\mathcal L}(\lambda):W^2(0,d)\to\mathcal
W^0[0,d]=L_2(0,d)\times\mathbb C^2$ corresponding to the operator $ \mathcal L(\omega)$
has the form
$$
\hat{\mathcal L}(\lambda) u=\big(-u_{\varphi\varphi}+\lambda^2u,\ u|_{\varphi=0}-\alpha_1
u|_{\varphi=d/2},\ u|_{\varphi=d}-\alpha_2 u|_{\varphi=d/2}\big),
$$
cf.~\eqref{2.4}.

We prove that the operator $\mathcal L(\omega)$ is an isomorphism for $0\le a\le2$,
$0<|\alpha_1+\alpha_2|<2$, and $0<d<2\arctan\sqrt{4(\alpha_1+\alpha_2)^{-2}-1}$. In this
case, Theorem~\ref{t2.2} implies that the operator $\mathcal L$ is also an isomorphism.

The proof comprises three parts.
\begin{enumerate}
\item We prove that the equation
\begin{equation}\label{2.7}
\mathcal A_\alpha w=f_0
\end{equation}
has a unique solution for any $f_0\in L_2(\theta)$, where $\mathcal A_\alpha:
\Dom(\mathcal A_\alpha)\subset L_2(\theta)\to L_2(\theta)$ is the linear bounded operator
given by
$$
\mathcal A_\alpha w=-\Delta w+w,\qquad w\in \Dom(\mathcal A_\alpha)=\{w\in
W_\alpha^1(\theta):\ -\Delta w+w\in L_2(\theta)\},
$$
$$
\begin{aligned}
W_\alpha^1(\theta)&=\{w\in W^1(\theta):\
w(\varphi,r)|_{\gamma_1}-\alpha_1 w(\varphi+d/2,r)|_{\gamma_1}=0,\\
&\qquad w(\varphi,r)|_{\gamma_2}-\alpha_2 w(\varphi-d/2,r)|_{\gamma_2}=0\}.
\end{aligned}
$$
To prove the unique solvability of Eq.~\eqref{2.7}, we reduce it to an elliptic
functional differential equation.
\item
We show that each solution of Eq.~\eqref{2.7} belongs to $H_1^2(\theta\cap B_R)$ for all
$R>0$.
\item
We prove that the equation
\begin{equation}\label{2.8}
\mathcal L(\omega)V=f
\end{equation}
has a unique solution for any $f=(f_0,f_1,f_2)\in\mathcal E_a^0(\theta,\gamma)$.
\end{enumerate}

1. Let us prove that Eq.~\eqref{2.7} has a unique solution $w\in \Dom(\mathcal A_\alpha)$
for any $f_0\in L_2(\theta)$. To do this, we reduce Eq.~\eqref{2.7} to a functional
differential equation.

1a. Consider the functional operator $\mathcal R:L_2(\mathbb R^2)\to L_2(\mathbb R^2)$
given by
$$
\mathcal Ru=u(\varphi,r)+\alpha_1 u(\varphi+d/2,r)+\alpha_2 u(\varphi-d/2,r).
$$
Let $I_\theta:L_2(\theta)\to L_2(\mathbb R^2)$ denote the operator which extends a
function defined on $\theta$ to $\mathbb R^2$ by zero and $P_\theta:L_2(\mathbb R^2)\to
L_2(\theta)$ the operator which restricts a function defined on $\mathbb R^2$ to
$\theta$. Set
$$
\mathcal R_\theta=P_\theta\mathcal R I_\theta.
$$
We claim that the operator $\mathcal R_\theta$ has the bounded inverse
$$
\mathcal R_\theta^{-1}=P_\theta\mathcal R' I_\theta,
$$
where
$$
\mathcal R'u=\big(u(\varphi,r)-\alpha_1 u(\varphi+d/2,r)-\alpha_2
u(\varphi-d/2,r)\big)/(1-\alpha_1\alpha_2),
$$
provided that $\alpha_1\alpha_2\ne1$ (which is true because $|\alpha_1+\alpha_2|<2$).
Indeed,
\begin{equation}\label{2.Rtheta}
\begin{aligned}
\mathcal R_\theta u=u(\varphi,r)+\alpha_1 u(\varphi+d/2,r)\quad\text{for}\
y\in\theta_1=\{y\in\mathbb
R^2:\ 0<\varphi<d/2,\ 0<r\},\\
 \mathcal R_\theta u=u(\varphi,r)+\alpha_2
u(\varphi-d/2,r)\quad\text{for}\ y\in\theta_2=\{y\in\mathbb R^2:\ d/2<\varphi<d,\ 0<r\}.
\end{aligned}
\end{equation}
Therefore,
\begin{multline*}
\mathcal R_\theta^{-1}\mathcal R_\theta u=\big(u(\varphi,r)+\alpha_1
u(\varphi+d/2,r)-\alpha_1
u(\varphi+d/2,r)-\alpha_1\alpha_2u(\varphi,r)\big)/(1-\alpha_1\alpha_2)\\=
u(\varphi,r)\qquad\text{for}\qquad y\in\theta_1,
\end{multline*}
\begin{multline*}
\mathcal R_\theta^{-1}\mathcal R_\theta u=\big(u(\varphi,r)+\alpha_2
u(\varphi-d/2,r)-\alpha_2
u(\varphi-d/2,r)-\alpha_1\alpha_2u(\varphi,r)\big)/(1-\alpha_1\alpha_2)\\=
u(\varphi,r)\qquad\text{for}\qquad y\in\theta_2,
\end{multline*}
which implies $\mathcal R_\theta^{-1}\mathcal R_\theta u(y)=u(y)$ for $y\in\theta$.
Similarly, we obtain $\mathcal R_\theta\mathcal R_\theta^{-1} u(y)=u(y)$ for
$y\in\theta$. Moreover, by using the same arguments as in Theorem~8.1 in~\cite[Chap.~2,
Sec.~8]{SkBook}, one can verify that the operators
$$
\mathcal R_\theta:\mathaccent23W^1(\theta)\to W_\alpha^1(\theta),\qquad \mathcal
R_\theta:\mathaccent23W^1(\theta\cap B_R)\to W_\alpha^1(\theta\cap B_R)
$$
are isomorphisms for all $R>0$, where
$$
\mathaccent23W^1(\theta)=\{u\in W^1(\theta):\ u|_{\gamma_1}=0,\ u|_{\gamma_2}=0\},
$$
$$
 \mathaccent23W^1(\theta\cap B_R)=\{u\in
W^1(\theta\cap B_R):\ u|_{\gamma_1}=0,\ u|_{\gamma_2}=0\},
$$
$$
\begin{aligned}
& W_\alpha^1(\theta\cap B_R)=\{w\in W^1(\theta\cap B_R):\\
&\qquad w(\varphi,r)|_{\gamma_1}-\alpha_1 w(\varphi+d/2,r)|_{\gamma_1}=0,\
w(\varphi,r)|_{\gamma_2}-\alpha_2 w(\varphi-d/2,r)|_{\gamma_2}=0\}.
\end{aligned}
$$

1b. It follows from what has been proved in part~1a that Eq.~\eqref{2.7} is equivalent to
the equation
\begin{equation}\label{2.9}
\mathcal A_\mathcal R u=f_0,
\end{equation}
where $\mathcal A_\mathcal R: \Dom(\mathcal A_\mathcal R)\subset L_2(\theta)\to
L_2(\theta)$ is the unbounded operator given by
$$
\mathcal A_\mathcal R u=(-\Delta+I)\mathcal R_\theta u,\qquad u\in \Dom(\mathcal
A_\mathcal R)=\{u\in \mathaccent23W^1(\theta):\ (-\Delta+I)\mathcal R_\theta u\in
L_2(\theta)\},
$$
and $I$ stands for the identity operator in $L_2(\theta)$.

Similarly to Theorem~10.1 in~\cite[Chap.~2, Sec.~10]{SkBook}, one can show that
Eq.~\eqref{2.9} has a unique solution for any $f_0\in L_2(\theta)$. However, for the
reader's convenience, we prefer to give the proof here.

Consider the sesquilinear form $b_{\mathcal R}[u,v]$ with the domain $\Dom(b_{\mathcal
R})=\mathaccent23W^1(\theta)$ given by
\begin{equation}\label{eqSesquilForm}
b_{\mathcal R}[u,v]=\int\limits_\theta\Big(\sum\limits_{i=1,2}(\mathcal R_\theta
u)_{y_i}\overline{v_{y_i}}+\mathcal R_\theta u\overline{v}\Big)dy.
\end{equation}
It is clear that
\begin{equation}\label{2.10}
\mathcal R_\theta u_{y_i}=(\mathcal R_\theta u)_{y_i}\qquad\text{for}\qquad
u\in\mathaccent23W^1(\theta).
\end{equation}
It follows from the Schwarz inequality and from~\eqref{2.10} that
\begin{equation}\label{2.11}
|b_{\mathcal R}[u,v]|\le
k_1\|u\|_{\mathaccent23W^1(\theta)}\|v\|_{\mathaccent23W^1(\theta)},
\end{equation}
where $k_1>0$ does not depend on $u$ and $v$.

Introduce the isomorphism $\mathcal U:L_2(\theta)\to L_2(\theta_1)\times L_2(\theta_1)$
by the formula
$$
(\mathcal U u)_i(y)=u(\varphi+(i-1)d/2,r),\qquad y\in\theta_1,\ i=1,2.
$$

Let $R_1=\begin{pmatrix} 1&\alpha_1\\ \alpha_2&1
\end{pmatrix}$. One can directly verify that
\begin{equation}\label{2.11'}
R_\theta u=\mathcal U^{-1}R_1\mathcal Uu=\mathcal U^* R_1\mathcal Uu.
\end{equation}
The symmetric part of the matrix $R_1$ has the form
$$
(R_1+R_1^*)/2=\begin{pmatrix} 1&(\alpha_1+\alpha_2)/2\\
(\alpha_1+\alpha_2)/2 & 1\end{pmatrix}.
$$
Since $|\alpha_1+\alpha_2|<2$, it follows that the matrix $(R_1+R_1^*)/2$ is positively
definite. Therefore, using~\eqref{2.10} and~\eqref{2.11'}, we obtain
\begin{multline}\label{2.12}
{\rm Re\,}b_{\mathcal R}[u,u]=
\int\limits_{\theta_1}\Big\{\sum\limits_{i}\Big(\dfrac{(R_1+R_1^*)}{2}(\mathcal U
u_{y_i}),\mathcal U u_{y_i}\Big)_{\mathbb C^2} +\Big(\dfrac{(R_1+R_1^*)}{2}\mathcal U
u,\mathcal U
u\Big)_{\mathbb C^2}\Big\}dy\\
\ge k_2\int\limits_{\theta_1}\Big\{\sum\limits_{i}(\mathcal U u_{y_i},\mathcal U
u_{y_i})_{\mathbb C^2}+(\mathcal U u,\mathcal U u)_{\mathbb C^2}\Big\}dy=
k_2\|u\|^2_{\mathaccent23W^1(\theta)},
\end{multline}
where $k_2>0$ does not depend on $u$.

Inequalities~\eqref{2.11} and~\eqref{2.12} imply that $b_{\mathcal R}$ is a closed
sectorial form on $L_2(\theta)$, with the domain $\Dom(b_{\mathcal
R})=\mathaccent23W^1(\theta)$ and vertex $k_2>0$ (see~\cite[Chap.~6]{Kato}). It follows
from the first representation theorem (see~\cite[Chap.~6, Sec.~2]{Kato}) that the
$m$-sectorial operator $\mathcal A_{\mathcal R}$ associated with the form $b_{\mathcal
R}$ has a bounded inverse $\mathcal A_{\mathcal
R}^{-1}:L_2(\theta)\to\mathaccent23W^1(\theta)$.

Thus, we have proved that Eq.~\eqref{2.7} has a unique solution $w=\mathcal
R_\theta\mathcal A_{\mathcal R}^{-1}f_0\in \Dom(\mathcal A_\alpha)$ for any $f_0\in
L_2(\theta)$.

\smallskip

2. We now prove that, if $w\in \Dom(\mathcal A_\alpha)$ is a solution of Eq.~\eqref{2.7},
then $w\in H_1^2(\theta\cap B_R(0))$ for any $R>0$.

2a. Denote $\theta^{sj}=\theta\cap\{2^{s-j}<|y|<2^{s+j}\}$,
$\gamma_\rho^{sj}=\gamma_\rho\cap\{2^{s-j}<|y|<2^{s+j}\}$, where $s=0,\pm1,\pm2,\dots$;
$\rho,j=1,2,3$; $\gamma_3=\{y\in\mathbb R^2:\ \varphi=d/2,\ 0<r\}$.

We prove that $w\in W^2(\theta^{s3})$ for any $s$. By theorem on interior smoothness
(see, e.g., Theorem~3.2 in~\cite[Chap.~2, Sec.~3]{LM}), we have $w|_{\gamma_3^{s3}}\in
W^{3/2}(\gamma_3^{s3})$. Since
$$
w(\varphi,r)|_{\gamma_1}=\alpha_1 w(\varphi+d/2,r)|_{\gamma_1},\qquad
w(\varphi,r)|_{\gamma_2}=\alpha_2 w(\varphi-d/2,r)|_{\gamma_2},
$$
it follows that
$$
w|_{\gamma_1^{s3}}\in W^{3/2}(\gamma_1^{s3}),\qquad w|_{\gamma_2^{s3}}\in
W^{3/2}(\gamma_2^{s3}).
$$
Therefore, using a theorem on smoothness of solutions of local boundary-value problems in
bounded domains (see e.g., Theorem~8.2 in~\cite[Chap.~2, Sec.~8]{LM}), we obtain $w\in
W^2(\theta^{s2})$. Since $\theta^{s3}=\theta^{s-1,2}\cup\theta^{s+1,2}$, it follows that
$w\in W^2(\theta^{s3})$ for any $s$.

2b. Let us prove that
\begin{equation}\label{2.13}
\|w\|_{W^2(\theta^{01})}\le k_3(\|-\Delta
w\|_{L_2(\theta^{03})}+\|w\|_{W^1(\theta^{03})}),
\end{equation}
where $k_3,k_4,{\dots}>0$ do not depend on $w$.

To do this, we denote $\theta_3^{02}=\{y\in\theta^{02}:\ d/4<\varphi<3d/4\}$ and consider
a function $\xi_0\in C_0^\infty(\theta^{03})$ such that $\xi_0(y)=1$ for
$y\in\theta_3^{02}$.

Using the a priori estimate for solutions of local elliptic problems (see, e.g.,
Theorem~8.2 in~\cite[Chap.~2, Sec.~8]{LM} and Theorem~9.1 in~\cite[Chap.~2, Sec.~9]{LM})
and Leibniz' formula, we have
\begin{multline}\label{2.14}
\|w|_{\gamma_3^{02}}\|_{W^{3/2}(\gamma_3^{02})}\le
\|w\|_{W^{2}(\theta_3^{02})}\le\|\xi_0w\|_{W^{2}(\theta^{03})}\\
\le k_4\|-\Delta(\xi_0w)\|_{L_2(\theta^{03})}\le k_5(\|-\Delta
w\|_{L_2(\theta^{03})}+\|w\|_{W^1(\theta^{03})}).
\end{multline}
Introduce a function $\xi_1(r)\in C_0^\infty(0,+\infty)$ such that $\xi_1(r)=1$ for
$2^{-1}\le r\le 2$ and $\supp\xi_1\subset(2^{-2},2^2)$. Applying Theorem~8.2
in~\cite[Chap.~2, Sec.~8]{LM} and Theorem~9.1 in~\cite[Chap.~2, Sec.~9]{LM} again and
using~\eqref{2.14}, we obtain
\begin{multline*}
\|w\|_{W^{2}(\theta^{01})}\le\|\xi_1w\|_{W^{2}(\theta^{02})}\le k_6\Big(\|-\Delta(\xi_1
w)\|_{L_2(\theta^{02})}+\sum\limits_{\rho=1,2}\|(\xi_1
w)|_{\gamma_\rho^{02}}\|_{W^{3/2}(\gamma_\rho^{02})}\Big)\\ \le k_7(\|-\Delta
w\|_{L_2(\theta^{03})}+\|w\|_{W^1(\theta^{03})}).
\end{multline*}
Thus, inequality~\eqref{2.13} is proved.

2c. Now we prove that $w\in H_1^2(\theta\cap B_R)$ for any $R>0$. It follows from part 2a
of the proof that $w\in H_1^2(\theta^{sj})$ (we set $\rho(y)=|y|$ in the definition of
the space $H_1^2(\theta^{sj})$). Set $y'=2^{-s}y$. Clearly, $y'\in\theta^{0j}$ for
$y\in\theta^{sj}$. Therefore, using the fact that $2^{s-1}<r<2^{s+1}$ for
$y\in\theta^{s1}$, letting $w^s(y')=w(2^sy')$ and applying inequality~\eqref{2.13}, we
obtain
\begin{multline}\label{2.16}
\|w\|_{H_1^2(\theta^{s1})}\le
k_8\sum\limits_{|\alpha|\le2}2^{2s(1-2+|\alpha|)}\int\limits_{\theta^{s1}}|D^\alpha_y
w(y)|^2dy=k_8\sum\limits_{|\alpha|\le2}\,\int\limits_{\theta^{01}}|D^\alpha_{y'}
w^s(y')|^2dy'\\
\le k_9\Big(\|-\Delta_{y'}w^s(y')\|^2_{L_2(\theta^{03})}+
\sum\limits_{|\alpha|\le1}\|D_{y'}^\alpha w^s(y')\|^2_{L_2(\theta^{03})}
\Big)\\
=k_{9}\Big(2^{2s}\|-\Delta_y
w(y)\|_{L_2(\theta^{s3})}^2+\sum\limits_{|\alpha|\le1}2^{2s(0-1+|\alpha|)}\|D_y^\alpha
w(y)\|^2_{L_2(\theta^{s3})}\Big),
\end{multline}
where $k_7,k_8,\dots{>0}$ do not depend on $w$ and $s$. It follows from~\eqref{2.16} that
\begin{equation}\label{2.17}
\|w\|_{H_1^2(\theta^{s1})}\le k_{10}(\|-\Delta
w+w\|^2_{L_2(\theta^{s3})}+\|w\|_{H_0^1(\theta^{s3})})
\end{equation}
for $s\le[\log_2 R]$.

Now we claim that
\begin{equation}\label{2.17'}
w\in H_0^1(\theta\cap B_{8R}).
\end{equation}
Indeed,
\begin{equation}\label{2.17''}
w\in W^1_\alpha(\theta\cap B_{8R})
\end{equation}
by assumption. Therefore, $\mathcal R_\theta^{-1}w\in \mathaccent23W^1(\theta\cap
B_{8R})$ because the operator $ \mathcal R_\theta:\mathaccent23W^1(\theta\cap B_{8R})\to
W_\alpha^1(\theta\cap B_{8R})$ is an isomorphism. By Lemma~4.8 in~\cite{Kondr},
$\mathaccent23W^1(\theta\cap B_{8R})\subset H_0^1(\theta\cap B_{8R})$, which implies that
$\mathcal R_\theta^{-1}w\in H_0^1(\theta\cap B_{8R})$. Therefore, using~\eqref{2.Rtheta},
we have
$$
w\in H_0^1(\theta_1\cap B_{8R}),\qquad w\in H_0^1(\theta_2\cap B_{8R}).
$$
Combining these relations with~\eqref{2.17''} yields~\eqref{2.17'}.

Summing inequalities~\eqref{2.17} with respect to $s\le[\log_2 R]$ and taking into
account relation~\eqref{2.17'}, we obtain
$$
\|w\|_{H_1^2(\theta\cap B_R)}\le k_{11}(\|-\Delta w+w\|^2_{L_2(\theta\cap
B_{8R})}+\|w\|_{H_0^1(\theta\cap B_{8R})}).
$$
Thus, we have proved that $w\in H_1^2(\theta\cap B_R)$.

\smallskip

3. We finally prove that Eq.~\eqref{2.8} has a unique solution $V\in E_a^2(\theta)$ for
any $f\in\mathcal E_a^0(\theta,\gamma)$, where $0\le a\le 2$.

3a. Let $w\in \Dom(\mathcal A_\alpha)$ be a solution of Eq.~\eqref{2.7} with right-hand
side $f_0\in C_0^\infty(\overline{\theta}\setminus\{0\})$. It is easy to check that the
strip $-1\le\Im\lambda\le1$ contains no eigenvalues of the operator-valued function
$\hat{\mathcal L}(\lambda)$ for $0<d<2\arctan\sqrt{4(\alpha_1+\alpha_2)^{-2}-1}$
(see~\cite[Sec.~9.1]{GurRJMP04}). On the other hand, $w\in H_1^2(\theta\cap B_1)$ by the
above, and the inequalities $-1\le a+1-2\le1$ hold. Therefore, by Lemma~3.2
in~\cite{SkMs86} concerning the asymptotic behavior of solutions of nonlocal elliptic
problems in plane angles, we have $w\in H_a^2(\theta\cap B_1)$.

3b. Now let us prove that the equation
\begin{equation}\label{2.18}
\mathcal L(\omega)w=(F_0,0,0)
\end{equation}
has a solution $w\in E_a^2(\theta)$ for any $F_0\in E_a^0(\theta)$.

Repeating the arguments from the proof of inequality~(2.4) in~\cite[Chap.~6, Sec.~2]{NP},
one can see that a solution $w\in \Dom(\mathcal A_\alpha)$ of Eq.~\eqref{2.7} with
right-hand side $f_0\in C_0^\infty(\overline{\theta}\setminus\{0\})$ belongs to
$E_a^2(\theta\setminus B_{1/2})$. Combining this fact with part~3a of our proof yields
$w\in E_a^2(\theta)$. Since the line $\Im\lambda=a+1-2$ contains no eigenvalues of the
operator-valued function $\hat{\mathcal L}(\lambda)$, it follows from Theorem~\ref{2.1}
that the set of functions $F_0\in E_a^0(\theta)$ for which Eq.~\eqref{2.18} has a
solution is closed in $E_a^0(\theta)$. On the other hand, the set
$C_0^\infty(\overline{\theta}\setminus\{0\})$ is dense in $E_a^0(\theta)$. Therefore,
Eq.~\eqref{2.18} has a solution $w\in E_a^2(\theta)$ for any $F_0\in E_a^0(\theta)$.

3c. Let us show that $\mathcal R(\mathcal L(\omega))=\mathcal E_a^0(\theta,\gamma)$. Take
functions $U_\rho\in E_a^2(\theta)$ such that $U_\rho|_{\gamma_\rho}=f_\rho$, $\rho=1,2$.
Consider cut-off functions $\eta_\rho(\varphi)\in C^\infty[0,d]$ such that
$\eta_1(\varphi)=1$ for $0\le\varphi\le d/4$, $\eta_1(\varphi)=0$ for $d/3\le\varphi\le
d$ and $\eta_2(\varphi)=1$ for $3d/4\le\varphi\le d$, $\eta_2(\varphi)=0$ for
$0\le\varphi\le 2d/3$. Then Eq.~\eqref{2.8} is equivalent to Eq.~\eqref{2.18}, where
$F_0=\Delta U-U+f_0$, $U=\eta_1U_1+\eta_2U_2\in E_a^2(\theta)$, and $w=V-U$. It is proved
in part 3b that Eq.~\eqref{2.18} has a solution $w\in E_a^2(\theta)$ for any $F_0\in
E_a^0(\theta)$. Therefore, Eq.~\eqref{2.8} has a solution $V=w+U\in E_a^2(\theta)$ for
any $f\in\mathcal E_a^0(\theta,\gamma)$.

3d. It remains to prove that $\mathcal N(\mathcal L(\omega))=\{0\}$. Let $w\in
E_a^2(\theta)$ be a solution of Eq.~\eqref{2.18} with $F_0=0$. Using the same arguments
as in part~3a of this proof, we have $w\in H_1^2(\theta\cap B_1)\subset W^1(\theta\cap
B_1)$. On the other hand, $E_a^2(\theta\setminus \overline{B_{1/2}})\subset
W^1(\theta\setminus \overline{B_{1/2}})$ because $a\ge0$. Therefore, $w\in W^1(\theta)$,
and $w=0$ by part~1 of this proof.
\end{example}

Note that Example~\ref{ex2.1} was earlier studied by another method
in~\cite[Sec.~10]{GurGiess}. The approach proposed in~\cite{GurGiess} is based on the
Green formulas for nonlocal elliptic problems (see~\cite{GurGiess}) and on the
interrelation between nonlocal elliptic problems and boundary-value problems for
functional differential equations (see~\cite{SkBook}). Other examples of nonlocal
elliptic problems generalizing problem~\eqref{2.5}, \eqref{2.6} and being uniquely
solvable in dihedral angles are constructed in~\cite{Skryabin}.

\subsection{}\label{subsec2.5}
Given a point $g\in\mathcal K_2$, we consider the linear bounded operator $$\mathcal
L_g(\omega): E_a^{l+2m}(\mathbb R_+^2)\to\mathcal E_a^l(\mathbb R_+^2,\gamma)$$ given by
\begin{equation}\label{2.19}
\mathcal L_g(\omega) V=\big(A(D_{y},\omega)V(y),\ B_{i\mu0}(D_y,\omega)V(y)|_{\mathbb
R_-},\ B_{i\mu0}(D_y,\omega)V(y)|_{\mathbb R_+}\big),
\end{equation}
where
$$
\mathcal E_a^l(\mathbb R_+^2,\gamma)=E_a^l(\mathbb R_+^2)\times
E_a^{l+2m-m_{i\mu}-1/2}(\mathbb R_-)\times E_a^{l+2m-m_{i\mu}-1/2}(\mathbb R_+),
$$
$\mathbb R_+^2=\{y\in\mathbb R^2:\ |\varphi|<\pi/2\}$,  $\mathbb R_\pm=\{y\in\mathbb
R^2:\ \varphi=\pm\pi/2\}$, $\omega\in S^{n-3}$, cf.~\eqref{1.12}.

We write the operators $A(D_{y},0)$ and $B_{i\mu0}(D_{y},0)$ in the polar coordinates:
\begin{equation*}
A(D_{y},0)=r^{-2m}\hat A(\varphi,D_\varphi,rD_r),\qquad
B_{i\mu0}(D_{y},0)=r^{-m_{i\mu}}\hat B_{i\mu0}(\varphi,D_\varphi,rD_r).
\end{equation*}

Consider the analytic operator-valued function $$\hat{\mathcal L}_g(\lambda):
W^{l+2m}(-\pi/2,\pi/2)\to\mathcal W^l[-\pi/2,\pi/2]$$ given by
\begin{equation}\label{2.22}
\hat{\mathcal L}_g(\lambda) w=\big(\hat A(\varphi,D_\varphi,\lambda)w,\ \hat
B_{i\mu0}(\varphi,D_\varphi,\lambda)w|_{\varphi=-\pi/2},\ \hat
B_{i\mu0}(\varphi,D_\varphi,\lambda)w|_{\varphi=\pi/2}\big),
\end{equation}
where $\mathcal W^l[-\pi/2,\pi/2]=W^l(-\pi/2,\pi/2)\times\mathbb C\times\mathbb C$,
cf.~\eqref{2.4}.

It follows from~\cite{AV,GS} that there exists a finite-meromorphic operator-valued
function $\hat{\mathcal R}_g(\lambda): \mathcal W^l[-\pi/2,\pi/2]\to \mathcal
W^{l+2m}(-\pi/2,\pi/2)$ such that $\hat{\mathcal L}_g^{-1}(\lambda)=\hat{\mathcal
R}_g(\lambda)$ for any $\lambda$ which is not a pole of $\hat{\mathcal R}_g(\lambda)$.
Moreover, if $\lambda_0=\mu_0+i\nu_0$ is a pole of $\hat{\mathcal R}_g(\lambda)$, then
$\lambda_0$ is an eigenvalue of $\hat{\mathcal L}_g(\lambda)$, and there exists a
$\delta>0$ such that the set $\{\lambda\in\mathbb C:\ 0<|\Im\lambda-\nu_0|<\delta\}$
contains no eigenvalues of $\hat{\mathcal L}_g(\lambda)$.

The following theorem establishes a connection between the operators $\mathcal L_g$ and
$\mathcal L_g(\omega)$ (see Theorem~2.1 in~\cite[Chap.~6, Sec.~2]{NP}).
\begin{theorem}\label{t2.R^n_+Isomorphism}
Let Conditions~$\ref{cond1.1}$ and~$\ref{cond1.2}$ hold. Assume that the line
$\Im\lambda=a+1-l-2m$ contains no eigenvalues of $\hat{\mathcal L}_g(\lambda)$ and
$\dim\mathcal N(\mathcal L_g(\omega))=\codim\mathcal R(\mathcal L_g(\omega))=0$ for any
$\omega\in S^{n-3}$. Then the operator $\mathcal L_g: \mathcal H_a^{l+2m}(\mathbb
R^n_+)\to\mathcal H_a^l(\mathbb R^n_+,\Gamma)$ is an isomorphism.
\end{theorem}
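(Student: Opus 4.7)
The strategy is to reduce the half-space problem to the parameter-dependent plane problem via Fourier transform in $z$, paralleling the approach behind Theorem~\ref{t2.2}; the reduction is in fact simpler here because there is a single half-space rather than a coupled family of angles, so no nonlocal matching between components occurs in the reduced problem.

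First I would apply the Fourier transform $\mathcal F_{z\to\zeta}$ in the tangential variable $z\in\mathbb R^{n-2}$. Since the principal symbols $A(D_{y'},D_{z'})$ and $B_{i\mu 0}(D_{y'},D_{z'})$ have constant coefficients, $\mathcal L_g$ is conjugate via $\mathcal F_{z\to\zeta}$ to an operator-valued multiplier $\tilde{\mathcal L}_g(\zeta)$ acting on functions of $y\in\mathbb R_+^2$. Using homogeneity of the principal parts, the substitution $y=|\zeta|^{-1}y'$ combined with multiplication by the appropriate powers of $|\zeta|$ brings $\tilde{\mathcal L}_g(\zeta)$ into the form $|\zeta|^{2m}\mathcal L_g(\omega)$ (with analogous $|\zeta|^{m_{i\mu}}$ factors on each boundary component), where $\omega=\zeta/|\zeta|\in S^{n-3}$. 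This is the precise role of the nonhomogeneous weight $r^{2a}(r^{2(|\alpha|-k)}+1)$ defining $E_a^k$: after the scaling, the summand $1$ in the weight absorbs the $|\zeta|^{2(k-|\alpha|)}$ factor and yields a clean norm equivalence
\[
\|u\|_{H_a^k(\mathbb R^n_+)}^2\;\asymp\;\int_{\mathbb R^{n-2}}\big\|(\mathcal F_{z\to\zeta}u)(\cdot,\zeta)\big\|_{E_a^k(\mathbb R^2_+)}^2\,d\zeta,
\]
together with analogous identities for the trace spaces on $\mathbb R_\pm^{n-1}$.

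Next, the hypothesis on the absence of eigenvalues of $\hat{\mathcal L}_g(\lambda)$ on the line $\Im\lambda=a+1-l-2m$, together with the triviality of $\mathcal N(\mathcal L_g(\omega))$ and $\mathcal R(\mathcal L_g(\omega))^\perp$ for every $\omega\in S^{n-3}$, gives that $\mathcal L_g(\omega)$ is an isomorphism for each $\omega$. Continuity of the inverse in $\omega$ together with compactness of $S^{n-3}$ yields a uniform bound $\sup_\omega\|\mathcal L_g(\omega)^{-1}\|\le C$. Combining this with the norm equivalence above, the operator $\mathcal L_g^{-1}=\mathcal F^{-1}_{\zeta\to z}\tilde{\mathcal L}_g(\zeta)^{-1}\mathcal F_{z\to\zeta}$ is bounded from $\mathcal H_a^l(\mathbb R_+^n,\Gamma)$ into $H_a^{l+2m}(\mathbb R_+^n)$, which proves the theorem.

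The main obstacle is justifying the norm equivalence rigorously: one must verify that the combination of Parseval's identity, the $|\zeta|$-dependent dilation $y\mapsto|\zeta|^{-1}y$, and the specific form of the nonhomogeneous weight in $E_a^k$ faithfully reproduces every term in the $H_a^k(\mathbb R^n_+)$ norm, and that the same is true for the fractional-order trace norms on $\mathbb R_\pm^{n-1}$ (for which the infimum definition of the trace norm must be tracked carefully through the scaling). Once this is in place, the remainder of the argument is a routine application of the spectral hypotheses and of uniform boundedness over the compact parameter set $S^{n-3}$.
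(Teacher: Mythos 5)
Your argument is correct and coincides with the approach the paper relies on: Theorem~\ref{t2.R^n_+Isomorphism} is quoted without proof from Theorem~2.1 in~\cite[Chap.~6, Sec.~2]{NP}, whose proof is exactly the Fourier-transform-in-$z$ plus dilation reduction to the family $\mathcal L_g(\omega)$, $\omega\in S^{n-3}$, that you describe (Fredholmness from the eigenvalue condition, isomorphism from the trivial kernel and cokernel, uniformity over the compact sphere), and the identical computation is carried out in this paper for the parameter-dependent analogue in Lemmas~\ref{lLpEReductHomog} and~\ref{lLpIsomE} and Theorem~\ref{tLpIsomH}. The only correction is that your displayed norm equivalence should be stated for the rescaled function $U(Y,\zeta)=|\zeta|^{2m}\tilde u(Y/|\zeta|,\zeta)$ and must carry the weight $|\zeta|^{-2(a-l+1)}$ under the $d\zeta$-integral, cf.~\eqref{eqLpIsomE8} and~\eqref{eqLpIsomE7}; this is precisely the normalization your prose about ``multiplication by the appropriate powers of $|\zeta|$'' alludes to, and the same weight appears on both the solution and the data side, so the uniform bound on $\|\mathcal L_g(\omega)^{-1}\|$ integrates to the desired estimate.
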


\section{Local Elliptic Problems in $\mathbb R^n\setminus\mathcal P$}\label{sec3}

\subsection{}\label{subsec3.1}
In Secs.~\ref{subsec3.1} and~\ref{subsec3.2}, we recall some known results on the
solvability of elliptic problems in $\mathbb R^2\setminus\{0\}$. These results are
adopted from~\cite{SkMs86,GurPetr}; they will be applied to the investigation of local
elliptic problems in $\mathbb R^n\setminus\mathcal P$, $n\ge3$, see
Secs.~\ref{subsec3.3}--\ref{subsec3.5}.

Let $A$ be a properly elliptic homogeneous operator with constant complex coefficients,
given by
$$
A=A(D_y)=\sum\limits_{|\alpha|=2m}a_\alpha D_y^\alpha.
$$
The operator $A: H_a^{l+2m}(\mathbb R^2)\to H_a^l(\mathbb R^2)$ is bounded for any fixed
integer $l\ge0$. We consider the equation
\begin{equation}\label{3.1}
Av=f_0(y),\qquad y\in\mathbb R^2\setminus\{0\},
\end{equation}
where $f_0\in H_a^l(\mathbb R^2)$.

Write the operator $A(D_y)$ in the polar coordinates,
$$
A(D_y)=r^{-2m}\hat A(\varphi,D_\varphi,rD_r)=r^{-2m}\sum\limits_{\alpha_1+\alpha_2\le 2m}
a_{\alpha_1\alpha_2}(\varphi)D_\varphi^{\alpha_1}(rD_r)^{\alpha_2},
$$
where $a_{\alpha_1\alpha_2}\in C_{2\pi}^\infty[0,2\pi]$, $C_{2\pi}^\infty[0,2\pi]$ is the
set of functions defined on the interval $[0,2\pi]$ such that their $2\pi$-periodic
extensions are infinitely differentiable on $\mathbb R$.

Setting $\tau=\ln r$, we infer from~\eqref{3.1}
\begin{equation}\label{3.2}
\hat A(\varphi,D_\varphi,D_\tau)v=F_0(\varphi,\tau),\qquad 0<\varphi<2\pi,\
-\infty<\tau<\infty,
\end{equation}
\begin{equation}\label{3.3}
D_\varphi^j v|_{\varphi=0}=D_\varphi^j v|_{\varphi=2\pi},\qquad -\infty<\tau<\infty,\
j=0,\dots,l+2m-1,
\end{equation}
where $D_\tau=-i\partial/\partial\tau$, $F_0(\varphi,\tau)=e^{2m\tau}f_0(\varphi,\tau)$,
$D_\varphi^j F_0|_{\varphi=0}=D_\varphi^j F_0|_{\varphi=2\pi}$, $j=1,\dots,l-1$.

By using the Fourier transform with respect to $\tau$, we obtain from
relations~\eqref{3.2} and \eqref{3.3}
\begin{equation}\label{3.4}
\hat A(\varphi,D_\varphi,\lambda)\hat v(\varphi,\lambda)=\hat F_0(\varphi,\lambda),\qquad
0<\varphi<2\pi,
\end{equation}
\begin{equation}\label{3.5}
D_\varphi^j \hat v|_{\varphi=0}=D_\varphi^j \hat v|_{\varphi=2\pi},\qquad
j=0,\dots,l+2m-1.
\end{equation}

Denote by $W_{2\pi}^k(0,2\pi)$ the closure of the set $C_{2\pi}^\infty[0,2\pi]$ in the
space $W^k(0,2\pi)$. Consider the operator-valued function $\hat A(\lambda):
W_{2\pi}^{l+2m}(0,2\pi)\to W_{2\pi}^l(0,2\pi)$ given by
$$
\hat A(\lambda)w=\hat A(\varphi,D_\varphi,\lambda)w(\varphi).
$$
It follows from~\cite[Sec.~1]{SkMs86} that there exists a finite-meromorphic
operator-valued function $\hat{\mathcal R}(\lambda): \mathcal W_{2\pi}^l(0,2\pi)\to
W_{2\pi}^{l+2m}(0,2\pi)$ such that $\hat A^{-1}(\lambda)=\hat{\mathcal R}(\lambda)$ for
any $\lambda$ which is not a pole of $\hat{\mathcal R}(\lambda)$. Moreover, if
$\lambda_0=\mu_0+i\nu_0$ is a pole of $\hat{\mathcal R}(\lambda)$, then $\lambda_0$ is an
eigenvalue of $\hat A(\lambda)$, and there exists a $\delta>0$ such that the set
$\{\lambda\in\mathbb C:\ 0<|\Im\lambda-\nu_0|<\delta\}$ contains no eigenvalues of $\hat
A(\lambda)$.

The following result is proved in~\cite[Sec.~1]{SkMs86}.
\begin{lemma}\label{l3.2}
Assume that the line $\Im\lambda=a+1-l-2m$ contains no eigenvalues of the operator-valued
function $\hat A(\lambda)$. Then Eq.~\eqref{3.1} has a unique solution $v\in
H_a^{l+2m}(\mathbb R^2)$ for any $f_0\in H_a^l(\mathbb R^2)$ and
\begin{equation}\label{3.7}
\|v\|_{H_a^{l+2m}(\mathbb R^2)}\le c\|f_0\|_{H_a^l(\mathbb R^2)},
\end{equation}
where $c>0$ does not depend on $f_0$.
\end{lemma}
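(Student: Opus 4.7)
The plan is to reduce the equation to a family of periodic ordinary differential problems on $(0,2\pi)$ parametrized by $\lambda$, solve each member of the family using the hypothesis on $\hat A(\lambda)$, and then reassemble the solution via the inverse Fourier transform, exploiting Parseval-type equivalences that identify the weighted norms $\|\cdot\|_{H_a^k(\mathbb R^2)}$ with parameter-dependent Sobolev norms on the cross-section.

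First, I would pass to polar coordinates $(r,\varphi)$ and substitute $\tau=\ln r$, so that $rD_r=D_\tau$ and $\mathbb R^2\setminus\{0\}$ becomes the cylinder $\mathbb R\times(0,2\pi)$. Setting $v(\tau,\varphi)=u(r\cos\varphi,r\sin\varphi)$ and $F_0(\tau,\varphi)=e^{2m\tau}f_0$, equation~\eqref{3.1} takes the form~\eqref{3.2} with periodicity conditions~\eqref{3.3}. A direct computation using the chain rule and $dy=e^{2\tau}d\tau\,d\varphi$ yields the norm equivalence
\begin{equation*}
\|u\|_{H_a^{k}(\mathbb R^2)}^2 \asymp \sum_{j+|\beta|\le k}\int_{\mathbb R}\int_0^{2\pi} e^{2(a-k+1)\tau}\,|D_\tau^j D_\varphi^\beta v|^2\,d\varphi\,d\tau.
\end{equation*}
Then I would apply the Fourier transform in $\tau$ along the horizontal line $\Im\lambda=h$ with $h=a+1-l-2m$. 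By Parseval, the above norm is equivalent to $\int \|\hat v(\cdot,\lambda)\|^2_{W^k_\lambda(0,2\pi)}\,d\xi$ along $\Im\lambda=h$, where $\lambda=\xi+ih$ and the parameter-dependent norm is
\begin{equation*}
\|w\|^2_{W^k_\lambda(0,2\pi)}=\sum_{j\le k}(1+|\lambda|)^{2(k-j)}\|w\|^2_{W^j(0,2\pi)}.
\end{equation*}
After Fourier transform, \eqref{3.2}--\eqref{3.3} reduces to the family~\eqref{3.4}--\eqref{3.5} of periodic ODE problems on $(0,2\pi)$.

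For existence, I would first take $f_0\in C_0^\infty(\overline{\mathbb R^2}\setminus\{0\})$ (a dense subset of $H_a^l(\mathbb R^2)$); for such $f_0$, $\hat F_0(\cdot,\lambda)$ is rapidly decaying in $\xi$ and analytic in a strip, so by the hypothesis that the line $\Im\lambda=h$ is free of eigenvalues, one may define $\hat v(\cdot,\lambda)=\hat A(\lambda)^{-1}\hat F_0(\cdot,\lambda)$ pointwise along this line. Inverting the Fourier transform yields $v$, and then $u$ via the change of variables. Uniqueness follows in the same way: if $u\in H_a^{l+2m}(\mathbb R^2)$ satisfies $Au=0$, then $\hat A(\lambda)\hat v(\cdot,\lambda)=0$ for a.e.\ $\lambda$ on the line, whence $\hat v(\cdot,\lambda)\equiv0$ and $u=0$.

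The main obstacle is establishing the uniform bound
\begin{equation*}
\|\hat A(\lambda)^{-1}g\|_{W^{l+2m}_\lambda(0,2\pi)}\le C\|g\|_{W^l_\lambda(0,2\pi)},\qquad \lambda\in\{\Im\lambda=h\},
\end{equation*}
with $C$ independent of $\lambda$, since this is exactly what transfers the pointwise-in-$\lambda$ estimate to the a priori bound~\eqref{3.7} after Parseval. For $|\Re\lambda|$ large the bound follows from ellipticity with a parameter (the Agmon--Douglis--Nirenberg scheme applied to $\hat A(\varphi,D_\varphi,\lambda)$ regarded as a parameter-elliptic operator on $(0,2\pi)$ with periodic boundary conditions, since the principal symbol of $A$ is elliptic). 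For $|\Re\lambda|$ bounded, the absence of eigenvalues on the line together with the finite-meromorphic character of $\hat{\mathcal R}(\lambda)$ recalled in Sec.~\ref{subsec3.1} gives continuity of $\lambda\mapsto\hat A(\lambda)^{-1}$ in operator norm, hence a uniform bound on any compact arc; patching the two regimes yields the uniform estimate. Combining this with Parseval and then density closes the proof of both existence and~\eqref{3.7}.
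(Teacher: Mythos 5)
Your argument is correct and is essentially the proof the paper relies on: Lemma~\ref{l3.2} is quoted from~\cite[Sec.~1]{SkMs86}, and the standard proof there is exactly the reduction you describe — the substitution $\tau=\ln r$ and Fourier transform in $\tau$ already set up in~\eqref{3.2}--\eqref{3.5}, followed by inversion of $\hat A(\lambda)$ along the eigenvalue-free line $\Im\lambda=a+1-l-2m$, with the uniform parameter-dependent estimate obtained from parameter-ellipticity for large $|\Re\lambda|$ and from analyticity of the finite-meromorphic inverse on the remaining compact segment, then Parseval and density. No gaps.
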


\subsection{}\label{subsec3.2}
Now we consider the asymptotic behavior of solutions of elliptic problems in $\mathbb
R^2\setminus\{0\}$. Let $l_1, l_2\ge0$ be integers, and let $a_1,a_2\in\mathbb R$ be such
that
$$
h_2=a_2+1-l_2-2m<a_1+1-l_1-2m=h_1.
$$
By the above properties of the operator-valued function $\hat A(\lambda)$, the strip
$h_2<\Im\lambda<h_1$ contains finitely many eigenvalues $\lambda_j$, $j=1,\dots,J$, of
$\hat A(\lambda)$. Let $q_j$ be the geometrical multiplicity of the eigenvalue
$\lambda_j$. Denote by
\begin{equation}\label{3.JordanChain}
\{\psi_j^{0,q}(\varphi),\dots,\psi_j^{p_{jq}-1,q}(\varphi)\},\qquad q=1,\dots,q_j,
\end{equation}
a canonical system of Jordan chains corresponding to the eigenvalue $\lambda_j$, where
$p_{j1}\ge p_{j2}\ge\dots\ge p_{jq_j}$ are the ranks of the eigenvectors
$\psi_j^{0,1}(\varphi),\dots,\psi_j^{0,q_j}(\varphi)$, respectively,
see~\cite[Sec.~1]{GS}. It is known that the Jordan chain~\eqref{3.JordanChain} satisfies
the equations
\begin{equation}\label{3.8}
\sum\limits_{s=0}^p\frac{1}{s!}\partial_\lambda^s\hat
A(\lambda_j)\psi_j^{p-s,q}(\varphi)=0,\qquad p=0,\dots,p_{jq}-1,
\end{equation}
where $\partial_\lambda^s=\partial^s/\partial \lambda^s$.

\begin{lemma}\label{l3.3}
Let $u\in H_{a_1}^{l_1+2m}(\mathbb R^2)$ be a solution of Eq.~\eqref{3.1}, and let
$f_0\in H_{a_1}^{l_1}(\mathbb R^2)\cap H_{a_2}^{l_2}(\mathbb R^2)$. Suppose that the line
$\Im\lambda=h_2$ contains no eigenvalues of the operator-valued function $\hat
A(\lambda)$. Then
\begin{equation}\label{3.9}
v(y)=\sum\limits_{j=1}^J\sum\limits_{q=1}^{q_j}\sum\limits_{k=0}^{p_{jq}-1}
\alpha_j^{kq}v_j^{kq}(y)+w(y),\qquad y\in\mathbb R^2\setminus\{0\};
\end{equation}
here
\begin{equation}\label{3.10}
v_j^{kq}(y)=r^{i\lambda_j}\sum\limits_{n=0}^k\frac{1}{n!}(i\ln
r)^n\psi_j^{k-n,q}(\varphi),
\end{equation}
$\alpha_j^{kq}=\alpha_j^{kq}(f_0)$ are linear bounded functionals on
$H_{a_1}^{l_1}(\mathbb R^2)\cap H_{a_2}^{l_2}(\mathbb R^2)$, the function $w\in
H_{a_2}^{l_2+2m}(\mathbb R^2)$ satisfies the equation $Aw=f_0$ and the inequality
\begin{equation}\label{3.11}
\|w\|_{H_{a_2}^{l_2+2m}(\mathbb R^2)}\le c\|f_0\|_{H_{a_2}^{l_2}(\mathbb R^2)},
\end{equation}
where $c>0$ does not depend on $f_0$.
\end{lemma}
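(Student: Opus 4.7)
The plan is to pass to the Mellin picture (Fourier transform in $\tau=\ln r$) and shift the contour of integration across the strip $h_2<\Im\lambda<h_1$, picking up residues at the eigenvalues $\lambda_j$ of $\hat A(\lambda)$. The function $v$ itself corresponds to the contour $\Im\lambda=h_1$, while Lemma~\ref{l3.2} applied to the data $f_0\in H_{a_2}^{l_2}(\mathbb R^2)$ supplies a second solution $w\in H_{a_2}^{l_2+2m}(\mathbb R^2)$ with $Aw=f_0$ satisfying inequality~\eqref{3.11}, which corresponds to the contour $\Im\lambda=h_2$. The difference $v-w$ is a solution of the homogeneous equation whose Mellin representation reduces to a sum of residues.

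The key steps, in order, are as follows. First, writing $\tau=\ln r$ and applying the Fourier transform in $\tau$, I would show that $\hat F_0(\varphi,\lambda)$, viewed as a function of $\lambda$, is holomorphic in the open strip $h_2<\Im\lambda<h_1$ with traces $\hat F_0(\cdot,\lambda)|_{\Im\lambda=h_j}$ of class $W_{2\pi}^{l_j}(0,2\pi)$ controlled, via a Paley--Wiener-type argument, by $\|f_0\|_{H_{a_j}^{l_j}(\mathbb R^2)}$, $j=1,2$. Second, by~\eqref{3.4}, the Mellin transforms $\hat v$ and $\hat w$ both coincide with $\hat{\mathcal R}(\lambda)\hat F_0(\varphi,\lambda)$ on the respective lines $\Im\lambda=h_1$ and $\Im\lambda=h_2$; the function $\hat{\mathcal R}(\lambda)\hat F_0(\varphi,\lambda)$ is meromorphic in the strip with poles precisely at the eigenvalues $\lambda_1,\dots,\lambda_J$ of $\hat A(\lambda)$, each of finite multiplicity. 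Third, for any fixed $y$, I would apply the residue theorem to the integral
$$
v(\varphi,\tau)-w(\varphi,\tau)=\frac{1}{2\pi}\Big(\int_{\Im\lambda=h_1}-\int_{\Im\lambda=h_2}\Big) e^{i\lambda\tau}\hat{\mathcal R}(\lambda)\hat F_0(\varphi,\lambda)\,d\lambda=\sum_{j=1}^J\operatorname*{Res}_{\lambda=\lambda_j}e^{i\lambda\tau}\hat{\mathcal R}(\lambda)\hat F_0(\varphi,\lambda).
$$

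Fourth, I would compute each residue in terms of the canonical system of Jordan chains~\eqref{3.JordanChain}. Using the Gohberg--Sigal structure theorem, the principal part of $\hat{\mathcal R}(\lambda)$ at $\lambda_j$ can be written explicitly in terms of the eigenvectors $\psi_j^{k,q}$ and the associated vectors of the adjoint pencil of $\hat A(\lambda)$; pairing with $\hat F_0(\cdot,\lambda_j)$ and its $\lambda$-derivatives produces scalar coefficients, which I denote $\alpha_j^{kq}$. Differentiating $r^{i\lambda}=e^{i\lambda\tau}$ with respect to $\lambda$ gives the factors $\tfrac{1}{n!}(i\ln r)^n$, so the residue assembles exactly into the linear combination $\sum_{j,k,q}\alpha_j^{kq}v_j^{kq}(y)$ with $v_j^{kq}$ as in~\eqref{3.10}; the defining relation~\eqref{3.8} is precisely the identity ensuring $Av_j^{kq}=0$, which is consistent with $A(v-w)=0$. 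The continuity estimate for each $\alpha_j^{kq}$ as a functional on $H_{a_1}^{l_1}(\mathbb R^2)\cap H_{a_2}^{l_2}(\mathbb R^2)$ then follows from the Paley--Wiener-type bounds established in the first step, since $\alpha_j^{kq}$ is a finite sum of evaluations of $\hat F_0$ and its derivatives at $\lambda_j$, each bounded by the norm of $f_0$ in the intersection space.

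The main obstacle will be the rigorous justification of the contour shift: one must control $\hat{\mathcal R}(\lambda)\hat F_0(\varphi,\lambda)$ as $|\Re\lambda|\to\infty$ uniformly in the strip. This reduces to standard Agmon-type estimates for the parameter-dependent elliptic pencil $\hat A(\lambda)$ on the intervals $[0,2\pi]$, guaranteeing that outside a compact set in $\lambda$ the resolvent $\hat{\mathcal R}(\lambda)$ is bounded by a polynomial in $|\lambda|^{-1}$, while the Paley--Wiener bound on $\hat F_0$ ensures rapid decay after integration. Once that tail estimate is in hand, the contour shift, the pole enumeration, and the residue computation are routine, and the decomposition~\eqref{3.9} together with~\eqref{3.11} and the boundedness of the $\alpha_j^{kq}$ follows immediately.
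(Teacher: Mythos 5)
Your proposal is correct and follows essentially the same route as the proof the paper relies on (the paper does not prove the lemma itself but refers to \cite{SkMs86}, Theorem~1.4 of \cite{NP}, and \cite{GurPetr}): Mellin transform in $\tau=\ln r$, construction of $w$ on the line $\Im\lambda=h_2$ via Lemma~\ref{l3.2}, a contour shift across the strip justified by parameter-elliptic estimates for the pencil, and the Gohberg--Sigal residue calculus at the eigenvalues producing the coefficients $\alpha_j^{kq}$ and the singular functions $v_j^{kq}$. The one point to watch is that the hypotheses do not exclude eigenvalues on the upper line $\Im\lambda=h_1$ (and the paper applies the lemma in exactly that situation in Example~\ref{ex3.2}), so the initial move of the contour off that line must use that $\hat v$ is the $L_2$ trace of the given solution rather than the a priori meromorphic expression $\hat{\mathcal R}(\lambda)\hat F_0$.
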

This lemma was obtained in~\cite[Sec.~3]{SkMs86} in a slightly different form. Its proof
is similar to that of Theorem~1.4 in~\cite[Chap.~3, Sec.~1]{NP}; see
also~\cite[Sec.~5]{GurPetr}, where the coefficients $\alpha_j^{kq}$ are explicitly
calculated.
\begin{remark}
It is easy to see that $\psi_j^{s,q}\in C_{2\pi}^\infty[0,2\pi]$, $j=1,\dots,J$;
$q=1,\dots,q_j$; $s=0,\dots,p_{jq}-1$.
\end{remark}
\begin{remark}\label{r3.2}
Lemma~\ref{l3.3} is also valid for $h_2\ge h_1$.
\end{remark}

Using the same arguments as in~\cite[Chap.~3, Sec.~1]{NP}, one can obtain the following
corollaries from Lemma~\ref{l3.3} (see also~\cite[Sec.~5]{GurPetr}).

\begin{corollary}\label{c3.1}
Let the conditions of Lemma~$\ref{l3.3}$ hold, and let the strip $h_2<\Im\lambda<h_1$
contain no eigenvalues of $\hat A(\lambda)$. Then $v\in H_{a_2}^{l_2+2m}(\mathbb R^2)$.
\end{corollary}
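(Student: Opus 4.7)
The plan is to deduce the corollary directly from Lemma~\ref{l3.3} by observing that, under the extra assumption, the asymptotic sum in that lemma is empty.

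First I would apply Lemma~\ref{l3.3} to the given solution $v\in H_{a_1}^{l_1+2m}(\mathbb R^2)$ of Eq.~\eqref{3.1} with right-hand side $f_0\in H_{a_1}^{l_1}(\mathbb R^2)\cap H_{a_2}^{l_2}(\mathbb R^2)$. The hypothesis that the line $\Im\lambda=h_2$ contains no eigenvalues of $\hat A(\lambda)$ is still assumed (since we require the conditions of Lemma~\ref{l3.3} to hold), so the decomposition
\[
v(y)=\sum_{j=1}^J\sum_{q=1}^{q_j}\sum_{k=0}^{p_{jq}-1}\alpha_j^{kq}v_j^{kq}(y)+w(y),\qquad w\in H_{a_2}^{l_2+2m}(\mathbb R^2),
\]
is available, where the outer sum runs over the eigenvalues $\lambda_j$ of $\hat A(\lambda)$ lying in the open strip $h_2<\Im\lambda<h_1$.

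Next, I would invoke the additional assumption of the corollary, namely that the strip $h_2<\Im\lambda<h_1$ contains no eigenvalues of $\hat A(\lambda)$. This says precisely that $J=0$, so the entire sum of singular terms $v_j^{kq}$ disappears and the expansion reduces to $v(y)=w(y)$. Consequently $v\in H_{a_2}^{l_2+2m}(\mathbb R^2)$, which is the desired conclusion; the estimate~\eqref{3.11} even upgrades to a bound $\|v\|_{H_{a_2}^{l_2+2m}(\mathbb R^2)}\le c\|f_0\|_{H_{a_2}^{l_2}(\mathbb R^2)}$ as a bonus.

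There is essentially no obstacle here since Lemma~\ref{l3.3} has already done all the work of controlling the asymptotic behavior near the origin and at infinity; the only point worth a sentence of justification is that the index set of eigenvalues in the strip is indeed empty (including the degenerate case $h_2\ge h_1$ covered by Remark~\ref{r3.2}, where the open strip is vacuously empty). So the corollary amounts to reading off the trivial specialization of Lemma~\ref{l3.3} in the absence of intermediate eigenvalues.
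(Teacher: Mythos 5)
Your argument is exactly the intended one: the paper derives Corollary~\ref{c3.1} directly from Lemma~\ref{l3.3} (citing the analogous reasoning in the reference to Nazarov--Plamenevskii), and with no eigenvalues in the strip the index set $J$ is empty, so $v=w\in H_{a_2}^{l_2+2m}(\mathbb R^2)$. The proposal is correct and matches the paper's approach.
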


\begin{corollary}\label{c3.2}
Let the line $\Im\lambda=a+1-l-2m$ contain an eigenvalue of $\hat A(\lambda)$. Then the
kernel of the operator $A: H_a^{l+2m}(\mathbb R^2)\to H_a^l(\mathbb R^2)$ is trivial,
while the image of $A$ is not closed in $H_a^l(\mathbb R^2)$.
\end{corollary}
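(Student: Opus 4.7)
I would handle the two assertions of the corollary separately: the triviality of the kernel by shifting to a nearby weight free of eigenvalues (using Corollary~\ref{c3.1} followed by Lemma~\ref{l3.2}), and the non-closedness of the range by producing an explicit near-eigenfunction sequence that rules out the a priori estimate which closed range would force.

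\emph{Kernel triviality.} Since $\hat A(\lambda)$ is finite-meromorphic, its eigenvalues form a discrete set. I would pick $a_2<a$ so close to $a$ that both the line $\Im\lambda=h_2:=a_2+1-l-2m$ and the open strip $h_2<\Im\lambda<h_1:=a+1-l-2m$ contain no eigenvalues of $\hat A(\lambda)$. Given $v\in H_a^{l+2m}(\mathbb R^2)$ with $Av=0$, I would apply Corollary~\ref{c3.1} with $a_1=a$, $l_1=l_2=l$ and $f_0=0$: since the open strip is free of eigenvalues, the asymptotic sum in Lemma~\ref{l3.3} is empty and $v\in H_{a_2}^{l+2m}(\mathbb R^2)$. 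Then Lemma~\ref{l3.2} at weight $a_2$ gives $v=0$ by uniqueness.

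\emph{Non-closedness of the range.} Assume, to reach a contradiction, that $\mathcal R(A)$ is closed. Combined with the trivial kernel, the open mapping theorem yields a constant $C>0$ with
\begin{equation*}
\|v\|_{H_a^{l+2m}(\mathbb R^2)}\le C\,\|Av\|_{H_a^l(\mathbb R^2)}\qquad\text{for all } v\in H_a^{l+2m}(\mathbb R^2).
\end{equation*}
Let $\lambda_0=\mu_0+ih_1$ ($\mu_0\in\mathbb R$) be an eigenvalue of $\hat A(\lambda)$ on the critical line and $\psi_0\in C_{2\pi}^\infty[0,2\pi]$ a corresponding eigenvector. Fix $\chi\in C_0^\infty(\mathbb R)$ with $\chi\equiv1$ on $[-1,1]$ and $\supp\chi\subset[-2,2]$, and set $v_n(y)=\chi(n^{-1}\ln r)\,r^{i\lambda_0}\,\psi_0(\varphi)$, which lies in $C_0^\infty(\mathbb R^2\setminus\{0\})\subset H_a^{l+2m}(\mathbb R^2)$. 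In the logarithmic variable $\tau=\ln r$, after conjugation by the exponential weight that identifies $H_a^k(\mathbb R^2)$ with a Sobolev space on $\mathbb R\times S^1$, $v_n$ transforms to $\chi(\tau/n)e^{i\mu_0\tau}\psi_0(\varphi)$, so $\|v_n\|_{H_a^{l+2m}(\mathbb R^2)}\sim\sqrt n$. The Leibniz formula for $\hat A(\varphi,D_\varphi,D_\tau)$, combined with $\hat A(\lambda_0)\psi_0=0$, gives
\begin{equation*}
\hat A(\varphi,D_\varphi,D_\tau)\bigl(\chi(\tau/n)e^{i\lambda_0\tau}\psi_0\bigr)=e^{i\lambda_0\tau}\sum_{k\ge1}\tfrac{1}{k!}D_\tau^k\chi(\tau/n)\,\partial_\lambda^k\hat A(\varphi,D_\varphi,\lambda_0)\psi_0(\varphi),
\end{equation*}
so that the principal $k=0$ term vanishes. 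Since $|D_\tau^k\chi(\tau/n)|\le C_k n^{-k}$ on a set of measure $O(n)$, each surviving term has $L^2$-norm $O(n^{1/2-k})$, whence $\|Av_n\|_{H_a^l(\mathbb R^2)}\lesssim n^{-1/2}$. Normalising $u_n=v_n/\|v_n\|_{H_a^{l+2m}(\mathbb R^2)}$ produces $\|u_n\|_{H_a^{l+2m}(\mathbb R^2)}=1$ and $\|Au_n\|_{H_a^l(\mathbb R^2)}\lesssim n^{-1}\to0$, contradicting the displayed estimate; hence $\mathcal R(A)$ is not closed.

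\emph{Main obstacle.} The main technical hurdle is to make the norm equivalence under the logarithmic change of variables and exponential weight conjugation precise enough to track the $n$-dependence of every commutator term: one has to verify that the lower-order weighted components of $\|\cdot\|_{H_a^l(\mathbb R^2)}$ do not accumulate, that no cancellation degrades the lower bound on $\|v_n\|_{H_a^{l+2m}(\mathbb R^2)}$, and that the only contribution of size $n^{-1/2}$ to $\|Av_n\|_{H_a^l(\mathbb R^2)}$ comes from the $k=1$ derivative of the cutoff, all other terms being of strictly smaller order in $n$. Once this bookkeeping is arranged, the argument is essentially forced by the vanishing of the principal eigenvalue equation $\hat A(\lambda_0)\psi_0=0$.
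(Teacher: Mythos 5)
Your proposal is correct. For the kernel, your route (shift the weight to a nearby non\nobreakdash-critical value $a_2<a$ via Corollary~\ref{c3.1} with $f_0=0$, then invoke uniqueness from Lemma~\ref{l3.2}) is exactly the argument the paper intends: it states Corollary~\ref{c3.2} as a consequence of Lemma~\ref{l3.3} ``using the same arguments as in [NP, Chap.~3, Sec.~1],'' and the kernel assertion is obtained there precisely by this weight-shifting. For the non-closedness of the range you diverge from the cited route: the argument in [NP] extracts the conclusion from the asymptotic expansion of Lemma~\ref{l3.3} (the coefficient functionals $\alpha_j^{kq}$ attached to the critical eigenvalue cannot be bounded on $H_a^l(\mathbb R^2)$ alone, which is incompatible with a closed range), whereas you build an explicit quasi-mode sequence $v_n=\chi(n^{-1}\ln r)\,r^{i\lambda_0}\psi_0(\varphi)$ and contradict the a priori estimate $\|v\|\le C\|Av\|$ that closed range plus trivial kernel would force. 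Your computation checks out: every term of $\|v_n\|^2_{H_a^{l+2m}}$ scales like $\int r^{-2}\,dy\sim n$ on the support of the cutoff (the $|\alpha|=0$ term already gives the lower bound, so no cancellation can occur since the norm is a sum of nonnegative terms), while after the Leibniz expansion in $D_\tau$ the $k=0$ term of $\hat A(\varphi,D_\varphi,D_\tau)v_n$ is killed by $\hat A(\lambda_0)\psi_0=0$ and each $k\ge1$ term contributes $O(n^{1/2-k})$, so $\|Av_n\|_{H_a^l}=O(n^{-1/2})$ and $\|Au_n\|\to0$. The quasi-mode approach is more self-contained (it needs only the eigenpair, not the full Jordan-chain asymptotics), at the cost of the bookkeeping you describe; the asymptotic-coefficient approach reuses machinery the paper has already set up. Both are standard in Kondratiev-type theory, and either is acceptable here.
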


\begin{example}\label{ex3.1}
We consider the equation
\begin{equation}\label{3.12}
-\Delta v(y)=f_0(y),\qquad y\in\mathbb R^2\setminus\{0\}.
\end{equation}
Introduce the operator $A: H_a^2(\mathbb R^2)\to H_a^0(\mathbb R^2)$ by the formula
$Av=-\Delta v$.

Passing to the variables $\tau,\varphi$ and using the Fourier transform with respect to
$\tau$, we have
\begin{equation}\label{3.13}
-\hat v_{\varphi\varphi}(\varphi,\lambda)+\lambda^2\hat v(\varphi,\lambda)=\hat
F_0(\varphi,\lambda),\qquad 0<\varphi<2\pi,
\end{equation}
\begin{equation}\label{3.14}
\hat v|_{\varphi=0}=\hat v|_{\varphi=2\pi},\qquad \hat v_\varphi|_{\varphi=0}=\hat
v_\varphi|_{\varphi=2\pi},
\end{equation}
where $F_0(\varphi,\tau)=e^{2\tau}f_0(\varphi,\tau)$, cf.~\eqref{3.4}, \eqref{3.5}.

Let us study the eigenvalue problem for the corresponding operator-valued function $\hat
A(\lambda):W^2_{2\pi}(0,2\pi)\to L_2(0,2\pi)$ given by
$$
\hat A(\lambda)\hat v=-\hat v_{\varphi\varphi}+\lambda^2 \hat v.
$$
The general solution of the equation
\begin{equation}\label{3.15}
-\hat v_{\varphi\varphi}+\lambda^2 \hat v=0
\end{equation}
for $\lambda\ne0$ has the form
\begin{equation}\label{3.16}
\hat v(\varphi)=c_1e^{\lambda\varphi}+c_2e^{-\lambda\varphi}.
\end{equation}
Substituting this solution into~\eqref{3.14}, we have
$$
\begin{aligned}
c_1(1-e^{\lambda2\pi})+c_2(1-e^{-\lambda2\pi})&=0,\\
c_1\lambda(1-e^{\lambda2\pi})-c_2\lambda(1-e^{-\lambda2\pi})&=0.
\end{aligned}
$$
Therefore, the set of nonzero eigenvalues of $\hat A(\lambda)$ coincides with the set of
nonzero roots of the equation
$$
e^{\lambda2\pi}=1.
$$
The nonzero roots of this equation have the form
\begin{equation}\label{3.17}
\lambda_k=ik,\qquad k=\pm1,\pm2,\dots.
\end{equation}

It is evident that $\lambda_0=0$ is also an eigenvalue of $\hat A(\lambda)$.

1. Let us first consider the eigenvalue $\lambda_0=0$. The corresponding eigenvector has
the form $\psi_0^{0,1}(\varphi)=1$ (up to factor). The geometric multiplicity of
$\lambda_0=0$ is equal to 1.

Due to~\eqref{3.8}, an associated vector $\psi_0^{1,1}(\varphi)$ must satisfy the
equation
$$
\hat A(0)\psi_0^{1,1}+\partial_\lambda\hat A(0)\psi_0^{0,1}=0,
$$
which is equivalent to the following problem:
\begin{gather*}
(\psi_0^{1,1})_{\varphi\varphi}=0,\qquad 0<\varphi<2\pi,\\
\psi_0^{1,1}|_{\varphi=0}=\psi_0^{1,1}|_{\varphi=2\pi},\qquad
(\psi_0^{1,1})_\varphi|_{\varphi=0}=(\psi_0^{1,1})_{\varphi}|_{\varphi=2\pi}.
\end{gather*}
Hence, $\psi_0^{1,1}=0$ (note that an associated vector of an operator-valued function,
unlike an eigenvector, can be equal to zero).

Due to~\eqref{3.8}, the second associated vector $\psi_0^{2,1}(\varphi)$ must satisfy the
equation
$$
\hat A(0)\psi_0^{2,1}+\partial_\lambda\hat
A(0)\psi_0^{1,1}+\frac{1}{2}\partial^2_\lambda\hat A(0)\psi_0^{0,1}=0,
$$
which is equivalent to the following problem:
\begin{gather}
(\psi_0^{2,1})_{\varphi\varphi}=1,\qquad 0<\varphi<2\pi,\label{3.18}\\
\psi_0^{2,1}|_{\varphi=0}=\psi_0^{2,1}|_{\varphi=2\pi},\qquad
(\psi_0^{2,1})_\varphi|_{\varphi=0}=(\psi_0^{2,1})_{\varphi}|_{\varphi=2\pi}.\label{3.19}
\end{gather}
Substituting the general solution $\psi_0^{2,1}(\varphi)=c_0+c_1\varphi+\varphi^2/2$ of
Eq.~\eqref{3.18} into~\eqref{3.19}, we obtain
$$
2\pi c_1+4\pi^2/2=0,\qquad 2\pi=0.
$$
This system is incompatible, and hence there does not exist a second associated vector
for $\lambda_0=0$.

\smallskip

2. Consider the eigenvalue $\lambda_k=ik$, $k=\pm1,\pm2,\dots$. There are two linearly
independent eigenvectors corresponding to $\lambda_k$,
$$
\psi_k^{0,1}=\sin k\varphi,\qquad \psi_k^{0,2}=\cos k\varphi
$$
(up to factor). Hence, the geometric multiplicity of $\lambda_k$ equals~2.

Due to~\eqref{3.8}, an associated vector $\psi_k^{1,j}(\varphi)$ ($j=1,2$) must satisfy
the equation
$$
\hat A(ik)\psi_k^{1,j}+\partial_\lambda\hat A(ik)\psi_k^{0,j}=0,
$$
which is equivalent to the following problem:
\begin{gather}
(\psi_k^{1,j})_{\varphi\varphi}+k^2\psi_k^{1,j}=i2k\psi_k^{0,j},\qquad 0<\varphi<2\pi,
\label{3.20}\\
\psi_k^{1,j}|_{\varphi=0}=\psi_k^{1,j}|_{\varphi=2\pi},\qquad
(\psi_k^{1,j})_\varphi|_{\varphi=0}=(\psi_k^{1,j})_{\varphi}|_{\varphi=2\pi}.
\label{3.21}
\end{gather}
Substituting the general solution
$$
\psi_k^{1,1}(\varphi)=a_k^1\cos k\varphi+b_k^1\sin k\varphi-i\varphi\cos
k\varphi\qquad\text{for}\qquad j=1,
$$
$$
\psi_k^{1,2}(\varphi)=a_k^2\cos k\varphi+b_k^2\sin k\varphi+i\varphi\sin
k\varphi\qquad\text{for}\qquad j=2
$$
of Eq.~\eqref{3.20} into~\eqref{3.21}, we have
$$
0=-i2\pi,\quad 0=0\qquad\text{for}\qquad j=1,
$$
$$
0=0,\quad 0=i2\pi k\qquad\text{for}\qquad j=2.
$$
These systems are incompatible for $k=\pm1,\pm2,\dots$, and hence there do not exist
associated vectors for $\lambda_k=ik$, $k=\pm1,\pm2,\dots$.
\end{example}

\begin{example}\label{ex3.2}
Let $v\in H_1^2(\mathbb R^2)$ be a solution of Eq.~\eqref{3.12} with right-hand side
$f_0\in H_1^0(\mathbb R^2)\cap H_{-\varepsilon}^0(\mathbb R^2)$, where $0<\varepsilon<1$.
We study the asymptotic behavior of the solution $v$. Using the results of
Example~\ref{ex3.1} and Lemma~\ref{l3.3} for $a_1=1,l_1=0,h_1=0$ and
$a_2=-\varepsilon,l_2=0,h_2=-1-\varepsilon$, we obtain
\begin{equation}\label{3.22}
v(y)=(\alpha_{1}^{0,1} \sin\varphi+\alpha_{1}^{0,2}
\cos\varphi)r+w(y)=\alpha_{1}^{0,1}y_2+\alpha_{1}^{0,2}y_1+w(y),\qquad y\in\mathbb
R^2\setminus\{0\},
\end{equation}
where $w\in H_{-\varepsilon}^2(\mathbb R^2)$, while
$\alpha_{1}^{0,j}=\alpha_{1}^{0,j}(f_0)$ are linear bounded functionals on $H_1^0(\mathbb
R^2)\cap H_{-\varepsilon}^0(\mathbb R^2)$. Note that these functionals can be found in an
explicit form (see~\cite[Sec.~5]{GurPetr}).
\end{example}

\subsection{}\label{subsec3.3}
We now proceed with the study of elliptic problems in $\mathbb R^n\setminus\mathcal P$
for $n\ge3$, where
$$
\mathcal P=\{x=(y,z)\in\mathbb R^n:\ y=0,\ z\in\mathbb R^{n-2}\}.
$$
 Let
$$
A=A(D_y,D_z)=\sum\limits_{|\alpha|+|\beta|=2m}a_{\alpha\beta} D_y^\alpha D_z^\beta,
$$
be a homogeneous properly elliptic operator with constant complex coefficients. We
consider the equation
\begin{equation}\label{3.23}
Av=f_0(x),\qquad x\in\mathbb R^n\setminus\mathcal P,
\end{equation}
where $f_0\in H_a^l(\mathbb R^n)$. It is easy to see that the operator
$A:H_a^{l+2m}(\mathbb R^n)\to H_a^{l}(\mathbb R^n)$ is bounded for any $a\in\mathbb R$
and any integer $l\ge0$.

The main result of this section is as follows.
\begin{theorem}\label{t3.1}
Let $a\in\mathbb R$, and let $l\ge0$ be an integer. Then the operator
$A:H_a^{l+2m}(\mathbb R^n)\to H_a^{l}(\mathbb R^n)$ is not an isomorphism.
\end{theorem}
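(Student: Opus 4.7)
I argue by contradiction: assume $A\colon H_a^{l+2m}(\mathbb R^n)\to H_a^l(\mathbb R^n)$ is an isomorphism, so the a priori estimate $\|u\|_{H_a^{l+2m}(\mathbb R^n)}\le C\|Au\|_{H_a^l(\mathbb R^n)}$ holds. The first move is to derive a two-dimensional necessary condition by testing this estimate against tensor-product functions $u_R(y,z)=v(y)\phi(z/R)$ with $v\in C_0^\infty(\mathbb R^2\setminus\{0\})$ and $\phi\in C_0^\infty(\mathbb R^{n-2})$, letting $R\to\infty$. Since $D_z^{\alpha_z}[\phi(z/R)]=R^{-|\alpha_z|}\phi^{(\alpha_z)}(z/R)$ while the substitution $\zeta=z/R$ contributes a single Jacobian factor $R^{n-2}$, every term in either norm that carries at least one $z$-derivative is suppressed by $R^{-2}$ relative to the leading ones. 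A direct calculation gives
\[
\|u_R\|^2_{H_a^{l+2m}(\mathbb R^n)}\sim R^{n-2}\|\phi\|^2_{L^2}\|v\|^2_{H_a^{l+2m}(\mathbb R^2)},\qquad \|Au_R\|^2_{H_a^l(\mathbb R^n)}\sim R^{n-2}\|\phi\|^2_{L^2}\|A(D_y,0)v\|^2_{H_a^l(\mathbb R^2)},
\]
so dividing and letting $R\to\infty$, then extending to arbitrary $v\in H_a^{l+2m}(\mathbb R^2)$ by density, one obtains $\|v\|_{H_a^{l+2m}(\mathbb R^2)}\le C\|A(D_y,0)v\|_{H_a^l(\mathbb R^2)}$. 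By Corollary~\ref{c3.2} this is already a contradiction whenever the line $\Im\lambda=a+1-l-2m$ meets an eigenvalue of the operator-valued function $\hat A(\cdot,0)(\lambda)$, because then the 2D image is not closed; otherwise Lemma~\ref{l3.2} identifies $A(D_y,0)\colon H_a^{l+2m}(\mathbb R^2)\to H_a^l(\mathbb R^2)$ as a genuine isomorphism and one proceeds to the next step.

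The decisive step is to build an explicit kernel element of $A$ directly. For each $\omega\in\mathbb R^{n-2}\setminus\{0\}$ and each integer $m_\circ\ge 1$, the 2D constant-coefficient elliptic equation $A(D_y,\omega)V=0$ admits exponentially-decaying-at-infinity Bessel-type solutions of the form $V_{\omega,m_\circ}(y)=\kappa_{m_\circ}(|\omega||y|)\cos(m_\circ\varphi)$, behaving like $|y|^{-m_\circ}$ near $|y|=0$; in the prototype case $A=-\Delta$, $\kappa_{m_\circ}=K_{m_\circ}$, the modified Bessel function of the second kind. Superposing over $\omega$,
\[
u(y,z)=\cos(m_\circ\varphi)\int_{\mathbb R^{n-2}}\chi(\omega)\,\kappa_{m_\circ}(|\omega||y|)\,e^{i\omega\cdot z}\,d\omega,\qquad \chi\in C_0^\infty(\mathbb R^{n-2}\setminus\{0\}),
\]
yields an $n$-dimensional function killed by $A$, exponentially decaying in $|y|$ at infinity, and with $u(y,z)\sim|y|^{-m_\circ}F(z)\cos(m_\circ\varphi)$ near $\mathcal P$, where $F\in\mathcal S(\mathbb R^{n-2})$. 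A weight-and-derivative computation (using $|D^\gamma u|\lesssim|y|^{-m_\circ-|\gamma_y|}|F^{(\gamma_z)}(z)|$ near $\mathcal P$) shows $u\in H_a^{l+2m}(\mathbb R^n)$ as soon as $a>l+2m+m_\circ-1$; taking $m_\circ=1$ this already exhibits a nontrivial element of $\ker A$ for every $a>l+2m$, contradicting injectivity in that range. For the complementary range $a\le l+2m$ not already handled by Step~1, the same Bessel-type construction applied to the formal transpose $A^\ast=\bar A$ in the dual weighted space $(H_a^l)^\ast$ exhibits an element of the cokernel of $A$ and contradicts surjectivity.

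The main obstacle lies in this second step: adapting the Bessel-type kernel construction to a \emph{general} homogeneous properly elliptic constant-coefficient operator $A$ of order $2m$ (rather than the Laplacian archetype) requires identifying a family of exponentially-decaying special-function solutions of each $A(D_y,\omega)V=0$ with the correct homogeneity near $|y|=0$, and then verifying that the superposed $n$-dimensional candidate belongs to $H_a^{l+2m}(\mathbb R^n)$ in the appropriate regime of $a$. Combined with the line-eigenvalue obstruction of Step~1 and a dual construction on the transpose side, this covers every real value of $a$ and completes the contradiction.
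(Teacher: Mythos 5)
Your Step~1 (the tensor-product scaling argument) is sound as far as it goes, but it only yields information about $A(D_y,0)$ acting on the homogeneous-weight spaces $H_a^k(\mathbb R^2)$, and by Lemma~\ref{l3.2} that operator \emph{is} an isomorphism whenever the line $\Im\lambda=a+1-l-2m$ is free of eigenvalues of $\hat A(\lambda)$ --- which is the case for all but a discrete set of $a$. So the entire burden of the proof falls on your Step~2, and that is where the genuine gap lies. The Bessel-function construction is carried out only for $A=-\Delta$; for a general homogeneous properly elliptic operator of order $2m$ you would need, for each $\omega$, a solution of $A(D_y,\omega)V=0$ that decays at infinity and has a prescribed negative power singularity at $y=0$, and you explicitly concede that producing such solutions is ``the main obstacle'' without resolving it. In addition, the coverage of all $a\in\mathbb R$ by your three cases is not established: the kernel construction reaches only $a>l+2m-1$ (or $a>l+2m$ for $m_\circ\ge1$), the ``dual'' construction is asserted in a single sentence, and for $l>0$ the identification of $\codim\mathcal R(A)$ with a kernel of the formal adjoint is not an $L_2$-duality --- the paper needs Lemma~\ref{l3.11} (which works only in the $l=0$ realization $E_{b-2m}^0\to E_b^0$) together with the regularity Lemmas~\ref{l3.9} and~\ref{l3.15} to transfer between $l=0$ and $l>0$. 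As written, the argument proves the theorem only for the Laplacian and only after the unverified duality step.

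The idea your proposal is missing, and which makes the paper's proof work for arbitrary $A$ and arbitrary $a$, is purely algebraic: every homogeneous polynomial $q$ of degree $m-1$ in $y$ satisfies $A(D_y,0)q=0$, so $\lambda_0=i(1-m)$ is \emph{always} an eigenvalue of $\hat A(\lambda)$ (part~1 of the proof of Lemma~\ref{l3.14}). This kills Fredholmness at the single weight $a=l+m$ via Lemma~\ref{l3.8}, and the symmetry $\overline{A(D_y,\omega)u(y)}=[A'(D_{y'},\omega)\overline{u(-y')}]|_{y'=-y}$ combined with the adjoint identification (Lemma~\ref{l3.11}) shows that an isomorphism at weight $a$ would force an isomorphism at the mirror weight $2m-a$ on the other side of the eigenvalue line; Lemma~\ref{l3.17} then rules this out. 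No explicit special-function solutions are ever needed --- the kernel/cokernel elements are produced abstractly from the power solution $r^{i\lambda_0}\psi^0(\varphi)$ attached to the universal eigenvalue. If you want to salvage your approach, you would have to replace the Bessel construction by exactly this mechanism.
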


To prove Theorem~\ref{t3.1}, we first apply the Fourier transform $F_{z\to\eta}$ with
respect to $z\in\mathbb R^{n-2}$. Then Eq.~\eqref{3.23} takes the form
\begin{equation}\label{3.24}
A(D_y,\eta)\tilde v(y,\eta)=\tilde f_0(y,\eta),\qquad y\in\mathbb R^2\setminus\{0\},\
\eta\in\mathbb R^{n-2},
\end{equation}
where
$$
\tilde v(y,\eta)=F_{z\to\eta}v=(2\pi)^{-(n-2)/2}\int\limits_{\mathbb R^{n-2}}
v(y,z)e^{-i(\eta,z)}dz.
$$
Denote $Y=|\eta|y$, $\omega=\eta/|\eta|$, $V(Y,\eta)=|\eta|^{2m}\tilde v(y,\eta)$,
$F_0(Y,\eta)=\tilde f_0(y,\eta)$. Then Eq.~\eqref{3.24} takes the following form:
\begin{equation}\label{3.26}
A(D_Y,\omega)V(Y,\eta)=F_0(Y,\eta),\qquad y\in\mathbb R^2\setminus\{0\},\ \omega\in
S^{n-3}.
\end{equation}

Consider the linear bounded operator $A(\omega):E_a^{l+2m}(\mathbb R^2)\to
E_a^{l}(\mathbb R^2)$ given by
\begin{equation}\label{3.27}
A(\omega)V=A(D_Y,\omega)V(Y),\qquad \omega\in S^{n-3}.
\end{equation}

Denote by $A(0):H_a^{l+2m}(\mathbb R^2)\to H_a^{l}(\mathbb R^2)$ the linear bounded
operator given by
$$
A(0)v=A(D_y,0)v(y).
$$
Clearly, the operator $A(0)$ is properly elliptic. Write the operator $A(0)$ in the polar
coordinates,
$$
A(0)=r^{-2m}\hat A(\varphi,D_\varphi,rD_r).
$$
Consider the operator-valued function $\hat A(\lambda):W_{2\pi}^{l+2m}(0,2\pi)\to
W_{2\pi}^{l}(0,2\pi)$ given by
$$
\hat A(\lambda)w=\hat A(\varphi,D_\varphi,\lambda)w(\varphi).
$$
Spectral properties of the operator-valued function $ \hat A(\lambda)$ are described in
Sec.~\ref{subsec3.1}.

The proof of the following lemma is similar to that of Theorem~2.3 in~\cite[Chap.~6,
Sec.~2]{NP} (see also Theorem~4.2 in~\cite{SkDu90}).
\begin{lemma}\label{l3.8}
The operator $A(\omega):E_a^{l+2m}(\mathbb R^2)\to E_a^{l}(\mathbb R^2)$ has the Fredholm
property for each $\omega\in S^{n-3}$ iff the line $\Im\lambda=a+1-l-2m$ contains no
eigenvalues of the operator-valued function $\hat A(\lambda)$.
\end{lemma}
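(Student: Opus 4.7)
The plan is to reduce the question to the spectral properties of the Mellin symbol $\hat A(\lambda)$ by a combination of polar coordinates, the substitution $\tau=\ln r$, and the Fourier transform in $\tau$, following the scheme of Theorem~2.3 in \cite[Chap.~6, Sec.~2]{NP}. Writing $A(D_Y,\omega)=\sum_{|\alpha|+|\beta|=2m} a_{\alpha\beta}\omega^\beta D_Y^\alpha$, the $|\beta|=0$ part is exactly $A(0)=A(D_Y,0)$, while the $|\beta|\ge 1$ terms have order strictly less than $2m$ in $D_Y$ and are treated as perturbations. Observe that the nonhomogeneous weight in $E_a^k(\mathbb R^2)$ makes the space behave like Kondratiev's $H_a^k$ near the origin and like a standard weighted Sobolev space away from the origin, so the $\omega$-terms are dominated by $A(0)$ at the singular point $Y=0$ (where Fredholmness is subtle) while at infinity they are controlled by the parameter-elliptic theory.

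For the sufficiency direction I would proceed as follows. First, rewrite $A(0)=r^{-2m}\hat A(\varphi,D_\varphi,rD_r)$ in polar coordinates and pass to the cylinder via $\tau=\ln r$. Apply the Fourier transform in $\tau$; by the standard computation the map $v\mapsto A(0)v$ becomes multiplication by $r^{-2m}\hat A(\varphi,D_\varphi,\lambda)$ on the line $\Im\lambda=a+1-l-2m$. Assuming that this line avoids the discrete set of eigenvalues of $\hat A(\lambda)$ described in Sec.~\ref{subsec3.1}, the inverse $\hat A(\lambda)^{-1}$ exists on the line. Combined with the parameter-elliptic estimate for $\hat A(\lambda)$ as $|\Re\lambda|\to\infty$ (which holds by ellipticity of $A$), this yields that $\hat A(\lambda)^{-1}$ is uniformly bounded on a horizontal strip neighborhood of the line. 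Inverse Fourier transform then provides a parametrix for $A(0)$ on $E_a^{l+2m}$ near $Y=0$. Gluing this with a standard elliptic parametrix for $A(\omega)$ away from the origin (where $E_a^k$ coincides, up to the weight $r^{2a}$, with an ordinary Sobolev space and $A(\omega)$ is uniformly elliptic) via a smooth cutoff, and absorbing the lower-order $\omega$-terms as compact perturbations on any bounded piece, produces a two-sided parametrix of $A(\omega)$ modulo compact operators, establishing the Fredholm property.

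For the necessity direction I would argue by contradiction: suppose $\lambda_0=\mu_0+i(a+1-l-2m)$ is an eigenvalue of $\hat A(\lambda)$ with eigenvector $\psi(\varphi)$. The function $u_0(Y)=r^{i\lambda_0}\psi(\varphi)$ formally satisfies $A(0)u_0=0$ but does not lie in $E_a^{l+2m}(\mathbb R^2)$. Following the construction used in~\cite[Chap.~6, Sec.~2]{NP}, I would produce a noncompact Weyl-type sequence by cutting off $u_0$ to dyadic annuli $\{2^k<r<2^{k+1}\}$ and renormalizing, verifying that its image under $A(0)$ tends to zero in $E_a^l$ while the norms in $E_a^{l+2m}$ stay bounded from below. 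Since $A(\omega)-A(0)$ is of lower order in $D_Y$, its contribution on each dyadic annulus can be controlled by a gain of one power of $r$ after rescaling, so the Weyl sequence also defeats Fredholmness of $A(\omega)$.

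The main obstacle is the interaction between the singular behaviour at $Y=0$ and the behaviour at $|Y|\to\infty$: the nonhomogeneous weight in $E_a^k$ prevents a clean global Mellin analysis, so the parametrix must be constructed in two pieces and patched. One must also check carefully that the $\omega$-dependent lower-order terms really are compact perturbations on the appropriate scales, which is routine away from the origin but requires the dyadic rescaling argument in a neighbourhood of $\mathcal P$. Once these pieces are in place, the iff assertion follows exactly as in~\cite[Theorem~4.2]{SkDu90}.
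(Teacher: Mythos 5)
Your overall scheme coincides with the one the paper itself invokes: the text gives no self-contained argument for Lemma~\ref{l3.8} but refers to Theorem~2.3 in~\cite[Chap.~6, Sec.~2]{NP} and Theorem~4.2 in~\cite{SkDu90}, and your two-piece parametrix (Mellin analysis near $Y=0$ on the eigenvalue-free line, uniform ellipticity of $A(\omega)$ with respect to the weight $r^{2a}$ at infinity, lower-order $\omega$-terms absorbed as perturbations) is exactly that construction. So the sufficiency direction is fine in outline.

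There is, however, a step in your necessity argument that fails as written. You propose the Weyl sequence $\chi_k u_0$ with $u_0=r^{i\lambda_0}\psi(\varphi)$ and $\chi_k$ cut off to a \emph{single} dyadic annulus $\{2^k<r<2^{k+1}\}$. Because $\Im\lambda_0=a+1-l-2m$ is precisely the critical exponent, every term of $[A(0),\chi_k]u_0$ scales like $r^{-\Im\lambda_0-2m-|\alpha|}$ after $|\alpha|$ further differentiations, so its contribution to $\|\cdot\|^2_{E_a^l}$ over the annulus is $\int r^{-2}\,r\,dr\approx\ln 2$ — a fixed positive constant, of the same size as $\|\chi_k u_0\|_{E_a^{l+2m}}$ itself. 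Hence $A(0)(\chi_k u_0)$ does \emph{not} tend to zero, and the sequence is not a singular (Weyl) sequence. The standard repair, used in~\cite{NP} and~\cite{SkMs86}, is to take cut-offs of logarithmically growing width, $\chi_k=\chi(\tau/N_k-c_k)$ with $\tau=\ln r$ and $N_k\to\infty$, so that each derivative of the cut-off gains a factor $N_k^{-1}$ while the $E_a^{l+2m}$-norm of $\chi_k u_0$ grows like $N_k^{1/2}$; after normalization the commutator terms are $O(N_k^{-1/2})$ and the image does tend to zero. (One should also send the supports toward the origin, $c_k\to-\infty$, so that the sequence converges weakly to zero and the lower-order terms $A(\omega)-A(0)$, which gain a power of $r$, vanish in the limit.) With this correction your argument matches the cited proofs; alternatively, the necessity can be extracted from the non-closedness of the range as in Corollary~\ref{c3.2}.
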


However, we prove below (see Lemma~\ref{l3.14}) that the operator
$A(\omega):E_a^{l+2m}(\mathbb R^2)\to E_a^{l}(\mathbb R^2)$ is not an isomorphism for
$a\in\mathbb R$,  $l\ge0$, and $\omega\in S^{n-3}$.

The following result is valid (see~\cite{SkDu90}).
\begin{lemma}\label{l3.4}
The operator $A:H_a^{l+2m}(\mathbb R^n)\to H_a^{l}(\mathbb R^n)$ is an isomorphism iff
the operator $A(\omega):E_a^{l+2m}(\mathbb R^2)\to E_a^{l}(\mathbb R^2)$ is an
isomorphism.
\end{lemma}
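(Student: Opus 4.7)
The plan is to carry out the reduction indicated just before the statement: apply the Fourier transform $F_{z\to\eta}$, rescale via $Y=|\eta|y$ with $V(Y,\eta)=|\eta|^{2m}\tilde v(y,\eta)$ and $F_0(Y,\eta)=\tilde f_0(y,\eta)$, and use the $2m$-homogeneity of $A$ to turn $Av=f_0$ into the fibered identity $A(\omega)V(\cdot,\eta)=F_0(\cdot,\eta)$ with $\omega=\eta/|\eta|\in S^{n-3}$. The body of the proof then reduces to showing that the norms on both $H_a^{l+2m}(\mathbb R^n)$ and $H_a^l(\mathbb R^n)$ admit direct-integral decompositions over $\eta$ whose fibers are equivalent (uniformly in $\omega$) to the $E_a^{l+2m}(\mathbb R^2)$ and $E_a^l(\mathbb R^2)$ norms of $V$ and $F_0$ respectively, weighted by the \emph{same} scalar factor $|\eta|^{2(l-a-1)}$.

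By Plancherel in $z$ and the change of variables $Y=|\eta|y$, each summand in $\|v\|_{H_a^{l+2m}(\mathbb R^n)}^2$ becomes
$$
\int_{\mathbb R^{n-2}}|\eta|^{2(l-a-1)}\,d\eta\int_{\mathbb R^2}|Y|^{2(a-l-2m+|\alpha|)}|\omega^{\alpha''}|^2|D_Y^{\alpha'}V|^2\,dY,
$$
with $\alpha=\alpha'+\alpha''$; a direct computation confirms that the power of $|\eta|$ is independent of $\alpha$ because of the compensation between the weight $r^{2(a-l-2m+|\alpha|)}$, the factor $|\eta^{\alpha''}|^2$ produced by $D_z^{\alpha''}$, the rescaled derivative $D_y^{\alpha'}$, the $|\eta|^{-2m}$ from the definition of $V$, and the Jacobian $|\eta|^{-2}$. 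Performing the same computation on $f_0$ (without the extra $|\eta|^{2m}$ rescaling) yields the identical $|\eta|^{2(l-a-1)}$ factor. For fixed $\alpha'$ and either $k=l+2m$ or $k=l$, the inner sum over $\alpha''$ collapses to $|Y|^{2(a-k+|\alpha'|)}S(\omega,|Y|)$, where $S(\omega,r)=\sum_{|\alpha''|\le p}r^{2|\alpha''|}|\omega^{\alpha''}|^2$ with $p=k-|\alpha'|$. As a polynomial in $r^2$, this $S(\omega,r)$ has constant term $1$ and leading coefficient $\sum_{|\alpha''|=p}|\omega^{\alpha''}|^2$, which is a continuous strictly positive function on the compact sphere $S^{n-3}$ and hence bounded below uniformly. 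Therefore $S(\omega,r)$ is equivalent to $1+r^{2p}$ uniformly in $\omega$, which yields
$$
\|v\|_{H_a^{l+2m}(\mathbb R^n)}^2\asymp\int_{\mathbb R^{n-2}}|\eta|^{2(l-a-1)}\|V(\cdot,\eta)\|_{E_a^{l+2m}(\mathbb R^2)}^2\,d\eta,
$$
together with the analogous relation for $f_0$ with $l+2m$ replaced by $l$ and $V$ by $F_0$.

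With these equivalent norms in hand, the conclusion is essentially direct. If each $A(\omega)$ is an isomorphism, continuity of $\omega\mapsto A(\omega)$ and compactness of $S^{n-3}$ imply that $\|A(\omega)^{-1}\|$ is uniformly bounded, so the fiberwise inverse defines a bounded inverse of $A$. Conversely, if $A$ is an isomorphism, the direct-integral structure together with continuity of the $\omega$-dependence and openness of the set of invertible operators force $A(\omega)$ to be invertible for every $\omega\in S^{n-3}$, for instance by localizing the right-hand side in $\eta$-space near a ray in direction $\omega$ and passing to the limit. The principal technical obstacle is the uniform-in-$\omega$ equivalence of $S(\omega,r)$ with $1+r^{2p}$; this is exactly the reason the inhomogeneous weight $|Y|^{2a}(|Y|^{2(|\alpha'|-k)}+1)$ appears in the definition of $E_a^k(\mathbb R^2)$, as it captures both the low-degree ($\alpha''=0$) and top-degree ($|\alpha''|=p$) contributions produced by the Fourier/rescaling procedure.
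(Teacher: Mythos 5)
Your proof is correct in substance and follows exactly the route the paper sets up in \eqref{3.24}--\eqref{3.27}; the paper itself gives no proof of Lemma~\ref{l3.4} but delegates it to \cite{SkDu90}, and your argument reconstructs that reduction. The central computation checks out: after Plancherel in $z$ and the substitution $Y=|\eta|y$, the exponent of $|\eta|$ collapses to $2(l-a-1)$ independently of $\alpha$ for both the $H_a^{l+2m}$ and $H_a^l$ norms (the extra $|\eta|^{-4m}$ from $|\tilde v|^2=|\eta|^{-4m}|V|^2$ is exactly compensated by the $4m$ discrepancy between the two weights), and the uniform equivalence $S(\omega,r)\asymp 1+r^{2p}$ follows since $\sum_{|\alpha''|=p}|\omega^{\alpha''}|^2\ge c_p>0$ on $S^{n-3}$ by the multinomial identity. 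This is precisely why the inhomogeneous weight in $E_a^k$ appears, as you observe.

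One caveat on the converse direction: ``openness of the set of invertible operators'' is not the operative mechanism there, since you have no invertible $A(\omega)$ to start from. What does work is the localization you mention, but it should be split into two halves. Localizing a fixed test function $V$ against $\phi_j(\eta)$ concentrated near the ray through $\omega_0$ and using the isomorphism estimate for $A$ yields the uniform a priori bound $\|V\|_{E_a^{l+2m}}\le C\|A(\omega_0)V\|_{E_a^l}$, hence injectivity and closed range of every $A(\omega_0)$. Localizing the right-hand side $F_0(Y,\eta)=\phi_j(\eta)F(Y)$ and solving $Av_j=f_{0,j}$ produces, for a positive-measure set of $\eta\in\supp\phi_j$, solutions $W_j$ of $A(\eta/|\eta|)W_j=F$ bounded in $E_a^{l+2m}$; choosing $\eta_j/|\eta_j|\to\omega_0$, passing to a weak limit, and using continuity of $\omega\mapsto A(\omega)$ in operator norm shows $F\in\overline{\mathcal R(A(\omega_0))}=\mathcal R(A(\omega_0))$. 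With that elaboration the converse is complete.
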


Combining Lemma~\ref{l3.4} with the fact that $A(\omega)$ is not an isomorphism allows us
to prove Theorem~\ref{t3.1}. Thus, it remains to show that the operator $A(\omega)$ is
not an isomorphism for $a\in\mathbb R$, $l\ge0$, and $\omega\in S^{n-3}$. To do so, we
preliminarily establish a priori estimates for solutions of Eq.~\eqref{3.26} and study
the adjoint operators.

The proof of a priori estimates in the spaces $E_a^l(\mathbb R^2)$ is based on the
well-known a priori estimate in Sobolev spaces (see, e.g., Theorem~15.3 in~\cite{ADN} and
the comment following it).
\begin{lemma}\label{l3.10}
Let $Q_1,Q_2\subset \mathbb R^n$ be bounded domains such that $\overline{Q_1}\subset
Q_2$. Assume that an operator
$$
\mathcal A=\sum\limits_{|\alpha|\le2m}a_\alpha(x)D_x^\alpha
$$
with infinitely differentiable coefficients $a_\alpha(x)$ is properly elliptic on
$\overline{Q_2}$. Then the following   estimate holds for all $u\in W^{l+2m}(Q_2)$\rm{:}
\begin{equation}\label{3.33}
\|u\|_{W^{l+2m}(Q_1)}\le c(\|\mathcal Au\|_{W^{l}(Q_2)}+\|u\|_{L_2(Q_2)}),
\end{equation}
where $c>0$ depends on $Q_1,Q_2$, and $M$,
$$
M=\max\limits_{|\beta|\le l_0}\max\limits_{|\alpha|\le
2m}\max\limits_{x\in\overline{Q_2}}|D^\beta a_\alpha(x)|, \qquad l_0=\max(l,1),
$$
and does not depend on $u$.
\end{lemma}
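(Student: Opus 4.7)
The plan is to deduce this from the standard interior a priori estimate for properly elliptic operators (exactly the ADN estimate cited in the excerpt) by a finite covering and cutoff argument, followed by an interpolation to absorb lower-order norms into the leading one.

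First, I would choose a finite open covering $\{B_j\}_{j=1}^J$ of the compact set $\overline{Q_1}$ by open balls $B_j = B(x_j,r_j)$ together with concentric larger balls $B_j' = B(x_j, r_j')$ with $r_j < r_j'$ chosen so that $\overline{B_j'} \subset Q_2$ for every $j$ (this is possible since $\dist(\overline{Q_1},\partial Q_2) > 0$). Pick cutoff functions $\zeta_j \in C_0^\infty(B_j')$ with $\zeta_j \equiv 1$ on $B_j$ and $|D^\beta \zeta_j| \le c_\beta$ for all $|\beta| \le l+2m$. Then $\zeta_j u$ is supported in $B_j'$ and extends by zero to a function in $W^{l+2m}(\mathbb R^n)$ with compact support.

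Second, I would invoke the standard ADN interior estimate (Theorem 15.3 in \cite{ADN}) applied to $\mathcal A$ on the ball $B_j'$, whose coefficients are bounded in $C^{l_0}(\overline{B_j'})$ by the constant $M$. This gives
$$
\|\zeta_j u\|_{W^{l+2m}(B_j')} \le c_1\bigl(\|\mathcal A(\zeta_j u)\|_{W^l(B_j')} + \|\zeta_j u\|_{L_2(B_j')}\bigr),
$$
with $c_1$ depending only on $B_j'$ and $M$. Next I use Leibniz' formula to expand the commutator: $\mathcal A(\zeta_j u) = \zeta_j \mathcal A u + [\mathcal A,\zeta_j]u$, where $[\mathcal A,\zeta_j]$ is a differential operator of order at most $2m-1$ with $C^{l_0}$ coefficients controlled by $M$ and by the derivatives of $\zeta_j$. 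Therefore
$$
\|\mathcal A(\zeta_j u)\|_{W^l(B_j')} \le c_2\bigl(\|\mathcal A u\|_{W^l(B_j')} + \|u\|_{W^{l+2m-1}(B_j')}\bigr).
$$
Combining and restricting the left-hand side to $B_j$ (where $\zeta_j = 1$) yields
$$
\|u\|_{W^{l+2m}(B_j)} \le c_3\bigl(\|\mathcal A u\|_{W^l(Q_2)} + \|u\|_{W^{l+2m-1}(Q_2)}\bigr).
$$

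Third, summing over the finite covering gives the same bound with the left-hand side replaced by $\|u\|_{W^{l+2m}(Q_1)}$. The only remaining issue, which I would call the main (but routine) obstacle, is to eliminate the intermediate norm $\|u\|_{W^{l+2m-1}(Q_2)}$ on the right-hand side. For this I would use the standard interpolation inequality between Sobolev spaces on $Q_2$: for every $\varepsilon > 0$ there exists $C_\varepsilon > 0$ such that
$$
\|u\|_{W^{l+2m-1}(Q_2)} \le \varepsilon \|u\|_{W^{l+2m}(Q_2)} + C_\varepsilon \|u\|_{L_2(Q_2)}.
$$
Applying the covering argument to the enlarged pair $(Q_1, Q_2)$ and to suitable intermediate domains $Q_1 \subset Q_1' \subset Q_2$, and then iterating (or, alternatively, running the covering argument directly on $Q_2$ to control $\|u\|_{W^{l+2m}(Q_2)}$ indirectly), the $W^{l+2m-1}$ term can be absorbed into the left-hand side after choosing $\varepsilon$ sufficiently small. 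This yields the desired inequality with a constant $c$ depending only on $Q_1$, $Q_2$, and $M$.
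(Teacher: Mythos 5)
Your first three steps (finite covering of $\overline{Q_1}$, cutoffs, the ADN interior estimate, and the commutator expansion) are sound and reproduce exactly what the paper itself takes for granted: as the Remark following the lemma states, the classical interior estimate already gives \eqref{3.33} with $\|u\|_{W^{l+2m-1}(Q_2)}$ in place of $\|u\|_{L_2(Q_2)}$. The genuine gap is in your final step. The interpolation inequality produces the term $\varepsilon\|u\|_{W^{l+2m}(Q_2)}$, whose domain is $Q_2$, whereas your left-hand side only controls $\|u\|_{W^{l+2m}(Q_1)}$ with $Q_1\Subset Q_2$; there is nothing to absorb it into. Your parenthetical alternative --- ``running the covering argument directly on $Q_2$ to control $\|u\|_{W^{l+2m}(Q_2)}$ indirectly'' --- is actually false: no boundary condition is imposed on $\partial Q_2$, so $\|u\|_{W^{l+2m}(Q_2)}$ cannot be bounded by $\|\mathcal A u\|_{W^l(Q_2)}+\|u\|_{L_2(Q_2)}$ (already for $\mathcal A=-\Delta$ there are harmonic functions with bounded $L_2(Q_2)$-norm and infinite $W^2(Q_2)$-norm). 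And a finite iteration over intermediate domains $Q_1\subset\Omega_1\subset\dots\subset\Omega_N\subset Q_2$ never closes, since each step leaves a remainder $\bigl(\prod_j c_j\varepsilon_j\bigr)\|u\|_{W^{l+2m}(\Omega_N)}$ on a domain still strictly larger than the one controlled on the left.

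What is missing is precisely the ``technique close to that in Miranda, Chap.~5'' that the paper's Remark invokes: one must either pass to weighted interior norms of the form $\sup_{\delta>0}\delta^{\,k}\|u\|_{W^{k}(Q^{(\delta)})}$ over the family $Q^{(\delta)}=\{x\in Q_2:\ \dist(x,\partial Q_2)>\delta\}$, apply the interior estimate between $Q^{(\delta)}$ and $Q^{(\delta/2)}$ (whose constant blows up like a fixed power of $\delta^{-1}$) together with an interpolation in which $\varepsilon$ is taken proportional to a suitable power of $\delta$, or equivalently run an \emph{infinite} nested-domain iteration in which the $\varepsilon_j$ are chosen to decay geometrically fast relative to the polynomial blow-up of the local constants $c_j$, so that both the accumulated right-hand side converges and the remainder term tends to zero. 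This quantitative bookkeeping is the actual content of the lemma beyond the standard interior estimate, and the proposal only gestures at it.
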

\begin{remark}
Theorem~3.1 in~\cite[Chap.~2, Sec.~3]{LM} ensures the validity of estimate~\eqref{3.33}
with the term $\|u\|_{W^{l+2m-1}(Q_2)}$ instead of $\|u\|_{L_2(Q_2)}$ on the right-hand
side. To obtain estimate~\eqref{3.33}, one must additionally apply the technique close to
that in~\cite[Chap.~5]{Miranda}.
\end{remark}

Denote by $W^k_\loc(\mathbb R^2\setminus\{0\})$ the space of distributions $v$ on
$\mathbb R^2\setminus\{0\}$ such that $\psi v\in W^k(\mathbb R^2)$ for all $\psi\in
C_0^\infty(\mathbb R^2\setminus\{0\})$.

\begin{lemma}\label{l3.9}
Let $v\in W^{l+2m}_\loc(\mathbb R^2\setminus\{0\})\cap E_{a-l-2m}^0(\{|y|<1\})$ and
$A(\omega)v\in E_a^l(\mathbb R^2)$ for some $\omega\in S^{n-3}$. Then $v\in
E_a^{l+2m}(\mathbb R^2)$ and
\begin{equation}\label{3.31}
\|v\|_{E_a^{l+2m}(\mathbb R^2)}\le c(\|A(\omega)v\|_{E_a^l(\mathbb
R^2)}+\|v\|_{E_{a-l-2m}^0(\{|y|<R\})}),
\end{equation}
where $R,c>0$ do not depend on $v$ and $\omega$.
\end{lemma}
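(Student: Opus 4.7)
The plan is a dyadic-in-radius analysis. Set $\theta^{s,j}=\{2^{s-j}<|y|<2^{s+j}\}$ for $s\in\mathbb{Z}$, $j=1,2$, and fix a smooth partition of unity $\{\xi_s\}$ subordinate to $\{\theta^{s,1}\}$ so that the enlargements $\theta^{s,2}$ have uniformly bounded multiplicity. It suffices to prove, for each $s$, the local estimate
\[
\|v\|_{E_a^{l+2m}(\theta^{s,1})}^2\le c\bigl(\|A(\omega)v\|_{E_a^l(\theta^{s,2})}^2+\|v\|_{E_{a-l-2m}^0(\theta^{s,2})}^2\bigr),
\]
with constant $c$ independent of $s\in\mathbb{Z}$ and $\omega\in S^{n-3}$, and then to sum over $s$, choosing $R$ so that $\bigcup_{s\le s_0}\theta^{s,2}\subset\{|y|<R\}$ absorbs the last term for small $s$. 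For each $s$, introduce the rescaled function $v^s(y')=v(2^sy')$ on $\theta^{0,j}$. By the $(D_y,D_z)$-homogeneity of $A$, the equation $A(D_y,\omega)v=f_0$ transforms into $A(D_{y'},2^s\omega)\,v^s=2^{2ms}f_0^s$ on $\theta^{0,1}$, where $f_0^s(y')=f_0(2^sy')$.

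For $s\le0$, the rescaled parameter $\eta_s:=2^s\omega$ satisfies $|\eta_s|\le1$, so the coefficients of $A(D_{y'},\eta_s)$ are uniformly bounded in $(s,\omega)$; its principal $2m$-order part in $y'$ equals $A(\xi,0)$, which is elliptic uniformly by the proper ellipticity of $A(D_y,D_z)$ in all $n$ variables. Applying Lemma~\ref{l3.10} on $(\theta^{0,1},\theta^{0,2})$ bounds $\|v^s\|_{W^{l+2m}(\theta^{0,1})}$ in terms of $2^{2ms}\|f_0^s\|_{W^l(\theta^{0,2})}+\|v^s\|_{L_2(\theta^{0,2})}$ with uniform constant. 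Translating back via $\|D^\alpha_{y'}v^s\|_{L_2(\theta^{0,j})}=2^{s(|\alpha|-1)}\|D^\alpha_yv\|_{L_2(\theta^{s,j})}$ and using the near-origin asymptotic $r^{2a}(r^{2(|\alpha|-k)}+1)\sim r^{2(a-k+|\alpha|)}$, the powers of $2^s$ combine to reproduce exactly the $E_a^{l+2m}$ and $E_a^l$ norms on the two sides; the residual $\|v^s\|_{L_2}$ term becomes precisely $\|v\|_{E_{a-l-2m}^0(\theta^{s,2})}$.

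For $s\ge0$, the coefficients of $A(D_{y'},2^s\omega)$ blow up in $s$, so Lemma~\ref{l3.10} no longer supplies a uniform constant. The substitute is the parameter-elliptic estimate: because $|A(\xi,\eta)|\ge c_0(|\xi|^2+|\eta|^2)^m$ by proper ellipticity and homogeneity, one has for every $|\eta|\ge1$ and $u\in W^{l+2m}(\theta^{0,2})$
\[
\sum_{|\alpha|\le l+2m}|\eta|^{l+2m-|\alpha|}\|D^\alpha u\|_{L_2(\theta^{0,1})}\le c\Bigl(\sum_{|\alpha|\le l}|\eta|^{l-|\alpha|}\|D^\alpha A(D,\eta)u\|_{L_2(\theta^{0,2})}+|\eta|^{l+2m}\|u\|_{L_2(\theta^{0,2})}\Bigr),
\]
with $c$ independent of $\eta$. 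Applied to $v^s$ with $\eta=2^s\omega$ and translated back, the parameter weights $|\eta|^{l+2m-|\alpha|}$ exactly absorb the derivative-scaling factors $2^{s(|\alpha|-1)}$, producing a uniform bound on $2^{s(l+2m-1)}\|v\|_{W^{l+2m}(\theta^{s,1})}$ by $2^{s(l+2m-1)}\bigl(\|f_0\|_{W^l(\theta^{s,2})}+\|v\|_{L_2(\theta^{s,2})}\bigr)$. For $r\gtrsim1$ the $E_a^k$ weight reduces to $r^{2a}$ uniformly in $\alpha$, so after multiplying by $2^{2as}$ the displayed local estimate again follows, the $\|v\|_{L_2}$ piece being the $|\alpha|=0$ part of the left-hand $E_a^{l+2m}$ norm and absorbed after summation by a standard shrinking-annulus argument (choosing the cut-offs so that a small constant can be pulled out).

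The main obstacle will be the regime $s\ge0$: a naive rescaling destroys the uniformity of the coefficients, so Lemma~\ref{l3.10} alone cannot handle this case. The delicate point is to invoke the parameter-elliptic structure of $A(\xi,\eta)$ supplied by proper ellipticity in all $n$ variables, and then to check that the parameter-dependent weights appearing on both sides of the parameter-elliptic estimate precisely cancel the change-of-variable factors, so that the resulting constant, once reinterpreted through the $E_a^k$ weight, is genuinely independent of $s$ and $\omega$.
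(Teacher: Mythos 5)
Your overall strategy — dyadic annuli, the rescaling $v^s(y')=v(2^sy')$, a uniform local elliptic estimate on each rescaled annulus, and summation with absorption of the residual — is exactly the paper's, and your treatment of the regime $s\le0$ is correct. The gap is in the regime $s\ge0$, and it is not the point you flag (uniformity of the constant in $s$ and $\omega$) but the power of $|\eta|$ carried by the residual term of the parameter-elliptic estimate you invoke. With the residual $|\eta|^{l+2m}\|u\|_{L_2}$, $\eta=2^s\omega$, the term you are left with on $\theta^{s,2}$, after undoing the rescaling and inserting the weight, is $2^{2sa}\|v\|^2_{L_2(\theta^{s,2})}$ (in squared norms). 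For $r\ge1$ this is comparable, with a constant that does \emph{not} decay in $s$, to the $|\alpha|=0$ part of $\|v\|^2_{E_a^{l+2m}(\theta^{s,2})}$; summed over $s\ge1$ it is a fixed (not small) multiple of the zeroth-order piece of the left-hand side over $\{|y|>1/2\}$, and there is no mechanism to absorb it — the constant is the interior-estimate constant times the overlap multiplicity, and shrinking the cut-offs only makes it worse. Nor is this term controlled by $\|v\|_{E_{a-l-2m}^0(\{|y|<R\})}$: it lives at large $|y|$ and carries the weight $r^{2a}$ rather than $r^{2(a-l-2m)}$.

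What is needed is the sharper interior estimate whose residual is $\|u\|_{L_2(Q_2)}$ with \emph{no} factor of $|\eta|^{l+2m}$. Then the residual on $\theta^{s,2}$ becomes $2^{2s(a-l-2m)}\|v\|^2_{L_2(\theta^{s,2})}$: summed over $1\le s\le s_0$ it is controlled by $\|v\|^2_{E_{a-l-2m}^0(\{|y|<R\})}$ with $R=2^{s_0+2}$, while for $s>s_0$ it equals $2^{-2s(l+2m)}$ times the zeroth-order part of $\|v\|^2_{E_a^{l+2m}(\theta^{s,2})}$ and hence carries the small factor $2^{-2s_0(l+2m)}$, permitting absorption. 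The paper obtains precisely this sharp estimate by a device you should adopt: apply the non-parametric interior estimate of Lemma~\ref{l3.10} in all $n$ variables to the function $u(y',z)=e^{i2^s(\omega,z)}v^s(y')$ on the cylinder $Q_j^0\times\{|z_k|<j\}$. Differentiating the exponential in $z$ generates the weights $2^{2s\nu}$ on both sides, while the residual $\|u\|_{L_2(Q_2)}$ remains equal to $\|v^s\|_{L_2(Q_2^0)}$ with no parameter factor. With that replacement your argument closes and coincides with the paper's proof.
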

\begin{proof}
1. Denote $Q_1^s=\{y\in\mathbb R^2:\ 2^s<|y|<2^{s+1}\}$ and $Q_2^s=\{y\in\mathbb R^2:\
2^{s-1}<|y|<2^{s+2}\}$, $s=0,\pm1,\pm2,\dots$. Evidently, $\overline{Q_1^s}\subset
Q_2^s$. It follows from the belonging $v\in W^{l+2m}_\loc(\mathbb R^2\setminus\{0\})$
that $v\in H_a^{l+2m}(Q_2^s)\cap E_a^{l+2m}(Q_2^s)$ for any $s$ (we set $\rho(y)=|y|$ in
the definition of the spaces $H_a^{l+2m}(Q_2^s)$ and $E_a^{l+2m}(Q_2^s)$). First, we
prove that
\begin{equation}\label{3.32}
\|v\|_{E_a^{l+2m}(Q_1^s)}^2\le
k_1(\|A(\omega)v\|_{E_a^l(Q_2^s)}^2+\|v\|_{E_{a-l-2m}^0(Q_2^s)}^2),\qquad s\le0,
\end{equation}
where $k_1,k_2,\dots{>0}$ do not depend on $v$, $\omega,$ and $s$.

Set $y'=2^{-s}y$. Clearly, $y'\in Q_j^0$ for $y\in Q_j^s$, $j=1,2$;
$s=0,\pm1,\pm2,\dots$. Therefore, setting $v^s(y')=v(2^sy')$ and applying
Lemma~\ref{l3.10}, we obtain, for $s\le 0$,
\begin{multline*}
\|v\|_{H_a^{l+2m}(Q_1^s)}^2\le k_2\sum\limits_{|\alpha|\le
l+2m}2^{2s(a-l-2m+|\alpha|)}\|D^\alpha_y v(y)\|_{L_2(Q_1^s)}^2\\=
k_2\sum\limits_{|\alpha|\le l+2m}2^{2s(a-l-2m+1)}\|D^\alpha_{y'}
v^s(y')\|_{L_2(Q_1^0)}^2\\
\le k_3 2^{2s(a-l-2m+1)}\Big(\sum\limits_{|\alpha|\le l}\|D_{y'}^\alpha
A(D_{y'},2^s\omega)v^s(y')\|_{L_2(Q_2^0)}^2+\|v^s(y')\|_{L_2(Q_2^0)}^2\Big)\\
= k_3 \Big(\sum\limits_{|\alpha|\le l}2^{2s(a-l+|\alpha|)}\|D_{y}^\alpha
A(D_{y},\omega)v(y)\|_{L_2(Q_2^s)}^2+2^{2s(a-l-2m)}\|v(y)\|_{L_2(Q_2^s)}^2\Big)\\
\le k_4 (\|A(D_y,\omega)v\|_{H_a^l(Q_2^s)}^2+\|v\|_{H_{a-l-2m}^0(Q_2^s)}^2).
\end{multline*}
The latter estimate is equivalent to~\eqref{3.32}.

2. To complete the proof, it remains to show that the estimate in~\eqref{3.32} is also
valid for $s>0$. To do so, we apply Lemma~\ref{l3.10} for $\mathcal A=A(D_{y'},D_z)$,
$u(y',z)=\exp(i2^s(\omega,z))v^s(y')$, and
$$
Q_j=Q_j^0\times\{z\in \mathbb R^{n-2}:\ |z_k|<j,\ k=1,\dots,n-2\},\qquad j=1,2.
$$
 Then we obtain
\begin{multline}\label{3.35}
\sum\limits_{\nu=0}^{l+2m}2^{2s\nu}\|v^s(y')\|_{W^{l+2m-\nu}(Q_1^0)}^2\\
\le k_5\Big(\sum\limits_{\nu=0}^l
2^{2s\nu}\|A(D_{y'},2^s\omega)v^s(y')\|_{W^{l-\nu}(Q_2^0)}^2+
\|v^s(y')\|_{L_2(Q_2^0)}^2\Big).
\end{multline}
Since $s>0$, it follows that inequality~\eqref{3.35} is equivalent to the following:
$$
2^{2s(l+2m-1)}\|v(y)\|_{W^{l+2m}(Q_1^s)}^2\le k_6(
2^{2s(l+2m-1)}\|A(D_{y},\omega)v(y)\|_{W^{l}(Q_2^s)}^2+2^{-2s}\|v(y)\|_{L_2(Q_2^s)}^2\Big).
$$
Multiplying both sides of this inequality by $2^{2s(a-l-2m+1)}$ yields
\begin{equation}\label{3.36}
\|v\|_{E_a^{l+2m}(Q_1^s)}^2\le
k_1(\|A(\omega)v\|_{E_a^l(Q_2^s)}^2+2^{2s(a-l-2m)}\|v\|_{L_2(Q_2^s)}^2),\qquad s>0.
\end{equation}

Summing~\eqref{3.32} and \eqref{3.36} with respect to $s=0,-1,-2,\dots$ and
$s=1,2,\dots$, respectively, and choosing a sufficiently large $R>0$, we
obtain~\eqref{3.31}.
\end{proof}

Using Lemmas~\ref{l3.3} and~\ref{l3.9}, we can prove the following result on regularity
of solutions of the equation
\begin{equation}\label{3.40}
A(\omega)v=f_0,\qquad \omega\in S^{n-3}.
\end{equation}
\begin{lemma}\label{l3.15}
Let the closed strip bounded by the lines $\Im\lambda=a_1+1-l_1-2m$ and
$\Im\lambda=a_2+1-l_2-2m$ contain no eigenvalues of the operator-valued function $\hat
A(\lambda)$. Suppose that $v\in E_{a_1}^{l_1+2m}(\mathbb R^2)$ is a solution of
Eq.~\eqref{3.40} for some $\omega\in S^{n-3}$, with right-hand side $f_0\in
E_{a_1}^{l_1}(\mathbb R^2)\cap E_{a_2}^{l_2}(\mathbb R^2)$. Then $v\in
E_{a_2}^{l_2+2m}(\mathbb R^2)$.
\end{lemma}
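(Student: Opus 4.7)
The plan is to reduce Lemma~\ref{l3.15} to the a priori estimate of Lemma~\ref{l3.9} at the target parameters $(a_2,l_2)$, using the asymptotic/regularity result of Corollary~\ref{c3.1} applied to a localization of $v$ near the origin. Interior elliptic regularity (Lemma~\ref{l3.10}), together with $f_0\in E_{a_2}^{l_2}(\mathbb{R}^2)$, first gives $v\in W^{l_2+2m}_{\mathrm{loc}}(\mathbb{R}^2\setminus\{0\})$. Applying Lemma~\ref{l3.9} with $a=a_2$, $l=l_2$, the desired conclusion $v\in E_{a_2}^{l_2+2m}(\mathbb{R}^2)$ reduces to verifying the local low-order membership $v\in E_{a_2-l_2-2m}^0(\{|y|<R\})$ for the radius $R$ furnished by that lemma.

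To establish this local bound, I would localize near the origin. Pick $\chi\in C_0^\infty(\mathbb{R}^2)$ with $\chi\equiv 1$ on $\{|y|\le 2R\}$; since $E_{a_1}^k$ and $H_{a_1}^k$ are equivalent on the compact support of $\chi$, one has $\chi v\in H_{a_1}^{l_1+2m}(\mathbb{R}^2)$. A direct computation yields
\begin{equation*}
A(D_y,0)(\chi v)=\chi f_0-\bigl(A(D_y,\omega)-A(D_y,0)\bigr)(\chi v)+[A(D_y,\omega),\chi]v=:F.
\end{equation*}
The first summand $\chi f_0$ lies in $H_{a_1}^{l_1}(\mathbb{R}^2)\cap H_{a_2}^{l_2}(\mathbb{R}^2)$ by the hypothesis on $f_0$ and the equivalence of $E$ and $H$ on $\supp\chi$. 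The commutator $[A(D_y,\omega),\chi]v$ is supported in an annulus away from the origin where $v$ is smooth by interior elliptic regularity, and so it belongs to $H_b^k(\mathbb{R}^2)$ for every $b$ and $k$.

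The main obstacle is the perturbation $R_\omega:=(A(D_y,\omega)-A(D_y,0))(\chi v)$: since $A(D_y,\omega)-A(D_y,0)=\sum_{|\alpha|<2m,\,|\alpha|+|\beta|=2m}a_{\alpha\beta}\omega^\beta D_y^\alpha$ is a differential operator of order strictly less than $2m$ in $y$, one only gets $R_\omega\in H_{a_1}^{l_1+1}(\mathbb{R}^2)\subset H_{a_1}^{l_1}(\mathbb{R}^2)$, which suffices to place $F$ in $H_{a_1}^{l_1}$ but not immediately in $H_{a_2}^{l_2}$. I would handle this by a finite bootstrap. Since the eigenvalues of $\hat A(\lambda)$ are isolated and the closed strip between $\Im\lambda=a_j+1-l_j-2m$, $j=1,2$, contains none, one may interpolate a finite chain of pairs $(a^{(0)},l^{(0)})=(a_1,l_1),\dots,(a^{(J)},l^{(J)})=(a_2,l_2)$ whose associated lines are mutually close enough that at each step both $\chi f_0$ and the current $R_\omega$ lie in $H_{a^{(j+1)}}^{l^{(j+1)}}(\mathbb{R}^2)$, and the closed strip between consecutive lines contains no eigenvalues of $\hat A(\lambda)$. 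At each step, Corollary~\ref{c3.1} applied to $A(D_y,0)(\chi v)=F$ upgrades $\chi v$ from $H_{a^{(j)}}^{l^{(j)}+2m}$ to $H_{a^{(j+1)}}^{l^{(j+1)}+2m}$, and the improved membership is fed back into $R_\omega$ for the next step. After finitely many iterations we obtain $\chi v\in H_{a_2}^{l_2+2m}(\mathbb{R}^2)$, which yields in particular the required $v\in E_{a_2-l_2-2m}^0(\{|y|<R\})$ and closes the application of Lemma~\ref{l3.9}.
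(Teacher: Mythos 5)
Your proposal is correct and follows essentially the same route as the paper: the paper likewise splits the analysis at the origin, applies the Kondrat'ev-type asymptotic result (Lemma~\ref{l3.3}/Corollary~\ref{c3.1}) with a finite bootstrap to absorb the lower-order difference $A(D_y,\omega)-A(D_y,0)$ on the compactly supported piece, and invokes Lemma~\ref{l3.9} for the rest; the only cosmetic difference is that the paper applies Lemma~\ref{l3.9} to $\eta v$ (with $\eta$ vanishing near the origin) rather than to $v$ itself after securing the local low-order membership.
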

\begin{proof}
1. Consider a function $\eta\in C^\infty(\mathbb R)$ such that $\eta(r)=0$ for $r\le1$
and $\eta(r)=1$ for $r\ge2$. Denote by $[A(\omega),\eta]$ the commutator of $A(\omega)$
and $\eta$. It is clear that $\supp[A(\omega),\eta]v\subset\{y\in\mathbb R^2:\
1\le|y|\le2\}$. Therefore,
\begin{equation}\label{3.AEtav}
A(\omega)(\eta v)=\eta f_0+[A(\omega),\eta]v\in E_{a_1}^{l_1}(\mathbb R^2)\cap
E_{a_2}^{l_2}(\mathbb R^2)
\end{equation}
because $v\in W^{l+2m}_\loc(\mathbb R^2\setminus\{0\})$, where $l=\max(l_1,l_2)$, due to
Theorem~3.2 in~\cite[Chap.~2, Sec.~3]{LM}.

On the other hand, $\eta v$ vanishes near the origin, and hence $\eta v \in
E_{a_2-l_2-2m}^0(\{|y|<R\})$ for any $R>0$. Using this fact, relation~\eqref{3.AEtav},
and Lemma~\ref{l3.9}, we conclude that $\eta v\in E_{a_2}^{l_2+2m}(\mathbb R^2)$.

2. Since $\supp A(\omega)((1-\eta) v)\subset\{y\in\mathbb R^2:\ |y|\le2\}$, we obtain
(similarly to~\eqref{3.AEtav}) that
\begin{equation}\label{3.A1-Etav}
A(\omega)((1-\eta) v)\in H_{a_1}^{l_1}(\mathbb R^2)\cap H_{a_2}^{l_2}(\mathbb R^2).
\end{equation}
Therefore, by using Lemma~\ref{l3.3} and Remark~\ref{r3.2}, we conclude
that\footnote{Since the operator $A(\omega)$ contains lower-order terms, one must
consecutively apply Lemma~\ref{l3.3} finitely many times, cf.~\cite{Kondr,KondrOl}.}
$(1-\eta) v\in H_{a_2}^{l_2+2m}(\mathbb R^2)$, and hence $(1-\eta) v\in
E_{a_2}^{l_2+2m}(\mathbb R^2)$.

Thus, $v\in E_{a_2}^{l_2+2m}(\mathbb R^2)$.
\end{proof}

\subsection{}\label{subsec3.4}
In this subsection, we consider adjoint operators. Introduce the operator
$$
A'(\omega)=A'(D_y,\omega)=\sum\limits_{|\alpha|+|\beta|=2m}\overline{a_{\alpha\beta}}
\,\omega^\beta D_y^\alpha.
$$
The operator $A'(D_y,\omega)$ is formally adjoint to $A(D_y,\omega)$ with respect to the
Green formula, i.e.,
\begin{equation}\label{3.37}
\int\limits_{\mathbb R^2}A(D_y,\omega)u\overline{v}\,dy= \int\limits_{\mathbb
R^2}u\overline{A'(D_y,\omega)v}\,dy,\qquad \omega\in\mathbb R^{n-2},
\end{equation}
for all $u,v\in C_0^\infty(\mathbb R^2\setminus\{0\})$.

Consider the unbounded operators
\begin{gather*}
\mathcal A(\omega): \Dom(\mathcal A(\omega))\subset
E_{b-2m}^0(\mathbb R^2)\to E_{b}^0(\mathbb R^2),\\
\mathcal A(\omega)v=A(D_y,\omega)v,\qquad v\in \Dom(\mathcal A(\omega))=E_b^{2m}(\mathbb
R^2)
\end{gather*}
and
\begin{gather*}
\mathcal A'(\omega): \Dom(\mathcal A'(\omega))\subset
E_{-b}^0(\mathbb R^2)\to E_{2m-b}^0(\mathbb R^2),\\
\mathcal A'(\omega)v=A'(D_y,\omega)v,\qquad v\in \Dom(\mathcal
A'(\omega))=E_{2m-b}^{2m}(\mathbb R^2).
\end{gather*}

\begin{lemma}\label{l3.11}
The operator $\mathcal A'(\omega)$ is adjoint to $\mathcal A(\omega)$ with respect to the
inner product in $L_2(\mathbb R^2)$ for any $\omega\in S^{n-3}$.
\end{lemma}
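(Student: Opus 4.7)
The plan is to prove $\mathcal{A}(\omega)^* = \mathcal{A}'(\omega)$ by establishing the two inclusions separately, where the adjoint is taken with respect to the sesquilinear pairing $(u, v) = \int_{\mathbb{R}^2} u \bar v\, dy$. By Cauchy--Schwarz this pairing extends continuously to a duality between $E_a^0(\mathbb{R}^2)$ and $E_{-a}^0(\mathbb{R}^2)$ for each $a \in \mathbb{R}$; the embeddings $E_b^{2m} \hookrightarrow E_{b-2m}^0$ and $E_{2m-b}^{2m} \hookrightarrow E_{-b}^0$ follow from a direct comparison of norms, and $\mathcal{A}(\omega)$ is densely defined since $C_0^\infty(\mathbb{R}^2\setminus\{0\})$ is dense in $E_{b-2m}^0$ by the very definition of the weighted spaces.

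For the inclusion $\mathcal{A}'(\omega) \subseteq \mathcal{A}(\omega)^*$, I would take arbitrary $u \in E_b^{2m}$ and $v \in E_{2m-b}^{2m}$, approximate them in norm by sequences $u_n, v_n \in C_0^\infty(\mathbb{R}^2\setminus\{0\})$, apply the Green formula \eqref{3.37} to $u_n, v_n$, and pass to the limit using continuity of $\mathcal{A}(\omega): E_b^{2m} \to E_b^0$ and $\mathcal{A}'(\omega): E_{2m-b}^{2m} \to E_{2m-b}^0$ together with continuity of the duality pairings. This step is essentially automatic.

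The substance lies in the reverse inclusion. Fix $v \in E_{-b}^0$ in the domain of $\mathcal{A}(\omega)^*$ and set $f = \mathcal{A}(\omega)^* v \in E_{2m-b}^0$. Testing the identity $(\mathcal{A}(\omega) u, v) = (u, f)$ against $u \in C_0^\infty(\mathbb{R}^2 \setminus \{0\}) \subset E_b^{2m}$ and invoking the Green formula for smooth functions yields $A'(D_y, \omega) v = f$ in the sense of distributions on $\mathbb{R}^2 \setminus \{0\}$. Since $A(D_y, D_z)$ is properly elliptic in $\mathbb{R}^n$, its principal symbol does not vanish on $(\mathbb{R}^2\setminus\{0\}) \times S^{n-3}$; the symbol of $A'(D_y,\omega)$ equals $\overline{A(\xi,\omega)}$ for real $\xi,\omega$ and therefore inherits the ellipticity. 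Lemma \ref{l3.10} then provides $v \in W_\loc^{2m}(\mathbb{R}^2 \setminus \{0\})$.

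The final step is to upgrade this local regularity to $v \in E_{2m-b}^{2m}(\mathbb{R}^2)$ by invoking the direct analogue of Lemma \ref{l3.9} for the operator $A'(\omega)$ with $l = 0$ and weight exponent $2m - b$. The main obstacle, and the only real work beyond bookkeeping, is verifying this extension, though no genuinely new idea is required: the proof of Lemma \ref{l3.9} uses only proper ellipticity of $A(D_y, \omega)$ in $y$ and the joint $2m$-homogeneity $A(\lambda D_y, \lambda \omega) = \lambda^{2m} A(D_y, \omega)$ that governs the dyadic scaling $y' = 2^{-s} y$, and both properties are inherited by $A'(D_y, \omega)$ from $A(D_y, D_z)$. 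Applied to the data $v \in W_\loc^{2m}(\mathbb{R}^2\setminus\{0\}) \cap E_{-b}^0(\{|y|<1\})$ and $A'(\omega)v = f \in E_{2m-b}^0(\mathbb{R}^2)$, this analogue gives $v \in E_{2m-b}^{2m}$ and therefore $\mathcal{A}(\omega)^* v = \mathcal{A}'(\omega) v = f$, completing the proof.
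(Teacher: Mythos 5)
Your proof follows the paper's argument essentially verbatim: the inclusion $\mathcal A'(\omega)\subset\mathcal A^*(\omega)$ by density of $C_0^\infty(\mathbb R^2\setminus\{0\})$ and the Green formula \eqref{3.37}, and the reverse inclusion by interior elliptic regularity followed by the weighted estimate of Lemma~\ref{l3.9} applied to $A'(\omega)$ — where you are in fact more careful than the paper, which cites Lemma~\ref{l3.9} directly even though it is stated for $A(\omega)$ and one must check (as you do) that its proof transfers to $A'(\omega)$. One small correction: the local regularity $v\in W^{2m}_{\loc}(\mathbb R^2\setminus\{0\})$ does not follow from Lemma~\ref{l3.10}, which is an a priori estimate presupposing $u\in W^{l+2m}(Q_2)$; the paper derives it from the interior regularity theorem (Theorem~3.2 in \cite[Chap.~2, Sec.~3]{LM}), and that is the result you should invoke at that step.
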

\begin{proof}
Denote by $\mathcal A^*(\omega)$ the adjoint operator for $\mathcal A(\omega)$. Since
$C_0^\infty(\mathbb R^2\setminus\{0\})$ is dense in the spaces $E_b^{2m}(\mathbb R^2)$
and $E_{2m-b}^{2m}(\mathbb R^2)$, it follows that identity~\eqref{3.37} is valid for all
$u\in E_b^{2m}(\mathbb R^2)$ and $v\in E_{2m-b}^{2m}(\mathbb R^2)$. Therefore, $\mathcal
A'(\omega)\subset \mathcal A^*(\omega)$.

It remains to prove the inverse inclusion. Let $v\in \Dom(\mathcal A^*(\omega))\subset
E_{-b}^0(\mathbb R^2)$. Since $\mathcal A^*(\omega)v\in E_{2m-b}^0(\mathbb R^2)\subset
L_{2,\loc}(\mathbb R^2\setminus\{0\})$, it follows from Theorem~3.2 in~\cite[Chap.~2,
Sec.~3]{LM} that $v\in W^{2m}_\loc(\mathbb R^2\setminus\{0\})$. Therefore, by
Lemma~\ref{l3.9}, $v\in E_{2m-b}^{2m}(\mathbb R^2)$, and hence $\mathcal
A^*(\omega)\subset\mathcal A'(\omega)$.
\end{proof}

Consider the identity~\eqref{3.37} for $\omega=0$, substitute $u=u_1$ and $v=r^{2m-2}v_1$
into it, and set $\tau=\ln r$. Then we have
\begin{equation}\label{3.38}
\int\limits_{-\infty}^\infty d\tau\int\limits_0^{2\pi} \Big(\hat
A(\varphi,D_\varphi,D_\tau)u_1\overline{v_1}-u_1 \overline{\hat
A'(\varphi,D_\varphi,D_\tau-2i(m-1))v_1}\,\Big)d\varphi=0
\end{equation}
for all $u_1,v_1\in\{u\in C_0^\infty([0,2\pi]\times \mathbb R):\
D_\varphi^ju|_{\varphi=0}=D_\varphi^ju|_{\varphi=2\pi},\ j=0,1,\dots\}$, where $\hat
A'(\varphi,D_\varphi,D_\tau)$ is defined similarly to $\hat A(\varphi,D_\varphi,D_\tau)$.

Consider functions $\psi,\hat\psi\in C_0^\infty(\mathbb R)$ such that
$$
\psi(\tau)=0\quad\text{for}\quad |\tau|>1,\qquad
\int\limits_{-\infty}^\infty\psi(\tau)d\tau=1,
$$
$$
\hat\psi(\tau)=1\quad\text{for}\quad |\tau|<1,\qquad \hat\psi(\tau)=0\quad\text{for}\quad
|\tau|>2.
$$
Substituting $u_1=e^{i\lambda\tau}\psi(\tau)u_2(\varphi)$ and
$v_1=e^{i\overline{\lambda}\tau}\hat\psi(\tau)v_2(\varphi)$ into~\eqref{3.38}, we obtain
\begin{equation}\label{3.39}
\int\limits_0^{2\pi} \Big(\hat A(\varphi,D_\varphi,\lambda)u_2\overline{v_2}-u_2
\overline{\hat A'(\varphi,D_\varphi,\overline{\lambda}-2i(m-1))v_2}\,\Big)d\varphi=0
\end{equation}
for all $u_2,v_2\in C_{2\pi}^\infty[0,2\pi]$ and $\lambda\in\mathbb C$.

Along with $\hat A(\lambda)$, we consider the operator-valued function $\hat
A'(\lambda):W_{2\pi}^{2m}(0,2\pi)\to L_2(0,2\pi)$ given by
$$
\hat A'(\lambda)w=\hat A'(\varphi,D_\varphi,\lambda)w.
$$

We also introduce the unbounded operators
$$
\hat {\mathcal A}(\lambda),\hat {\mathcal A}'(\lambda):\Dom(\hat {\mathcal
A}(\lambda))=\Dom(\hat {\mathcal A}'(\lambda))\subset L_2(0,2\pi)\to L_2(0,2\pi)
$$
given by
$$
\hat {\mathcal A}(\lambda)w=\hat A(\varphi,D_\varphi,\lambda)w,\qquad w\in \Dom(\hat
{\mathcal A}(\lambda))=W_{2\pi}^{2m}(0,2\pi),
$$
$$
\hat {\mathcal A}'(\lambda)w=\hat A'(\varphi,D_\varphi,\lambda)w,\qquad w\in \Dom(\hat
{\mathcal A}'(\lambda)).
$$

Similarly to Lemma~\ref{l3.11}, we conclude from~\eqref{3.39} that the operator
$\hat{\mathcal A}'(\overline{\lambda}-2i(m-1))$ is adjoint to $\hat{\mathcal A}(\lambda)$
with respect to the inner product in $L_2(0,2\pi)$ for any $\lambda\in\mathbb C$. This
fact and the fact that $\hat A(\lambda)$ is a Fredholm operator with $\ind\hat
A(\lambda)=0$ imply:
\begin{lemma}\label{l3.12}
A number $\lambda$ is an eigenvalue of $\hat A(\lambda)$ iff $\overline{\lambda}-2i(m-1)$
is an eigenvalue of $\hat A'(\lambda)$.
\end{lemma}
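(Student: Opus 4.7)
The plan is to deduce the lemma from the two facts collected just above it: the adjointness relation $\hat{\mathcal A}(\lambda)^* = \hat{\mathcal A}'(\overline{\lambda}-2i(m-1))$ (with respect to the $L_2(0,2\pi)$ inner product) and the Fredholm property of $\hat A(\lambda)$ with index zero.

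First I would unwind the definitions. By definition, $\lambda$ is an eigenvalue of the operator-valued function $\hat A(\lambda)$ precisely when $\ker\hat A(\lambda)\neq\{0\}$, i.e., when there exists a nonzero $w\in W_{2\pi}^{l+2m}(0,2\pi)$ with $\hat A(\varphi,D_\varphi,\lambda)w=0$. Since $\hat A(\varphi,D_\varphi,\lambda)$ is a $2m$-th order elliptic ODE with smooth coefficients, its solutions are automatically smooth, so this kernel coincides with $\ker\hat{\mathcal A}(\lambda)\subset W_{2\pi}^{2m}(0,2\pi)$; in particular the notion of eigenvalue is the same for the bounded realization $\hat A(\lambda)$ and the unbounded realization $\hat{\mathcal A}(\lambda)$ on $L_2(0,2\pi)$. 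The same observation applies to $\hat A'(\lambda)$ and $\hat{\mathcal A}'(\lambda)$.

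Next I would combine the two given facts. Because $\hat A(\lambda)$ is Fredholm with $\ind\hat A(\lambda)=0$, we have
\begin{equation*}
\dim\ker\hat{\mathcal A}(\lambda) = \codim\mathcal R(\hat{\mathcal A}(\lambda)).
\end{equation*}
Fredholmness also yields that $\mathcal R(\hat{\mathcal A}(\lambda))$ is closed in $L_2(0,2\pi)$, so by the standard duality relation for densely defined closed operators,
\begin{equation*}
\mathcal R(\hat{\mathcal A}(\lambda))^\perp = \ker\hat{\mathcal A}(\lambda)^*.
\end{equation*}
Together with the adjoint identification $\hat{\mathcal A}(\lambda)^* = \hat{\mathcal A}'(\overline{\lambda}-2i(m-1))$, this gives
\begin{equation*}
\dim\ker\hat{\mathcal A}(\lambda) = \dim\ker\hat{\mathcal A}'(\overline{\lambda}-2i(m-1)).
\end{equation*}
Hence $\ker\hat A(\lambda)\neq\{0\}$ if and only if $\ker\hat A'(\overline{\lambda}-2i(m-1))\neq\{0\}$, which is exactly the statement of the lemma.

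The only delicate point, and the one I would be careful about, is the passage between the bounded operator-valued function $\hat A(\lambda)$ on the Sobolev scale and the unbounded realization $\hat{\mathcal A}(\lambda)$ on $L_2$: one must verify that the kernels coincide (which follows from interior regularity for the elliptic ODE) and that the Fredholm/closed-range property transfers from $\hat A(\lambda)$ to $\hat{\mathcal A}(\lambda)$ so that the $L_2$-duality argument can be applied. Once this matching is established, the rest is a one-line consequence of index zero plus the adjoint formula.
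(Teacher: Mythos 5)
Your proof is correct and is essentially the argument the paper intends: the paper itself derives the lemma in one sentence from the adjointness relation $\hat{\mathcal A}(\lambda)^*=\hat{\mathcal A}'(\overline{\lambda}-2i(m-1))$ and the fact that $\hat A(\lambda)$ is Fredholm of index zero, which is exactly the duality-plus-index-zero computation you carry out. Your explicit attention to matching the kernels and the closed-range property between the Sobolev-scale realization and the unbounded $L_2$ realization fills in a step the paper leaves implicit, and is handled correctly via elliptic regularity.
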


\subsection{}\label{subsec3.5}
In this subsection, using the results of Secs.~\ref{subsec3.1}--\ref{subsec3.4}, we prove
the following result.
\begin{lemma}\label{l3.14}
Let $a\in\mathbb R$, $l\ge0$ be an integer, and $\omega\in S^{n-3}$. Then the operator
$A(\omega):E_a^{l+2m}(\mathbb R^2)\to E_a^{l}(\mathbb R^2)$ is not an isomorphism.
\end{lemma}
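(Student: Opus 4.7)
The plan is to split the argument based on whether the line $\Im\lambda=a+1-l-2m$ meets the spectrum of the pencil $\hat A(\lambda)$ introduced in Sec.~\ref{subsec3.1}.

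In the first case, if the line contains an eigenvalue of $\hat A(\lambda)$, then Lemma~\ref{l3.8} shows immediately that $A(\omega)$ fails to be Fredholm as a map $E_a^{l+2m}(\mathbb R^2)\to E_a^l(\mathbb R^2)$, and in particular is not an isomorphism. This disposes of one half of the dichotomy in a single step.

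In the remaining case, when the line carries no eigenvalue, $A(\omega)$ is Fredholm by Lemma~\ref{l3.8}, and I would show that it nonetheless fails to be an isomorphism by exhibiting a nontrivial element of the cokernel. The strategy rests on the following observation about the nonhomogeneous weight defining $E_a^k$: a pure power mode $r^{i\lambda}\psi(\varphi)$ lies in $E_a^{l+2m}$ near the origin only when $\Im\lambda<a+1-l-2m$, while the $r^{2a}$ weight at infinity demands $\Im\lambda>a+1$, and these two requirements are incompatible. Consequently no pure power mode belongs to $E_a^{l+2m}$, yet Lemma~\ref{l3.8} only rules out failure of Fredholm behavior at the \emph{origin} line. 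Pick an eigenvalue $\lambda_0$ of $\hat A(\lambda)$ that lies strictly between the two lines (such $\lambda_0$ exists because $\lambda=0$ is always an eigenvalue with eigenfunction $\psi_0\equiv1$---the homogeneous operator $A(D_y,0)$ of positive order $2m$ annihilates constants---and because the pencil has infinitely many eigenvalues accumulating at $\pm i\infty$, including polynomial-type eigenvalues at $\lambda=-ik$ for $k=0,1,\ldots,2m-1$ and their adjoint counterparts at $\lambda=i(k-2(m-1))$ obtained via Lemma~\ref{l3.12}). With the corresponding eigenfunction $\psi_0(\varphi)$, form the cut-off function $v_0(y)=\chi(r)\,r^{i\lambda_0}\psi_0(\varphi)$ with a smooth $\chi$. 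A direct computation shows that $A(\omega)v_0$ is supported in a compact annulus and therefore lies in $E_a^l$; however, no $v\in E_a^{l+2m}$ can satisfy $A(\omega)v=A(\omega)v_0$, because the difference $v_0-v$ would be a homogeneous solution near the origin whose leading term $r^{i\lambda_0}\psi_0(\varphi)$ is forbidden by the incompatibility above. This produces the desired cokernel element.

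The main obstacle is making this construction uniform in the parameter $\omega\in S^{n-3}$ and in the pairs $(a,l)$. For $\omega=0$ the ansatz $r^{i\lambda_0}\psi_0(\varphi)$ is a genuine local solution of $A(D_y,0)v=0$, but for $\omega\ne 0$ the lower-order $D_y$-terms induced by $\omega$ spoil exact homogeneity, so $v_0$ must be built instead as a formal asymptotic series near the origin (the leading term being the eigenfunction, subsequent terms compensating for the lower-order remainder) and then truncated and cut off; this is where Lemma~\ref{l3.15} controls the regularity of the remainder across successive strips, and Lemma~\ref{l3.9} supplies the a priori estimates needed to verify that $A(\omega)v_0\in E_a^l$ with the announced support properties. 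The adjoint viewpoint of Lemma~\ref{l3.11} together with Lemma~\ref{l3.12} then converts ``no $v\in E_a^{l+2m}$ solves the equation'' into the statement that a nonzero functional on $E_a^l$ vanishes on the image of $A(\omega)$, completing the cokernel construction.
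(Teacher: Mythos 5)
Your treatment of the resonant case (the line $\Im\lambda=a+1-l-2m$ meets the spectrum of $\hat A(\lambda)$) is correct and coincides with the paper's use of Lemma~\ref{l3.8}. The non-resonant case, however, contains a genuine gap. The central claim --- that no $v\in E_a^{l+2m}(\mathbb R^2)$ can solve $A(\omega)v=A(\omega)v_0$ because ``the difference $v_0-v$ would be a homogeneous solution near the origin whose leading term $r^{i\lambda_0}\psi_0$ is forbidden'' --- is not a valid deduction and is in fact false for a generic eigenvalue $\lambda_0$ in the strip $a+1-l-2m<\Im\lambda_0<a+1$. The difference $w=v_0-v$ is a solution of $A(\omega)w=0$ on all of $\mathbb R^2\setminus\{0\}$ which is singular at the origin and decays at infinity; the local membership criterion does not exclude such $w$, and whenever one exists your $A(\omega)v_0$ lies in the range. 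Concretely, take $A=-\Delta$, $m=1$, $l=0$, $a=1/2$: the strip is $(-1/2,3/2)$ and contains the eigenvalue $\lambda_0=i$ ($\Im\lambda_0=1$, eigenfunction $\cos\varphi$, power solution $r^{-1}\cos\varphi$). With $v_0=\chi(r)r^{-1}\cos\varphi$ ($\chi$ a cut-off near the origin) one has $v_0\notin E_{1/2}^2(\mathbb R^2)$, yet $v=v_0-c\,K_1(r)\cos\varphi$, with $K_1$ the Macdonald function ($c\,K_1(r)=r^{-1}+O(r\ln r)$ as $r\to0$, exponential decay at infinity), belongs to $E_{1/2}^2(\mathbb R^2)$ and satisfies $(-\Delta+1)v=(-\Delta+1)v_0$. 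So this choice of $\lambda_0$ produces no cokernel element. Which eigenvalue in the strip actually obstructs surjectivity is dictated by the kernel of the adjoint operator in the dual weight (here $\lambda_0=0$, i.e.\ constants, does work, because $K_0\in E_{3/2}^2(\mathbb R^2)$ lies in $\mathcal N(\mathcal A'(\omega))$); your construction contains no mechanism for identifying it.

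That missing mechanism is precisely what the paper's proof supplies. The paper fixes one canonical eigenvalue, $\lambda_0=i(1-m)$, coming from homogeneous polynomials of degree $m-1$; it uses the adjoint symmetry $\overline{A(D_y,\omega)u(y)}=[A'(D_{y'},\omega)\overline{u(-y')}]|_{y'=-y}$ together with Lemma~\ref{l3.11} to show that an isomorphism at weight $a$ would force one at the reflected weight $2m-a$; and it then invokes Lemma~\ref{l3.17}, which forbids isomorphisms on both sides of an eigenvalue line. A secondary, fixable issue: your existence claim for $\lambda_0$ rests on the eigenvalues $\lambda=-ik$, $k=0,\dots,2m-1$, and their images under Lemma~\ref{l3.12}, which only populate the lines $1-2m\le\Im\lambda\le1$; for $|a|$ large the strip $(a+1-l-2m,\,a+1)$ misses all of these, and you would need the eigenvalues coming from homogeneous polynomials of arbitrary degree (and their adjoint counterparts) to cover every case.
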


We preliminarily prove two lemmas on the property of the operator $A(\omega)$ to be an
isomorphism. These lemmas, together with properties of the adjoint operator, will enable
us to prove Lemma~\ref{l3.14} and hence Theorem~\ref{t3.1}.

\begin{lemma}\label{l3.16}
Assume that the strip $a_2+1-l-2m<\Im\lambda<a_1+1-l-2m$ contains no eigenvalues of the
operator-valued function $\hat A(\lambda)$. If the operator $A(\omega)$, $\omega\in
S^{n-3}$, is an isomorphism for some $a=a_0\in(a_2,a_1)$, then it is an isomorphism for
all $a\in(a_2,a_1)$.
\end{lemma}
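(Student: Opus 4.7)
The plan is to establish injectivity, closedness of the image, and surjectivity of $A(\omega):E_a^{l+2m}(\mathbb R^2)\to E_a^l(\mathbb R^2)$ for arbitrary $a\in(a_2,a_1)$, using the isomorphism at $a_0$ together with the regularity result (Lemma~\ref{l3.15}) as the ``bridge'' between weighted spaces.

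\textbf{Fredholm property and injectivity.} First, for any $a\in(a_2,a_1)$ the line $\Im\lambda=a+1-l-2m$ lies strictly inside the open strip $(a_2+1-l-2m,\,a_1+1-l-2m)$, which is free of eigenvalues of $\hat A(\lambda)$ by hypothesis. Lemma~\ref{l3.8} then gives that $A(\omega)$ has the Fredholm property at every $a\in(a_2,a_1)$; in particular the image $\mathcal R(A(\omega))$ is closed in $E_a^l(\mathbb R^2)$. For injectivity, suppose $v\in E_a^{l+2m}(\mathbb R^2)$ satisfies $A(\omega)v=0$. The closed strip bounded by the horizontal lines $\Im\lambda=a+1-l-2m$ and $\Im\lambda=a_0+1-l-2m$ is contained in the (open) eigenvalue-free strip, so Lemma~\ref{l3.15} applies (with $l_1=l_2=l$ and zero right-hand side) and yields $v\in E_{a_0}^{l+2m}(\mathbb R^2)$. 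Since $A(\omega)$ is an isomorphism at $a_0$, we conclude $v=0$.

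\textbf{Surjectivity via density.} Let $f_0\in E_a^l(\mathbb R^2)$. Choose a sequence $f_k\in C_0^\infty(\overline{\mathbb R^2}\setminus\{0\})$ with $f_k\to f_0$ in $E_a^l(\mathbb R^2)$; this is possible by the definition of the weighted space as a completion. Each such $f_k$ has compact support away from the origin, so $f_k\in E_{a_0}^l(\mathbb R^2)$ as well. Because $A(\omega)$ is an isomorphism at $a_0$, there is a unique $v_k\in E_{a_0}^{l+2m}(\mathbb R^2)$ with $A(\omega)v_k=f_k$. Applying Lemma~\ref{l3.15} once more (now with $f_k\in E_{a_0}^l\cap E_a^l$ and the same eigenvalue-free closed strip between $a_0$ and $a$) gives $v_k\in E_a^{l+2m}(\mathbb R^2)$. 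Hence $f_k\in \mathcal R(A(\omega))$ as an operator acting on $E_a^{l+2m}(\mathbb R^2)$. Since $\mathcal R(A(\omega))$ is closed by the first paragraph and $f_k\to f_0$ in $E_a^l(\mathbb R^2)$, we obtain $f_0\in\mathcal R(A(\omega))$. Thus $A(\omega)$ is surjective and, combined with injectivity, is an isomorphism.

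\textbf{Main obstacle.} The only delicate point is the hypothesis check when invoking Lemma~\ref{l3.15}: that lemma demands the \emph{closed} strip between the two weight lines contain no eigenvalues of $\hat A(\lambda)$. This is precisely where the \emph{strict} inclusions $a,a_0\in(a_2,a_1)$ (as opposed to the endpoints) are used, since they force the closed interval between $a+1-l-2m$ and $a_0+1-l-2m$ to sit in the open eigenvalue-free strip. Modulo that observation, the proof is a routine combination of density of $C_0^\infty(\overline{\mathbb R^2}\setminus\{0\})$ in the weighted spaces with the Fredholm property coming from Lemma~\ref{l3.8}.
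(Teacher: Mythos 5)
Your proof is correct and follows essentially the same route as the paper's: Lemma~\ref{l3.15} transfers kernel elements and solutions between the weights $a$ and $a_0$ (using that both lines lie in the eigenvalue-free open strip), Lemma~\ref{l3.8} gives closedness of the range, and density of $C_0^\infty(\overline{\mathbb R^2}\setminus\{0\})$ in $E_a^l(\mathbb R^2)$ completes surjectivity. Your explicit check that the closed strip between the two weight lines is eigenvalue-free is a point the paper leaves implicit, but the argument is the same.
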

\begin{proof}
1. It suffices to prove that  $\mathcal N(A(\omega))=\{0\}$ and $\mathcal
R(A(\omega))=E_b^l(\mathbb R^2)$ for each $a=b\in(a_2,a_1)$. It follows from
Lemma~\ref{l3.15} that, if $v\in\mathcal N(A(\omega))$ for $a=b$, $a_2<b<a_1$, then $v\in
\mathcal N(A(\omega))$ for $a=a_0$. Hence $v=0$, i.e., $\mathcal N(A(\omega))=\{0\}$ for
$a=b$.

2. Consider Eq.~\eqref{3.40} for $f_0\in C_0^\infty(\mathbb R^2\setminus\{0\})$ and
$a=a_0$. By assumption, this equation has a unique solution $v\in E_{a_0}^{l+2m}(\mathbb
R^2)$. By virtue of Lemma~\ref{l3.15}, $v\in E_b^{l+2m}(\mathbb R^2)$. Lemma~\ref{l3.8}
implies that $\mathcal R(A(\omega))$ (for $a=b$) is closed in $E_b^l(\mathbb R^2)$.
Combining this with the fact that $C_0^\infty(\mathbb R^2\setminus\{0\})$ is dense in
$E_b^l(\mathbb R^2)$ yields $\mathcal R(A(\omega))=E_b^l(\mathbb R^2)$.
\end{proof}

\begin{lemma}\label{l3.17}
Assume that each of the lines $\Im\lambda=a_2+1-l-2m$ and $\Im\lambda=a_1+1-l-2m$
contains an eigenvalue of the operator-valued function $\hat A(\lambda)$, and let the
strip $a_2+1-l-2m<\Im\lambda<a_1+1-l-2m$ contain no eigenvalues of the operator-valued
function $\hat A(\lambda)$. If the operator $A(\omega)$, $\omega\in S^{n-3}$, is an
isomorphism for some $a=a_0\in(a_2,a_1)$, then it is not an isomorphism for
$a\notin(a_2,a_1)$.
\end{lemma}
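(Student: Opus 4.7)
The plan is to treat separately the boundary case $a \in \{a_1, a_2\}$ (where the Fredholm property itself fails) and the case $a \notin [a_2, a_1]$ (where I derive a contradiction using an asymptotic-expansion argument).

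In the boundary case $a \in \{a_1, a_2\}$, the hypothesis that each line $\Im\lambda = a_i + 1 - l - 2m$ ($i=1,2$) carries an eigenvalue of $\hat A(\lambda)$ immediately prevents $A(\omega)\colon E_a^{l+2m}(\mathbb R^2) \to E_a^l(\mathbb R^2)$ from having the Fredholm property, by Lemma~\ref{l3.8}, so it cannot be an isomorphism.

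For the main case $a \notin [a_2, a_1]$, it is enough by symmetry to treat $a > a_1$. Assume for contradiction that $A(\omega)$ is an isomorphism. Since an isomorphism is Fredholm, Lemma~\ref{l3.8} gives that the line $\Im\lambda = a+1-l-2m$ contains no eigenvalues of $\hat A(\lambda)$. Fix $f_0 \in C_0^\infty(\mathbb{R}^2 \setminus \{0\}) \subset E_a^l \cap E_{a_0}^l$ and use the two isomorphism properties to obtain the unique solutions $u \in E_a^{l+2m}$ and $u_0 \in E_{a_0}^{l+2m}$ of $A(\omega) v = f_0$. The central step is to establish the asymptotic decomposition
\begin{equation*}
u = u_0 + \sum_{j,k,q} \alpha_j^{kq}(f_0)\, V_j^{kq}(y),
\end{equation*}
where the sum runs over the canonical Jordan chains of $\hat A(\lambda)$ at the eigenvalues $\lambda_j$ with $a_0+1-l-2m < \Im\lambda_j < a+1-l-2m$ (which, by the hypothesis, includes all eigenvalues on $\Im\lambda = a_1+1-l-2m$), the $V_j^{kq}$ are the singular solutions~\eqref{3.10} augmented by lower-order correction terms so that $A(\omega) V_j^{kq} = 0$ exactly, and $\alpha_j^{kq}$ are linear functionals on $E_a^l \cap E_{a_0}^l$. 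This decomposition is obtained by passing to $H$-spaces via the cutoff decomposition from the proof of Lemma~\ref{l3.15}, applying Lemma~\ref{l3.3} together with Remark~\ref{r3.2}, and then iterating to absorb the perturbation $A(\omega) - A(0)$ into the corrections of the $V_j^{kq}$, in the manner indicated by the footnote to that proof.

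The contradiction then comes from matching asymptotic behaviors on the two sides of the decomposition. Each $V_j^{kq}$ associated with an eigenvalue on $\Im\lambda = a_1+1-l-2m$ behaves like $r^{l+2m-a_1-1}$ at both $0$ and $\infty$ (modulo logarithmic factors and lower-order corrections), a behavior incompatible with the requirement that $u - u_0$ belong to $E_a^{l+2m} + E_{a_0}^{l+2m}$ with the prescribed weight restrictions at the two ends. The only way out is that $\alpha_j^{kq}(f_0) = 0$ for every $f_0 \in C_0^\infty(\mathbb R^2 \setminus \{0\})$. The principal obstacle I anticipate is verifying that at least one functional $\alpha_j^{kq}$ is not identically zero on $C_0^\infty(\mathbb R^2 \setminus \{0\})$; this is done by representing $\alpha_j^{kq}$ as a pairing with a Jordan chain of the adjoint operator-valued function $\hat A'(\lambda)$, whose eigenvalues by Lemma~\ref{l3.12} are located at $\overline{\lambda_j} - 2i(m-1)$, and then exhibiting an explicit $f_0$ on which this pairing does not vanish, yielding the required contradiction.
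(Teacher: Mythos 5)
Your handling of the boundary case $a\in\{a_1,a_2\}$ via Lemma~\ref{l3.8} is correct, but the main argument has two essential flaws. First, the reduction of $a<a_2$ to $a>a_1$ ``by symmetry'' does not work: the two cases are genuinely asymmetric. For $a>a_1$ the space $E_a^{l+2m}(\mathbb R^2)$ admits, near the origin, the singular solutions $r^{i\lambda_0}\psi^0(\varphi)$ attached to an eigenvalue $\lambda_0$ on the line $\Im\lambda=a_1+1-l-2m$, so what can fail is injectivity; for $a<a_2$ those singular solutions do not belong to $E_a^{l+2m}$ (neither near the origin nor, in general, near infinity), and what fails is surjectivity. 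The paper handles $a<a_2$ by passing to the formally adjoint operator $A'(\omega)$ at the dual weight and applying the kernel argument to it (Lemmas~\ref{l3.11} and~\ref{l3.12}); some such duality step is unavoidable, and your sketch contains no substitute for it.

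Second, and more seriously, the contradiction mechanism for $a>a_1$ is inverted. A nonzero coefficient $\alpha_j^{kq}(f_0)$ attached to an eigenvalue on $\Im\lambda=a_1+1-l-2m$ is \emph{not} incompatible with $u-u_0\in E_a^{l+2m}+E_{a_0}^{l+2m}$: the cut-off singular function $(1-\eta)v_j^{kq}$ lies in $E_a^{l+2m}$ precisely because $a>a_1$, and its global correction (obtained by solving $A(\omega)v=F$ with $F$ supported away from the origin in an intermediate weight class via Lemma~\ref{l3.16}, then bootstrapping with Lemma~\ref{l3.9}) also lies in $E_a^{l+2m}$. So you cannot conclude that $\alpha_j^{kq}(f_0)=0$, and the subsequent, only sketched, step of exhibiting an $f_0$ with $\alpha_j^{kq}(f_0)\neq0$ would not produce a contradiction even if carried out. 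The correct use of these singular terms is the opposite one: $w=(1-\eta)\,r^{i\lambda_0}\psi^0(\varphi)-v$ is a \emph{nonzero element of the kernel} of $A(\omega):E_a^{l+2m}(\mathbb R^2)\to E_a^l(\mathbb R^2)$ for every $a>a_1$ (nonzero because $(1-\eta)r^{i\lambda_0}\psi^0\notin E_b^{l+2m}(\mathbb R^2)$ for $b\le a_1$ while $v$ does belong to such a space), which contradicts the isomorphism assumption directly, without any asymptotic decomposition of $u-u_0$ and without any analysis of the functionals $\alpha_j^{kq}$.
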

\begin{proof}
1. Let us prove that $\dim\mathcal N(A(\omega))>0$ for $a>a_1$. Set
$$
u=r^{i\lambda_0}\psi^0(\varphi),
$$
where $\lambda_0$ is an eigenvalue  of the operator-valued function $\hat A(\lambda)$
such that $\Im\lambda_0=a_1+1-l-2m$ and $\psi^0(\varphi)$ is the corresponding
eigenvector. In this case,
\begin{equation}\label{3.42}
A(0)u=0.
\end{equation}
Therefore,
$$
A(\omega)((1-\eta)u)=[A(0),(1-\eta)]u+(A(\omega)-A(0))((1-\eta)u)\equiv F,
$$
where $\eta$ is the same function as in the proof of Lemma~\ref{l3.15}.

Note that
\begin{equation}\label{3.(1-eta)u_notin}
(1-\eta)u\notin E_b^{l+2m}(\mathbb R^2)\qquad\text{for any}\qquad b\le a_1,
\end{equation}
\begin{equation}\label{3.(1-eta)u_in}
(1-\eta)u\in E_a^{l+2m}(\mathbb R^2)\qquad\text{for any}\qquad a>a_1.
\end{equation}

Clearly,
\begin{equation}\label{3.FinE}
F\in E_b^l(\mathbb R^2) \qquad\text{for any}\qquad b\in(a_1-1,+\infty).
\end{equation}
 By Lemma~\ref{l3.16}, Eq.~\eqref{3.40} with the right-hand side
$f_0=F$ has a unique solution $v\in E_b^{l+2m}(\mathbb R^2)$, where
$b\in(a_2,a_1)\cap(a_1-1,+\infty)$. In particular, this implies that  $w=(1-\eta)u-v$ is
not the zero function due to~\eqref{3.(1-eta)u_notin}.

Further, using the relation $v\in E_b^{l+2m}(\mathbb R^2)\subset E_b^{0}(\mathbb R^2)$
and taking~\eqref{3.FinE} into account, we deduce from Lemma~\ref{l3.9} that $v\in
E_{b+l+2m}^{l+2m}(\mathbb R^2)$. Repeating these arguments finitely many times, we obtain
that $v\in E_{a}^{l+2m}(\mathbb R^2)$ for any $a>a_1$. Combining this relation
with~\eqref{3.(1-eta)u_in}, we see that $w\in E_{a}^{l+2m}(\mathbb R^2)$ for $a>a_1$, and
hence $w\in\mathcal N(A(\omega))$ for $a>a_1$.

2. Now let $a<a_2$. If the line $\Im\lambda=a+1-l-2m$ contains an eigenvalue of $\hat
A(\lambda)$, then the conclusion of this lemma follows from Lemma~\ref{l3.8}. Therefore,
we assume that the line $\Im\lambda=a+1-l-2m$ contains no eigenvalues of $\hat
A(\lambda)$. In this case, the set $\mathcal R(A(\omega))$ is closed both for $A(\omega):
E_{a-l}^{2m}(\mathbb R^2)\to E_{a-l}^{0}(\mathbb R^2)$ and for $A(\omega):
E_{a}^{l+2m}(\mathbb R^2)\to E_{a}^{l}(\mathbb R^2)$.

2a. First, we prove that $d=\dim\mathcal R(A(\omega))^\bot>0$ for
$$A(\omega): E_{a-l}^{2m}(\mathbb R^2)\to E_{a-l}^{0}(\mathbb
R^2).$$ By virtue of Lemma~\ref{l3.12}, the lines $\Im\lambda=l+2m-a_2+1-2m$ and
$\Im\lambda=l+2m-a_1+1-2m$ contain eigenvalues of $\hat A'(\lambda)$, while the strip
$l+2m-a_1+1-2m<\Im\lambda<l+2m-a_2+1-2m$ contains no eigenvalues of $\hat A'(\lambda)$.
By assumption, the operator $A(\omega): E_{a_0}^{l+2m}(\mathbb R^2)\to
E_{a_0}^{l}(\mathbb R^2)$, $\omega\in S^{n-3}$, is an isomorphism. Therefore, by
Lemmas~\ref{l3.9} and~\ref{l3.8}, the operator $A(\omega): E_{a_0-l}^{2m}(\mathbb R^2)\to
E_{a_0-l}^{0}(\mathbb R^2)$ is also an isomorphism. Now it follows from Lemma~\ref{l3.11}
that the operator $$A'(\omega): E_{l+2m-a_0}^{2m}(\mathbb R^2)\to
E_{l+2m-a_0}^{0}(\mathbb R^2)$$ is an isomorphism. Applying part~1 of this proof to the
operator $A'(\omega)$, we conclude that $\dim\mathcal N(A'(\omega))>0$ for $A'(\omega):
E_{b}^{2m}(\mathbb R^2)\to E_{b}^{0}(\mathbb R^2)$, where $b>l+2m-a_2$. Therefore, by
virtue of Lemma~\ref{l3.11}, $d>0$ for $A(\omega): E_{a-l}^{2m}(\mathbb R^2)\to
E_{a-l}^{0}(\mathbb R^2)$, where $2m-(a-l)>l+2m-a_2$, i.e., $a<a_2$.

2b. It remains to prove that $\dim\mathcal R(A(\omega))^\bot=d$ for $$A(\omega):
E_{a}^{l+2m}(\mathbb R^2)\to E_{a}^{l}(\mathbb R^2).$$  Due to part~1b of the proof,
Eq.~\eqref{3.40} with right-hand side $f_0\in E_{a-l}^{0}(\mathbb R^2)$ has a solution
$v\in E_{a-l}^{2m}(\mathbb R^2)$ iff
\begin{equation}\label{3.43}
(f_0,f_j)_{E_{a-l}^0(\mathbb R^2)}=0,\qquad j=1,\dots,d,
\end{equation}
where $f_1,\dots,f_d\in E_{a-l}^0(\mathbb R^2)$ are linearly independent functions. It
follows from Lem\-ma~\ref{l3.9} that conditions~\eqref{3.43} are necessary and sufficient
for Eq.~\eqref{3.40} with right-hand side $f_0\in E_{a}^{l}(\mathbb R^2)$ to have a
solution $v\in E_{a}^{l+2m}(\mathbb R^2)$. It follows from the Schwarz inequality and
from the boundedness of the embedding $E_a^l(\mathbb R^2)\subset E_{a-l}^0(\mathbb R^2)$
that
$$
|(f_0,f_j)_{E_{a-l}^0(\mathbb R^2)}|\le\|f_0\|_{E_{a-l}^0(\mathbb
R^2)}\|f_j\|_{E_{a-l}^0(\mathbb R^2)}\le c\|f_0\|_{E_{a}^l(\mathbb
R^2)}\|f_j\|_{E_{a-l}^0(\mathbb R^2)},
$$
where $c>0$ does not depend on $f_0$. Hence, by Riesz' theorem, there are functions
$F_j\in E_{a}^l(\mathbb R^2)$, $j=1,\dots,d$, such that
$$
(f_0,f_j)_{E_{a-l}^0(\mathbb R^2)}=(f_0,F_j)_{E_{a}^l(\mathbb R^2)}\qquad\text{for
all}\qquad f_0\in E_{a}^l(\mathbb R^2),
$$
and the functions $F_j$ are linearly independent. Thus, $\dim\mathcal R(A(\omega))^\bot$
in $E_a^l(\mathbb R^2)$ is equal to $d$.
\end{proof}

\begin{proof}[Proof of Lemma~$\ref{l3.14}$]
1. First, we show that the operator
$$A(\omega):E_{l+m}^{l+2m}(\mathbb R^2)\to E_{l+m}^{l}(\mathbb
R^2)$$ is not an isomorphism for any $l\ge0$. To do so, we prove that $\lambda_0=i(1-m)$
is an eigenvalue of $\hat A(\lambda)$. Consider a homogeneous polynomial $q(y)$ of order
$m-1$ and write it in the polar coordinates, $q(y)=r^{m-1}\tilde q(\varphi)$, where
$\tilde q\in C_{2\pi}^\infty[0,2\pi]$. We have
$$
0=A(D_y,0)q(y)=r^{-2m}\hat A(\varphi,D_\varphi,rD_r)(r^{m-1}\tilde
q(\varphi))=r^{-m-1}\hat A(\varphi,D_\varphi,i(1-m))\tilde q(\varphi).
$$
Hence, $\lambda_0=i(1-m)$ is an eigenvalue and $\tilde q(\varphi)$ is the corresponding
eigenvector. Since the line $\Im\lambda=1-m=m+1-2m$ contains the eigenvalue $\lambda_0$,
it follows from Lemma~\ref{l3.8} that the operator $A(\omega):E_{l+m}^{l+2m}(\mathbb
R^2)\to E_{l+m}^{l}(\mathbb R^2)$ does not have the Fredholm property. Therefore, it is
not an isomorphism.

2. Now we prove that the operator $A(\omega):E_{a}^{2m}(\mathbb R^2)\to E_{a}^{0}(\mathbb
R^2)$ is not an isomorphism for any $a$, $a\ne m$. Assume, to the contrary, that
$A(\omega)$ is an isomorphism for some $a\ne m$. Then, by Lemma~\ref{l3.11},
\begin{equation}\label{3.44}
\text{the operator}\quad A'(\omega):E_{2m-a}^{2m}(\mathbb R^2)\to E_{2m-a}^{0}(\mathbb
R^2)\quad\text{is an isomorphism.}
\end{equation}

Note that
$$
\overline{A(D_y,\omega)u(y)}\equiv[A'(D_{y'},\omega)w(y')]|_{y'=-y},
$$
where $w(y')=\overline{u(-y')}$. It follows from this relation and from~\eqref{3.44} that
the operator $A(\omega):E_{2m-a}^{2m}(\mathbb R^2)\to E_{2m-a}^{0}(\mathbb R^2)$ is also
an isomorphism. This contradicts Lemma~\ref{l3.17} because the strip bounded by the lines
$\Im\lambda=a+1-2m$ and $\Im\lambda=(2m-a)+1-2m$ contains the eigenvalue
$\lambda_0=i(1-m)$ of the operator-valued function $\hat A(\lambda)$.

3. Finally, we prove that the operator $A(\omega):E_{a}^{l+2m}(\mathbb R^2)\to
E_{a}^{l}(\mathbb R^2)$ is not an isomorphism for all $\omega\in S^{n-3}$, $l>0$, and
$a\ne l+m$. Assume, to the contrary, that it is an isomorphism for some $\omega\in
S^{n-3}$, $l>0$, and $a\ne l+m$. Then, by Lemma~\ref{l3.8}, the line
$\Im\lambda=a+1-l-2m$ contains no eigenvalues of the operator-valued function $\hat
A(\lambda)$. Therefore, according to part~2 of this proof, either $\dim\mathcal
N(A(\omega))>0$ or $\dim\mathcal R(A(\omega))^\bot>0$ for the operator
$A(\omega):E_{a-l}^{2m}(\mathbb R^2)\to E_{a-l}^0(\mathbb R^2)$.

Let $\dim\mathcal N(A(\omega))>0$ for the above operator $A(\omega)$. Hence, there exists
a function $v\in E_{a-l}^{2m}(\mathbb R^2)$ such that $v\ne0$ and $A(\omega)v=0$. By
Lemma~\ref{l3.9}, $v\in E_{a}^{l+2m}(\mathbb R^2)$, and hence $\dim\mathcal
N(A(\omega))>0$ for $A(\omega):E_{a}^{l+2m}(\mathbb R^2)\to E_{a}^l(\mathbb R^2)$. This
contradicts our assumption.

Let $\dim\mathcal R(A(\omega))^\bot>0$ for $A(\omega):E_{a-l}^{2m}(\mathbb R^2)\to
E_{a-l}^0(\mathbb R^2)$. Since $A(\omega):E_{a}^{l+2m}(\mathbb R^2)\to E_{a}^l(\mathbb
R^2)$ is an isomorphism, it follows that  the equation
$$
A(\omega)v=f_0
$$
has a solution $v\in E_a^{l+2m}(\mathbb R^2)\subset E_{a-l}^{2m}(\mathbb R^2)$ for each
$f_0\in E_a^{l}(\mathbb R^2)$. On the other hand, $E_a^{l}(\mathbb R^2)$ is dense in
$E_{a-l}^{0}(\mathbb R^2)$, while $\mathcal R(A(\omega))$ is closed in
$E_{a-l}^{0}(\mathbb R^2)$ for $A(\omega):E_{a-l}^{2m}(\mathbb R^2)\to E_{a-l}^0(\mathbb
R^2)$. Therefore, $\mathcal R(A(\omega))=E_{a-l}^0(\mathbb R^2)$, which contradicts the
assumption that $\dim\mathcal R(A(\omega))^\bot>0$.
\end{proof}

Theorem~\ref{t3.1} follows from Lemmas~\ref{l3.4} and~\ref{l3.14}.

\section{A Priori Estimates of Solutions in Bounded Domains}\label{sec4}

\subsection{}\label{subsec4.1}

In this section, we obtain a priori estimates for solutions of nonlocal elliptic problems
in weighted spaces. Combining these estimates with the existence of a right regularizer,
which we construct in the next section, we prove the Fredholm property for the
corresponding nonlocal operator. Let us discuss the choice of weighted spaces. For each
set $\mathcal K_j$, $j=1,2,3$, we may assume that either the set $K$ in the definition of
the spaces $H_a^k(Q)=H_a^k(Q,K)$ and $H_a^{k-1/2}(\Gamma)=H_a^{k-1/2}(\Gamma,K)$ contains
the set $\mathcal K_j$ or it does not. This is equivalent to whether or not right-hand
sides and solutions of nonlocal problems in bounded domains have singularities near the
set $\mathcal K_j$. If $\mathcal K_j\subset K$, then the model operators corresponding to
the points of the set $\mathcal K_j$ and playing a fundamental role in obtaining a priori
estimates and constructing a right regularizer must be considered in weighted spaces;
otherwise, in Sobolev spaces.

Consider the set $\mathcal K_1=\partial Q\setminus \bigcup_i\Gamma_i$ of conjugation
points. It is shown in~\cite{SkMs86,SkRJMP} (see also~\cite{GurDAN04}) that generalized
solutions of nonlocal problems can have power-law singularities near the set $\mathcal
K_1$. Therefore, we always assume that $\mathcal K_1\subset K$, while the corresponding
model operators act on weighted spaces in dihedral angles.

Consider the set $\mathcal K_3\subset Q$. It follows from Theorem~\ref{t3.1} that the
model operator on weighted spaces in $\mathbb R^n$ is not an isomorphism (moreover, one
can show that it does not have even the Fredholm property on weighted spaces, cf.
Remark~2.2 in~\cite[Chap.~6, Sec.~2]{NP}). Therefore, we assume that right-hand sides and
solutions of nonlocal problems in bounded domains have no singularities inside the domain
$Q$, while the corresponding model operator acts on Sobolev spaces. If $\mathcal
K_3=\varnothing$, this assumption leads to no difficulties. However, if $\mathcal
K_3\ne\varnothing$, the following difficulty arises. Take a point
$g\in\overline{\Gamma_i}\cap\mathcal K$ such that $\omega_{is}(g)\in\mathcal K_3$, and
let a function $u$ belong to the weighted space $H_a^{l+2m}$ near the point $g$ and to
the Sobolev space $W^{l+2m}$ near the point $\omega_{is}(g)$. Since $\omega_{is}$ is a
smooth nondegenerate transformation, it follows that the function $(B_{i\mu
s}(x,D)u)(\omega_{is}(x))|_{\Gamma_i}$ occurring in nonlocal conditions~\eqref{1.4}
belongs to $W^{l+2m-m_{i\mu}-1/2}$ near the point $g$; however, in general, it does not
belong to $H_a^{l+2m-m_{i\mu}-1/2}$ near the point $g$. Therefore, the corresponding
nonlocal operator appears to be unbounded on weighted spaces. To eliminate this obstacle,
we additionally assume that $a>l+2m-1$ in the case $\mathcal K_3\ne\varnothing$, which
ensures the inclusion $W^{l+2m-m_{i\mu}-1/2}\subset H_a^{l+2m-m_{i\mu}-1/2}$ in a
neighborhood of $g\in\overline{\Gamma_i}\cap\mathcal K$ (cf. Lemma~\ref{l4.3} below).

Consider the set $\mathcal K_2\subset\bigcup_i\Gamma_i$. We may either include or not
include the set $\mathcal K_2$ in the set $K$. In the first case, we consider model local
operators on weighted spaces. In the second case, we consider model local operators on
Sobolev spaces. The advantage of the ``weighted case'' is that we solve a nonlocal
problem in the whole scale of spaces (depending on the weight parameter $a\in\mathbb R$).
However, the disadvantage is that we must impose some assumptions on the location of
eigenvalues of an auxiliary problem with the parameter $\lambda$ and require that an
auxiliary operator with the parameter $\omega\in S^{n-3}$ be an isomorphism (the latter
is often hard to verify), see Theorem~\ref{t2.R^n_+Isomorphism}. The advantage of the
case of Sobolev spaces is that the model operators are isomorphisms without any
additional assumptions. The disadvantage is that, if $\mathcal K_2\ne\varnothing$, we
must suppose $a>l+2m-1$ even if $\mathcal K_3=\varnothing$ (the reason is similar to that
in the above case $\mathcal K_3\ne\varnothing$).

The following consistency condition integrates all the above cases.
\begin{condition}[consistency condition]\label{cond4.1}
\begin{enumerate}
\item
If $\mathcal K_3=\varnothing$, then either
\begin{enumerate}
\item $a\in\mathbb R$ and $K=\mathcal K_1\cup\mathcal K_2$, or
\item  $a>l+2m-1$ and $K=\mathcal K_1$.
\end{enumerate}
\item
If $\mathcal K_3\ne\varnothing$, then $a>l+2m-1$ and either
\begin{enumerate}
\item  $K=\mathcal K_1\cup\mathcal K_2$, or
\item $K=\mathcal K_1$.
\end{enumerate}
\end{enumerate}
\end{condition}

To conclude this subsection, we prove two auxiliary results. Denote
$$
\mathcal M^\delta=\{x\in\mathbb R^n:\ \dist(x,\mathcal M)<\delta\}
$$
for any set $\mathcal M\subset \mathbb R^n$ and $\delta>0$.

\begin{lemma}\label{lzeta0v}
Let $\zeta\in C^\infty(\mathbb R^n)$ be a function such that $\zeta(x)=0$ for
$x\in\mathcal K_1$. Then
\begin{equation}\label{eqzeta0v0}
\|\zeta v\|_{H_a^l(Q)}\le c\delta\|v\|_{H_a^l(Q)}
\end{equation}
for all $v\in H_a^l(Q)$ such that $\supp v\subset\overline{Q}\cap\mathcal K_1^\delta$,
where $c>0$ does not depend on $\delta$ and $v$.
\end{lemma}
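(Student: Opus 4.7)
The plan is to expand $D^\alpha(\zeta v)$ by the Leibniz rule and then show that, on the support of $v$, the prefactor $\rho^{|\beta|}|D^\beta\zeta(x)|$ of every term is bounded by $C\delta$, with $C$ independent of $\delta$ and $v$. Writing the weight as
$$
\rho^{a-l+|\alpha|}=\rho^{|\beta|}\cdot\rho^{a-l+|\alpha-\beta|},
$$
the bound $\rho^{|\beta|}|D^\beta\zeta|\le C\delta$ on $\supp v$ will immediately give a pointwise estimate
$$
\bigl|\rho^{a-l+|\alpha|}D^\alpha(\zeta v)(x)\bigr|\le C\delta\sum_{\beta\le\alpha}\bigl|\rho^{a-l+|\alpha-\beta|}D^{\alpha-\beta}v(x)\bigr|,
$$
from which \eqref{eqzeta0v0} follows by taking $L_2$-norms and summing over $|\alpha|\le l$.

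The two pointwise ingredients are both elementary. First, since $\zeta\in C^\infty(\mathbb R^n)$ vanishes on $\mathcal K_1$ and $\overline{Q}$ is compact, we have $|\zeta(x)|\le M\,\dist(x,\mathcal K_1)$ for all $x\in\overline Q$, where $M=\|\nabla\zeta\|_{L_\infty(\overline Q)}$. Hence $|\zeta(x)|\le M\delta$ whenever $x\in\overline Q\cap\mathcal K_1^\delta$, which handles $\beta=0$. Second, because $\rho$ is equivalent, in a neighborhood of $K$, to $\dist(\cdot,K)$ and because $\mathcal K_1\subset K$, for sufficiently small $\delta$ we have
$$
\rho(x)\le C_1\,\dist(x,K)\le C_1\,\dist(x,\mathcal K_1)<C_1\delta,\qquad x\in\overline Q\cap\mathcal K_1^\delta.
$$
Combining this with the uniform bound on $|D^\beta\zeta|$ on the compact set $\overline Q$ yields $\rho^{|\beta|}|D^\beta\zeta(x)|\le C_2\delta^{|\beta|}\le C_2\delta$ for $1\le|\beta|\le l$ (assuming $\delta\le1$; larger $\delta$ is absorbed into the final constant $c$ using the trivial bound $\|\zeta v\|_{H_a^l(Q)}\le C\|v\|_{H_a^l(Q)}$).

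Assembling the two cases of $\beta$ into the Leibniz expansion and using the triangle inequality in $L_2(Q)$ together with the definition~\eqref{1.5} of the weighted norm completes the proof. There is no real obstacle here; the only point requiring modest care is choosing $\delta$ small enough that $\mathcal K_1^\delta$ lies in the region where $\rho\sim\dist(\cdot,K)$, so that the constant $C_1$ (and hence $c$) is genuinely independent of $\delta$.
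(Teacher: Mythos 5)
Your proposal is correct and follows essentially the same route as the paper's proof: a first-order Taylor (mean value) bound $|\zeta(x)|\le C\,\dist(x,\mathcal K_1)\le C\delta$ on $\overline Q\cap\mathcal K_1^\delta$ for the undifferentiated factor, a Leibniz expansion with the weight split $\rho^{a-l+|\alpha|}=\rho^{|\beta|}\rho^{a-l+|\alpha-\beta|}$, and the bound $\rho\le C\delta$ on the support of $v$ to control the terms where derivatives fall on $\zeta$. Your remark about restricting to small $\delta$ so that $\rho\sim\dist(\cdot,K)$ there, and absorbing large $\delta$ into the constant, is a reasonable (minor) point of care that the paper leaves implicit.
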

\begin{proof}
Since $\zeta(x)=0$ for $x\in\mathcal K_1$, it follows from the Taylor formula that
\begin{equation}\label{eqzeta0v1}
|\zeta(x)|\le k_1\delta,\qquad x\in \mathcal K_1^{\delta}.
\end{equation}
Therefore, using~\eqref{eqzeta0v1}, we obtain
\begin{multline*}
\|\zeta v\|_{H_a^l(Q)}^2=\sum\limits_{|\beta|\le l}\,\int\limits_{Q\cap\mathcal
K_1^\delta}\rho^{2(a-l+|\beta|)}|\zeta D^\beta v|^2
dx\\
+\sum\limits_{|\alpha|=1}^l\sum\limits_{|\beta|\le
l-|\alpha|}\,\int\limits_{Q\cap\mathcal
K_1^\delta}\rho^{2|\alpha|}\rho^{2(a-l+|\beta|)}|D^\alpha\zeta
D^\beta v|^2 dx\\
\le  \sum\limits_{|\beta|\le l}k_1^2\delta^2\int\limits_{Q\cap\mathcal
K_1^\delta}\rho^{2(a-l+|\beta|)}|D^\beta v|^2
dx+\sum\limits_{|\alpha|=1}^l\sum\limits_{|\beta|\le
l-|\alpha|}k_2\delta^{2|\alpha|}\int\limits_{Q\cap\mathcal
K_1^\delta}\rho^{2(a-l+|\beta|)}|D^\beta v|^2 dx,
\end{multline*}
which implies~\eqref{eqzeta0v0}.
\end{proof}

\begin{lemma}\label{lZetaDeltaU}
Let $\zeta_\delta\in C^\infty(\mathbb R^n)$ be a family of functions such that
$\supp\zeta_\delta\subset \mathcal K_1^{\delta}$ and
\begin{equation}\label{eqZetaDeltaU}
|D^\beta\zeta_\delta(x)|\le c_1\delta^{-|\beta|}, \qquad x\in Q,\quad |\beta|\le l,
\end{equation}
where $c_1>0$ does not depend on $\delta$. Then
\begin{equation}\label{ZetaDeltaU'}
\|\zeta_\delta u\|_{H_a^l(Q)}\le c_2\|u\|_{H_a^l(Q)}
\end{equation}
for all $u\in H_a^l(Q)$, where $c_2>0$ does not depend on $\delta$ and $u$.
\end{lemma}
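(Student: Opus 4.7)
The plan is a direct computation by Leibniz' rule, exploiting the key balancing observation that on $\supp\zeta_\delta$ the weight $\rho$ is comparable to $\delta$, so that the blow-up $\delta^{-|\beta|}$ of derivatives of $\zeta_\delta$ is absorbed by powers of $\rho$ appearing in the weighted norm.

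First I would observe that by the definition of $\rho$ in case~1 of Sec.~\ref{subsec1.1}, the function $\rho$ is equivalent to $\dist(x,K)$ in a neighborhood of $K$, and since $\mathcal K_1\subset K$, we have
$$
\rho(x)\le k_0\,\dist(x,\mathcal K_1)\le k_0\delta\qquad \text{for all } x\in\supp\zeta_\delta\subset \mathcal K_1^\delta,
$$
once $\delta$ is small enough (for large $\delta$ the bound~\eqref{ZetaDeltaU'} is trivial since $\rho$ is bounded on $\overline Q$). Combining this with the hypothesis~\eqref{eqZetaDeltaU} yields
$$
\rho(x)^{|\beta|}\,|D^\beta\zeta_\delta(x)|\le k_0^{|\beta|}\delta^{|\beta|}\cdot c_1\delta^{-|\beta|}\le c_3,\qquad x\in Q,\ |\beta|\le l,
$$
where $c_3$ is independent of $\delta$.

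Next, by Leibniz' formula, for $|\alpha|\le l$,
$$
D^\alpha(\zeta_\delta u)=\sum_{\beta\le\alpha}\binom{\alpha}{\beta}\,D^\beta\zeta_\delta\,D^{\alpha-\beta}u,
$$
and splitting the weight as $\rho^{a-l+|\alpha|}=\rho^{|\beta|}\cdot\rho^{a-l+|\alpha-\beta|}$, I obtain
$$
\rho^{a-l+|\alpha|}\,|D^\alpha(\zeta_\delta u)|\le \sum_{\beta\le\alpha}\binom{\alpha}{\beta}\,\bigl(\rho^{|\beta|}|D^\beta\zeta_\delta|\bigr)\,\rho^{a-l+|\alpha-\beta|}|D^{\alpha-\beta}u|\le c_4\sum_{\beta\le\alpha}\rho^{a-l+|\alpha-\beta|}|D^{\alpha-\beta}u|.
$$
Squaring, integrating over $Q$, summing over $|\alpha|\le l$, and noting that $|\alpha-\beta|\le l$ so each term on the right is controlled by $\|u\|_{H_a^l(Q)}^2$, I conclude
$$
\|\zeta_\delta u\|_{H_a^l(Q)}^2\le c_5\sum_{|\alpha|\le l}\sum_{\beta\le\alpha}\int_Q\rho^{2(a-l+|\alpha-\beta|)}|D^{\alpha-\beta}u|^2\,dx\le c_2^2\,\|u\|_{H_a^l(Q)}^2,
$$
which is~\eqref{ZetaDeltaU'}.

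There is no serious obstacle here; the whole argument is the elementary observation that the $\delta^{-|\beta|}$ growth of the derivatives of the cut-off is exactly cancelled by the factor $\rho^{|\beta|}\lesssim\delta^{|\beta|}$ available on $\supp\zeta_\delta$, which is why the weighted norm tolerates cut-off functions whose derivatives blow up as $\delta\to 0$. The only mildly subtle point worth noting is that the estimate $\rho\le k_0\delta$ on $\mathcal K_1^\delta$ uses the equivalence of $\rho$ and $\dist(\cdot,K)$ near $K$ together with $\mathcal K_1\subset K$; away from $K$ the function $\rho$ is bounded below, so the cut-off $\zeta_\delta$ is supported where $\delta$ is not too small and~\eqref{ZetaDeltaU'} is automatic.
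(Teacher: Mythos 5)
Your proof is correct and follows essentially the same route as the paper: expand $D^\alpha(\zeta_\delta u)$ by Leibniz, split the weight as $\rho^{|\beta|}\cdot\rho^{a-l+|\alpha-\beta|}$, and use that $\rho\lesssim\delta$ on $\mathcal K_1^\delta$ so that $\rho^{|\beta|}|D^\beta\zeta_\delta|$ is bounded uniformly in $\delta$. The paper's version is terser (it writes the Leibniz expansion and the absorption of $\delta^{-|\alpha|}$ by $\rho^{2|\alpha|}$ in a single displayed inequality), but the mechanism is identical.
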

\begin{proof}
Using~\eqref{eqZetaDeltaU}, we obtain
\begin{multline*}
\|\zeta_\delta u\|_{H_a^l(Q)}^2=\sum\limits_{|\alpha|+|\beta|\le
l}\,\int\limits_{Q\cap\mathcal
K_1^\delta}\rho^{2|\alpha|}\rho^{2(a-l+|\beta|)}|D^\alpha\zeta_\delta D^\beta u|^2 dx\\
\le k \sum\limits_{|\beta|\le l}\,\int\limits_{Q\cap\mathcal
K_1^\delta}\rho^{2(a-l+|\beta|)}|D^\beta u|^2 dx, \end{multline*} which
implies~\eqref{ZetaDeltaU'}.
\end{proof}
\begin{remark}\label{rzeta0v}
Lemmas~\ref{lzeta0v} and~\ref{lZetaDeltaU} are true for the spaces $H_a^l(Q)$ replaced by
$H_a^{l-1/2}(\Gamma)$, where $\Gamma$ is a smooth $(n-1)$-dimensional manifold such that
$\overline\Gamma\subset\overline Q$ and $l-1/2\ge 1/2$. To prove this, it suffices to use
the corresponding bounded operator of extension acting from $H_a^{l-1/2}(\Gamma)$ to
$H_a^{l}(Q)$.
\end{remark}

\subsection{}\label{subsec4.2}

We introduce the linear operator
$$
\mathbf L=\{A,B_{i\mu}\}
$$
corresponding to problem~\eqref{1.3}, \eqref{1.4}. It follows from Lemma~\ref{l4.6} (see
below) that the operator $\mathbf L: H_a^{l+2m}(Q)\to\mathcal H_a^l(Q,\Gamma)$ is
bounded.

The main result of this section is as follows.
\begin{theorem}\label{t4.1}
Let Conditions~$\ref{cond1.1}$--$\ref{cond1.4}$ and~$\ref{cond4.1}$ hold. Assume that the
line $\Im\lambda=a+1-l-2m$ contains no eigenvalues of $\hat{\mathcal L}_g(\lambda)$ for
any $g\in K$ and $\dim\mathcal N(\mathcal L_g(\omega))=\codim\mathcal R(\mathcal
L_g(\omega))=0$ for any $g\in K$ and $\omega\in S^{n-3}$. Then the following estimate
holds for all $u\in H_a^{l+2m}(Q)$$:$
\begin{equation}\label{4.1}
\|u\|_{H_a^{l+2m}(Q)}\le c(\|\mathbf L u\|_{\mathcal
H_a^l(Q,\Gamma)}+\|u\|_{H_{a}^{l+2m-1}(Q)}),
\end{equation}
where $c>0$ does not depend on $u$.
\end{theorem}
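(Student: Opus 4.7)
The proof is a standard partition-of-unity / freezing-coefficients argument, with the three isomorphism results (Corollary~\ref{cort2.2'}, Theorem~\ref{t2.R^n_+Isomorphism}, and Lemma~\ref{l3.10} with its boundary analogue) supplying the local estimates at three kinds of singular or regular points. First, I would cover $\overline Q$ by a finite collection of neighborhoods of diameter $\lesssim\delta$: type~(I) the neighborhoods $V(g_j)$ of all orbit points for $g\in\mathcal K_1$ provided by Condition~\ref{cond1.3}; type~(II) small neighborhoods of points of $\mathcal K_2$; type~(III) small neighborhoods of points of $\mathcal K_3$; type~(IV) neighborhoods of all remaining points (interior or smooth boundary points). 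Take a smooth partition of unity $\{\psi_\nu\}$ subordinate to this cover, with $|D^\beta\psi_\nu|\le c\delta^{-|\beta|}$.

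Second, I derive a local bound for each $\psi_\nu u$. For a type~(I) piece, pulling $\psi_\nu u$ through the change of variables of Condition~\ref{cond1.4} yields a vector $v=(v_1,\dots,v_N)\in\mathcal H_a^{l+2m}(\Theta)$. Corollary~\ref{cort2.2'} gives $\|v\|\le c_0\|\mathcal L_g''v\|$, and on the support of $\psi_\nu$ the image $\mathcal L_g''v$ differs from the corresponding component of $\mathbf Lu$ by (a) commutators $[A,\psi_\nu]$, $[B_{i\mu s},\psi_\nu]$ of order $l+2m-1$, and (b) a coefficient-freezing term that is $O(\delta)\|u\|_{H_a^{l+2m}(Q)}$ by Lemma~\ref{lZetaDeltaU} and the construction of $\mathcal L_g''$ inside Corollary~\ref{cort2.2'}. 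For a type~(II) piece one uses the half-space reduction of Sec.~\ref{subsec1.4} and Theorem~\ref{t2.R^n_+Isomorphism}; nonlocal terms coming from indices $s\ge 1$ whose shift $\omega_{is}$ takes the support of $\psi_\nu$ outside a neighborhood of $g$ act on $u$ in a region where, under Condition~\ref{cond4.1}, one has the continuous embedding $W^{l+2m-m_{i\mu}-1/2}\subset H_a^{l+2m-m_{i\mu}-1/2}$, so they contribute to $\|u\|_{H_a^{l+2m-1}(Q)}$ after a standard compact-perturbation estimate. Type~(III) and type~(IV) pieces are handled by Lemma~\ref{l3.10} and the classical boundary a priori estimate for local elliptic problems, since the weight $\rho$ is bounded from below there and weighted norms coincide with Sobolev norms up to a positive constant.

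Third, I square and sum the local estimates. All commutators collect into a term bounded by $\|u\|_{H_a^{l+2m-1}(Q)}$. All freezing remainders and all ``stray'' nonlocal contributions collect into a term of the form $\varepsilon(\delta)\|u\|_{H_a^{l+2m}(Q)}$ with $\varepsilon(\delta)\to 0$ as $\delta\to 0$. Choosing $\delta$ small enough to absorb $\varepsilon(\delta)\|u\|_{H_a^{l+2m}(Q)}$ into the left-hand side, and fixing the partition of unity once and for all afterwards, yields~\eqref{4.1}.

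\textbf{Main obstacle.} The delicate point is the bookkeeping of nonlocal terms that fall across the different regions of the partition: shifts $\omega_{is}$ carrying points near one type of singular point to points near another type. For type~(I) neighborhoods this is ruled out by Condition~\ref{cond1.3}(2), which is precisely why that condition was imposed. For type~(II)--(IV) pieces one has to argue that, under the weight restrictions of Condition~\ref{cond4.1} (the reason $a>l+2m-1$ is imposed whenever $\mathcal K_3\ne\varnothing$ or $\mathcal K_2\not\subset K$), the boundary trace in Sobolev space embeds continuously into the weighted trace space, so these cross terms can be treated as compact perturbations and absorbed into $\|u\|_{H_a^{l+2m-1}(Q)}$. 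Verifying that all such terms are indeed of strictly lower order, and that the small $\varepsilon(\delta)$-constant in front of $\|u\|_{H_a^{l+2m}(Q)}$ does not accumulate when summing over $\nu$, is the principal technical content.
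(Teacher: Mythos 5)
Your overall framework (partition of unity, model isomorphisms at the three kinds of points, absorption of small remainders) matches the paper's, and your treatment of the pieces near $\mathcal K_1$ — including the observation that the partition functions must essentially commute with the rotations $\mathcal G_{j\rho ks}$ near the edge and that the coefficient-freezing error is $O(\delta)$ via Corollary~\ref{cort2.2'} — is essentially the paper's Lemma~\ref{l4.4}. But there is a genuine gap in how you dispose of the nonlocal terms supported outside $\mathcal K_1$, i.e.\ the operators $B_{i\mu}^2u=\sum_s(B_{i\mu s}^0(x,D)((1-\xi)u))(\omega_{is}(x))|_{\Gamma_i}$. You claim these ``contribute to $\|u\|_{H_a^{l+2m-1}(Q)}$ after a standard compact-perturbation estimate.'' They do not: $B_{i\mu}^2$ contains the full $m_{i\mu}$ derivatives of $u$ evaluated on $\omega_{is}(\Gamma_i)$, so it is a bounded operator of full order from $H_a^{l+2m}(Q)$ to the trace space; it is neither compact nor small in $\delta$, and it cannot be absorbed by shrinking the partition or pushed into the lower-order norm. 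The embedding $W^{l+2m-m_{i\mu}-1/2}\subset H_a^{l+2m-m_{i\mu}-1/2}$ guaranteed by Condition~\ref{cond4.1} only ensures boundedness of $B_{i\mu}^2$ on the weighted scale (Lemma~\ref{l4.3}); it does not lower its order.

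The paper's proof of Theorem~\ref{t4.1} handles this with a two-step localization property of $B_{i\mu}^2$ (Lemma~\ref{l4.6}): estimate~\eqref{4.15} says $\|B_{i\mu}^2u\|$ is controlled by $\|u\|_{H_a^{l+2m}}$ on $Q\setminus\overline{\mathcal K_1^{2\varkappa}}$, and estimate~\eqref{4.16} says the part of $B_{i\mu}^2u$ living away from $\mathcal K_1$ is controlled by $\|u\|_{H_a^{l+2m}(Q_\sigma)}$ strictly in the interior. One then bootstraps: apply the $\mathbf L^1$-estimate of Lemma~\ref{l4.4} to $\eta u$ with $\eta$ vanishing near $\mathcal K_1$; the resulting right-hand side contains $\|\eta B_{i\mu}^2u\|$, which by~\eqref{4.16} reduces to an interior norm, and only then does the interior elliptic a priori estimate (Lemma~\ref{l3.10}) convert the full-order norm $\|u\|_{H_a^{l+2m}(Q_\sigma)}$ into $\|Au\|_{H_a^l(Q)}+\|u\|_{H_a^{l+2m-1}(Q)}$. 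This circular-looking but convergent reduction is the actual content of the step you dismissed; without it (or an equivalent argument), your proof does not close. A secondary, fixable imprecision: you use a single small parameter $\delta$ for everything, whereas the argument requires two independent scales — the radii $\delta(g)$ of the neighborhoods near $\mathcal K_1$, fixed first by Corollary~\ref{cort2.2'}, and the support parameter $\varepsilon$ of the cut-off $\xi$ splitting $B^1$ from $B^2$, chosen afterwards so that the constant multiplying $\varepsilon''\|u\|_{H_a^{l+2m}(Q)}$ (which must be shown independent of $\varepsilon$) can be absorbed.
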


Let $A^0$ and $B_{i\mu s}^0$ denote the principal homogeneous parts of the operators
$A(x,D)$ and $B_{i\mu s}(x,D)$, respectively. Set
$$
B_{i\mu}^0u=B_{i\mu0}^0u|_{\Gamma_i}.
$$

For each $\varepsilon>0$, we introduce a function $\xi=\xi_\varepsilon\in
C_0^\infty(\mathbb R^n)$ such that $\xi(x)=1$ for $x\in\mathcal K_1^{\varepsilon/2}$,
$\supp\xi(x)\subset\mathcal K_1^{\varepsilon}$, and
\begin{equation}\label{4.1'}
|D^\beta\xi(x)|\le k_1\varepsilon^{-|\beta|}, \qquad x\in Q,
\end{equation}
where $k_1=k_1(\beta)>0$ does not depend on $\varepsilon$. Since $\omega_{is}$ are
$C^\infty$ diffeomorphisms, it follows that
\begin{equation}\label{4.xiOmega=0}
\supp\xi(\omega_{is}(x))\subset\mathcal K_1^{\varepsilon''},
\end{equation}
where $\varepsilon''=\varepsilon''(i,s,\varepsilon)\to0$ as $\varepsilon\to 0$
$(i=1,\dots N_1;\ s=0,\dots,S_i)$.

We assume that $\varepsilon>0$ is so small that
\begin{equation}\label{eqStar}
0<\varepsilon''<\dist(\mathcal K_1,\mathcal K_2\cup\mathcal K_3)/4.
\end{equation}
Later on, we will make additional assumptions concerning $\varepsilon$ (see the proofs of
Lemmas~\ref{l4.4} and~\ref{l5.1}).

Consider the operators
$$
B_{i\mu}^1 u=\sum\limits_{s=1}^{S_i}(B_{i\mu s}^0(x,D)(\xi
u))(\omega_{is}(x))|_{\Gamma_i},
$$
$$
B_{i\mu}^2 u=\sum\limits_{s=1}^{S_i}(B_{i\mu s}^0(x,D)((1-\xi)
u))(\omega_{is}(x))|_{\Gamma_i},
$$
$$
B_{i\mu}^3=B_{i\mu}-B_{i\mu}^1-B_{i\mu}^2,\qquad A^1=A-A^0.
$$
The operators $B_{i\mu}^1$ correspond to nonlocal terms supported near the set $\mathcal
K_1$ and the operators $B_{i\mu}^2$ to nonlocal terms supported outside the set $\mathcal
K_1$, while $B_{i\mu}^3$ and $A^1$ correspond to lower-order terms (compact
perturbations).

Denote $B^k=\{B_{i\mu}^k\}_{i,\mu}$, $k=0,\dots,3$, $B=B^0+\dots+B^3$, and $C=B^0+B^1$.

Along with the operator $\mathbf L=(A,B)$ we consider the bounded operators
$$
\mathbf L^0=(A^0,B^0):H_a^{l+2m}(Q)\to\mathcal H_a^l(Q,\Gamma),\qquad \mathbf
L^1=(A^0,C):H_a^{l+2m}(Q)\to\mathcal H_a^l(Q,\Gamma).
$$

We first obtain an a priori estimate (similar to~\eqref{4.1}) for the operator $\mathbf
L^1$ with sufficiently small $\varepsilon$. Then we prove a fundamental property of the
operators $B_{i\mu}^2$ related to the fact that the operators $B_{i\mu}^2$ correspond to
nonlocal terms supported outside the set $\mathcal K_1$. Combining these results will
allow us to prove Theorem~\ref{t4.1}.

\begin{lemma}\label{l4.4}
Let the conditions of Theorem~$\ref{t4.1}$ be fulfilled. Then there is an $\varepsilon>0$
such that the following estimate holds for all $u\in H_a^{l+2m}(Q)$$:$
\begin{equation}\label{4.6}
\|u\|_{H_a^{l+2m}(Q)}\le c(\|\mathbf L^1 u\|_{\mathcal
H_a^l(Q,\Gamma)}+\|u\|_{H_{a}^{l+2m-1}(Q)}),
\end{equation}
where $c>0$ does not depend on $u$.
\end{lemma}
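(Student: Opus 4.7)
The plan is to derive~\eqref{4.6} from local model estimates obtained on three different kinds of neighborhoods by means of a partition of unity adapted to~$\mathcal K$, and then to absorb all lower-order terms into the norm $\|u\|_{H_a^{l+2m-1}(Q)}$. The operators $A^0$ and $C=B^0+B^1$ are precisely chosen so that, after localization near any $g\in\mathcal K_1$ (resp.\ near $g\in\mathcal K_2$ or away from $\mathcal K$), they reduce to the model operator $\mathcal L_g''$ of Corollary~\ref{cort2.2'} (resp.\ the operator $\mathcal L_g$ of Theorem~\ref{t2.R^n_+Isomorphism} or a classical local elliptic operator), up to coefficient perturbations that are $O(\delta)$ small near the origin and up to commutators with the cutoffs.

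First, I would fix a sufficiently small $\varepsilon_0>0$ (satisfying~\eqref{eqStar} and the extra smallness to be imposed below) and cover $\overline Q$ as follows. For each $g\in\mathcal K_1$ choose the neighborhoods $\hat V(g_j)\subset V(g_j)$ from Condition~\ref{cond1.3} and, via the change of variables from Condition~\ref{cond1.4}, transfer~$u$ to a vector $v=(v_1,\dots,v_N)$ on dihedral angles $\Theta_j$. Multiplying by the cutoff $\eta=\eta_\delta$ from Section~\ref{subsec2.3} (with $\delta$ small so that $\supp\eta_\delta\subset V(0)$), the vector $\eta v$ satisfies
\[
\mathcal L_g''(\eta v)=\eta\bigl(\mathbf L^1 u\bigr)\big|_{\text{local}}+[\,\text{commutators with }\eta,\xi\,]+[\,\text{lower order}\,],
\]
since the nonlocal sum in $C=B^0+B^1$ contains exactly the terms~$\xi(\omega_{is}(x))$ that contribute to the orbit coupling and, by~\eqref{4.xiOmega=0} together with~\eqref{eqStar}, the pieces with $s\ge 1$ do not propagate outside the system of neighborhoods of $\mathcal O(g)$. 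Applying Corollary~\ref{cort2.2'}, one obtains a local estimate
\[
\|\eta v\|_{\mathcal H_a^{l+2m}(\Theta)}\le c_0\|\mathcal L_g''(\eta v)\|_{\mathcal H_a^l(\Theta,\Gamma)}
\]
with $c_0$ independent of $\delta$, whence a bound for $u$ on $Q\cap\bigcup_j \hat V(g_j)$ by $\|\mathbf L^1 u\|+\|u\|_{H_a^{l+2m-1}(Q)}$ (the lower-order term coming from the commutators and the smooth coefficient perturbations; Lemma~\ref{lZetaDeltaU} and Remark~\ref{rzeta0v} are used to keep the weighted norms bounded).

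Next, near each $g\in\mathcal K_2$ (present only under Condition~\ref{cond4.1}(1a)~or~(2a)), the cutoffs kill all the nonlocal terms because $\xi$ vanishes on $\mathcal K_2^{\varepsilon_0}$ and the distance condition~\eqref{eqStar} guarantees $\omega_{is}(\supp\xi)$ stays far from~$\mathcal K_2$. Thus $C$ reduces locally to the local operator $B_{i\mu 0}^0$, and Theorem~\ref{t2.R^n_+Isomorphism} applied to $\mathcal L_g$ in the half-space (together with the same perturbation-by-freezing-coefficients argument as in Corollary~\ref{cort2.2'}) yields the corresponding local estimate. In the remaining region (a bounded set staying away from~$\mathcal K_1$ and, in case~(1a)/(2a), from~$\mathcal K_2$), the cutoff $\xi$ again kills the nonlocal part $B^1$, the coefficients of $A^0$ are smooth and the Lopatinskii condition~\ref{cond1.2} holds, so a standard interior/boundary a priori estimate for the local elliptic problem (Lemma~\ref{l3.10} plus the boundary version from~\cite[Chap.~2]{LM}) gives the required bound. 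Finally, if $\mathcal K_3\ne\varnothing$ the assumption $a>l+2m-1$ together with Sobolev embedding (as discussed in Section~\ref{subsec4.1}) makes the interior pieces harmless.

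Summing the local estimates over a finite subordinate partition of unity, the leading terms combine to give $\|u\|_{H_a^{l+2m}(Q)}$ on the left and $\|\mathbf L^1u\|_{\mathcal H_a^l(Q,\Gamma)}$ on the right. All commutator contributions are of order $\le l+2m-1$ in~$u$, hence absorbed into $\|u\|_{H_a^{l+2m-1}(Q)}$; the passages through the change of variables and through $\xi$ are controlled by Lemmas~\ref{lzeta0v}--\ref{lZetaDeltaU}. The principal obstacle is the first step: matching the localized version of~$C$ near~$\mathcal K_1$ with the model operator~$\mathcal L_g''$ and simultaneously choosing $\varepsilon_0$ and~$\delta$ small enough that both the coefficient-freezing error in Corollary~\ref{cort2.2'} and the nonlocal leak through the cutoffs~$\xi(\omega_{is}(x))$ are under control. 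Once $\varepsilon_0>0$ is fixed small enough to make these two estimates compatible with the fixed finite covering, the summation and the absorption of lower-order terms yield~\eqref{4.6}.
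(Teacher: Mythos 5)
Your overall architecture (localize with a partition of unity, use Corollary~\ref{cort2.2'} near $\mathcal K_1$, Theorem~\ref{t2.R^n_+Isomorphism} near $\mathcal K_2$, and classical interior/boundary estimates elsewhere, then sum) is the paper's, but two essential mechanisms are missing, and the steps where you wave them away would fail as written.

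First, the assertion that ``all commutator contributions are of order $\le l+2m-1$'' is false for the commutators of the cutoffs with the nonlocal transformations. A term like $\bigl(\xi_q^t(\mathcal G_{j\rho ks}y',z')-\xi_q^t(y',z')\bigr)\,(B^0_{j\rho\mu ks}u_k)(\mathcal G_{j\rho ks}y',z')$ contains derivatives of $u$ of the full order $m_{j\rho\mu}$; the same is true of the discrepancy between $\mathcal L_{g}''(\xi_q^t u^t)$ and the localized $\mathbf L^1u$, which produces $(B^0_{i\mu s}((1-\xi)\xi_q^t u))(\omega_{is}(x))$. Neither is absorbable into $\|u\|_{H_a^{l+2m-1}(Q)}$ by order counting. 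The paper controls them by building a special partition of unity $\xi_j^t(y',z')=\psi^t(y')\varphi_j^t(0,z')$ that is \emph{independent of $y'$} near the edge, so that both error terms are supported in a compact subset $Q_b$ strictly inside $Q$; only then can they be bounded by $\|A^0u\|_{H_a^l(Q)}+\|u\|_{H_a^{l+2m-1}(Q)}$ via the interior estimate of Lemma~\ref{l3.10}. Your proposal never constructs such a partition and never identifies this interior-support mechanism.

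Second, your claim that away from $\mathcal K_1$ ``the cutoff $\xi$ again kills the nonlocal part $B^1$'' is not correct on the transition region: $B^1_{i\mu}u$ is supported in $\mathcal K_1^{\varepsilon''}\cap\Gamma_i$, while the cutoffs near $\mathcal K_1$ sum to $1$ only \emph{on} $\mathcal K_1$ itself (they cannot equal $1$ on a full neighborhood because of the product structure above). Hence the localized problem away from $\mathcal K_1$ sees the full-order term $(1-\xi_0)B^1_{i\mu}u$. The paper's proof estimates it by $c\,\varepsilon''\|u\|_{H_a^{l+2m}(Q)}$ using Lemma~\ref{lzeta0v} (since $1-\xi_0$ vanishes on $\mathcal K_1$ and $B^1_{i\mu}u$ is supported in $\mathcal K_1^{\varepsilon''}$) and then \emph{absorbs it into the left-hand side} by choosing $\varepsilon$ so small that $c\,\varepsilon''\le 1/2$. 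This absorption is the whole reason the lemma reads ``there is an $\varepsilon>0$''; your plan locates the need for smallness of $\varepsilon$ in the wrong place (the matching near $\mathcal K_1$, where the relevant small parameter is actually $\delta$ from Corollary~\ref{cort2.2'}) and omits the step that actually uses it.
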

\begin{proof}
1. For any point $g\in\mathcal K_1$, denote by $\mathcal O(g)$ the orbit of $g$, see
Sec.~\ref{subsec1.1}. By Condition~\ref{cond1.3}, each orbit $\mathcal O(g)$ consists of
finitely many points $g_j$, $j=1,\dots,N(g)$. Set
\begin{equation*}%\label{2.chi_m}
\chi_m=\chi_m(g)=\min\limits_{j,\rho, k,s}\chi_{j\rho ks}\quad (j,k=1,\dots,N(g);\
\rho=1,2;\ s=0,1,\dots, S_{j\rho k}).
\end{equation*}
Clearly, $\chi_m\le 1$. Let $x'\to x(g,j)$ be the change of variables inverse to the
change of variables $x\to x'(g,j)$ from Sec.~\ref{subsec1.1}. The transformation $x'\to
x(g,j)$ takes each ball $B_{\chi_m\delta}$ onto some neighborhood ${\hat B}_\delta(g_j)$
of the point $g_j$ in such a way that the diameter of ${\hat B}_\delta(g_j)$ tends to
zero as $\delta\to0$. (Note that ${\hat B}_\delta(g_j)$ need not be a ball.) For each
orbit $\mathcal O(g)$, we take a sufficiently small number $\delta=\delta(g)>0$ such that
${\hat B}_\delta(g_j)\subset\hat V(g_j)$, $j=1,\dots,N(g)$, and the operator $\mathcal
L_g''$ has a bounded inverse for $\delta=\delta(g)$ (see Corollary~\ref{cort2.2'}).

It is clear that the union
$$
\bigcup_{g\in\mathcal K_1}\bigcup_{j=1}^{N(g)}{\hat B}_{\delta(g)}(g_j)
$$
covers the set $\mathcal K_1$. We choose finitely many points $g^t\in\mathcal K_1$,
$t=1,\dots T$, such that
$$
\mathcal K_1\subset\bigcup\limits_{t,j}{\hat B}_{\delta(g^t)}(g_j^t).
$$

Let functions $\varphi_j^t\in C_0^\infty(\mathbb R^n)$ form a partition of unity for the
set $\mathcal K_1$ subordinated to the covering $\{{\hat B}_{\delta(g^t)}(g_j^t)\}$. Now
we will pass from the partition of unity $\{\varphi_j^t\}$ to another partition of unity
$\{\xi_j^t\}$ such that each function $\xi_j^t$, being written in the local coordinates
$x'=(y',z')$, does not depend on $y'$ in a neighborhood of the edge $\mathcal P$. To do
so, we denote the function $\varphi_j^t(x)$ written in the variables $x'=(y',z')$ by
$\varphi_j^t(y',z')$. Clearly, there is a number $a'<\chi_m\delta(g^t)$ such that
$$
\varphi_j^t(0,z')=0
$$
for $a'<|z'|<\chi_m\delta(g^t)$. We can assume without loss of generality that the
function $\varphi_j^t(0,z')$ is extended by zero for $|z'|\ge \chi_m\delta(g^t)$, and it
remains to be infinitely differentiable.

Denote by $\psi^t\in C_0^\infty(\mathbb R^2)$ a function such that $\psi^t(y')=1$ for
$|y'|<\varepsilon_1'$ and $\psi_j^t(y')=0$ for $|y'|>2\varepsilon_1'$, where
$\varepsilon_1'>0$ is so small that
\begin{equation}\label{eqVstavka4.5}
\left\{(2\varepsilon_1')^2+(a')^2\right\}^{1/2}<\chi_m\delta(g^t),
\end{equation}
and $\varepsilon_1'$ does not depend on $\varepsilon$.

Set
$$
\xi_j^t(y',z')=\psi^t(y')\varphi_j^t(0,z').
$$
By virtue of~\eqref{eqVstavka4.5}, we have
\begin{equation}\label{eqVstavka4.6}
\supp \xi_j^t(y',z')\subset B_{\chi_m\delta(g^t)}.
\end{equation}
It is also clear that
\begin{equation}\label{eqVstavka4.7}
\xi_j^t(y',z')=\varphi_j^t(0,z'),\qquad |y'|<\varepsilon_1'.
\end{equation}

Denote by $\xi_j^t(x)$ the functions $\xi_j^t(y',z')$ written in the variables
$x=x(g^t,j)$. Since $\supp \xi_j^t(x)\subset {\hat B}_{\delta(g^t)}(g_j^t)$, we can
extend each function $\xi_j^t(x)$ by zero outside the neighborhood ${\hat
B}_{\delta(g^t)}(g_j^t)$ to obtain the function infinitely differentiable on $\mathbb
R^n$.

Obviously, we have
\begin{equation}\label{eqVstavka4.8}
\sum\limits_{j,t}\xi_j^t(x)=\sum\limits_{j,t}\varphi_j^t(x)=1,\qquad x\in\mathcal K_1.
\end{equation}

2. Take an arbitrary function $u\in H_a^{l+2m}(Q)$. If $g_j^t\in\overline{\Gamma_i}$ and
$x\in\hat V(g_j^t)$, then it follows from Condition~\ref{cond1.3} that $\omega_{is}(x)\in
V(g_p^t)$ for some $1\le p\le N(g^t)$. Denote $u_p^t(x)=u(x)$ for $x\in Q\cap V(g_p^t)$.
Then $u_p^t(\omega_{is}(x))=u(\omega_{is}(x))$ for $x\in Q\cap \hat V(g_j^t)$. Let $x\to
x'(g^t,j)$ be the change of variables from Sec.~\ref{subsec1.1}, corresponding to the
orbit $\mathcal O(g^t)$. Denote the functions $\xi_j^t$ and $u_j^t$ written in the new
variables $x'$ by the same symbols (which leads to no confusion) and let
$u^t=(u_1^t,\dots, u_{N(g^t)}^t)$. Applying Corollary~\ref{cort2.2'}, we obtain
$$
\|\xi_q^tu\|_{H_a^{l+2m}(Q)}\le k_1\|\xi_q^t u_q^t\|_{H_a^{l+2m}(\Theta_q)}\le
k_1\|\xi_q^t u^t\|_{\mathcal H_a^{l+2m}(\Theta)}\le k_2\|\mathcal
L_{g^t}''(\xi_q^tu^t)\|_{\mathcal H_a^l(\Theta,\Gamma)},
$$
where $q=1,\dots,N(g^t)$, while $k_1,k_2,{\dots}>0$ do not depend on $u$.

It follows from~\eqref{eqVstavka4.6} that
$$
\xi_q^t(\mathcal G_{j\rho ks}y',z')=0,\qquad |x'|>\delta(g^t).
$$
 Therefore, $\mathcal L_{g^t}''(\xi_q^tu^t)=\mathcal
L_{g^t}'(\xi_q^t u^t)$, and, by using Leibniz' formula, we have
\begin{multline}\label{4.AprL^1_1}
\|\xi_q^tu\|_{H_a^{l+2m}(Q)}\le k_2\|\mathcal L_{g^t}'(\xi_q^tu^t)\|_{\mathcal
H_a^l(\Theta,\Gamma)} \le k_3 \Big(\|\mathbf L^1 u\|_{\mathcal
H_a^l(Q,\Gamma)}+\|u\|_{H_a^{l+2m-1}(Q)}\\
+ \sum\limits_{h=1,2}\sum\limits_{j,\rho,\mu}\sum\limits_{(k,s)\ne(j,0)}\|\Psi_{j\rho\mu
ks}^h\|_{H_a^{l+2m-m_{j\rho\mu}-1/2}(\Gamma_{j\rho})}\Big),
\end{multline}
where
$$
\Psi_{j\rho\mu ks}^1=\big(B_{j\rho\mu ks}^0(x',D_{y'},D_{z'})((1-\xi)
\xi_q^tu_k^t)\big)\big(\mathcal G_{j\rho ks}y',z'\big)|_{\Gamma_{j\rho}},
$$
$$
\Psi_{j\rho\mu ks}^2=\big(\xi_q^t(\mathcal G_{j\rho
ks}y',z')-\xi_q^t(y',z')\big)(B_{j\rho\mu ks}^0(x',D_{y'},D_{z'})(\xi u_k^t))(\mathcal
G_{j\rho ks}y',z')|_{\Gamma_{j\rho}}.
$$

Denote by $\xi_{qk}^t(x)$ the function $\xi_q^t(y',z')$ written in the variables
$x=x(g^t,k)$. Clearly, $\supp \xi_{qk}^t\subset \hat B_{\delta(g^t)}(g_k^t)$. Passing to
the variables $\hat x =\omega_{is}(x)$, we estimate the norm of $\Psi_{j\rho\mu ks}^1$ in
the following way:
\begin{multline*}
\|\Psi_{j\rho\mu ks}^1\|_{H_a^{l+2m-m_{j\rho\mu}-1/2}(\Gamma_{j\rho})}\\
\le k_4\|B_{i\mu s}^0(\hat x,D)((1-\xi)\xi_{qk}^t u)(\hat x
)|_{\omega_{is}(\Gamma_i)}\|_{H_a^{l+2m-m_{i\mu}-1/2}(\omega_{is}(\Gamma_i))}.
\end{multline*}
Denote
$$
Q_b=\{x\in Q:\ \dist (x,\partial Q)>b\},
$$
where $b>0$. Since $\hat B_{\delta(g^t)}(g_k^t)\subset\hat V(g_k^t)$, it follows from
Condition~\ref{cond1.3} that the set
$$
\Omega_0=\big(\overline{\omega_{is}(\Gamma_i)}\cap \hat
B_{\delta(g^t)}(g_k^t)\big)\setminus\mathcal K_1^{\varepsilon/2}
$$
intersects neither $\mathcal K_1$ nor $\mathcal K_2$. Therefore, there exists a number
$b=b(\varepsilon)>0$ such that $\Omega_0\subset Q_b$. Since
$$
\supp\big((1-\xi)\xi_{qk}^t\big)\big|_{\omega_{is}(\Gamma_i)}\subset\Omega_0\subset Q_b,
$$
using the last inequality, the equivalence of the norms in the spaces $H_a^{l+2m}(Q_b)$
and $W^{l+2m}(Q_b)$, and Lemma~\ref{l3.10}, we obtain
\begin{multline}\label{eqVstavka4.10}
\|\Psi_{j\rho\mu ks}^1\|_{H_a^{l+2m-m_{j\rho\mu}-1/2}(\Gamma_{j\rho})}\le
k_5\|u\|_{H_a^{l+2m}(Q_b)}\le k_6\|u\|_{W^{l+2m}(Q_b)}\\
\le k_7(\|A^0u\|_{W^l(Q_{b/2})}+\|u\|_{L_2(Q_{b/2})})\le
 k_8(\|A^0u\|_{H_a^l(Q)}+\|u\|_{H_a^{l+2m-1}(Q)}).
\end{multline}

Now let us estimate the norm of $\Psi_{j\rho\mu ks}^2$. By virtue
of~\eqref{eqVstavka4.7},
$$
\xi_q^t(\mathcal G_{j\rho ks}y',z')-\xi_q^t(y',z')=0,\qquad
 |y'|<\varepsilon_1'/\max(1,\chi_{j\rho ks}).
$$
Therefore, similarly to~\eqref{eqVstavka4.10}, we obtain
\begin{equation}\label{eqVstavka4.10'}
\|\Psi_{j\rho\mu ks}^2\|_{H_a^{l+2m-m_{j\rho\mu}-1/2}(\Gamma_{j\rho})}\le
 k_9(\|A^0u\|_{H_a^l(Q)}+\|u\|_{H_a^{l+2m-1}(Q)}).
\end{equation}

It follows from~\eqref{4.AprL^1_1}, \eqref{eqVstavka4.10}, and~\eqref{eqVstavka4.10'}
that
\begin{equation}\label{4.AprL^1_2}
\|\xi_q^tu\|_{H_a^{l+2m}(Q)}\le k_{10}(\|\mathbf L^1 u\|_{\mathcal
H_a^l(Q,\Gamma)}+\|u\|_{H_a^{l+2m-1}(Q)}).
\end{equation}

Setting
\begin{equation}\label{4.AprL^1_2'}
\xi_0(x)=\sum\limits_{t,q}\xi_q^t(x)
\end{equation}
and using inequality~\eqref{4.AprL^1_2}, we have
\begin{equation}\label{eqVstavka4.12}
\|\xi_0u\|_{H_a^{l+2m}(Q)}\le k_{11}(\|\mathbf L^1 u\|_{\mathcal
H_a^l(Q,\Gamma)}+\|u\|_{H_a^{l+2m-1}(Q)})
\end{equation}
(note that $k_{11}$ depends on $\varepsilon$).

3. Using a partition of unity, Theorem~\ref{t2.R^n_+Isomorphism}, Leibniz' formula, and a
priori estimates of solutions for elliptic problems in the interior of $Q$ and near a
smooth part of the boundary, we obtain
\begin{multline}\label{eqVstavka4.13}
\|(1-\xi_0)u\|_{H_a^{l+2m}(Q)}\le c_1(\|(1-\xi_0)\mathbf
L^0u\|_{\mathcal H_a^l(Q,\Gamma)}+\|u\|_{H_a^{l+2m-1}(Q)})\\
\le c_2\Big(\|(1-\xi_0)\mathbf L^1u\|_{\mathcal
H_a^l(Q,\Gamma)}+\sum\limits_{i,\mu}\|(1-\xi_0)B_{i\mu}^1u\|_{H_a^{l+2m-m_{i\mu}-1/2}(\Gamma_i)}+\|u\|_{H_a^{l+2m-1}(Q)}\Big),
\end{multline}
where $c_1,c_2,{\dots}>0$ do not depend on $u$ and $\varepsilon$ (we recall that the
function $\xi_0$ does not depend on $\varepsilon$).

It follows from~\eqref{eqVstavka4.8} and~\eqref{4.AprL^1_2'} that $1-\xi_0(x)=0$ for
$x\in\mathcal K_1$. On the other hand, $\supp\xi(\omega_{is}(x))\subset\mathcal
K_1^{\varepsilon''}$ due to~\eqref{4.xiOmega=0}. Therefore, applying Lemma~\ref{lzeta0v}
and Remark~\ref{rzeta0v} and taking into account that $\omega_{is}$ are $C^\infty$
diffeomorphisms, we have
$$
\|(1-\xi_0)B_{i\mu}^1u\|_{H_a^{l+2m-m_{i\mu}-1/2}(\Gamma_i)}\le
c_3\varepsilon''\|B_{i\mu}^1u\|_{H_a^{l+2m-m_{i\mu}-1/2}(\Gamma_i)}\le
c_4\varepsilon''\|\xi u\|_{H_a^{l+2m}(Q)}.
$$
Further, using the relation $\supp\xi\subset\mathcal K_1^{\varepsilon}$, inequalities
\eqref{4.1'}, and Lemma~\ref{lZetaDeltaU}, we obtain from the last estimate that
$$
\|(1-\xi_0)B_{i\mu}^1u\|_{H_a^{l+2m-m_{i\mu}-1/2}(\Gamma_i)} \le
c_5\varepsilon''\|u\|_{H_a^{l+2m}(Q)}.
$$
Combining this estimate with~\eqref{eqVstavka4.13} yields
\begin{equation}\label{eqVstavka4.15}
\|(1-\xi_0)u\|_{H_a^{l+2m}(Q)} \le c_6(\|\mathbf L^1u\|_{\mathcal
H_a^l(Q,\Gamma)}+\varepsilon''\|u\|_{H_a^{l+2m}(Q)}+\|u\|_{H_a^{l+2m-1}(Q)}).
\end{equation}
Choosing $\varepsilon$ in the definition of the function $\xi$ so small that
$c_6\varepsilon''=1/2$ and using inequalities~\eqref{eqVstavka4.12}
and~\eqref{eqVstavka4.15}, we complete the proof.
\end{proof}

\subsection{}\label{subsec4.3}
In this subsection, we prove Theorem~\ref{t4.1}. First, we formulate some results on
properties of weighted spaces, which are needed below.

\begin{lemma}\label{l4.2}
Let $Q_1\subset\mathbb R^n$ be a bounded domain such that $\overline{Q}\subset Q_1$.
Assume that the set $K_1$ in the definition of the space $H_a^k(Q_1)=H_a^k(Q_1,K_1)$
coincides with the set $K$ in the definition of the space $H_a^k(Q)=H_a^k(Q,K)$. Then,
for any function $v\in H_a^k(Q)$, there exists a function $v_1\in H_a^k(Q_1)$ such that
$v_1(x)=v(x)$ for $x\in Q$ and
\begin{equation*}
\|v_1\|_{H_a^k(Q_1)}\le c\|v\|_{H_a^k(Q)},
\end{equation*}
where $c>0$ does not depend on $v$.
\end{lemma}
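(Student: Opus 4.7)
The plan is to build $v_1$ by a standard patching procedure: cover $\overline{Q}$ by finitely many open sets, split $v$ via a subordinate partition of unity, extend each piece locally to a neighborhood still contained in $Q_1$, and sum the results. Since $\overline{Q}\subset Q_1$, I can arrange all local patches to lie inside $Q_1$, and since $K\subset\overline{Q}$ is the same set for both spaces, I need only check that every local extension respects the weight $\rho(x)$.

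Concretely, I would choose a finite cover $\{U_j\}$ of $\overline{Q}$ together with a partition of unity $\{\varphi_j\}\subset C_0^\infty(\mathbb R^n)$ such that each $U_j$ falls into one of three types: (i) $U_j\Subset Q$, in which case $\varphi_jv$ extends by zero and contributes nothing on $Q_1\setminus Q$; (ii) $\overline{U_j}\cap K=\varnothing$ but $U_j\cap\partial Q\ne\varnothing$, in which case $\rho$ is bounded above and below by positive constants on $U_j$, so $\|\cdot\|_{H_a^k(U_j\cap Q)}$ is equivalent to the usual Sobolev norm $\|\cdot\|_{W^k(U_j\cap Q)}$ and any classical Calder\'on/Stein extension from $U_j\cap Q$ to $U_j$ gives the required bound; (iii) $U_j$ is a small neighborhood of a point $g\in K\cap\overline{Q}$. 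The work concentrates on case (iii).

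In case (iii) I would use Condition~\ref{cond1.4} (for $g\in\mathcal K_1$) or the smoothness of $\Gamma_i$ and $\mathcal K_{2\nu}$ (for $g\in\mathcal K_2$), as in Sec.~\ref{subsec1.4}, to pass to local coordinates $x'=(y',z')$ in which $U_j\cap Q$ becomes either a dihedral angle $\Theta\cap V(0)$ with edge on $\mathcal P=\{y'=0\}$, or a half-space with distinguished submanifold $\mathcal P$. In these coordinates $\rho$ is equivalent to $|y'|$, and the extension can be performed by reflection across the smooth face(s) of the model domain (for a dihedral angle, by reflecting successively across each side, as in Stein's proof for Lipschitz domains). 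Since each such reflection fixes the edge/distinguished manifold, it preserves $|y'|$ and hence $\rho$, so
\begin{equation*}
\|(\varphi_jv)_{\text{ext}}\|_{H_a^k(U_j)}\le c\|\varphi_jv\|_{H_a^k(U_j\cap Q)}\le c'\|v\|_{H_a^k(Q)}.
\end{equation*}
Cutting off the resulting extension by a smooth function supported in $U_j\subset Q_1$ and summing over $j$ yields $v_1\in H_a^k(Q_1)$ with the required norm estimate.

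The main obstacle is carrying out the reflection step in case (iii) when $g\in\mathcal K_1$, because there $\partial Q$ itself is not smooth and naive reflection across one face does not produce an admissible extension across the whole boundary. I would resolve this by reflecting first across one side of the dihedral angle to obtain an extension on a half-space-like region, then across the image of the second side, verifying in each reflection that the Jacobian is bounded and that $|y'|$ (hence $\rho$) is preserved; the only use of the partition-of-unity cutoff is to keep the supports inside $Q_1$ after both reflections, which is possible because the diameters of the $U_j$ can be chosen smaller than $\operatorname{dist}(\overline{Q},\partial Q_1)$.
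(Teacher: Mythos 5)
Your strategy is sound, and in fact the paper itself gives no proof of this lemma at all: it simply refers the reader to \cite[Sec.~3]{SkJMAA}, so there is nothing in the text to compare against line by line. The partition-of-unity decomposition into (i) interior patches, (ii) smooth-boundary patches away from $K$ where $\rho$ is bounded above and below and the classical Calder\'on--Stein extension applies, and (iii) model patches near $K$ handled in the local coordinates of Condition~\ref{cond1.4} (dihedral angle with edge on $\mathcal P$) or of Sec.~\ref{subsec1.4} (half-space with $\mathcal P$ in its boundary) is exactly the standard route for such extension lemmas in Kondratiev-type spaces, and the key observation --- that the local extension maps must preserve $|y'|$, hence $\rho$, up to constants --- is the right one.

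Two points in case (iii) deserve more care than your sketch gives them. First, for $k\ge 2$ a single even reflection does not preserve $W^k$; you need a Hestenes-type extension $\sum_j\lambda_j\,u(\varphi\mapsto d_{1}+j(d_{1}-\varphi))$, and these angular maps are homogeneous of degree one in $y'$ but not linear, so you must actually verify that composition with a map $\Phi$ satisfying $|D^\alpha\Phi(y')|\le C|y'|^{1-|\alpha|}$ is bounded on $H_a^k$; this works precisely because the loss $|y'|^{|\beta|-|\alpha|}$ in the chain rule is absorbed by the weight $\rho^{a-k+|\alpha|}$, but it is the crux of the estimate and should be stated. Second, when the opening $d_2-d_1$ is small, two reflections do not cover the complementary sector; you need finitely many, and the successive reflected copies overlap, so the extension must be assembled with an angular partition of unity (alternatively, one can replace the whole reflection scheme by a dyadic decomposition in $r$ with a fixed Lipschitz extension operator rescaled to each annulus, which is closer to how such lemmas are usually written up). Finally, since $H_a^k(Q_1,K)$ is \emph{defined} as a completion of $C_0^\infty(\overline{Q_1}\setminus K)$, you should run the construction on $v\in C_0^\infty(\overline{Q}\setminus K)$ --- whose extension vanishes near $K$ because your local maps preserve $r$ --- and pass to the limit by the uniform bound; none of these points invalidates the argument, but they are where the actual work lies.
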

Lemma~\ref{l4.2} is proved in~\cite[Sec.~3]{SkJMAA}.

\begin{lemma}\label{l4.3}
Let $a>l+2m-1$. Assume that $\delta>0$ satisfies the following conditions$:$
\begin{enumerate}
\item
$\mathcal K_1^\delta\cap(\mathcal K_2\cup\mathcal K_3)=\varnothing$ if $\mathcal
K_2\cup\mathcal K_3\ne\varnothing$,
\item
$\mathcal K_2^{\delta}\cap(\mathcal K\setminus\mathcal K_2)=\varnothing$ if $\mathcal
K_2\ne\varnothing$,
\item
$\mathcal K_3^{\delta}\subset Q$ and $\mathcal K_3^{\delta}\cap(\mathcal
K\setminus\mathcal K_3)=\varnothing$ if $\mathcal K_3\ne\varnothing$,
\item
$\delta>0$ is arbitrary if $\mathcal K_2\cup\mathcal K_3=\varnothing$.
\end{enumerate}
Then
\begin{equation}\label{4.5}
\|u\|_{H_a^{l+2m}(\mathcal K_j^{\delta}\cap Q)}\le c_1\|u\|_{W^{l+2m}(\mathcal
K_j^{\delta}\cap Q)}
\end{equation}
for all $u\in W^{l+2m}(\mathcal K_j^{\delta}\cap Q)$ if $\mathcal K_j\ne\varnothing$
$(j=1,2)$ and
\begin{equation}\label{4.4}
\|u\|_{H_a^{l+2m}(\mathcal K_3^{\delta})}\le c_2\|u\|_{W^{l+2m}(\mathcal K_3^{\delta})}
\end{equation}
for all $u\in W^{l+2m}(\mathcal K_3^{\delta})$ if $\mathcal K_3\ne\varnothing$, where
$c_1,c_2>0$ do not depend on $u$.
\end{lemma}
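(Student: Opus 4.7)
The plan is to establish~\eqref{4.5} and~\eqref{4.4} by a unified argument, since $\mathcal K_1,\mathcal K_2$, and $\mathcal K_3$ are all $(n-2)$-dimensional smooth submanifolds. I would expand the left-hand side using the definition of the weighted norm,
$$
\|u\|^2_{H_a^{l+2m}(\mathcal K_j^\delta\cap Q)}=\sum_{|\alpha|\le l+2m}\int_{\mathcal K_j^\delta\cap Q}\rho^{2(a-l-2m+|\alpha|)}|D^\alpha u|^2\,dx,
$$
and split the sum at $|\alpha|=0$. For $|\alpha|\ge1$ the exponent $a-l-2m+|\alpha|\ge a-l-2m+1$ is strictly positive by the hypothesis $a>l+2m-1$; hence the weight is bounded on the bounded set $\mathcal K_j^\delta\cap Q$, and the corresponding term is immediately controlled by $\|D^\alpha u\|^2_{L_2}\le\|u\|^2_{W^{l+2m}}$.

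The only possibly singular term is $|\alpha|=0$, whose exponent $2(a-l-2m)$ lies in $(-2,0)$. If $\mathcal K_j\not\subset K$, then the disjointness conditions on $\delta$ (items~1--3 of the hypothesis) imply that $\rho$ stays bounded away from zero on $\mathcal K_j^\delta\cap Q$, and the bound is again trivial. In the remaining case $\mathcal K_j\subset K$, I would cover $\mathcal K_j$ by finitely many coordinate charts in which the manifold is straightened to $\{y=0\}\subset\mathbb R^2\times\mathbb R^{n-2}$, with $\rho(x)$ equivalent to $|y|$ in the new variables $x=(y,z)$, and use a partition of unity to reduce matters to each chart.

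In such a chart, by Fubini's theorem the slice $u(\cdot,z)$ belongs to $W^{l+2m}$ of the two-dimensional disk $B_\delta^{(2)}$ for almost every $z$, and
$$
\int\|u(\cdot,z)\|^2_{W^{l+2m}(B_\delta^{(2)})}\,dz\le c\|u\|^2_{W^{l+2m}}.
$$
Because $m\ge1$, the regularity order $l+2m\ge 2$ strictly exceeds $\dim(y)/2=1$, so the two-dimensional Sobolev embedding gives $\|u(\cdot,z)\|_{L_\infty(B_\delta^{(2)})}\le c\|u(\cdot,z)\|_{W^{l+2m}(B_\delta^{(2)})}$. The weight $|y|^{2(a-l-2m)}$ is integrable on $B_\delta^{(2)}$ exactly because $2(a-l-2m)>-2$, and therefore
$$
\int|y|^{2(a-l-2m)}|u(y,z)|^2\,dy\,dz\le\Big(\int_{|y|<\delta}|y|^{2(a-l-2m)}\,dy\Big)\int\|u(\cdot,z)\|^2_{L_\infty(B_\delta^{(2)})}\,dz\le c\|u\|^2_{W^{l+2m}}.
$$
Summation over the charts gives~\eqref{4.5}; the argument for~\eqref{4.4} is identical, since $\mathcal K_3$ has the same local codimension-two structure.

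The proof has no single hard step, but it succeeds only because three facts are simultaneously in place: the codimension of each $\mathcal K_j$ is precisely~$2$, so the pure distance weight $|y|^{2(a-l-2m)}$ becomes integrable on the normal disk exactly at the threshold $a>l+2m-1$; the same hypothesis makes all weights with $|\alpha|\ge1$ bounded; and the minimal order $l+2m\ge 2$ matches the dimension of the normal slice, which is what permits a direct $L_\infty$ embedding on slices and avoids any delicate Hardy-type inequality. Weakening any one of these three conditions would invalidate the argument.
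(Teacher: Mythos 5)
Your argument is correct. Note that the paper itself does not prove this lemma but only cites \cite{SkJMAA} and Lemma~5.2 in \cite{KovSk}; your proposal supplies a valid self-contained proof along the standard lines: the hypothesis $a>l+2m-1$ makes every weight exponent with $|\alpha|\ge1$ strictly positive (hence bounded on a bounded set), and reduces the whole matter to the $|\alpha|=0$ term, which is handled by straightening the codimension-two manifold, slicing by Fubini, and using the two-dimensional embedding $W^{l+2m}\hookrightarrow L_\infty$ on normal slices together with the integrability of $|y|^{2(a-l-2m)}$ near $y=0$, which holds exactly when $a>l+2m-1$. Two cosmetic remarks: the exponent $2(a-l-2m)$ lies in $(-2,0)$ only when $a<l+2m$ (for $a\ge l+2m$ the $|\alpha|=0$ term is trivially bounded, so nothing is lost); and in the case $\mathcal K_j\not\subset K$ you should observe that the separation conditions on $\delta$ keep $\overline{\mathcal K_j^\delta\cap Q}$ away from the zero set of $\rho$, so that $\rho$ is bounded below there by compactness. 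Neither point affects the substance of the proof.
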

Lemma~\ref{l4.3} is proved in~\cite{SkJMAA} (see also Lemma~5.2 in~\cite{KovSk}).

The following result is also obtained in~\cite[Sec.~3]{SkJMAA}. It means that the
operators $B_{i\mu}^2$ correspond to nonlocal terms supported outside the set $\mathcal
K_1$. For the reader's convenience, we give the proof of this result.

\begin{lemma}\label{l4.6}
Let Condition~$\ref{cond4.1}$ hold. Then there exists a number
$\varkappa=\varkappa(\varepsilon)>0$ such that
\begin{equation}\label{4.15}
\|B_{i\mu}^2u\|_{H_a^{l+2m-m_{i\mu}-1/2}(\Gamma_i)}\le
c_1\|u\|_{H_a^{l+2m}(Q\setminus\overline{\mathcal K_1^{2\varkappa}})}
\end{equation}
for all $u\in H_a^{l+2m}(Q\setminus\overline{\mathcal K_1^{2\varkappa}});$ furthermore,
there exists a number $\sigma=\sigma(\varkappa)$ such that
\begin{equation}\label{4.16}
\|B_{i\mu}^2u\|_{H_a^{l+2m-m_{i\mu}-1/2}(\Gamma_i\setminus\overline{\mathcal
K_1^{\varkappa}})}\le c_2\|u\|_{H_a^{l+2m}(Q_\sigma)}
\end{equation}
for all $u\in H_a^{l+2m}(Q_\sigma);$ here $i=1,\dots,N_0;$ $\mu=1,\dots,m;$ $c_1,c_2>0$
do not depend on $u$.
\end{lemma}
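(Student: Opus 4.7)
\emph{Plan.} The proof rests on the observation that $(1-\xi)(x)$ vanishes identically on $\mathcal K_1^{\varepsilon/2}$, and therefore so do $(1-\xi)u$ and every term $B_{i\mu s}^0(x,D)((1-\xi)u)$. As a result, $B_{i\mu}^2 u$ depends linearly on $u$ only through its values on sets uniformly bounded away from $\mathcal K_1$, and the function $B_{i\mu}^2 u$ on $\Gamma_i$ automatically vanishes near every $g\in\mathcal K_1\cap\overline\Gamma_i$ whose image $\omega_{is}(g)$ lies in $\mathcal K_1$.

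For~\eqref{4.15}, I would choose $\varkappa>0$ so small that $\overline{\mathcal K_1^{2\varkappa}}\subset\mathcal K_1^{\varepsilon/2}$. Then, by Leibniz' formula together with~\eqref{4.1'}, one has
\[
\|(1-\xi)u\|_{H_a^{l+2m}(Q)}\le c\,\|u\|_{H_a^{l+2m}(Q\setminus\overline{\mathcal K_1^{2\varkappa}})}.
\]
The smooth differential operator $B_{i\mu s}^0(x,D)$ is bounded between the corresponding weighted Sobolev spaces, after which the trace onto $\omega_{is}(\Gamma_i)\subset Q$ and the pull-back by the diffeomorphism $\omega_{is}^{-1}$ deliver an estimate in $H_a^{l+2m-m_{i\mu}-1/2}(\Gamma_i)$. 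Near a point $g\in\mathcal K_1\cap\overline\Gamma_i$ with $\omega_{is}(g)\in\mathcal K_1$ the pulled-back function is identically zero, so there is no singularity to control; near a point $g$ with $\omega_{is}(g)\in\mathcal K_2\cup\mathcal K_3$ the assumption $a>l+2m-1$ from Condition~\ref{cond4.1} and Lemma~\ref{l4.3} furnish the local embedding $W^{l+2m-m_{i\mu}-1/2}\hookrightarrow H_a^{l+2m-m_{i\mu}-1/2}$ needed to absorb the singular weight on the $\Gamma_i$-side into a Sobolev norm on the $\omega_{is}$-side.

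For~\eqref{4.16}, I would exploit that $\overline{\Gamma_i\setminus\mathcal K_1^\varkappa}$ is a compact subset of $\Gamma_i$ disjoint from $\overline\Gamma_i\setminus\Gamma_i=\mathcal K_1\cap\overline\Gamma_i$. Since $\omega_{is}(\Gamma_i)\subset Q$ and $\omega_{is}$ is continuous, the image $\omega_{is}(\overline{\Gamma_i\setminus\mathcal K_1^\varkappa})$ is a compact subset of the open set $Q$, hence at a positive distance from $\partial Q$. Choosing $\sigma=\sigma(\varkappa)>0$ smaller than this distance (and slightly reduced to cover a tubular neighborhood on which the differential operator $B_{i\mu s}^0$ probes $u$), one obtains $\omega_{is}(\Gamma_i\setminus\mathcal K_1^\varkappa)\subset Q_\sigma$. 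On $Q_\sigma$ the weight $\rho$ is uniformly bounded above and below, so the weighted and Sobolev norms are equivalent, and the standard trace theorem together with the elliptic mapping property of $B_{i\mu s}^0$ yields~\eqref{4.16}.

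I expect the main obstacle to lie in~\eqref{4.15}, precisely in the case $g\in\mathcal K_1\cap\overline\Gamma_i$ with $\omega_{is}(g)\in\mathcal K_2\cup\mathcal K_3$: the support of $B_{i\mu}^2 u$ on $\Gamma_i$ then reaches up to~$g$, where the target weight $\rho$ is singular, yet the right-hand side measures $u$ only on $Q\setminus\overline{\mathcal K_1^{2\varkappa}}$, which carries no weight at the possibly singular image point $\omega_{is}(g)$. Reconciling the two sides is exactly the purpose of Condition~\ref{cond4.1} (and of Lemma~\ref{l4.3}): they ensure that near such secondary singular points weighted spaces are dominated by ordinary Sobolev spaces, making the two norms comparable.
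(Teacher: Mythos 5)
Your proposal is correct and follows essentially the same route as the paper's proof: both rest on the fact that $(1-\xi)u$ vanishes on $\mathcal K_1^{\varepsilon/2}$, both invoke Lemma~\ref{l4.3} together with Condition~\ref{cond4.1} to reconcile the weighted norm on the $\Gamma_i$-side with the Sobolev norm near the secondary singular points $\omega_{is}(g)\in\mathcal K_2\cup\mathcal K_3$, and both derive~\eqref{4.16} from the compact inclusion $\omega_{is}(\overline{\Gamma_i\setminus\mathcal K_1^{\varkappa}})\subset Q$. The only technical device you omit is the paper's use of Lemma~\ref{l4.2} to extend $(1-\xi)u$ (resp.\ $\eta(1-\xi)u$) beyond $\overline Q$ onto $Q\cup\omega_{is}(\Omega_i)$, which is needed so that the composed function $(B_{i\mu s}^0 U)(\omega_{is}(x))$ is defined on the full neighborhood $\Omega_i$ of $\Gamma_i$ and the trace norm on $\Gamma_i$ can be read off directly.
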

\begin{proof}
1. It suffices to show that inequalities~\eqref{4.15} and~\eqref{4.16} are valid for the
function
$$
\varphi_{i\mu s}=(B_{i\mu s}^0(x,D)((1-\xi) u))(\omega_{is}(x))|_{\Gamma_i}
$$
substituted for $B_{i\mu}^2u$.

2. Let $\overline{\omega_{is}(\Gamma_i)}\cap\mathcal K_2\ne\varnothing$. We assume
without loss of generality that $\overline{\omega_{is}(\Omega_i)}\cap\mathcal
K_2=\overline{\omega_{is}(\Gamma_i)}\cap\mathcal K_2$, see Fig.~\ref{fig4.1}.
\begin{figure}[ht]
{ \hfill\epsfxsize120mm\epsfbox{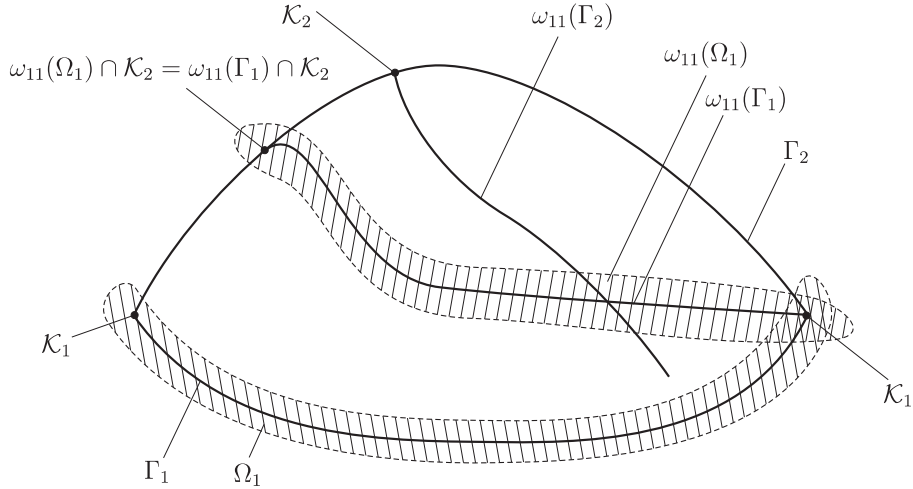}\hfill\ } \caption{The domain
$Q$}\label{fig4.1}
\end{figure}
Let $U$ be an extension of the function $(1-\xi)u$ to $Q\cup\omega_{is}(\Omega_i)$,
defined by Lemma~\ref{l4.2} and satisfying the inequality
\begin{equation}\label{eqVstavka4.16}
\|U\|_{H_a^{l+2m}(Q\cup\omega_{is}(\Omega_i))}\le k_1\|(1-\xi)u\|_{H_a^{l+2m}(Q)},
\end{equation}
where $k_1,k_2,{\dots}>0$ do not depend on  $u$.

Set
$$
\Phi_{i\mu s}(x)=(B_{i\mu s}^0(x,D)U)(\omega_{is}(x)),\qquad x\in\Omega_i.
$$
Clearly,
$$
\varphi_{i\mu s}=\Phi_{i\mu s}|_{\Gamma_i}.
$$
Introducing the new variable $\hat x=\omega_{is}(x)$, applying Lemma~\ref{l4.3} if
$\mathcal K_j\ne\varnothing$ and $\mathcal K_j\not\subset K$, $j=2,3$, and using
inequality~\eqref{eqVstavka4.16}, we obtain
\begin{multline}\label{4.18}
\|\varphi_{i\mu s}\|_{H_a^{l+2m-m_{i\mu}-1/2}(\Gamma_i)}\le\|\Phi_{i\mu
s}\|_{H_a^{l+2m-m_{i\mu}}(\Omega_i)}\\
\le k_2\|B_{i\mu s}^0(\hat x,D)U(\hat
x)\|_{H_a^{l+2m-m_{i\mu}}(\omega_{is}(\Omega_i)))}\le k_3\|(1-\xi)u\|_{H_a^{l+2m}(Q)}.
\end{multline}
Thus, setting $2\varkappa=\varepsilon/2$, we see that~\eqref{4.18} implies~\eqref{4.15}.

Since the transformation $\omega_{is}$ is continuous and $\omega_{is}(\Gamma_i)\subset
Q$, it follows that $\omega_{is}(\Gamma_i\setminus\overline{\mathcal
K_1^\varkappa})\subset Q_{2\sigma}$ for sufficiently small $\sigma>0$. Introduce a
function $\eta\in C_0^\infty(\mathbb R^n)$ such that $\eta(x)=1$ for $x\in Q_{2\sigma}$
and $\eta(x)=0$ for $x\notin Q_{\sigma}$.

Suppose that the function $\eta u$ is extended  by zero outside $Q$. Set
$$
\Psi_{i\mu s}(x)=(B_{i\mu s}^0(x,D)(\eta(1-\xi) u))(\omega_{is}(x)),\qquad x\in\Omega_i.
$$
It is clear that
$$
\varphi_{i\mu s}|_{\Gamma_i\setminus\overline{\mathcal
K_1^\varkappa}}=\Psi|_{\Gamma_i\setminus\overline{\mathcal K_1^\varkappa}}.
$$
Hence, applying Lemma~\ref{l4.3} if $\mathcal K_j\ne\varnothing$ and $\mathcal
K_j\not\subset K$, $j=2,3$,  we obtain
\begin{multline*}
\|\varphi_{i\mu s}\|_{H_a^{l+2m-m_{i\mu}-1/2}(\Gamma_i\setminus\overline{\mathcal
K_1^\varkappa})}\le\|\Psi_{i\mu
s}\|_{H_a^{l+2m-m_{i\mu}}(\Omega_i)}\\
\le k_4\|B_{i\mu s}^0(\hat x,D)(\eta(1-\xi) u)(\hat
x)\|_{H_a^{l+2m-m_{i\mu}}(\omega_{is}(\Omega_i)))}\le k_5\|u\|_{H_a^{l+2m}(Q_\sigma)}.
\end{multline*}

3. If $\overline{\omega_{is}(\Gamma_i)}\cap\mathcal K_2=\varnothing$, then
$\overline{\omega_{is}(\Gamma_i)}\setminus\mathcal K_1^{\varepsilon/2}\subset Q$.
Therefore, similarly to the above, we obtain
$$
\|\varphi_{i\mu s}\|_{H_a^{l+2m-m_{i\mu}-1/2}(\Gamma_i)}\le k_5
\|u\|_{H_a^{l+2m}(Q_\sigma)}.
$$
This proves inequalities~\eqref{4.15} and~\eqref{4.16}.
\end{proof}

\begin{proof}[Proof of Theorem~$\ref{t4.1}$]
1. Take an arbitrary function $u\in H_a^{l+2m}(Q)$. It follows from Lemma~\ref{l4.4} that
\begin{equation}\label{4.22}
\|u\|_{H_a^{l+2m}(Q)}\le k_1\Big(\|\mathbf Lu\|_{\mathcal
H_a^l(Q,\Gamma)}+\|A^1u\|_{H_a^l(Q)}+\sum\limits_{i,\mu}\sum\limits_{k=2,3}\|B_{i\mu}^k
u\|_{H_a^{l+2m-m_{i\mu}-1/2}(\Gamma_i)} \Big),
\end{equation}
where $k_1,k_2,\dots{>0}$ do not depend on $u$.

It follows from the boundedness of the domain $Q$ and from Lemma~\ref{l4.3} that
\begin{equation}\label{4.23}
\|A^1u\|_{H_a^l(Q)}+\sum\limits_{i,\mu}\|B_{i\mu}^3
u\|_{H_a^{l+2m-m_{i\mu}-1/2}(\Gamma_i)}\le k_2\|u\|_{H_a^{l+2m-1}(Q)}.
\end{equation}

2. Consider a function $\eta\in C^\infty(\mathbb R^n)$ such that
$$
\eta(x)=1\quad\text{for}\quad x\in\mathbb R^n\setminus\overline{\mathcal
K_1^{2\varkappa}},\qquad \eta(x)=0\quad\text{for}\quad x\in\mathcal K_1^{\varkappa},
$$
where $\varkappa>0$ is the constant occurring in Lemma~\ref{l4.6}.

It follows from inequality~\eqref{4.15}, from Lemma~\ref{l4.4}, and from Leibniz' formula
that
\begin{multline}\label{4.26}
\|B_{i\mu}^2 u\|_{H_a^{l+2m-m_{i\mu}-1/2}(\Gamma_i)}\le k_3\|\eta
u\|_{H_a^{l+2m}(Q)}\\
\le k_4\Big(\|\eta \mathbf L^1u\|_{\mathcal H_a^l(Q,\Gamma)}+\|u\|_{H_a^{l+2m-1}(Q)}+
\sum\limits_{i,\mu}\sum\limits_{s\ne0}\|\Psi_{i\mu
s}\|_{H_a^{l+2m-m_{i\mu}-1/2}(\omega_{is}(\Gamma_i))}\Big),
\end{multline}
where
$$
\Psi_{i\mu s}=\big(\eta(x)-\eta(\omega_{is}^{-1}(x))\big)(B_{i\mu s}^0(x,D)(\xi
u))(x)|_{\omega_{is}(\Gamma_i)}.
$$
It is clear that, if $\overline{\omega_{is}(\Gamma_i)}\cap\mathcal
K_1^\varepsilon=\varnothing$, then $\Psi_{i\mu s}=0$. Let
$\overline{\omega_{is}(\Gamma_i)}\cap\mathcal K_1^\varepsilon\ne\varnothing$. We claim
that
\begin{equation}\label{4.Psi_iMus=0}
\supp\Psi_{i\mu s}\subset Q_b
\end{equation}
for some $b>0$. Indeed,
\begin{equation}\label{4.omegaANDxi}
\omega_{is}(\Gamma_i)\subset Q\qquad\text{and}\qquad \supp\xi\subset\overline{\mathcal
K_1^\varepsilon}.
\end{equation}
Therefore, by virtue of~\eqref{eqStar}, it suffices to show that $\Psi_{i\mu s}(x)=0$ for
$x$ in some neighborhood of $\mathcal K_1$. Since
\begin{equation}\label{4.eta=0}
\eta(x)=0,\qquad x\in\overline{\omega_{is}(\Gamma_i)}\cap\mathcal K_1^\varkappa,
\end{equation}
it remains to prove that
\begin{equation}\label{4.etaOmega-1=0}
\eta(\omega_{is}^{-1}(x))=0,\qquad x\in\overline{\omega_{is}(\Gamma_i)}\cap\mathcal
K_1^d,
\end{equation}
for a sufficiently small $d>0$. Note that, if
$\overline{\omega_{is}(\Gamma_i)}\cap\mathcal K_1=\varnothing$, then~\eqref{4.Psi_iMus=0}
follows from~\eqref{eqStar} and~\eqref{4.omegaANDxi}. If
$\overline{\omega_{is}(\Gamma_i)}\cap\mathcal K_1\ne\varnothing$ for some $i$ and $s$,
then $\mathcal K_{1\nu}\subset\overline{\omega_{is}(\Gamma_i)}$ for some $\nu$ and
$\omega_{is}^{-1}(\mathcal K_{1\nu})\subset\mathcal K_1$. Hence, there exists a
sufficiently small $d>0$ such that $\mathcal K_{1\nu}^d\subset\omega_{is}(\Omega_i)$ and
$\omega_{is}^{-1}(\mathcal K_{1\nu}^d)\subset\mathcal K_1^\varkappa$ (because the
transformations $\omega_{is}^{-1}$ are smooth). Clearly, \eqref{4.etaOmega-1=0} holds in
this case. Thus, we obtain~\eqref{4.Psi_iMus=0}.

It follows from~\eqref{4.Psi_iMus=0} and Lemma~\ref{l3.10} that
$$
\|\Psi_{i\mu s}\|_{H_a^{l+2m-m_{i\mu}-1/2}(\omega_{is}(\Gamma_i))}\le
k_5(\|Au\|_{H_a^l(Q)}+\|u\|_{H_a^{l+2m-1}(Q)}).
$$
Using this inequality, we infer from~\eqref{4.26} that
\begin{multline}\label{4.26'}
\|B_{i\mu}^2 u\|_{H_a^{l+2m-m_{i\mu}-1/2}(\Gamma_i)}\\
\le k_6\Big(\| \mathbf L u\|_{\mathcal H_a^l(Q,\Gamma)}+\|u\|_{H_a^{l+2m-1}(Q)}+
\sum\limits_{i,\mu}\|\eta B_{i\mu}^2u\|_{H_a^{l+2m-m_{i\mu}-1/2}(\Gamma_i)}\Big).
\end{multline}

By virtue of~\eqref{4.16}, Lemma~\ref{l3.10}, and Leibniz' formula, we have
\begin{equation}\label{4.27}
\|\eta B_{i\mu}^2u\|_{H_a^{l+2m-m_{i\mu}-1/2}(\Gamma_i)}\le
k_7\|u\|_{H_a^{l+2m}(Q_\sigma)} \le k_8(\|Au\|_{H_a^l(Q)}+\|u\|_{H_a^{l+2m-1}(Q)}).
\end{equation}

Combining estimates~\eqref{4.22}, \eqref{4.23}, \eqref{4.26'}, and \eqref{4.27}, we
obtain the desired estimate~\eqref{4.1}.
\end{proof}

\section{The Fredholm Property of Nonlocal Elliptic Problems}\label{sec5}

\subsection{}

In this section, we prove the main result of the paper concerning the Fredholm property
of nonlocal elliptic problems in weighted spaces. This result can be formulated as
follows.

\begin{theorem}\label{t5.1}
Let Conditions~$\ref{cond1.1}$--$\ref{cond1.4}$ and~$\ref{cond4.1}$ hold. Assume that the
line $\Im\lambda=a+1-l-2m$ contains no eigenvalues of $\hat{\mathcal L}_g(\lambda)$ for
any $g\in K$ and $\dim\mathcal N(\mathcal L_g(\omega))=\codim\mathcal R(\mathcal
L_g(\omega))=0$ for any $g\in K$ and $\omega\in S^{n-3}$. Then the operator $\mathbf L:
H_a^{l+2m}(Q)\to \mathcal H_a^l(Q,\Gamma)$ has the Fredholm property.
\end{theorem}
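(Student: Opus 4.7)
\medskip

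\noindent\textbf{Proof proposal.} The plan is to derive the Fredholm property from two ingredients: the a priori estimate already established in Theorem~\ref{t4.1}, and the construction of a right regularizer $\mathcal R_r:\mathcal H_a^l(Q,\Gamma)\to H_a^{l+2m}(Q)$ satisfying $\mathbf L\mathcal R_r=I+\mathcal T$ with $\mathcal T$ compact on $\mathcal H_a^l(Q,\Gamma)$. The first ingredient, combined with the compactness of the embedding $H_a^{l+2m}(Q)\hookrightarrow H_a^{l+2m-1}(Q)$ (which holds by the usual Rellich-type argument for bounded domains with a lower-dimensional weight set), yields that $\dim\mathcal N(\mathbf L)<\infty$ and $\mathcal R(\mathbf L)$ is closed. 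The second ingredient gives $\codim\mathcal R(\mathbf L)<\infty$, so together they prove that $\mathbf L$ has the Fredholm property.

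For the kernel and closed range, I would argue as follows. If $u_n\in\mathcal N(\mathbf L)$ with $\|u_n\|_{H_a^{l+2m}(Q)}=1$, then~\eqref{4.1} gives $\|u_n\|_{H_a^{l+2m}(Q)}\le c\|u_n\|_{H_a^{l+2m-1}(Q)}$; compactness of the embedding extracts a convergent subsequence, so the closed unit ball of $\mathcal N(\mathbf L)$ is compact and hence $\mathcal N(\mathbf L)$ is finite-dimensional. Closedness of $\mathcal R(\mathbf L)$ follows from~\eqref{4.1} by the standard Peetre-type lemma, after quotienting out $\mathcal N(\mathbf L)$.

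For the right regularizer I would use a partition of unity tied to the decomposition of $K$ into orbits. Fix a finite cover of $\overline Q$ by neighborhoods of three types: (i) neighborhoods $\hat V(g_j^t)$ of points in each orbit $\mathcal O(g^t)\subset\mathcal K_1$, on which the model operator $\mathcal L_{g^t}''$ is an isomorphism for sufficiently small $\delta$ by Corollary~\ref{cort2.2'}; (ii) neighborhoods of points of $K\cap\partial Q\setminus\mathcal K_1$ (i.e., of $\mathcal K_2$ when $\mathcal K_2\subset K$), on which the model operator $\mathcal L_g$ on the half-space is an isomorphism by Theorem~\ref{t2.R^n_+Isomorphism}; (iii) neighborhoods of smooth boundary points and interior points, where the classical local elliptic regularizers (from~\cite{LM}) are available, and the weighted norms are equivalent to Sobolev norms by Lemma~\ref{l4.3}. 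Let $\{\zeta_q\}$ be a partition of unity subordinate to this cover and $\{\tilde\zeta_q\}$ cut-off functions with $\tilde\zeta_q\zeta_q=\zeta_q$. I would set
\[
\mathcal R_r f=\sum_q \tilde\zeta_q\, \mathcal L_{g_q}^{-1}(\zeta_q f),
\]
where each $\mathcal L_{g_q}^{-1}$ denotes the appropriate local inverse from (i)–(iii), transplanted to $Q$ via the local change of variables from Sec.~\ref{subsec1.1} or~\ref{subsec1.4}, and the resulting function is extended by zero.

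The main obstacle is to verify $\mathbf L\mathcal R_r=I+\mathcal T$ with $\mathcal T$ compact. The compositions $\mathbf L\tilde\zeta_q\mathcal L_{g_q}^{-1}(\zeta_q\cdot)$ produce the desired identity piece plus commutator terms $[\mathbf L,\tilde\zeta_q]\mathcal L_{g_q}^{-1}(\zeta_q\cdot)$ (of order $2m-1$, hence compact by the compact embedding $H_a^{l+2m}\hookrightarrow H_a^{l+2m-1}$) and discrepancies between the full operator $\mathbf L$ and the frozen model operator $\mathcal L_{g_q}''$. The trickiest discrepancies are the nonlocal contributions $B^2_{i\mu}$ coming from transformations $\omega_{is}$ that carry parts of $\Gamma_i$ away from the support of $\zeta_q$, and the terms produced by the cut-off $\eta_\delta$ built into $\mathcal L_{g_q}''$. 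For the former, Lemma~\ref{l4.6} is crucial: it shows that $B^2_{i\mu}\tilde\zeta_q\mathcal L_{g_q}^{-1}(\zeta_q\cdot)$ factors through $H_a^{l+2m}(Q_\sigma)$, where the weighted norm is equivalent to a Sobolev norm by Lemma~\ref{l4.3}, so the operator is compact in view of the interior elliptic gain of smoothness and the Rellich embedding $W^{l+2m}(Q_\sigma)\hookrightarrow W^{l+2m-1}(Q_\sigma)$. For the latter, the smallness estimate $\|\eta_\delta(\mathcal L_g'-\mathcal L_g)\|\to 0$ from the proof of Corollary~\ref{cort2.2'} allows us to treat the remaining principal-order discrepancy as a small perturbation that can be absorbed by taking $\delta$ small and, together with the compact lower-order remainders, yields $\mathcal T=\mathcal T_0+\mathcal T_1$ with $\mathcal T_0$ compact and $\|\mathcal T_1\|<1$; then $I+\mathcal T_1$ is invertible and $(I+\mathcal T_1)^{-1}\mathcal T_0$ remains compact, so $\mathcal R_r(I+\mathcal T_1)^{-1}$ serves as a right regularizer modulo a compact operator, completing the proof.
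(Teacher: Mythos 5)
Your top-level skeleton is exactly the paper's: the a priori estimate of Theorem~\ref{t4.1} plus the compact embedding $H_a^{l+2m}(Q)\hookrightarrow H_a^{l+2m-1}(Q)$ give $\dim\mathcal N(\mathbf L)<\infty$ and closedness of $\mathcal R(\mathbf L)$, and a right regularizer gives $\codim\mathcal R(\mathbf L)<\infty$. The gap is in the construction of the regularizer, where you assert compactness at the two places where it genuinely fails and where the paper has to work hardest.

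First, you claim the commutators $[\mathbf L,\tilde\zeta_q]$ are of order $2m-1$ and hence compact. That is true for the local differential part $A^0$, but not for the nonlocal boundary operators: commuting a cut-off past $B_{i\mu}^1$ produces terms of the form $\bigl(\tilde\zeta_q(\mathcal G_{j\rho ks}y',z')-\tilde\zeta_q(y',z')\bigr)\bigl(B^0_{j\rho\mu ks}v_k\bigr)(\mathcal G_{j\rho ks}y',z')|_{\Gamma_{j\rho}}$, which carry the \emph{full} order $m_{j\rho\mu}$ and are merely bounded, not compact. This is the operator $\mathcal T^t_{j\rho\mu}$ (and the resulting $T_1$, $T_3$) in the paper's Lemma~\ref{l5.1}; the paper shows only that its \emph{square} is compact (using that the factor $\hat\xi^t(\mathcal G y')-\hat\xi^t(y')$ vanishes near the edge and that the relevant components of the right-hand side vanish, cf.~\eqref{eqStar44}), and then invokes the special perturbation Lemma~\ref{lI+T+M} for operators of the form $I+M_\varepsilon+S_\varepsilon$ with $M_\varepsilon$ small and $S_\varepsilon^2$ compact. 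Your decomposition $\mathcal T=\mathcal T_0+\mathcal T_1$ with $\mathcal T_0$ compact and $\|\mathcal T_1\|<1$ has no slot for such terms.

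Second, and more seriously, the claim that $B^2_{i\mu}\tilde\zeta_q\mathcal L_{g_q}^{-1}(\zeta_q\cdot)$ is compact ``in view of the interior elliptic gain of smoothness and the Rellich embedding'' is false. Interior regularity gives $u\in W^{l+2m}$ locally from $f_0\in W^l$ --- exactly the regularity needed to make $B^2_{i\mu}u\in H_a^{l+2m-m_{i\mu}-1/2}(\Gamma_i)$, with no derivative to spare; the composition $\mathcal H_a^l(Q,\Gamma)\to H_a^{l+2m}(Q_\sigma)\to H_a^{l+2m-m_{i\mu}-1/2}(\Gamma_i)$ is bounded but not compact, and Rellich cannot be inserted without losing the derivative you need. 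The paper never claims this compactness. Instead it exploits the \emph{support} structure of Lemma~\ref{l4.6}: it builds a second regularizer $\mathbf R_1'$ (Lemma~\ref{lVstavka5.2}) whose output is supported in $\overline Q\setminus Q_\sigma$, and near $\mathcal K_1$ when the data is, so that the correction $\mathbf Rf=\mathbf R_1f-\mathbf R_1'\Phi+\mathbf R_1'B^2\mathbf R_1'\Phi$ with $\Phi=B^2\mathbf R_1f$ terminates exactly: $B^2\mathbf R_1'B^2\mathbf R_1'\Phi=0$ because $B^2$ annihilates functions supported in $\mathcal K_1^{2\varkappa}$. Without this (or an equivalent device), your single-layer parametrix leaves a bounded, non-compact, non-small error coming from $B^2$, and the argument does not close.
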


Due to Theorem 16.4 in~\cite{Kr} (about compact perturbations of Fredholm operators), it
suffices to prove Theorem~\ref{t5.1} and the other assertions of this section for $A^1=0$
and $B^3=0$. Therefore, we assume that $A^1=0$ and $B^3=0$ throughout this section.
\begin{corollary}\label{c5.1}
Let the conditions of Theorem~$\ref{t5.1}$ be fulfilled. Then $\ind \mathbf L=\ind
\mathbf L^1$.
\end{corollary}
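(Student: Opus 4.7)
Under the standing assumption $A^1=0$, $B^3=0$, we have $\mathbf L - \mathbf L^1 = (0,B^2)$. My strategy is to establish that this perturbation is compact as a map from $H_a^{l+2m}(Q)$ into $\mathcal H_a^l(Q,\Gamma)$; the corollary then follows immediately from the invariance of the Fredholm index under compact perturbations (Theorem~16.4 of~\cite{Kr}) combined with the Fredholm property of $\mathbf L$ asserted in Theorem~\ref{t5.1}.

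The crux is therefore to show that
$$
B^2:\, H_a^{l+2m}(Q) \to \prod_{i,\mu} H_a^{l+2m-m_{i\mu}-1/2}(\Gamma_i)
$$
is compact. Each term of $B^2_{i\mu}u$ has the form $(B^0_{i\mu s}(x,D)((1-\xi)u))(\omega_{is}(x))\big|_{\Gamma_i}$, so it depends on $u$ only through its restriction to $Q\setminus\overline{\mathcal K_1^{\varepsilon/2}}$, evaluated along the interior submanifolds $\omega_{is}(\Gamma_i)\subset Q$. Given a bounded sequence $\{u_k\}\subset H_a^{l+2m}(Q)$, I would decompose $B^2 u_k = \eta B^2 u_k + (1-\eta) B^2 u_k$ using a cut-off $\eta$ equal to $1$ outside $\mathcal K_1^{2\varkappa}$ and vanishing on $\mathcal K_1^{\varkappa}$. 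Inequality~\eqref{4.16} of Lemma~\ref{l4.6} controls $\eta B^2 u_k$ by $\|u_k\|_{H_a^{l+2m}(Q_\sigma)}$, while \eqref{4.15}, combined with the inclusion $\omega_{is}(\Gamma_i)\subset Q_\sigma$, does the same for the remaining piece. By Lemma~\ref{l4.3} and Condition~\ref{cond4.1}, this norm is in turn dominated by the Sobolev norm $\|u_k\|_{W^{l+2m}(Q_\sigma)}$, and Rellich's theorem then supplies a subsequence whose image under $B^2$ converges in the target product space.

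The main technical obstacle is to transfer the Rellich compactness through the diffeomorphisms $\omega_{is}$ and the trace operators into the weighted target space, especially for the piece $(1-\eta)B^2 u_k$: its support lies near $\mathcal K_1\subset\partial Q$, where the weight on $\Gamma_i$ is singular, whereas the relevant evaluation of $u_k$ occurs in the strict interior $Q_\sigma$. The vanishing of $(1-\xi)(\omega_{is}(x))$ wherever $\omega_{is}(x)$ is close to $\mathcal K_1$, together with the disjointness of the neighborhoods $V(g_j)$ from $\mathcal K_2\cup\mathcal K_3$ provided by Condition~\ref{cond1.3}, should effectively localize the active part of $B^2$ to a region of the interior where weighted and Sobolev norms are comparable, allowing the compactness argument to go through.
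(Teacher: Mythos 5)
Your argument rests on the claim that $B^2\colon H_a^{l+2m}(Q)\to\prod_{i,\mu}H_a^{l+2m-m_{i\mu}-1/2}(\Gamma_i)$ is compact, and that claim is false. Each term of $B_{i\mu}^2u$ is, up to the harmless factor $1-\xi$ and the diffeomorphism $\omega_{is}$, the trace on the interior hypersurface $\omega_{is}(\Gamma_i)$ of a differential operator of order $m_{i\mu}$ applied to $u$, and the target space has order exactly $l+2m-m_{i\mu}-1/2$ --- precisely what the trace theorem yields, with nothing to spare. A trace operator of critical order onto an infinite-dimensional trace space is surjective (lift any bounded sequence $\psi_k$ with no convergent subsequence in $H^{l+2m-m_{i\mu}-1/2}$ of a patch of $\omega_{is}(\Gamma_i)\cap Q_\sigma$ to a bounded sequence $u_k$ in $H_a^{l+2m}(Q)$ supported away from $\mathcal K_1$), hence never compact. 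This is exactly where your invocation of Rellich breaks down: boundedness of $u_k$ in $W^{l+2m}(Q_\sigma)$ gives precompactness only in $W^{l+2m-1}(Q_\sigma)$, whose traces after an order-$m_{i\mu}$ operator land in $H^{l+2m-1-m_{i\mu}-1/2}(\Gamma_i)$, one full order below the target. Estimates \eqref{4.15} and \eqref{4.16} localize where $B^2u$ depends on $u$, but they involve the full $H_a^{l+2m}$ norm on a subdomain and carry no gain of derivatives. Indeed, the paper's whole architecture signals this: the genuinely compact lower-order terms are isolated in $B^3$ and $A^1$ and discarded at the outset, whereas $B^2$ must be handled by the separate machinery of Lemma~\ref{l4.6} in the a priori estimate and by the three-term regularizer $\mathbf R=\mathbf R_1-\mathbf R_1'\Phi+\mathbf R_1'B^2\mathbf R_1'\Phi$ in Theorem~\ref{t5.2}; if $B^2$ were compact, none of that would be needed.

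The correct route, and the one the paper takes, is a homotopy rather than a compact perturbation: set $L_tu=\{A^0u,\ Cu+(1-t)B^2u\}$, so that $L_0=\mathbf L$ and $L_1=\mathbf L^1$ (using $A^1=0$, $B^3=0$). Each $L_t$ satisfies the hypotheses of Theorem~\ref{t5.1} (the operator $(1-t)B^2$ again obeys the bounds of Lemma~\ref{l4.6}), hence is Fredholm, and $\|L_tu-L_{t_0}u\|_{\mathcal H_a^l(Q,\Gamma)}\le k_{t_0}|t-t_0|\,\|u\|_{H_a^{l+2m}(Q)}$, so by the local stability of the index under small bounded perturbations (Theorem~16.2 in~\cite{Kr}) the index is locally constant in $t$; a finite subcovering of $[0,1]$ gives $\ind\mathbf L=\ind\mathbf L^1$. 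You should replace the compactness claim by this continuation argument.
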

\begin{proof}
We introduce the operator
$$
 L_tu=\{A^0u,\ {C}u+(1-t){B}^2u\}.
$$
We have $L_0={\bf L}$ (because $A^1=0$ and $B^3=0$) and $L_1={\bf L}^1$.

By Theorem~\ref{t5.1}, the operators $L_t$ have the Fredholm property for all $t$.
Furthermore, for any $t_0$ and $t$, the following estimate holds:
$$
\|L_tu- L_{t_0}u\|_{{\mathcal H}_a^l(Q,\Gamma)}\le k_{t_0}|t-t_0|\cdot
\|u\|_{H_a^{l+2m}(Q)},
$$
where $k_{t_0}>0$ does not depend on $t$. Therefore, by Theorem~16.2 in~\cite{Kr}, we
have $\ind L_t=\ind L_{t_0}$ for any $t$ in a sufficiently small neighborhood of the
point $t_0$. Since $t_0$ is arbitrary, these neighborhoods cover the segment $[0, 1]$.
Choosing a finite subcovering, we obtain the relations $ \ind{\bf L}=\ind L_0=\ind
L_1=\ind{\bf L}^1. $
\end{proof}

The proof of Theorem~$\ref{t5.1}$ is based on the existence of a right regularizer for
the operator $\mathbf L$.
\begin{theorem}\label{t5.2}
Let the conditions of Theorem~$\ref{t5.1}$ be fulfilled. Then there exists a linear
bounded operator $\mathbf R:\mathcal H_a^l(Q,\Gamma)\to H_a^{l+2m}(Q)$ such that
$$
\mathbf L\mathbf R=\mathbf I+\mathbf T
$$
where $\mathbf I$ and $\mathbf T$ are  the identity operator and a compact operator on
$\mathcal H_a^l(Q,\Gamma)$, respectively.
\end{theorem}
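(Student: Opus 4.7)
The plan is to construct $\mathbf{R}$ by patching together local inverses of model problems via a partition of unity, and then to show that the remainder $\mathbf{L}\mathbf{R} - \mathbf{I}$ is compact on $\mathcal{H}_a^l(Q,\Gamma)$.

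First, I would fix a fine partition of unity $\{\xi_\iota\}$ on $\overline{Q}$ together with auxiliary cut-offs $\tilde{\xi}_\iota \equiv 1$ on $\supp \xi_\iota$, subordinate to a covering by four families of charts: (a) neighborhoods of points of $\mathcal{K}_1$ adapted to orbits (as built in the proof of Lemma~\ref{l4.4}); (b) neighborhoods of points of $\mathcal{K}_2$ (when $\mathcal{K}_2 \subset K$) diffeomorphic to a half-space via Sec.~\ref{subsec1.4}; (c) neighborhoods of points of $\bigcup_i \Gamma_i$ that meet neither $\mathcal{K}$ nor the images $\omega_{is}(\mathcal{K}_1)$; (d) interior balls disjoint from $K$. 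For each $\iota$ I would choose a local right inverse $\mathbf{R}_\iota$: for type (a), $(\mathcal{L}_{g^t}'')^{-1}$ given by Corollary~\ref{cort2.2'}, transplanted back by the orbit change of variables; for type (b), the half-space isomorphism of Theorem~\ref{t2.R^n_+Isomorphism}; for types (c) and (d), classical local regularizers in Sobolev spaces (equivalent to the weighted norms on these charts by Lemma~\ref{l4.3}). The two spectral/Fredholm hypotheses on $\hat{\mathcal{L}}_g(\lambda)$ and $\mathcal{L}_g(\omega)$ are exactly what guarantees that all these local inverses exist. I then put
\[
\mathbf{R}_0 f = \sum_\iota \tilde{\xi}_\iota \mathbf{R}_\iota (\xi_\iota f).
\]

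Applying $\mathbf{L} = (A^0, B^0 + B^1 + B^2)$ and using Leibniz' formula, I would decompose $\mathbf{L}\mathbf{R}_0 - \mathbf{I}$ into: commutators $[A^0,\tilde{\xi}_\iota]\mathbf{R}_\iota$ and $[B^0+B^1,\tilde{\xi}_\iota]\mathbf{R}_\iota$, which are of order strictly less than $A^0$ and $B^0$; variable-coefficient discrepancy terms $(\mathcal{L}_{g^t}' - \mathcal{L}_{g^t})\tilde{\xi}_\iota\mathbf{R}_\iota$ measuring the deviation of the true operator from the frozen-coefficient model (these have arbitrarily small operator norm on shrinking charts, exactly as in the estimate~\eqref{2.L'-Lto0} proved inside Corollary~\ref{cort2.2'}); boundary contributions analogous to $\Psi_{i\mu s}$ in the proof of Theorem~\ref{t4.1}; and the $B^2$ cross-term coupling charts of type (a) with regions far from $\mathcal{K}_1$. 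The commutator and boundary contributions are compact via the compact embedding $H_a^{l+2m}(Q)\hookrightarrow H_a^{l+2m-1}(Q)$ and its trace-space analogue. The discrepancy term $\mathbf{T}_0$ has norm strictly less than $1$ after shrinking the charts, so $\mathbf{I} + \mathbf{T}_0$ is invertible by a Neumann series and the final regularizer is $\mathbf{R} := \mathbf{R}_0(\mathbf{I} + \mathbf{T}_0)^{-1}$.

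The main obstacle is the $B^2$-contribution, because the nonlocal transformations in $B^2$ send neighborhoods of $\mathcal{K}_1$ to points strictly inside $Q$ and never near $\mathcal{K}_1$, so this piece is truly nonlocal and does not live on a single chart. Lemma~\ref{l4.6} is tailor-made for it: $B^2$ is continuous from $H_a^{l+2m}(Q_\sigma)$ into $H_a^{l+2m-m_{i\mu}-1/2}(\Gamma_i\setminus\overline{\mathcal{K}_1^{\varkappa}})$, producing boundary data that vanishes near $\mathcal{K}_1$ and carries no weighted singularity. Away from $\overline{\mathcal{K}_1^{\varkappa}}$ the weighted norms are equivalent to ordinary Sobolev norms (Lemma~\ref{l4.3}), and the compact embedding $W^{l+2m-m_{i\mu}-1/2}\hookrightarrow W^{l+2m-m_{i\mu}-3/2}$ renders the entire $B^2\mathbf{R}_0$ contribution compact as an operator into $\mathcal{H}_a^l(Q,\Gamma)$. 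Combining this with the compactness of the commutator and boundary residues, and absorbing the small discrepancy via the Neumann-series correction, yields $\mathbf{L}\mathbf{R} = \mathbf{I} + \mathbf{T}$ with $\mathbf{T}$ compact.
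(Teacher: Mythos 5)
Your overall architecture (partition of unity, local model inverses from Corollary~\ref{cort2.2'} and Theorem~\ref{t2.R^n_+Isomorphism}, commutators compact, frozen-coefficient discrepancy small) matches the paper's construction of the regularizer $\mathbf R_1$ for $\mathbf L^1$ in Lemma~\ref{l5.1}. But there is a genuine gap in your treatment of the $B^2$-contribution, and it is precisely the point the paper's proof of Theorem~\ref{t5.2} is built to circumvent. You claim that $B^2\mathbf R_0$ is compact into $\mathcal H_a^l(Q,\Gamma)$ because the output vanishes near $\mathcal K_1$ and ``the compact embedding $W^{l+2m-m_{i\mu}-1/2}\hookrightarrow W^{l+2m-m_{i\mu}-3/2}$'' applies. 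That embedding is irrelevant: the component of $\mathcal H_a^l(Q,\Gamma)$ in which $B_{i\mu}^2\mathbf R_0 f$ must be measured is $H_a^{l+2m-m_{i\mu}-1/2}(\Gamma_i)$ itself, not the lower-order space. The operator $B_{i\mu}^2$ is a differential operator of full order $m_{i\mu}$ followed by a diffeomorphism and a trace onto a manifold lying in the interior of $Q$; composed with $\mathbf R_0$ it loses exactly the expected number of derivatives and gains nothing. One can check on the flat model (solve $-\Delta u=f_0$ with $\widehat{f_0}$ concentrated on $\{|\xi'|\approx j,\ |\xi_n|\le j\}$ and proportional to $|\xi|^{-2}$ there, then trace $u$ on a hyperplane) that the interior-trace of the solution operator is bounded but \emph{not} compact into the critical space $H^{l+2m-m_{i\mu}-1/2}$. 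So your remainder is not compact and the argument does not close.

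The paper's resolution is not compactness but an exact, finitely terminating correction: it builds a second regularizer $\mathbf R_1'$ for boundary data (Lemma~\ref{lVstavka5.2}) with two support properties --- $\supp\mathbf R_1'f'\subset\overline Q\setminus Q_\sigma$ always, and $\supp\mathbf R_1'f'\subset\mathcal K_1^{2\varkappa}$ whenever $\supp f'\subset\overline{\mathcal K_1^{\varkappa}}$ --- and sets $\mathbf R f=\mathbf R_1f-\mathbf R_1'\Phi+\mathbf R_1'B^2\mathbf R_1'\Phi$ with $\Phi=B^2\mathbf R_1f$. The point is that estimates~\eqref{4.15} and~\eqref{4.16} propagate supports so that $B^2\mathbf R_1'B^2\mathbf R_1'\Phi\equiv 0$: the would-be Neumann series terminates identically after two terms, and no compactness or smallness of $B^2$ is ever needed. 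A secondary gap of the same nature occurs already in your near-$\mathcal K_1$ step: the remainders produced by the nonlocal terms $B^1$ (the operators $T_1$, $T_3$ in Lemma~\ref{l5.1}, coming from cutoff differences evaluated at $\mathcal G_{j\rho ks}y'$ strictly inside the angle) are likewise bounded, neither small nor compact; only their \emph{squares} are compact, which is why the paper needs Lemma~\ref{lI+T+M} rather than a plain Neumann series plus compact-perturbation argument.
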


\begin{proof}[Proof of Theorem~$\ref{t5.1}$]
Assume that Theorem~\ref{t5.2} is true. By Lemma~3.5 in~\cite{Kondr}, the embedding of
$H_a^{l+2m}(Q)$ into $H_{a}^{l+2m-1}(Q)$ is compact. Therefore, by Theorem~7.1
in~\cite{Kr} and Theorem~\ref{t4.1}, $\dim\mathcal N(\mathbf L)<\infty$ and the range
$\mathcal R(\mathbf L)$ is closed in $\mathcal H_a^l(Q,\Gamma)$. On the other hand,
Theorem~15.2 in~\cite{Kr} and Theorem~\ref{t5.2} imply that $\codim\mathcal R(\mathbf
L)<\infty$.
\end{proof}

Thus, it remains to prove Theorem~\ref{t5.2}.

\subsection{}

First, we prove the following auxiliary result.
\begin{lemma}\label{lI+T+M}
Let $H$ be a Hilbert space and $I$ the identity operator on $H$. Let $M_\varepsilon$ and
$S_\varepsilon$, $\varepsilon>0$, be families of bounded operators on $H$ such that
\begin{equation}\label{eqI+T+M}
\|M_\varepsilon\|\le c_1\varepsilon,\qquad \|S_\varepsilon\|\le c_2,
\end{equation}
where $c_1,c_2>0$ do not depend on $\varepsilon$, and the operators $S_\varepsilon^2$ are
compact. Then the operators
$$
L_\varepsilon=I+M_\varepsilon+S_\varepsilon
$$
 have the Fredholm
property for sufficiently small $\varepsilon>0$.
\end{lemma}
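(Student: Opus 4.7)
The plan is to exploit the identity $(I+N_\varepsilon)(I-N_\varepsilon)=(I-N_\varepsilon)(I+N_\varepsilon)=I-N_\varepsilon^2$, where $N_\varepsilon=M_\varepsilon+S_\varepsilon$, and show that for small $\varepsilon$ the operator $I-N_\varepsilon^2$ is Fredholm; then both the ``left'' and the ``right'' product of $I+N_\varepsilon$ with $I-N_\varepsilon$ being Fredholm forces $I+N_\varepsilon=L_\varepsilon$ itself to be Fredholm.

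First I would expand
$$
N_\varepsilon^2=M_\varepsilon^2+M_\varepsilon S_\varepsilon+S_\varepsilon M_\varepsilon+S_\varepsilon^2.
$$
By~\eqref{eqI+T+M}, the first three summands have norm bounded by $c_1^2\varepsilon^2+2c_1c_2\varepsilon$, so there exist constants $c>0$ and $\varepsilon_0>0$ such that for $0<\varepsilon\le\varepsilon_0$ we may write
$$
I-N_\varepsilon^2=(I-E_\varepsilon)-S_\varepsilon^2,\qquad \|E_\varepsilon\|\le c\varepsilon,
$$
with $E_\varepsilon=M_\varepsilon^2+M_\varepsilon S_\varepsilon+S_\varepsilon M_\varepsilon$. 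For $\varepsilon$ small enough that $\|E_\varepsilon\|<1$, the factor $I-E_\varepsilon$ is invertible by Neumann series, and hence
$$
I-N_\varepsilon^2=(I-E_\varepsilon)\bigl(I-(I-E_\varepsilon)^{-1}S_\varepsilon^2\bigr).
$$
Since $S_\varepsilon^2$ is compact by hypothesis, $(I-E_\varepsilon)^{-1}S_\varepsilon^2$ is also compact, and therefore the second factor is Fredholm (of index zero). Consequently $I-N_\varepsilon^2$ is Fredholm.

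Next I would use the two factorizations
$$
(I+N_\varepsilon)(I-N_\varepsilon)=I-N_\varepsilon^2=(I-N_\varepsilon)(I+N_\varepsilon).
$$
From the first equality, the range of $I+N_\varepsilon$ contains the range of the Fredholm operator $I-N_\varepsilon^2$, which is closed and of finite codimension in $H$; hence $\mathcal R(I+N_\varepsilon)$ is itself closed and of finite codimension. From the second equality, $\ker(I+N_\varepsilon)\subset\ker(I-N_\varepsilon^2)$, which is finite-dimensional; hence $\dim\mathcal N(I+N_\varepsilon)<\infty$. Together these give the Fredholm property of $L_\varepsilon=I+N_\varepsilon$ for all sufficiently small $\varepsilon>0$.

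There is no real obstacle here; the only point to check carefully is that $\mathcal R(I+N_\varepsilon)$ is closed, which follows from its containing the closed finite-codimension subspace $\mathcal R(I-N_\varepsilon^2)$. Everything else is a direct consequence of the compactness of $S_\varepsilon^2$ and the Neumann-series invertibility of $I-E_\varepsilon$.
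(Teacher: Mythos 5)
Your proposal is correct and rests on exactly the same decomposition as the paper's proof: both factor $L_\varepsilon(I-(M_\varepsilon+S_\varepsilon))=I-N_\varepsilon^2$ and split $N_\varepsilon^2$ into a part of norm $O(\varepsilon)$ plus the compact operator $S_\varepsilon^2$. The only difference is in the bookkeeping at the end — the paper invokes the abstract perturbation and regularizer theorems from~\cite{Kr}, whereas you conclude by a self-contained Neumann-series factorization of $I-N_\varepsilon^2$ and a direct reading of $\ker$ and $\mathcal R$ from the two-sided identity, which is equally valid.
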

\begin{proof}
To prove the lemma, we will construct a right and a left regularizers for
$L_\varepsilon$. We have
$$
L_\varepsilon(I-(M_\varepsilon+S_\varepsilon))=I-M_\varepsilon^2-M_\varepsilon
S_\varepsilon-S_\varepsilon M_\varepsilon-S_\varepsilon^2.
$$
It follows from~\eqref{eqI+T+M} that
$$
\|M_\varepsilon^2+M_\varepsilon S_\varepsilon+S_\varepsilon M_\varepsilon\|\le
c_3\varepsilon,
$$
where $c_3>0$ does not depend on $\varepsilon$. Therefore, the operators
$I-M_\varepsilon^2-M_\varepsilon S_\varepsilon-S_\varepsilon M_\varepsilon$ have the
Fredholm property by Theorem~16.2 in~\cite{Kr}, provided that $\varepsilon>0$ is
sufficiently small. Further, using the fact that the operators $S_\varepsilon^2$ are
compact and applying Theorem~16.4 in~\cite{Kr}, we see that the operators
$L_\varepsilon(I-(M_\varepsilon+S_\varepsilon))$ also have the Fredholm property. Now it
follows from Theorem~15.2 in~\cite{Kr} that there exist bounded operators
$R_{1\varepsilon}$ and compact operators $T_{1\varepsilon}$ such that
\begin{equation}\label{eqI+T+M1}
L_\varepsilon(I-(M_\varepsilon+S_\varepsilon))R_{1\varepsilon}=I+T_{1\varepsilon}.
\end{equation}

Similarly, one can prove that there exist bounded operators $R_{2\varepsilon}$ and
compact operators $T_{2\varepsilon}$ such that
\begin{equation}\label{eqI+T+M2}
R_{2\varepsilon}(I-(M_\varepsilon+S_\varepsilon))L_\varepsilon=I+T_{2\varepsilon}.
\end{equation}

The conclusion of the lemma follows from relations~\eqref{eqI+T+M1} and~\eqref{eqI+T+M2}
and from Theorems~15.2 and~14.3 in~\cite{Kr}.
\end{proof}

To prove Theorem~\ref{t5.2}, we preliminarily consider the operator $\mathbf L^1$, i.e.,
assume that nonlocal terms are supported near the set $\mathcal K_1$.

\begin{lemma}\label{l5.1}
Let the conditions of Theorem~$\ref{t5.1}$ be fulfilled and the number $\varepsilon$ be
sufficiently small. Then there exist a linear bounded operator $\mathbf R_1:\mathcal
H_a^l(Q,\Gamma)\to H_a^{l+2m}(Q)$ and a compact operator $\mathbf T_1: \mathcal
H_a^l(Q,\Gamma)\to \mathcal H_a^l(Q,\Gamma)$ such that
$$
\mathbf L^1\mathbf R_1=\mathbf I+\mathbf T_1.
$$
\end{lemma}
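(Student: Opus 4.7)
The plan is to build $\mathbf R_1$ by pasting local right inverses via partitions of unity and then use Lemma~\ref{lI+T+M} to correct the pasted operator to a genuine right regularizer. First, I would cover $\overline Q$ by sets of three types: (i) neighborhoods $\hat B_{\delta(g^t)}(g_j^t)$ of the orbit points of finitely many representatives $g^t\in\mathcal K_1$, chosen as in the proof of Lemma~\ref{l4.4}, together with the subordinate partition of unity $\{\xi_j^t\}$ constructed there which, in local coordinates, depends only on $z'$ near the edge $\mathcal P$; (ii) if $\mathcal K_2\subset K$, neighborhoods of points of $\mathcal K_2$ in which the change of variables from Sec.~\ref{subsec1.4} reduces the problem to the model half-space operator of Sec.~\ref{subsec2.5}; (iii) neighborhoods of points of $\overline Q\setminus\mathcal K$, either interior or near a smooth part of $\partial Q$. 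To each piece I attach the associated local right inverse: for type (i) the operator $(\mathcal L_{g^t}'')^{-1}$ from Corollary~\ref{cort2.2'} (with $\delta$ chosen so the corollary applies); for type (ii) the inverse supplied by Theorem~\ref{t2.R^n_+Isomorphism}; for type (iii) a standard right inverse for a properly elliptic boundary-value problem on a smooth bounded domain.

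Using Lemma~\ref{l4.2} to extend data from $Q$ to the model domains and a second partition of unity $\{\zeta_\alpha\}$ to re-localize outputs, I set
\begin{equation*}
\mathbf R_1 f=\sum_\alpha \zeta_\alpha\, R_\alpha(\eta_\alpha f),
\end{equation*}
where $\{\eta_\alpha\}$ ranges over the partition of unity and $R_\alpha$ is the local right inverse on the corresponding piece; boundedness $\mathbf R_1:\mathcal H_a^l(Q,\Gamma)\to H_a^{l+2m}(Q)$ follows from Lemmas~\ref{l4.2} and~\ref{l4.3}. A direct Leibniz computation then yields
\begin{equation*}
\mathbf L^1\mathbf R_1=\mathbf I+\mathbf M_\varepsilon+\mathbf S_\varepsilon,
\end{equation*}
where $\mathbf M_\varepsilon$ collects the commutators of $\zeta_\alpha$ with $A^0$ and with the nonlocal part of $C$ whose supports meet $\mathcal K_1$, while $\mathbf S_\varepsilon$ collects all remaining terms, which are lower-order (they factor through the compact embedding $H_a^{l+2m}(Q)\hookrightarrow H_a^{l+2m-1}(Q)$ provided by Lemma~3.5 in~\cite{Kondr}) or supported in a fixed interior subdomain $Q_b$ after an application of the shifts $\omega_{is}$, as in the proof of Lemma~\ref{l4.6}. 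The bound $\|\mathbf M_\varepsilon\|\le c\varepsilon$ follows from Lemma~\ref{lzeta0v} and Remark~\ref{rzeta0v} together with relation~\eqref{4.xiOmega=0}, and $\mathbf S_\varepsilon^2$ is compact by the same compact embedding. Lemma~\ref{lI+T+M} then shows that $\mathbf I+\mathbf M_\varepsilon+\mathbf S_\varepsilon$ is Fredholm for $\varepsilon$ sufficiently small, so Theorem~15.2 in~\cite{Kr} yields a bounded $R'$ and compact $\mathbf T_1$ with $(\mathbf I+\mathbf M_\varepsilon+\mathbf S_\varepsilon)R'=\mathbf I+\mathbf T_1$; replacing $\mathbf R_1$ by $\mathbf R_1 R'$ gives the claim.

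The main obstacle is the decomposition of $\mathbf L^1\mathbf R_1-\mathbf I$ into the small-norm $\mathbf M_\varepsilon$ and the part $\mathbf S_\varepsilon$ with compact square, because cutoff functions do not commute with the nonlocal shifts $\omega_{is}$, so commutators of $\zeta_\alpha$ with the nonlocal terms are not automatically lower order. The essential device is the construction of the partition of unity $\{\xi_j^t\}$ from Lemma~\ref{l4.4}: because each $\xi_j^t$ depends only on $z'$ in a neighborhood of $\mathcal P$, the differences $\xi_j^t(\mathcal G_{j\rho ks}y',z')-\xi_j^t(y',z')$ vanish in a fixed neighborhood of the edge, so the corresponding nonlocal commutator terms are supported strictly inside $Q$, produce a genuine gain of smoothness exactly as in the proof of Lemma~\ref{l4.6}, and therefore land in $\mathbf S_\varepsilon$; the remaining terms, whose supports meet $\mathcal K_1$, vanish on $\mathcal K_1$ itself, so Lemma~\ref{lzeta0v} (and Remark~\ref{rzeta0v}) delivers the $O(\varepsilon)$ bound that places them in $\mathbf M_\varepsilon$.
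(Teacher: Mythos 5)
Your overall architecture coincides with the paper's: the same partition of unity built from the functions $\xi_j^t$ that are flat in $y'$ near the edge, the local inverses $(\mathcal L_{g^t}'')^{-1}$ from Corollary~\ref{cort2.2'} together with local regularizers away from $\mathcal K_1$, the decomposition $\mathbf L^1\mathbf R_1=\mathbf I+\mathbf M_\varepsilon+\mathbf S_\varepsilon$, the appeal to Lemma~\ref{lI+T+M}, and the final correction via Theorem~15.2 in~\cite{Kr}. You also correctly identify both the main obstacle (cutoffs do not commute with the shifts) and the main device (the $z'$-only structure of the cutoffs near $\mathcal P$, which pushes the nonlocal commutators into the interior).

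The genuine gap is your justification that $\mathbf S_\varepsilon^2$ is compact. The commutator terms $\big(\hat\xi^t(\mathcal G_{j\rho ks}y',z')-\hat\xi^t(y',z')\big)\big(B^0_{j\rho\mu ks}v_k\big)(\mathcal G_{j\rho ks}y',z')$ with $v=(\mathcal L_{g^t}'')^{-1}F^t(\sum_q\xi_q^tf)$, and likewise the terms produced by $B^1_{i\mu}-C_{i\mu}$ acting on $(1-\xi)$ times the pasted solution, are supported in the interior but are of full order $m_{j\rho\mu}$ in $v_k$. The interior estimate of Lemma~\ref{l3.10} bounds them by $\|A_k''v_k\|_{H_a^l}+\|v_k\|_{H_a^0}$, and the first summand carries the full strength of $f_0$; these terms are therefore merely bounded, not compact, and they do not ``factor through the compact embedding'' as you assert. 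What makes their square compact is a structural fact you never invoke: the defect they produce consists of boundary data only, so when it is fed back into the local inverse the first $N(g^t)$ components of $F^t\big(\sum_q\xi_q^tT_3f\big)$ vanish, whence $A_k''\big[(\mathcal L_{g^t}'')^{-1}F^t(\sum_q\xi_q^tT_3f)\big]_k=0$ (relation~\eqref{eqStar44} in the paper); only then does the interior estimate collapse to the $H_{a}^0(\Theta_k\cap B_{2\delta(g^t)})$-norm, where the compact embedding applies. Without this step (and its analogue for the $T_1$-type terms, which uses that the relevant supports land in a fixed $Q_b$ and that $A^0\mathbf R_1\{0,f'\}$ is again a pure defect), the hypothesis of Lemma~\ref{lI+T+M} on $\mathbf S_\varepsilon^2$ is unverified, and the argument does not close.
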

\begin{proof}
1. To construct a right regularizer, we consider a partition of unity $\{\xi_j^t\}$
different from that in the proof of Lemma~\ref{l4.4}.

For each orbit $\mathcal O(g)$, $g\in\mathcal K_1$, we denote by $\hat B_\delta(g_j)$ the
same neighborhoods as in the proof of Lemma~\ref{l4.4}, and let $\{\varphi_j^t\}$ be the
same partition of unity for $\mathcal K_1$. We denote the function $\varphi_j^t(x)$
written in the variables $x'=(y',z')$ by $\varphi_j^t(y',z')$. Clearly, there is a number
$a'<\chi_m\delta(g^t)$ such that
$$
\varphi_j^t(0,z')=0
$$
for $a'<|z'|<\chi_m\delta(g^t)$. As before, we assume that $\varphi_j^t(0,z')$ is
extended by zero for $|z'|\ge \chi_m\delta(g^t)$, and it remains to be infinitely
differentiable.

Let $\hat\varphi^t\in C_0^\infty(\mathbb R^{n-2})$ be a function such that
$$
\hat\varphi^t(z')=1,\qquad |z'|< a',
$$
$$
\hat\varphi^t(z')=0,\qquad |z'|> {\hat a}',
$$
where $a'<{\hat a}'<\chi_m\delta(g^t)$.

Denote by $\psi^t,\hat\psi^t\in C_0^\infty(\mathbb R^2)$ functions such that
$\psi^t(y')=1$ for $|y'|\le\varepsilon_1'$ and $\psi^t(y')=0$ for $|y'|\ge
3\varepsilon_1'/2$, $\hat\psi^t(y')=1$ for $|y'|\le3\varepsilon_1'/2$, and
$\hat\psi^t(y')=0$ for $|y'|\ge 2\varepsilon_1'$, where $\varepsilon_1'>0$ is so small
that
\begin{equation}\label{eqVstavka5.5}
\left\{(2\varepsilon_1')^2+({\hat a}')^2\right\}^{1/2}<\chi_m\delta(g^t),
\end{equation}
and $\varepsilon_1'$ does not depend on $\varepsilon$.

Set
$$
\xi_j^t(y',z')=\psi^t(y')\varphi_j^t(0,z'),\qquad
\hat\xi^t(y',z')=\hat\psi^t(y')\hat\varphi^t(z').
$$
By virtue of~\eqref{eqVstavka5.5}, we have
\begin{equation}\label{eqVstavka5.6}
\supp \xi_j^t(y',z')\subset B_{\chi_m\delta(g^t)},\qquad \supp \hat\xi^t(y',z')\subset
B_{\chi_m\delta(g^t)}.
\end{equation}
It is also clear that
\begin{equation}\label{eqVstavka5.7}
\xi_j^t(y',z')=\varphi_j^t(0,z'),\quad \hat\xi^t(y',z')=\hat\varphi^t(z'),\qquad
|y'|<\varepsilon_1',
\end{equation}
\begin{equation}\label{eqVstavka5.8}
\hat\xi^t(y',z')\xi_j^t(y',z')=\xi_j^t(y',z'),\qquad (y',z')\in\mathbb R^n.
\end{equation}

Denote by $\xi_j^t(x)$ and $\hat\xi_j^t(x)$ the functions $\xi_j^t(y',z')$ and
$\hat\xi^t(y',z')$, respectively, written in the variables $x=x(g^t,j)$. Since $\supp
\xi_j^t(x)\subset {\hat B}_{\delta(g^t)}(g_j^t)$ and $\supp \hat\xi_j^t(x)\subset {\hat
B}_{\delta(g^t)}(g_j^t)$, we can extend these functions by zero outside the neighborhood
${\hat B}_{\delta(g^t)}(g_j^t)$ to obtain the functions infinitely differentiable on
$\mathbb R^n$.

Obviously, we have
\begin{equation}\label{eqVstavka5.9}
\sum\limits_{j,t}\xi_j^t(x)=\sum\limits_{j,t}\varphi_j^t(x)=1,\qquad x\in\mathcal K_1.
\end{equation}

2. We set
$$
\begin{aligned}
M_H^t&=\left\{u\in H_a^{l+2m}(Q):\ \supp u\subset\bigcup_{j=1}^{N(g^t)}
V(g_j^t)\right\},\\
\mathcal M_H^t&=\{v\in\mathcal H_a^{l+2m}(\Theta):\ \supp v\subset V(0)\}.
\end{aligned}
$$
For all $u\in M_H^t$, denote $u_j^t(x)=u(x)$, $x\in V(g_j^t)$. We define the isomorphism
$U^t: M_H^t\to\mathcal M_H^t$ by the formulas
$$
(U^tu)_j(x')=u_j^t(x(x')),\ x'\in\Theta_j\cap V(0);\qquad (U^tu)_j(x')=0,\
x'\in\Theta_j\setminus V(0);
$$
$j=1,\dots,N(g^t)$.

We set
$$
\begin{aligned}
G_H^t&=\left\{f\in \mathcal H_a^{l}(Q,\Gamma):\ \supp f\subset\bigcup_{j=1}^{N(g^t)}
V(g_j^t)\right\},\\ \mathcal
 G_H^t&=\{\Phi\in\mathcal H_a^{l}(\Theta,\Gamma):\ \supp
\Phi\subset V(0)\}.
\end{aligned}
$$
If $f=\{f_0,f_{i\mu}\}\in G_H^t$, then we denote by $f_j^t(x)$ and $f_{j\rho\mu}^t(x)$
the functions $f_0(x)$ and $f_{i\mu}(x)$ for $x\in V(g_j^t)$ and $x\in\Gamma_i\cap
V(g_j^t)$, respectively, where $\rho$ and $j$ are such that the transformation $x\mapsto
x'(g^t,j)$ maps $\Gamma_i\cap V(g_j^t)$ onto $\Gamma_{j\rho}\cap V(0)$. Further, we
define the isomorphism $F^t: G_H^t\to\mathcal G_H^t$ by the formula
$$
(F^tf)(x')=\{(F^tf)_j(x'), (F^tf)_{j\rho\mu}(x')\}.
$$
Here
$$
(F^tf)_j(x')=f_j^t(x(x')),\ x'\in\Theta_j\cap V(0);\qquad (F^tf)_j(x')=0,\
x'\in\Theta_j\setminus V(0);
$$
$$
(F^tf)_{j\rho\mu}(x')=f_{j\rho\mu}^t(x(x')),\ x'\in\Gamma_{j\rho}\cap V(0);\qquad
(F^tf)_{j\rho\mu}(x')=0,\ x'\in\Gamma_{j\rho}\setminus V(0);
$$
$j=1,\dots,N(g^t)$; $\rho=1,2$; $\mu=1,\dots,m$.

3. We set
\begin{equation}\label{eqVstavka5.10}
R_{\mathcal K_1}f=\sum\limits_t(U^t)^{-1}\Big(\hat\xi^t(\mathcal
L_{g^t}'')^{-1}F^t\Big(\sum\limits_q\xi_q^tf\Big)\Big).
\end{equation}
Since the functions $\hat\xi^t$ and $\xi_q^t$ do not depend on $\varepsilon$, it follows
from Corollary~\ref{cort2.2'} that
\begin{equation}\label{eqVstavka5.10'}
\|R_{\mathcal K_1}f\|_{H_a^{l+2m}(Q)}\le c_1\|f\|_{\mathcal H_a^l(Q,\Gamma)},
\end{equation}
where $c_1,c_2,{\dots}>0$ depend neither on $\varepsilon$ nor on a function occurring in
the right-hand side.

Clearly, we have
\begin{equation}\label{eqVstavka5.11}
\mathbf L^1 R_{\mathcal K_1}f=\mathbf L R_{\mathcal K_1}f+T_1f,
\end{equation}
where $T_1:\mathcal H_a^l(Q,\Gamma)\to\mathcal H_a^l(Q,\Gamma)$ is a linear bounded
operator given by
$$
T_1 f=\Big\{0,-\sum\limits_{s=1}^{S_i}\big(B_{i\mu s}^0(x,D)((1-\xi)R_{\mathcal
K_1}f)\big)(\omega_{is}(x))|_{\Gamma_i}\Big\}
$$
(recall that $A^1=0$ and $B^3=0$). Moreover, using Lemmas~\ref{l4.6}
and~\ref{lZetaDeltaU} and estimate~\eqref{eqVstavka5.10'}, we have
\begin{equation}\label{eqVstavka5.11'}
\|T_1f\|_{\mathcal H_a^l(Q,\Gamma)}\le c_2(\|R_{\mathcal K_1}f\|_{H_a^{l+2m}(Q)}+\|\xi
R_{\mathcal K_1}f\|_{H_a^{l+2m}(Q)})\le c_3\|f\|_{\mathcal H_a^l(Q,\Gamma)}.
\end{equation}

Further, by virtue of~\eqref{eqVstavka5.6}, we have
$$
\supp\hat\xi^t(y',z')\subset B_{\delta(g^t)}.
$$
Therefore,
\begin{equation}\label{eqVstavka5.12}
\hat\xi^t\mathcal L_{g^t}'v=\hat\xi^t\mathcal L_{g^t}''v,\qquad v\in\mathcal
H_a^{l+2m}(\Theta).
\end{equation}

It follows from Leibniz' formula that
\begin{equation}\label{eqVstavka5.13}
\mathcal L_{g^t}'(\hat\xi^t v)=\hat\xi^t\mathcal L_{g^t}'v+\tilde{\mathcal
L}_{g^t}v+\{0,\mathcal T_{j\rho\mu}^t v\},
\end{equation}
where $\tilde{\mathcal L}_{g^t}:\mathcal H_a^{l+2m-1}(\Theta)\to \mathcal
H_a^l(\Theta,\Gamma)$ is a bounded operator such that $\supp\tilde{\mathcal
L}_{g^t}v\subset B_{\delta(g^t)}$, while
$$
\mathcal T_{j\rho\mu}^t v=\sum\limits_{(k,s)\ne(j,0)}\big(\hat\xi^t(\mathcal G_{j\rho
ks}y',z')-\hat\xi^t(y',z')\big)\big(B_{j\rho\mu ks}^0(x',D)v_k\big)(\mathcal G_{j\rho
ks}y',z')|_{\Gamma_{j\rho}}
$$
and $\supp \mathcal T_{j\rho\mu}^t v\subset B_{\delta(g^t)}$. Clearly, $$\mathcal
T_{j\rho\mu}^t:\mathcal H_a^{l+2m}(\Theta)\to
H_a^{l+2m-m_{j\rho\mu}-1/2}(\Gamma_{j\rho})$$ is a bounded operator. Moreover, since the
function $\hat\xi^t$ does not depend on $\varepsilon$, it follows that
\begin{equation}\label{eqVstavka5.13''}
\|\tilde{\mathcal L}_{g^t} v\|_{\mathcal H_a^l(\Theta,\Gamma)}\le c_4\|v\|_{\mathcal
H_a^{l+2m-1}(\Theta)},
\end{equation}
\begin{equation}\label{eqVstavka5.13'}
\|\mathcal T_{j\rho\mu}^t v\|_{H_a^{l+2m-m_{j\rho\mu}-1/2}(\Gamma_{j\rho})}\le
c_5\|v\|_{\mathcal H_a^{l+2m}(\Theta)}.
\end{equation}

Using definition~\eqref{eqVstavka5.10} of the operator $R_{\mathcal K_1}$, the
isomorphisms $U^t$ and $F^t$, and relations~\eqref{eqVstavka5.13}, \eqref{eqVstavka5.12},
and~\eqref{eqVstavka5.8}, we obtain
\begin{equation}\label{eqVstavka5.14}
\begin{aligned}
\mathbf L R_{\mathcal K_1}f&=\sum\limits_t(F^t)^{-1}\mathcal L_{g^t}'\hat\xi^t(\mathcal
L_{g^t}'')^{-1}F^t\Big(\sum\limits_q\xi_q^tf\Big)\\
&=\sum\limits_t(F^t)^{-1}\hat\xi^t
F^t\Big(\sum\limits_q\xi_q^tf\Big)+T_2f+T_3f\\
&=\sum\limits_t(F^t)^{-1}
F^t\Big(\sum\limits_q\hat\xi^t_q\xi_q^tf\Big)+T_2f+T_3f\\
&=\sum\limits_{t,q}\xi_q^t f+T_2f+T_3f,
\end{aligned}
\end{equation}
where
$$
T_2f=\sum\limits_t(F^t)^{-1}\tilde{\mathcal L}_{g^t}(\mathcal
L_{g^t}'')^{-1}F^t\Big(\sum\limits_q\xi_q^tf\Big),
$$
$$
T_3f=\sum\limits_t(F^t)^{-1}\Big\{0,\mathcal T_{j\rho\mu}^t(\mathcal
L_{g^t}'')^{-1}F^t\Big(\sum\limits_q\xi_q^tf\Big)\Big\}.
$$

Since the operator $\tilde{\mathcal L}_{g^t}:\mathcal H_a^{l+2m-1}(\Theta)\to \mathcal
H_a^l(\Theta,\Gamma)$ is bounded, $\supp\tilde{\mathcal L}_{g^t}v\subset
B_{\delta(g^t)}$, and $\mathcal H_a^{l+2m}(\Theta\cap B_{\delta(g^t)})$ is compactly
embedded into $\mathcal H_a^{l+2m-1}(\Theta\cap B_{\delta(g^t)})$, it follows that the
operator
$$
T_2:\mathcal H_a^l(Q,\Gamma)\to \mathcal H_a^l(Q,\Gamma)
$$
is compact. Furthermore, by~\eqref{eqVstavka5.13''}, \eqref{eqVstavka5.13'}, and
Corollary~\ref{cort2.2'}, we have
\begin{equation}\label{eqVstavka5.14'}
\|T_i f\|_{\mathcal H_a^l(Q,\Gamma)}\le c_6\|f\|_{\mathcal H_a^l(Q,\Gamma)},\qquad i=2,3
\end{equation}
(we have also used the fact that the operator $\mathcal L_{g^t}''$ and the functions
$\xi_q^t$ do not depend on $\varepsilon$).

Now let us prove that the square of the operator $T_3$ is compact. Indeed,
\begin{equation}\label{eqVstavka5.15}
\|(T_3)^2 f\|_{\mathcal H_a^l(Q,\Gamma)}\le c_7\sum\limits_{t,j,\rho,\mu}\Big\|\mathcal
T_{j\rho\mu}^t(\mathcal L_{g^t}'')^{-1}F^t\Big(\sum\limits_q\xi_q^tT_3
f\Big)\Big\|_{H_a^{l+2m-m_{j\rho\mu}-1/2}(\Gamma_{j\rho})}.
\end{equation}
It follows from~\eqref{eqVstavka5.6} and~\eqref{eqVstavka5.7} that
$$\supp \big(\hat\xi^t(\mathcal G_{j\rho
ks}y',z')-\hat\xi^t(y',z')\big)\subset B_{\delta(g^t)}$$ and $$\hat\xi^t(\mathcal
G_{j\rho ks}y',z')-\hat\xi^t(y',z')=0$$ for $|y'|\le\varepsilon_1'/\chi_M$, where
$\chi_M=\max\limits_{j,\rho,k,s}\chi_{j\rho ks}$. Therefore, the condition
$d_{k1}<d_{j\rho}+\varphi_{j\rho ks}<d_{k2}$ for $(k,s)\ne (j,0)$ implies that
\begin{equation}\label{eqVstavka5.16}
\|\mathcal T_{j\rho\mu}^t v \|_{H_a^{l+2m-m_{j\rho\mu}-1/2}(\Gamma_{j\rho})}\le
c_8\sum\limits_{k=1}^{N(g^t)}\|v_k\|_{H_a^{l+2m}(\Omega_k^t)},
\end{equation}
where
$$
\Omega_k^t=\{x'=(y',z'):\ d_{k1}+d_0<\varphi<d_{k2}-d_0,\ |y'|>\varepsilon_1'/\chi_M,\
|x'|<\delta(g^t)\}
$$
and
\begin{equation}\label{eqd0}
d_0=\min_{j,\rho,k,s}\big(d_{j\rho}+\varphi_{j\rho ks}-d_{k1},\
d_{k2}-(d_{j\rho}+\varphi_{j\rho ks})\big)/2\quad ((k,s)\ne(j,0)).
\end{equation}

Using inequality~\eqref{eqVstavka5.16}, Lemma~\ref{l3.10}, and the equivalence of the
norms in subspaces of the spaces $H_a^l(\Theta_k)$ and $W^l(\Theta_k)$ consisting of
compactly supported functions vanishing near the edge $\mathcal P$, we obtain
\begin{multline*}
\Big\|\mathcal T_{j\rho\mu}^t(\mathcal L_{g^t}'')^{-1}F^t\Big(\sum\limits_q\xi_q^tT_3
f\Big)\Big\|_{H_a^{l+2m-m_{j\rho\mu}-1/2}(\Gamma_{j\rho})}
\\
\le c_9\sum\limits_k\Big(\Big\|A_k''\Big[(\mathcal
L_{g^t}'')^{-1}F^t\Big(\sum\limits_q\xi_q^tT_3
f\Big)\Big]_k\Big\|_{H_a^l(\Theta_k)}\\
+\Big\|\Big[(\mathcal L_{g^t}'')^{-1}F^t\Big(\sum\limits_q\xi_q^tT_3
f\Big)\Big]_k\Big\|_{H_{a}^0(\Theta_k\cap B_{2\delta(g^t)})}\Big),
\end{multline*}
where
$$
A_k''=A_k(D_{x'})+\eta(A_k^0(x',D_{x'})-A_k(D_{x'})).
$$
However, the first $N(g^t)$ components of the vector $F^t\Big(\sum\limits_q\xi_q^tT_3
f\Big)$ are equal to zero. Therefore,
\begin{equation}\label{eqStar44}
A_k''\Big[(\mathcal L_{g^t}'')^{-1}F^t\Big(\sum\limits_q\xi_q^tT_3 f\Big)\Big]_k=0,\qquad
k=1,\dots,N(g^t),
\end{equation}
and hence
\begin{multline}\label{eqVstavka5.17}
\Big\|\mathcal T_{j\rho\mu}^t(\mathcal L_{g^t}'')^{-1}F^t\Big(\sum\limits_q\xi_q^tT_3
f\Big)\Big\|_{H_a^{l+2m-m_{j\rho\mu}-1/2}(\Gamma_{j\rho})}\\
\le c_9\sum\limits_k\Big\|\Big[(\mathcal L_{g^t}'')^{-1}F^t\Big(\sum\limits_q\xi_q^tT_3
f\Big)\Big]_k\Big\|_{H_{a}^0(\Theta_k\cap B_{2\delta(g^t)})}.
\end{multline}

Inequalities~\eqref{eqVstavka5.15} and~\eqref{eqVstavka5.17} and the compactness of the
embedding $H_a^{l+2m}(\Theta_k)\subset H_{a}^0(\Theta_k\cap B_{2\delta(g^t)})$ imply that
the operator $T_3$ has a compact square.

Similarly, one can show that the operator $T_1$ has a compact square. To this end, one
must use the relation
$$
\Big[\overline{\omega_{is}(\Gamma_i)}\cap\Big(\bigcup\limits_{t,j}\hat
B_{\delta(g_j^t)}(g_j^t)\Big)\Big]\setminus\mathcal K_1^{\varepsilon/2}\subset Q_b,\qquad
i=1,\dots,N_0,\ s=1,\dots S_i,
$$
which holds for some $b>0$.

Thus, it follows from~\eqref{eqVstavka5.11} and~\eqref{eqVstavka5.14} that
\begin{equation}\label{eqVstavka5.18}
\mathbf L^1 R_{\mathcal K_1}f=\xi_0 f+ T_{\mathcal K_1}f,
\end{equation}
where
\begin{equation}\label{eqVstavka5.18''}
\xi_0(x)=\sum\limits_{t,q}\xi_q^t(x)
\end{equation}
and $ T_{\mathcal K_1}: \mathcal H_a^l(Q,\Gamma)\to \mathcal H_a^l(Q,\Gamma) $ is a
bounded operator with compact square. Moreover, inequalities~\eqref{eqVstavka5.11'}
and~\eqref{eqVstavka5.14'} imply that
\begin{equation}\label{eqVstavka5.18'}
\|T_{\mathcal K_1} f\|_{\mathcal H_a^l(Q,\Gamma)}\le c_{10}\|f\|_{\mathcal
H_a^l(Q,\Gamma)}.
\end{equation}

4. Take a number $\varepsilon$ in the definition of the function $\xi$ so small that
$$
\mathcal K_1^{4\varepsilon}\subset \bigcup\limits_{t,j}\hat B_{\delta(g^t)}(g_j^t)
$$
and
$$
\xi_0(x)\ge 1/2,\qquad x\in\mathcal K_1^{4\varepsilon}
$$
(the existence of such an $\varepsilon$ follows from~\eqref{eqVstavka5.9}
and~\eqref{eqVstavka5.18''}). Later on, we will impose some additional conditions on
$\varepsilon$.

For each $\varepsilon$, we consider a function $\zeta_0\in C_0^\infty(\mathbb R^n)$
depending on $\varepsilon$, such that
\begin{equation}\label{eqVstavka5.19}
\supp\zeta_0\subset\mathcal K_1^{4\varepsilon};\qquad \zeta_0(x)=1,\ x\in\mathcal
K_1^{2\varepsilon};\qquad |D^\alpha\zeta_0(x)|\le c_{11}\varepsilon^{-|\alpha|}.
\end{equation}

For each point $g\in\overline Q\setminus \mathcal K_1^{2\varepsilon}$, we consider its
$(\varepsilon/2)$-neighborhood $B_{\varepsilon/2}(g)$. All these neighborhoods cover
$\overline Q\setminus \mathcal K_1^{2\varepsilon}$. Choose a finite subcovering
$\{B_{\varepsilon/2}(h^\tau)\}_{\tau=1}^{\tau_1}$, where $\tau_1=\tau_1(\varepsilon)$.
Let functions $\tilde\zeta^\tau\in C_0^\infty(\mathbb R^n)$ form a partition of unity for
$\overline Q\setminus \mathcal K_1^{2\varepsilon}$, subordinated to the covering
$\{B_{\varepsilon/2}(h^\tau)\}_{\tau=1}^{\tau_1}$. Then the functions
\begin{equation}\label{eqZetaZetaTau}
\zeta=\xi_0+\zeta_0(1-\xi_0),\qquad \zeta^\tau=(1-\zeta)\tilde\zeta^\tau,\quad
\tau=1,\dots,\tau_1,
\end{equation}
form a partition of unity for $\overline Q$, subordinated to the covering by the sets
$\bigcup\limits_{t,j}\hat B_{\delta(g^t)}(g_j^t)$ and $B_{\varepsilon/2}(h^\tau)$,
$\tau=1,\dots,\tau_1$.

Due to Theorem~\ref{t2.R^n_+Isomorphism} and to the general theory of elliptic
boundary-value problems in the interior of a domain and near a smooth part of the
boundary (see, e.g.,~\cite{Volevich}), there exist bounded operators
$$
 R_{\tau0}:\{f\in \mathcal
H_a^l(Q,\Gamma): \supp f\subset B_{\varepsilon/2}(h^\tau)\}\to \{u\in H_a^{l+2m}(Q):
\supp u\subset B_{\varepsilon}(h^\tau)\}
$$
and compact operators
$$
T_{\tau0}:\{f\in \mathcal H_a^l(Q,\Gamma): \supp f\subset B_{\varepsilon/2}(h^\tau)\}\to
\{f\in\mathcal H_a^l(Q,\Gamma): \supp f\subset B_{\varepsilon}(h^\tau)\}
$$
such that
\begin{equation}\label{eqVstavka5.20}
\mathbf L^0 R_{\tau0} f=f+ T_{\tau0} f.
\end{equation}

For any $f\in \mathcal H_a^l(Q,\Gamma)$, we set
\begin{equation}\label{eqRf}
R f=R_{\mathcal K_1}f+R_{\mathcal K_1}(\eta f)+\sum\limits_{\tau} R_{\tau0}(\zeta^\tau
f),
\end{equation}
where $\eta(x)={\zeta_0(x)(1-\xi_0(x))}/{\xi_0(x)}$ for $x\in \mathcal
K_1^{4\varepsilon}$ and $\eta(x)=0$ for $x\notin\mathcal K_1^{4\varepsilon}$. Note that
$\supp\zeta_0\subset\mathcal K_1^{4\varepsilon}$ and $\xi_0(x)\ge1/2$ for $x\in \mathcal
K_1^{4\varepsilon}$; hence, the function $\eta$ is supported on $\mathcal
K_1^{4\varepsilon}$ and infinitely differentiable on $\mathbb R^n$. We have
\begin{equation}\label{eqVstavka5.21}
\mathbf L^1 R f=\mathbf L^1 R_{\mathcal K_1}f+\mathbf L^1R_{\mathcal
K_1}(\eta f)\\
+\sum\limits_{\tau}\mathbf L^0 R_{\tau0}(\zeta^\tau f)+\sum\limits_\tau\{0,B_{i\mu}^1
R_{\tau0}(\zeta^\tau f)\}.
\end{equation}

Since $\zeta(x)=1$ for $x\in\mathcal K_1^{2\varepsilon}$, it follows that
$\supp\zeta^\tau f\subset\overline Q\setminus\mathcal K_1^{2\varepsilon}$. Thus, we see
that $\supp R_{\tau0}(\zeta^\tau f)\subset \overline Q\setminus\mathcal
K_1^{\varepsilon}$, while $\supp\xi\subset \mathcal K_1^{\varepsilon}$. Therefore,
$$
B_{i\mu}^1 R_{\tau0}(\zeta^\tau f)=0.
$$
Combining this relation with equalities~\eqref{eqVstavka5.18}, \eqref{eqVstavka5.20}, and
\eqref{eqVstavka5.21}, we obtain
\begin{equation}\label{eqVstavka5.22}
\mathbf L^1 R f=\xi_0 f+T_{\mathcal K_1}f+\zeta_0(1-\xi_0)f+T_{\mathcal K_1}(\eta f)
+\sum\limits_{\tau}\zeta^\tau f+Tf =f+T_{\mathcal K_1}f+Mf+Tf,
\end{equation}
where
$$
Mf=T_{\mathcal K_1}(\eta f),
$$
while $ T:\mathcal H_a^l(Q,\Gamma)\to\mathcal H_a^l(Q,\Gamma) $ is a compact operator
(whose norm may increase as $\varepsilon\to0$). By~\eqref{eqVstavka5.18'}, we have
$$
\|Mf\|_{\mathcal H_a^l(Q,\Gamma)}\le c_{10}\|\eta f\|_{\mathcal H_a^l(Q,\Gamma)}.
$$
However, $(1-\xi_0(x))/\xi_0(x)=0$ for $x\in\mathcal K_1$ and the function $\zeta_0$ is
supported in $\mathcal K_1^{4\varepsilon}$ and satisfies the inequality
in~\eqref{eqVstavka5.19}. Therefore, it follows from the last estimate, from
Lemmas~\ref{lzeta0v} and~\ref{lZetaDeltaU} and from Remark~\ref{rzeta0v} that
\begin{equation}\label{eqVstavka5.22'}
\|Mf\|_{\mathcal H_a^l(Q,\Gamma)}\le c_{11}\varepsilon\|f\|_{\mathcal H_a^l(Q,\Gamma)}.
\end{equation}

It follows from~\eqref{eqVstavka5.18'}, \eqref{eqVstavka5.22'}, and from
Lemma~\ref{lI+T+M} that the operator $\mathbf I+T_{\mathcal K_1}+M$ has the Fredholm
property, provided that $\varepsilon>0$ is sufficiently small. Applying Theorem~16.4
in~\cite{Kr}, we see that the operator
$$
\mathbf L^1 R=\mathbf I+T_{\mathcal K_1}+M+T
$$
also has the Fredholm property. Therefore, by Theorem~15.2 in~\cite{Kr}, there exist a
bounded operator $R':\mathcal H_a^l(Q,\Gamma)\to \mathcal H_a^l(Q,\Gamma)$ and a compact
operator $\mathbf T_1:\mathcal H_a^l(Q,\Gamma)\to \mathcal H_a^l(Q,\Gamma)$ such that
$$
\mathbf L^1 R R'=\mathbf I+\mathbf T_1.
$$
Setting $\mathbf R_1=R R'$, we complete the proof.
\end{proof}

Set $\mathcal H_a^l(\partial Q)=\prod\limits_{i,\mu} H_a^{l+2m-m_{i\mu}-1/2}(\Gamma_i)$.

To construct a right regularizer for the operator $\mathbf L$, we also need to prove the
existence of a ``right regularizer'' $\mathbf R_1'$ for the operator $\mathbf L^1$, which
is defined on the functions $f'\in\mathcal H_a^l(\partial Q)$ and possesses the following
properties: $\mathbf R_1'f'$ is supported near the boundary $\partial Q$ for all $f'$ and
$\mathbf R_1'f'$ is supported near the set $\mathcal K_1$ for $f'$ supported near
$\mathcal K_1$.

\begin{lemma}\label{lVstavka5.2}
Let the conditions of Theorem~$\ref{t5.1}$ be fulfilled. Then there exist a linear
bounded operator $\mathbf R_1':\mathcal H_a^l(\partial Q)\to H_a^{l+2m}(Q)$ and a compact
operator $\mathbf T_1': \mathcal H_a^l(\partial Q)\to \mathcal H_a^l(Q,\Gamma)$ such that
\begin{equation}\label{eqVstavka0}
 \mathbf L^1\mathbf R_1'f'=\{0,f'\}+\mathbf T_1'f',
\end{equation}
\begin{equation}\label{eqVstavka1}
\supp \mathbf R_1'f'\subset \overline Q\setminus Q_{\sigma}
\end{equation}
for any $f'\in \mathcal H_a^l(\partial Q)$, and
\begin{equation}\label{eqVstavka2}
\supp \mathbf R_1'f'\subset \mathcal K_1^{2\varkappa}
\end{equation}
for $f'\in \mathcal H_a^l(\partial Q)$, $\supp f'\subset\overline{\mathcal
K_1^{\varkappa}}$, where $\varkappa,\sigma>0$ are the constants from Lemma~$\ref{l4.6}$.
\end{lemma}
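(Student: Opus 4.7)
The strategy is to mimic the construction of the right regularizer $\mathbf R_1$ from Lemma~\ref{l5.1}, but now feed it only the boundary datum $\{0,f'\}$ and truncate the output so that it is supported in a thin strip near $\partial Q$. Thus I expect $\mathbf R_1'$ to have the form
$$
\mathbf R_1'f'=\eta_\partial\Big(R_{\mathcal K_1}\{0,\zeta f'\}+\sum_\tau R_{\tau\partial}(\zeta^\tau f')\Big),
$$
where $\zeta$ and $\zeta^\tau$ are the cutoffs from~\eqref{eqZetaZetaTau} (so $\zeta$ localises near $\mathcal K_1$ and the $\zeta^\tau$ cover $\partial Q\setminus\mathcal K_1^{2\varepsilon}$), $R_{\mathcal K_1}$ is the operator~\eqref{eqVstavka5.10} built in the proof of Lemma~\ref{l5.1}, $R_{\tau\partial}$ is a local half-space regularizer at a smooth boundary point $h^\tau\in\partial Q$ built from Theorem~\ref{t2.R^n_+Isomorphism} applied with vanishing right-hand side for the equation, and $\eta_\partial\in C^\infty(\mathbb R^n)$ equals $1$ in a neighbourhood of $\partial Q$ and vanishes on $Q_{\sigma/2}$.

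The plan is as follows. First, I check that each summand is bounded from $\mathcal H_a^l(\partial Q)$ to $H_a^{l+2m}(Q)$: the $R_{\mathcal K_1}$--piece by the bound~\eqref{eqVstavka5.10'}, and the $R_{\tau\partial}$--pieces by the general theory of local elliptic boundary-value problems combined with Theorem~\ref{t2.R^n_+Isomorphism}, both of which produce locally supported outputs. Second, I apply $\mathbf L^1$ to $\mathbf R_1'f'$ and use Leibniz' formula exactly as in~\eqref{eqVstavka5.14} and~\eqref{eqVstavka5.21}: the principal term is $\{0,\zeta f'+\sum_\tau\zeta^\tau f'\}=\{0,f'\}$ on $\partial Q$, and the remainders split into (a) commutator terms of the cutoffs $\eta_\partial$, $\hat\xi^t$, and $\zeta^\tau$ with the differential operators, which are of one order lower and hence produce compact operators into $\mathcal H_a^l(Q,\Gamma)$ by Lemma~3.5 of~\cite{Kondr}; (b) the nonlocal correction $\{0,B^1_{i\mu}R_{\tau\partial}(\zeta^\tau f')\}$, which vanishes identically, since $\supp R_{\tau\partial}(\zeta^\tau f')\subset\overline Q\setminus\mathcal K_1^\varepsilon$ while $\supp\xi\subset\mathcal K_1^\varepsilon$ entering the definition of $B^1_{i\mu}$; and (c) the operator $T_{\mathcal K_1}$ from~\eqref{eqVstavka5.18}, which was already shown to have a compact square in the proof of Lemma~\ref{l5.1} and which can be absorbed by the Fredholm-property argument used there (via Lemma~\ref{lI+T+M}, composing on the right with a regularizer of $\mathbf I+T_{\mathcal K_1}$). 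This yields~\eqref{eqVstavka0} with $\mathbf T_1'$ compact.

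For the support properties, observe that $\supp R_{\mathcal K_1}\{0,\zeta f'\}$ is contained in $\bigcup_{t,j}\hat B_{\delta(g^t)}(g_j^t)\subset\mathcal K_1^{2\varkappa}\cup\big(\overline Q\setminus Q_\sigma\big)$ provided $\delta(g^t)$ is chosen small enough (which is harmless, since the isomorphism in Corollary~\ref{cort2.2'} holds for all sufficiently small $\delta$), and $\supp R_{\tau\partial}(\zeta^\tau f')\subset B_\varepsilon(h^\tau)\subset\overline Q\setminus Q_\sigma$ by construction of the half-space regularizer with an extra cutoff to a strip of width $<\sigma$. Multiplication by $\eta_\partial$ gives~\eqref{eqVstavka1}. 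For~\eqref{eqVstavka2} I choose $\varepsilon$ with $\varkappa<\varepsilon/4$, so that $\supp f'\subset\overline{\mathcal K_1^\varkappa}$ forces $\zeta^\tau f'\equiv0$ (since $\zeta^\tau$ vanishes on $\mathcal K_1^{2\varepsilon}$) and $\zeta f'=f'$; then only the $R_{\mathcal K_1}$--piece survives, and by shrinking the supports of $\hat\xi^t$ in~\eqref{eqVstavka5.10} one forces its image to lie in $\mathcal K_1^{2\varkappa}$.

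The main obstacle will be item~(c) above: the extra cutoff $\eta_\partial$ generates commutators $[\mathbf L^1,\eta_\partial]$ that, when applied to the interior parts of $R_{\mathcal K_1}\{0,f'\}$, need not be supported near $\partial Q$. I expect to handle this by choosing $\eta_\partial$ to equal $1$ on the whole of $\bigcup_{t,j}\hat B_{\delta(g^t)}(g_j^t)$ (so the commutator acts only on the $R_{\tau\partial}$ pieces, whose supports are already in $\overline Q\setminus Q_\sigma$) and by using that the resulting commutator is of order $2m-1$ with support disjoint from $\mathcal K_1$, hence maps continuously into a space compactly embedded in $\mathcal H_a^l(Q,\Gamma)$, as in the treatment of $T_2$ in the proof of Lemma~\ref{l5.1}.
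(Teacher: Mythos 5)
Your overall architecture (localize near $\partial Q$, split into a near-$\mathcal K_1$ piece and local half-space regularizers, control supports by cutoffs) matches the paper's, but there is a genuine gap in how you handle the remainder of the near-$\mathcal K_1$ piece, and it is exactly the point where the lemma is delicate. You build that piece from the raw operator $R_{\mathcal K_1}$ of~\eqref{eqVstavka5.10}, whose error term $T_{\mathcal K_1}$ in~\eqref{eqVstavka5.18} is only known to be bounded with \emph{compact square}, not compact. To turn $\mathbf I+T_{\mathcal K_1}+\dots$ into $\mathbf I+(\text{compact})$ you propose to compose on the right with an abstract regularizer $R'$ coming from Lemma~\ref{lI+T+M} and Theorem~15.2 of~\cite{Kr}. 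But $R'$ is produced by Fredholm theory and does not preserve supports: if $\supp f'\subset\overline{\mathcal K_1^{\varkappa}}$ there is no reason for $R'f'$ to be supported near $\mathcal K_1$, so after composition the local pieces $R_{\tau\partial}(\zeta^\tau R'f')$ are activated away from $\mathcal K_1$ and property~\eqref{eqVstavka2} is lost. Since~\eqref{eqVstavka2} (together with~\eqref{eqVstavka1}) is precisely what is used in the proof of Theorem~\ref{t5.2} to kill $B^2\mathbf R_1'B^2\mathbf R_1'\Phi$, this is not a cosmetic defect. The paper avoids the dilemma by taking $u=\mathbf R_1\{0,f'\}$ with $\mathbf R_1$ the \emph{finished} regularizer of Lemma~\ref{l5.1}, whose remainder is genuinely compact, and then restoring the support property a posteriori by multiplying by a cutoff $\hat\zeta$ equal to $1$ near $\mathcal K_1$ and correcting the part $(1-\hat\zeta)\{0,f'\}$ with local half-space regularizers.

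That route creates commutator terms $[\mathbf L^1,\hat\zeta]u$, and here a second weakness of your plan shows up: you claim all cutoff commutators are ``of one order lower and hence compact.'' That is true for the differential operators $A^0$ and $B_{i\mu 0}^0$, but the commutator of a cutoff with the \emph{nonlocal} operator $B_{i\mu}^1$ is the term $\big(\hat\zeta(\omega_{is}(x))-\hat\zeta(x)\big)\big(B_{i\mu s}^0(\xi u)\big)(\omega_{is}(x))|_{\Gamma_i}$ of~\eqref{eqVstavka7}, which has the full order $m_{i\mu}$ in $u$. Its compactness is obtained in the paper not by order reduction but by showing its support lies in some $Q_b$ (using that $\hat\zeta-\hat\zeta\circ\omega_{is}^{-1}$ vanishes near $\mathcal M_{is}\cap\mathcal K_1$ and that $\xi$ vanishes near $\mathcal M_{is}\cap(\mathcal K_2\cup\mathcal K_3)$), then estimating via Lemma~\ref{l3.10} by $\|A^0u\|_{H_a^l(Q)}+\|u\|_{H_a^0(Q)}$ and using that $A^0u=A^0\mathbf R_1\{0,f'\}$ is itself a compact operator of $f'$ — a fact that is available only because $u$ solves the equation modulo a compact error, i.e.\ only because the full regularizer $\mathbf R_1$ was used. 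So the two defects are linked: without starting from $\mathbf R_1$, you lose both the compactness of the remainder and the mechanism that makes the nonlocal commutators compact.
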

\begin{proof}
1. Fix an arbitrary number $\hat\varepsilon>0$ independent of $\varepsilon$. Similarly to
the proof of Lemma~\ref{l5.1}, we can construct functions $\hat\zeta,\hat\zeta^\tau\in
C_0^\infty(\mathbb R^n)$, $\tau=1,\dots,\hat\tau_1$,
$\hat\tau_1=\hat\tau_1(\hat\varepsilon)$, which form a partition of unity for $\overline
Q$, subordinated to the covering by the sets $\mathcal K_1^{2\hat\varepsilon}$ and
$B_{\hat\varepsilon/2}(h^\tau)$, $\tau=1,\dots,\hat\tau_1$, where $h^\tau\in\overline
Q\setminus\mathcal K_1^{2\hat\varepsilon}$. In particular, the function $\hat\zeta$ can
be chosen in such a way that
\begin{equation}\label{eqVstavka3}
\hat\zeta(x)=1,\qquad x\in\mathcal K_1^{\hat\varepsilon}.
\end{equation}

2. Due to Theorem~\ref{t2.R^n_+Isomorphism} and to the general theory of elliptic
boundary-value problems near a smooth part of the boundary (see, e.g.,~\cite{Volevich}),
there exist bounded operators
$$
R_{\tau0}':\{f'\in \mathcal H_a^l(\partial Q): \supp f'\subset
B_{\hat\varepsilon/2}(h^\tau)\}\to \{u\in H_a^{l+2m}(Q): \supp u\subset
B_{\hat\varepsilon}(h^\tau)\}
$$
and compact operators
$$
T_{\tau0}':\{f'\in \mathcal H_a^l(\partial Q): \supp f'\subset
B_{\hat\varepsilon/2}(h^\tau)\}\to \{f\in\mathcal H_a^l(Q,\Gamma): \supp f\subset
B_{\hat\varepsilon}(h^\tau)\}
$$
such that
\begin{equation}\label{eqVstavka4}
\mathbf L^0 R_{\tau0}' f'=\{0,f'\}+ T_{\tau0}' f'.
\end{equation}

For any $f'\in \mathcal H_a^l(\partial Q)$, we set
\begin{equation}\label{eqVstavka5}
\mathbf R_1' f'=\hat\zeta u+\sum\limits_{\tau} u^\tau,
\end{equation}
where
$$
u=\mathbf R_1\{0,f'\},\qquad u^\tau=R_{\tau0}'(\hat\zeta^\tau\{0,f'\}).
$$

Clearly, property~\eqref{eqVstavka1} holds for $2\hat\varepsilon<\sigma$, while
property~\eqref{eqVstavka2} holds for $2\hat\varepsilon<2\varkappa$ and
$\varkappa+\hat\varepsilon/2<2\varkappa$. Let us prove relation~\eqref{eqVstavka0}.

Using~\eqref{eqVstavka5} and Leibniz' formula, we have
\begin{equation}\label{eqVstavka6}
\mathbf L^1\mathbf R_1' f'=\hat\zeta\mathbf L^1u+\Big\{0,\sum\limits_{s=1}^{S_i}T_{i\mu
s} f'\Big\}+Tu+\sum\limits_\tau\mathbf L^0 u^\tau+\{0,B_{i\mu}^1u^\tau\},
\end{equation}
where
\begin{equation}\label{eqVstavka7}
T_{i\mu s} f'=\big(\hat\zeta(\omega_{is}(x))-\hat\zeta(x)\big)\big(B_{i\mu
s}^0(x,D_x)(\xi u)\big)\big(\omega_{is}(x)\big)\big|_{\Gamma_i},
\end{equation}
while $T:H_a^{l+2m-1}(Q)\to \mathcal H_a^l(Q,\Gamma)$ is a bounded operator. By virtue of
the compactness of the embedding $H_a^{l+2m}(Q)\subset H_a^{l+2m-1}(Q)$, the operator
$T:H_a^{l+2m}(Q)\to \mathcal H_a^l(Q,\Gamma)$ is compact.

Now it follows from Lemma~\ref{l5.1}, from~\eqref{eqVstavka4}, and
from~\eqref{eqVstavka6} that
\begin{equation}\label{eqVstavka8}
\mathbf L^1\mathbf R_1' f'=\{0,f'\}+T'f'+\Big\{0,\sum\limits_{s=1}^{S_i}T_{i\mu s}
f'\Big\}+\{0,B_{i\mu}^1u^\tau\},
\end{equation}
where $T':\mathcal H_a^l(\partial Q)\to \mathcal H_a^l(Q,\Gamma)$ is a compact operator.

3. Let us prove that the operator $T_{i\mu s}:\mathcal H_a^l(\partial Q)\to
H_a^{l+2m-m_{i\mu}-1/2}(\Gamma_i)$ is compact. Since $\omega_{is}$ are $C^\infty$
diffeomorphisms, it follows from Lemma~\ref{l4.3} that
\begin{multline}\label{eqVstavka9}
\|T_{i\mu s}f'\|_{H_a^{l+2m-m_{i\mu}-1/2}(\Gamma_i)}\\
\le k_1\big\|\big(\hat\zeta(x)-\hat\zeta(\omega_{is}^{-1}(x))\big)\big(B_{i\mu
s}^0(x,D_x)(\xi
u)\big)\big|_{\omega_{is}(\Gamma_i)}\big\|_{H_a^{l+2m-m_{i\mu}-1/2}(\omega_{is}(\Gamma_i))},
\end{multline}
where $k_1,k_2,{\dots}>0$ do not depend on $f'$.

Denote $\mathcal M_{is}=\overline{\omega_{is}(\Gamma_i)}\setminus\omega_{is}(\Gamma_i)$.
For every $x\in\mathcal M_{is}$, we have either $x\in\mathcal K_1$, or $x\in\mathcal
K_2$, or $x\in\mathcal K_3$. If $x\in\mathcal M_{is}\cap\mathcal K_{1}$, then both $x$
and $\omega_{is}^{-1}(x)$ belong to $\mathcal K_1$. Therefore,
$$
\hat\zeta(x)-\hat\zeta(\omega_{is}^{-1}(x))=0,\qquad x\in(\mathcal M_{is}\cap\mathcal
K_1)^d,
$$
where $(\mathcal M_{is}\cap\mathcal K_1)^d$ is the $d$-neighborhood of the set $\mathcal
M_{is}\cap\mathcal K_1$ and $d>0$ is sufficiently small.

If $\mathcal M_{is}\cap(\mathcal K_{2}\cup\mathcal K_3)\ne\varnothing$, then
$$
\xi(x)=0,\qquad x\in(\mathcal M_{is}\cap(\mathcal K_2\cup\mathcal K_3))^d,
$$
where $(\mathcal M_{is}\cap(\mathcal K_2\cup\mathcal K_3))^d$ is the $d$-neighborhood of
the set $\mathcal M_{is}\cap(\mathcal K_2\cup\mathcal K_3)$ and $d>0$ is sufficiently
small.

In all these cases, we see that
$$
\supp \big(\hat\zeta(x)-\hat\zeta(\omega_{is}^{-1}(x))\big)\big(B_{i\mu s}^0(x,D_x)(\xi
u)\big)\big|_{\omega_{is}(\Gamma_i)}\subset Q_b,
$$
where $b>0$ is sufficiently small.

Using estimate~\eqref{eqVstavka9}, Lemma~\ref{l3.10}, and equivalence of the norms in
$H_a^l(Q_b)$ and $W^l(Q_b)$, we obtain
\begin{equation}\label{eqVstavka10}
\|T_{i\mu s}f'\|_{H_a^{l+2m-m_{i\mu}-1/2}(\Gamma_i)}\\
\le k_2\|u\|_{W^{l+2m}(Q_b)}\le k_3(\|A^0 u\|_{H_a^l(Q)}+\|u\|_{H_{a}^0(Q)}).
\end{equation}
By Lemma~\ref{l5.1}, $A^0u=A^0\mathbf R^1\{0,f'\}=\mathbf T_{11}f'$, where $\mathbf
T_{11}:\mathcal H_a^l(\partial Q)\to H_a^l(Q)$ is a compact operator. Hence,
inequality~\eqref{eqVstavka10} takes the form
$$
\|T_{i\mu s}f'\|_{H_a^{l+2m-m_{i\mu}-1/2}(\Gamma_i)}\\
\le k_3(\|\mathbf T_{11}f'\|_{H_a^l(Q)}+\|\mathbf R^1\{0,f'\}\|_{H_{a}^0(Q)}),
$$
which implies that $T_{i\mu s}$ is a compact operator (because $\mathbf T_{11}$ is
compact and $H_{a}^{l+2m}(Q)$ is compactly embedded into $H_{a}^0(Q)$).

4. The expression $B_{i\mu}^1 u^\tau$ consists of the terms
\begin{equation}\label{eqVstavka11}
\big(B_{i\mu s}^0(x,D_x)(\xi R_{\tau
0}'(\hat\zeta^\tau\{0,f'\}))\big)\big(\omega_{is}(x)\big)\big|_{\Gamma_i},\qquad
s=1,\dots, S_i.
\end{equation}
Since $\supp R_{\tau 0}'(\hat\zeta^\tau\{0,f'\})\subset B_{\hat\varepsilon}(h^\tau)$,
where $h^\tau\in\overline Q\setminus\mathcal K_1^{2\hat\varepsilon}$, it follows that
$$\supp R_{\tau 0}'(\hat\zeta^\tau\{0,f'\})\subset \overline Q\setminus\mathcal
K_1^{\hat\varepsilon}.$$ On the other hand, $\supp\xi\subset\mathcal K_1^{\varepsilon}$.
Hence,
$$
\supp B_{i\mu s}^0(x,D_x)(\xi R_{\tau
0}'(\hat\zeta^\tau\{0,f'\}))|_{\omega_{is}(\Gamma_i)}\subset Q_b,
$$
where $b>0$ is sufficiently small. Therefore, applying Lemma~\ref{l3.10} and
equality~\eqref{eqVstavka4}, we can show similarly to the above that each of the
operators in~\eqref{eqVstavka11} is compact. Thus, we see that~\eqref{eqVstavka8} is
equivalent to~\eqref{eqVstavka0}.
\end{proof}

Now we can prove Theorem~\ref{t5.2}.

\begin{proof}[Proof of Theorem~$\ref{t5.2}$]
1. We set
\begin{equation}\label{5.Phi}
\Phi= B^2\mathbf R_1f,\qquad f=\{f_0,f'\}\in\mathcal H_a^l(Q,\Gamma).
\end{equation}
Introduce the bounded operator ${\mathbf R}:\mathcal H_a^l(Q,\Gamma)\to H_a^{l+2m}(Q)$ by
the formula
$$
{\mathbf R}f=\mathbf R_1f-\mathbf R_1'\Phi+\mathbf R_1'
 B^2\mathbf R_1'\Phi,
$$
where $\mathbf R_1$ and $\mathbf R_1'$ are the operators occurring in Lemmas~\ref{l5.1}
and~\ref{lVstavka5.2}, respectively. Let us show that ${\mathbf R}$ is the desired
operator.

For simplicity, we denote diverse compact operators by the same letter $T$.

It follows from Lemmas~\ref{l5.1} and~\ref{lVstavka5.2} that
\begin{equation}\label{eqRegL_1}
A{\mathbf R}f=A^0{\mathbf R}f=A^0\mathbf R_1f-A^0\mathbf R_1'(\Phi- B^2\mathbf
R_1'\Phi)=f_0+Tf
\end{equation}
(recall that $A^1=0$) and
\begin{multline}\label{eqRegL_2}
C{\mathbf R}f=C\mathbf R_1f-C\mathbf R_1'\Phi+
C\mathbf R_1'B^2\mathbf R_1'\Phi\\
=(f'+Tf)-(\Phi+T\Phi)+(B^2\mathbf R_1'\Phi+T\mathbf B^2\mathbf
R_1'\Phi)=f'-\Phi+B^2\mathbf R_1'\Phi+Tf.
\end{multline}
Applying the operator $B^2$ to the function ${\mathbf R}f$ and using~\eqref{5.Phi}, we
obtain
\begin{equation}\label{eqRegL_3}
B^2{\mathbf R}f=\Phi-B^2\mathbf R_1'\Phi+B^2\mathbf R_1' B^2\mathbf R_1'\Phi.
\end{equation}
Summing relations~\eqref{eqRegL_2} and \eqref{eqRegL_3} and recalling that $B^3=0$, we
obtain
\begin{equation}\label{eqRegL_4}
B{\mathbf R}f=f'+Tf+B^2\mathbf R_1' B^2\mathbf R_1'\Phi.
\end{equation}

2. Let us show that
\begin{equation}\label{eqRegL_5}
B^2\mathbf R_1' B^2\mathbf R_1'\Phi=0.
\end{equation}
It follows from relation~\eqref{eqVstavka1} in Lemma~\ref{lVstavka5.2} that
$$
\supp\mathbf R_1'\Phi\subset\overline Q\setminus Q_{\sigma}.
$$
Therefore, estimate~\eqref{4.16} implies that
$$
\supp B^2\mathbf R_1'\Phi\subset\overline{\mathcal K_1^\varkappa}.
$$
Furthermore, it follows from relation~\eqref{eqVstavka2} in Lemma~\ref{lVstavka5.2} that
$$
\supp\mathbf R_1' B^2\mathbf R_1'\Phi\subset\mathcal K_1^{2\varkappa}.
$$
Combining this fact with inequality~\eqref{4.15} yields~\eqref{eqRegL_5}.

Relations~\eqref{eqRegL_1},  \eqref{eqRegL_4}, and~\eqref{eqRegL_5} prove the theorem.
\end{proof}

\begin{remark}
Using the results in~\cite{GurIzvRAN}, one can show that Theorem~\ref{t5.1} remains true
for the case in which the transformations $\omega_{is}$ are nonlinear near the set
$\mathcal K_1$, while their linear parts at the points of the set $\mathcal K_1$ satisfy
Condition~\ref{cond1.4}. Moreover, the index of the problem with nonlinear
transformations $\omega_{is}$ is equal to the index of the corresponding problem with
transformations linearized near the set~$\mathcal K_1$.
\end{remark}

\section{Some Generalizations}\label{sec6}

\subsection{}

In this section, we generalize the results of Secs.~\ref{sec4} and~\ref{sec5} to the case
where diffeomorphisms $\omega_{is}$ are defined only on some neighborhood of the set
$\mathcal K_1$, the operators $B_{i\mu}^2$ are abstract nonlocal operators supported
outside the set $\mathcal K_1$, and $A^1$ and $B_{i\mu}^3$ are compact perturbations on
the corresponding weighted spaces.

Consider the differential operators
$$
A^0\equiv A^0(x,D)=\sum\limits_{|\alpha|=2m}a_\alpha(x)D^\alpha,\qquad B_{i\mu s}^0\equiv
B_{i\mu s}^0(x,D)=\sum\limits_{|\alpha|=m_{i\mu}}b_{i\mu s \alpha}(x)D^\alpha,
$$
where $a_\alpha, b_{i\mu s \alpha}\in C^\infty(\mathbb R^n)$ are complex-valued functions
($i=1,\dots,N_0$; $\mu=1,\dots,m$; $s=0,\dots, S_i$), $m_{i\mu}\le 2m-1$.

Let a domain $Q\subset \mathbb R^n$ satisfy the assumptions of Sec.~\ref{sec1}. As in
Sec.~\ref{sec1}, we denote
$$
\mathcal K_1=\bigcup\limits_{\nu=1}^{N_1}\mathcal K_{1\nu}=\partial
Q\setminus\bigcup_i\Gamma_i,
$$
where $\mathcal K_{1\nu}$ are mutually disjoint $(n-2)$-dimensional connected $C^\infty$
manifolds without a boundary.

Let $\omega_{is}$ ($i=1,\dots,N_0$; $s=1,\dots,S_i$) denote a $C^\infty$ diffeomorphism
mapping $(\overline{\Gamma_i}\setminus\Gamma_i)^{\varepsilon_0}$ onto the set
$\omega_{is}((\overline{\Gamma_i}\setminus\Gamma_i)^{\varepsilon_0})$, where
$\varepsilon_0>0$ is some number, in such a way that
\begin{enumerate}
\item
$\omega_{is}(\Gamma_i\cap\mathcal K_1^{\varepsilon_0})\subset Q$,
\item
if $\eta$ ($1\le\eta\le N_1$) and $i$ ($1\le i\le N_0$) are such that $\mathcal
K_{1\eta}\subset\overline{\Gamma_i}\setminus\Gamma_i$, then, for every $s$ ($1\le s\le
S_i$), there is $\nu$ ($1\le\nu\le N_1$) such that $\omega_{is}(\mathcal
K_{1\eta})=\mathcal K_{1\nu}$.
\end{enumerate}

Along with the set $\mathcal K_1$, we introduce the set
$$
K_2=\bigcup\limits_{\nu=1}^{N_2} K_{2\nu}\subset\bigcup_i\Gamma_i,
$$
where $ K_{2\nu}$ are mutually disjoint connected $C^\infty$ manifolds without a
boundary. In particular, the set $K_2$ can be empty. We use either the set
$$
K=\mathcal K_1
$$
or the set
$$
K=\mathcal K_1\cup K_2
$$
in the definition of the space $H_a^l(Q)=H_a^l(Q,K)$.

We study the following nonlocal elliptic problem:
\begin{equation}\label{6.2}
Au\equiv A^0u+A^1u=f_0(x),\qquad x\in Q,
\end{equation}
\begin{equation}\label{6.3}
B_{i\mu}u\equiv \sum\limits_{j=0}^3 B_{i\mu}^ju=f_{i\mu}(x),\qquad x\in \Gamma_i;\
i=1,\dots,N_0;\ \mu=1,\dots, m.
\end{equation}
Here
$$
B_{i\mu}^0=B_{i\mu0}^0u|_{\Gamma_i},\qquad B_{i\mu}^1u=\sum\limits_{s=1}^{S_i}\big(
B_{i\mu s}^0(x,D)(\xi u)\big)\big(\omega_{is}(x)\big)\big|_{\Gamma_i},
$$
a function $\xi\in C_0^\infty(\mathbb R^n)$ is such that $\xi(x)=1$ for $x\in\mathcal
K_1^{\varepsilon/2}$ and $\supp\xi\subset K_1^{\varepsilon}$, while $\varepsilon>0$ is so
small that if $\omega_{is}(\mathcal K_{1\eta})=\mathcal K_{1\nu}$, then $\mathcal
K_{1\nu}^\varepsilon\subset \omega_{is}(\mathcal K_{1\eta}^{\varepsilon_0})$.

Assume that the operators $A^0$ and $B_{i\mu0}^0$ satisfy Conditions~\ref{cond1.1}
and~\ref{cond1.2}, and the transformations $\omega_{is}$ satisfy Conditions~\ref{cond1.3}
and~\ref{cond1.4}. We also suppose that the following conditions for the operators $A^1$,
$B_{i\mu}^2$, and $B_{i\mu}^3$ hold.

\begin{condition}[compactness of perturbations]\label{cond6.1}
The linear operators $$A^1:H_a^{l+2m-1}(Q)\to H_a^l(Q),\quad
B_{i\mu}^3:H_a^{l+2m-1}(Q)\to H_a^{l+2m-m_{i\mu}-1/2}(\Gamma_i)$$ are bounded {\rm
(}$i=1,\dots,N_0;$ $\mu=1,\dots,m${\rm )}.
\end{condition}

\begin{condition}[separability from the conjugation points]\label{cond6.2}
The linear operators $B_{i\mu}^2:H_a^{l+2m}(Q)\to H_a^{l+2m-m_{i\mu}-1/2}(\Gamma_i)$ are
bounded, and there exist numbers $\sigma>0$ and $\varkappa_1>\varkappa_2>0$ such that
\begin{equation}\label{6.4}
\|B_{i\mu}^2u\|_{H_a^{l+2m-m_{i\mu}-1/2}(\Gamma_i)}\le
c_1\|u\|_{H_a^{l+2m}(Q\setminus\overline{\mathcal K_1^{\varkappa_1}})}
\end{equation}
for all $u\in H_a^{l+2m}(Q\setminus\overline{\mathcal K_1^{\varkappa_1}})$ and
\begin{equation}\label{6.5}
\|B_{i\mu}^2u\|_{H_a^{l+2m-m_{i\mu}-1/2}(\Gamma_i\setminus\overline{\mathcal
K_1^{\varkappa_2}})}\le c_2\|u\|_{H_a^{l+2m}(Q_\sigma)}
\end{equation}
for all $u\in H_a^{l+2m}(Q_\sigma);$ here $i=1,\dots,N_0;$ $\mu=1,\dots,m;$ $c_1,c_2>0$
do not depend on $u$.
\end{condition}

\begin{remark}
It follows from Lemma~\ref{l4.6} that problem~\eqref{1.3}, \eqref{1.4} can be represented
in the form~\eqref{6.2}, \eqref{6.3} with the operators $A^1$, $B_{i\mu}^2$, and
$B_{i\mu}^3$ satisfying Conditions~\ref{cond6.1} and~\ref{cond6.2}.
\end{remark}

\begin{remark}
The proofs of Theorems~\ref{t4.1} and~\ref{t5.2} use only Conditions~\ref{cond6.1}
and~\ref{cond6.2}, rather than any explicit form of the operators~$A^1$, $B_{i\mu}^2$,
and $B_{i\mu}^3$. To prove that those operators satisfied Conditions~\ref{cond6.1}
and~\ref{cond6.2}, we used Condition~\ref{cond4.1}, which provided the choice of the set
$K$ and the number $a$ in Secs.~\ref{sec1}--\ref{sec5}. However, Condition~\ref{cond4.1}
is needless in this section because the fulfilment of Conditions~\ref{cond6.1}
and~\ref{cond6.2} is postulated rather than proved.
\end{remark}

\subsection{}

We introduce the linear bounded operator
$$
\mathbf L=\{A,B_{i\mu}\}:H_a^{l+2m}(Q)\to\mathcal H_a^l(Q,\Gamma)
$$
corresponding to problem~\eqref{6.2}, \eqref{6.3}.

For every fixed point $g\in\mathcal K_1$, we consider the linear bounded operator
$\mathcal L_g(\omega):\mathcal E_a^{l+2m}(\theta)\to\mathcal E_a^l(\theta,\gamma)$ given
by~\eqref{2.1}, where $\omega\in S^{n-3}$. We also consider the analytic operator-valued
function $\hat{\mathcal L}_g(\lambda): \mathcal W^{l+2m}(d_1,d_2)\to\mathcal
W^l[d_1,d_2]$ given by~\eqref{2.4}.

For every fixed point $g\in K_2$, we consider the linear bounded operator $\mathcal
L_g(\omega):E_a^{l+2m}(\mathbb R_+^2)\to\mathcal E_a^l(\mathbb R_+^2,\gamma)$ given
by~\eqref{2.19}, where $\omega\in S^{n-3}$. We also consider the analytic operator-valued
function $\hat{\mathcal L}_g(\lambda): W^{l+2m}(-\pi/2,\pi/2)\to\mathcal
W^l[-\pi/2,\pi/2]$ given by~\eqref{2.22}.

Similarly to the proofs of Theorems~\ref{t4.1} and~\ref{t5.2}, using
Conditions~\ref{cond6.1} and~\ref{cond6.2} and assuming that the number $\varepsilon>0$
in the definition of the function $\xi$ is sufficiently small, we obtain the following
two results.

\begin{theorem}\label{t6.1}
Let Conditions~$\ref{cond1.1}$--$\ref{cond1.4}$, $\ref{cond6.1}$, and~$\ref{cond6.2}$
hold. Assume that the line $\Im\lambda=a+1-l-2m$ contains no eigenvalues of
$\hat{\mathcal L}_g(\lambda)$ for any $g\in K$ and $\dim\mathcal N(\mathcal
L_g(\omega))=\codim\mathcal R(\mathcal L_g(\omega))=0$ for any $g\in K$ and $\omega\in
S^{n-3}$. Then the following estimate holds for all $u\in H_a^{l+2m}(Q)$$:$
$$
\|u\|_{H_a^{l+2m}(Q)}\le c(\|\mathbf L u\|_{\mathcal
H_a^l(Q,\Gamma)}+\|u\|_{H_{a}^{l+2m-1}(Q)}),
$$
where $c>0$ does not depend on $u$.
\end{theorem}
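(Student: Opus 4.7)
The plan is to repeat the three-step structure of the proof of Theorem~\ref{t4.1}, replacing the use of Lemmas~\ref{l4.3} and~\ref{l4.6} (which were where Section~\ref{sec4} actually established Conditions~\ref{cond6.1} and~\ref{cond6.2} for the concrete operators) by direct appeal to these conditions. All ingredients used in Section~\ref{sec4} that are purely geometric (partitions of unity subordinated to orbits of $\mathcal K_1$, the change of variables from Condition~\ref{cond1.4}, Corollary~\ref{cort2.2'} for $\mathcal L_g''$, Theorem~\ref{t2.R^n_+Isomorphism} for points of $K_2$, and interior a priori estimates via Lemma~\ref{l3.10}) involve only $A^0$ and $B^0+B^1$, so they carry over verbatim.

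The first step is to reproduce Lemma~\ref{l4.4}: for sufficiently small $\varepsilon>0$, one shows that
$$
\|u\|_{H_a^{l+2m}(Q)}\le c\big(\|\mathbf L^1 u\|_{\mathcal H_a^l(Q,\Gamma)}+\|u\|_{H_a^{l+2m-1}(Q)}\big),\qquad \mathbf L^1=(A^0,B^0+B^1).
$$
For this, I would fix, for each conjugation orbit $\mathcal O(g^t)$, the cut-off system $\{\xi_q^t,\hat\xi_q^t\}$ used in Lemma~\ref{l4.4} and invert the model operator $\mathcal L_{g^t}''$ via Corollary~\ref{cort2.2'}, obtaining control of $\|\xi_q^t u\|_{H_a^{l+2m}}$ by $\|\mathbf L^1 u\|+\|u\|_{H_a^{l+2m-1}}$ plus commutator remainders; the latter are estimated by Lemma~\ref{l3.10} exactly as in inequalities~\eqref{eqVstavka4.10}--\eqref{eqVstavka4.10'}. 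Away from $\mathcal K_1$, the function $(1-\xi_0)u$ is handled by Theorem~\ref{t2.R^n_+Isomorphism} at points of $K_2$ (under the stated spectral hypotheses) and by classical interior/boundary elliptic estimates elsewhere, together with Lemmas~\ref{lzeta0v} and~\ref{lZetaDeltaU} to absorb the $(1-\xi_0)B^1 u$ contribution for $\varepsilon$ small.

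The second step is to include the terms $A^1u$ and $B^3u$. Condition~\ref{cond6.1} states precisely that these operators are bounded from $H_a^{l+2m-1}(Q)$ into the appropriate weighted spaces, so the estimate obtained in step one gives, for $u\in H_a^{l+2m}(Q)$,
$$
\|A^1 u\|_{H_a^l(Q)}+\sum_{i,\mu}\|B_{i\mu}^3 u\|_{H_a^{l+2m-m_{i\mu}-1/2}(\Gamma_i)}\le k\|u\|_{H_a^{l+2m-1}(Q)},
$$
which is exactly the bound~\eqref{4.23} used in the original proof, only now postulated rather than derived.

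The third and final step is to absorb $B^2u$; this is where Condition~\ref{cond6.2} replaces Lemma~\ref{l4.6}. I would take the cut-off $\eta\in C^\infty(\mathbb R^n)$ with $\eta\equiv 0$ on $\mathcal K_1^{\varkappa_2}$ and $\eta\equiv 1$ on $\mathbb R^n\setminus\overline{\mathcal K_1^{\varkappa_1}}$ (admissible because $\varkappa_1>\varkappa_2$). Inequality~\eqref{6.4} gives $\|B^2u\|\le c\|\eta u\|_{H_a^{l+2m}(Q)}$ up to a term vanishing on the complement; applying the estimate from step one to $\eta u$ yields, via Leibniz, the same kind of decomposition as~\eqref{4.26}--\eqref{4.26'}, reducing the problem to estimating $\eta B^2 u$ on $\Gamma_i\setminus\overline{\mathcal K_1^{\varkappa_2}}$. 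Inequality~\eqref{6.5} bounds this latter norm by $\|u\|_{H_a^{l+2m}(Q_\sigma)}$, which by classical interior regularity (Lemma~\ref{l3.10}) is controlled by $\|Au\|_{H_a^l(Q)}+\|u\|_{H_a^{l+2m-1}(Q)}$, exactly as in~\eqref{4.27}. Assembling the three steps gives the desired estimate.

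The main obstacle I anticipate is the bookkeeping required to synchronise three independent smallness parameters: the parameter $\varepsilon$ in the definition of $\mathbf L^1$ (chosen small to absorb the nonlocal remainder via Lemmas~\ref{lzeta0v}--\ref{lZetaDeltaU}), the geometric scales $\varkappa_1>\varkappa_2$ supplied by Condition~\ref{cond6.2}, and the interior distance $\sigma$. They must be arranged so that $\eta\equiv 1$ on the support of $B^2u$ produced by~\eqref{6.4}, $\eta\equiv 0$ where $\xi$ is not identically $1$, and the commutator set $\{\eta\ne 0,1\}$ lies in $Q_b$ for some $b>0$ allowing interior estimates; beyond this verification the argument is a transcription of the proof of Theorem~\ref{t4.1}.
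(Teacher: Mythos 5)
Your proposal is correct and follows exactly the route the paper itself prescribes: the paper's proof of Theorem~\ref{t6.1} is only the remark that one repeats the proof of Theorem~\ref{t4.1} with Conditions~\ref{cond6.1} and~\ref{cond6.2} substituted for Lemmas~\ref{l4.3} and~\ref{l4.6}, and your three steps (reproducing Lemma~\ref{l4.4} for $\mathbf L^1$, invoking Condition~\ref{cond6.1} in place of~\eqref{4.23}, and absorbing $B^2u$ via the cut-off $\eta$ together with~\eqref{6.4}, \eqref{6.5} and interior estimates as in~\eqref{4.26}--\eqref{4.27}) reproduce that argument faithfully, including the correct orientation of $\eta$ relative to $\varkappa_1>\varkappa_2$.
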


\begin{theorem}\label{t6.2}
Let the conditions of Theorem~$\ref{t6.1}$ be fulfilled. Then there exists a linear
bounded operator $\mathbf R:\mathcal H_a^l(Q,\Gamma)\to H_a^{l+2m}(Q)$ such that
$$
\mathbf L\mathbf R=\mathbf I+\mathbf T
$$
where $\mathbf T: \mathcal H_a^l(Q,\Gamma)\to \mathcal H_a^l(Q,\Gamma)$ is a compact
operator.
\end{theorem}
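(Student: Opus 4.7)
The plan is to repeat essentially verbatim the proof of Theorem~\ref{t5.2}, after first establishing the natural analogs of Lemmas~\ref{l5.1} and~\ref{lVstavka5.2} in the abstract setting of Section~\ref{sec6}. As remarked after Condition~\ref{cond6.2}, the proofs of Theorems~\ref{t4.1} and~\ref{t5.2} use the operators $A^1$, $B_{i\mu}^2$, $B_{i\mu}^3$ only through Conditions~\ref{cond6.1} and~\ref{cond6.2}: the former provides the compact-perturbation reduction (allowing us to assume $A^1=0$ and $B^3=0$ when constructing the regularizer), while inequalities~\eqref{6.4} and~\eqref{6.5} play exactly the role of~\eqref{4.15} and~\eqref{4.16} from Lemma~\ref{l4.6}, i.e., they express that $B^2$ is ``supported away from $\mathcal K_1$.'' Consequently, the constructions of Sec.~\ref{sec5} transfer with minimal change.

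First I would verify that Lemma~\ref{l5.1} holds in the present setting: one produces a bounded right inverse $\mathbf R_1$ for $\mathbf L^1=(A^0,C)$ modulo compact operators, by gluing local right inverses of the model operators $\mathcal L_g''$ using the partition of unity $\{\xi_j^t,\hat\xi^t,\zeta^\tau\}$ constructed near $\mathcal K_1$ and a smooth partition away from $\mathcal K_1$. The hypotheses of Theorem~\ref{t6.1} give isomorphisms $\mathcal L_g$ (by Theorems~\ref{t2.2}, \ref{t2.R^n_+Isomorphism}, and Corollary~\ref{cort2.2'}), which is all that was used. Next I would verify Lemma~\ref{lVstavka5.2}: construction~\eqref{eqVstavka5} uses only $\mathbf R_1$ and local solvability of the model problems near smooth parts of $\partial Q$, so~\eqref{eqVstavka0}--\eqref{eqVstavka2} remain valid. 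The compactness of the residual operators $T_{i\mu s}$ in that proof relied on the support localization near $\mathcal K_1\cup\mathcal K_2\cup\mathcal K_3$; in our abstract setting the analogous role is played purely by the decomposition $B=B^0+B^1+B^2+B^3$ together with Condition~\ref{cond6.1} for $B^3$ and the fact that $B^1$ involves only $\omega_{is}(x)$ for $x\in\supp\xi\subset\mathcal K_1^\varepsilon$.

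Having the two lemmas at hand, I would define
$$
\mathbf R f=\mathbf R_1 f-\mathbf R_1'\Phi+\mathbf R_1'B^2\mathbf R_1'\Phi,\qquad \Phi=B^2\mathbf R_1 f,
$$
exactly as in the proof of Theorem~\ref{t5.2}, and compute $A\mathbf Rf$ and $B\mathbf Rf$ modulo compact operators. The identities~\eqref{eqRegL_1}--\eqref{eqRegL_4} rely only on the regularizer relations of Lemmas~\ref{l5.1} and~\ref{lVstavka5.2}, so they carry over. Finally, the crucial cancellation $B^2\mathbf R_1'B^2\mathbf R_1'\Phi=0$ in~\eqref{eqRegL_5} follows from the two inclusions $\supp\mathbf R_1'\Phi\subset\overline Q\setminus Q_\sigma$ and $\supp\mathbf R_1'B^2\mathbf R_1'\Phi\subset\mathcal K_1^{2\varkappa_2}$ supplied by Lemma~\ref{lVstavka5.2}, combined with~\eqref{6.4} and~\eqref{6.5} of Condition~\ref{cond6.2} applied for $\varkappa_1>\varkappa_2$ (choosing $\hat\varepsilon$ in the proof of the Lemma~\ref{lVstavka5.2} analog so that $2\hat\varepsilon<\sigma$ and $\varkappa_2+\hat\varepsilon/2<\varkappa_1$). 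This yields $\mathbf L\mathbf R=\mathbf I+\mathbf T$ with $\mathbf T$ compact, proving the theorem.

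The main obstacle is conceptual rather than technical: verifying that the two inclusions above combine correctly with Condition~\ref{cond6.2} requires the strict inequality $\varkappa_1>\varkappa_2$, which is precisely why this strict inequality was built into Condition~\ref{cond6.2}. Everything else is a transcription of the Section~\ref{sec5} arguments, with Lemma~\ref{l4.6} replaced by its abstract counterpart Condition~\ref{cond6.2}.
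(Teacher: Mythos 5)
Your proposal is correct and follows exactly the route the paper intends: the paper's own ``proof'' of Theorem~\ref{t6.2} is the single sentence preceding Theorem~\ref{t6.1}, instructing the reader to repeat the arguments of Theorems~\ref{t4.1} and~\ref{t5.2} with Lemma~\ref{l4.6} replaced by Conditions~\ref{cond6.1} and~\ref{cond6.2}. You have in fact supplied more detail than the paper does, and you correctly isolate the one point that genuinely changes --- that the cancellation $B^2\mathbf R_1'B^2\mathbf R_1'\Phi=0$ now rests on the strict inequality $\varkappa_1>\varkappa_2$, with $\hat\varepsilon$ in the analog of Lemma~\ref{lVstavka5.2} chosen so that data supported in $\overline{\mathcal K_1^{\varkappa_2}}$ produce solutions supported in $\mathcal K_1^{\varkappa_1}$.
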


Along with the operator $\mathbf L$, we consider the bounded operator
$$
\mathbf L^1=\{A^0,B_{i\mu}^0+B_{i\mu}^1\}:H_a^{l+2m}(Q)\to\mathcal H_a^l(Q,\Gamma).
$$
It follows from Theorem~\ref{t5.1} that the operator $\mathbf L^1$ has the Fredholm
property.

Similarly to the proofs of Theorem~\ref{t5.1} and Corollary~\ref{c5.1}, using
Theorems~\ref{t6.1} and~\ref{t6.2}, we obtain the following  result.
\begin{theorem}\label{t6.3}
Let the conditions of Theorem~$\ref{t6.1}$ be fulfilled. Then the operator $\mathbf L:
H_a^{l+2m}(Q)\to \mathcal H_a^l(Q,\Gamma)$ has the Fredholm property and $\ind\mathbf
L=\ind\mathbf L^1$.
\end{theorem}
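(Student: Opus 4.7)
The plan is to mimic the proof of Theorem~\ref{t5.1} together with Corollary~\ref{c5.1}, invoking Theorems~\ref{t6.1} and~\ref{t6.2} in place of their counterparts from Section~\ref{sec5}. First, I would establish the Fredholm property: Theorem~\ref{t6.2} produces a right regularizer $\mathbf R$ with $\mathbf L\mathbf R=\mathbf I+\mathbf T$ and $\mathbf T$ compact, so Theorem~15.2 in~\cite{Kr} gives $\codim\mathcal R(\mathbf L)<\infty$; combining Theorem~\ref{t6.1} with the compactness of the embedding $H_a^{l+2m}(Q)\hookrightarrow H_a^{l+2m-1}(Q)$ (Lemma~3.5 in~\cite{Kondr}) and Theorem~7.1 in~\cite{Kr} yields $\dim\mathcal N(\mathbf L)<\infty$ together with the closedness of $\mathcal R(\mathbf L)$.

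For the index equality, I would first reduce to the case $A^1=0$, $B_{i\mu}^3=0$. By Condition~\ref{cond6.1}, these operators are bounded from $H_a^{l+2m-1}(Q)$ into the respective target spaces; composing with the compact embedding $H_a^{l+2m}(Q)\hookrightarrow H_a^{l+2m-1}(Q)$, they become compact operators $H_a^{l+2m}(Q)\to\mathcal H_a^l(Q,\Gamma)$. By Theorem~16.4 in~\cite{Kr}, adding such perturbations leaves both the Fredholm property and the index invariant, so it suffices to show $\ind\{A^0,B^0+B^1+B^2\}=\ind\{A^0,B^0+B^1\}=\ind\mathbf L^1$.

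To this end, define the linear homotopy
$$
L_t u=\{A^0 u,\; B^0 u+B^1 u+(1-t)B^2 u\},\qquad t\in[0,1],
$$
so that $L_0$ is the reduced version of $\mathbf L$ and $L_1=\mathbf L^1$. The operator $(1-t)B_{i\mu}^2$ satisfies Condition~\ref{cond6.2} for every $t\in[0,1]$ with constants bounded by those for $B_{i\mu}^2$, while the principal parts $A^0$, $B_{i\mu0}^0$ and the nonlocal operator $B_{i\mu}^1$ are unchanged with $t$. Consequently, the model operators $\mathcal L_g(\omega)$ and the analytic operator-valued functions $\hat{\mathcal L}_g(\lambda)$ at every point $g\in K$ are independent of $t$, since the operator $B^2$, being supported away from $\mathcal K_1$ by~\eqref{6.4}, contributes nothing to the model problems at points of $\mathcal K_1$, and no $B^2$ appears in the model problems at points of $K_2$ either. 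Hence the spectral hypotheses of Theorems~\ref{t6.1} and~\ref{t6.2} are satisfied by each $L_t$, so by the argument of the first paragraph every $L_t$ is Fredholm. Moreover,
$$
\|L_tu-L_{t_0}u\|_{\mathcal H_a^l(Q,\Gamma)}\le k|t-t_0|\,\|u\|_{H_a^{l+2m}(Q)},
$$
with $k>0$ independent of $t,t_0$. By Theorem~16.2 in~\cite{Kr}, $\ind L_t$ is locally constant on $[0,1]$; by connectedness, $\ind L_0=\ind L_1$, which is the required index identity.

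The main subtlety will be checking that the a priori estimate and the regularizer of Theorems~\ref{t6.1} and~\ref{t6.2} can be applied uniformly along the homotopy. This ultimately rests on the fact, already emphasized above, that the model operators at the singular set $K$ are insensitive both to the abstract nonlocal term $B^2$ (by the separability estimates of Condition~\ref{cond6.2}) and to the lower-order perturbations $A^1$, $B^3$ (by Condition~\ref{cond6.1}). Once this independence is noted, the constants in the estimates of Theorems~\ref{t6.1} and~\ref{t6.2} can be taken uniform in $t\in[0,1]$, and the homotopy argument closes as above.
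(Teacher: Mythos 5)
Your proposal is correct and follows essentially the same route as the paper: Fredholmness from the a priori estimate of Theorem~\ref{t6.1} plus the right regularizer of Theorem~\ref{t6.2} (via Theorems~7.1 and~15.2 in~\cite{Kr}), reduction to $A^1=0$, $B^3=0$ by compact perturbation, and the homotopy $L_t=\{A^0,B^0+B^1+(1-t)B^2\}$ with local constancy of the index, exactly as in the proofs of Theorem~\ref{t5.1} and Corollary~\ref{c5.1}. Your observation that $(1-t)B^2$ satisfies Condition~\ref{cond6.2} uniformly in $t$ and that the model operators are unaffected is precisely the point that makes the homotopy argument go through.
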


Theorem~\ref{t6.3} shows that the addition of the operators $A^1$, $B_{i\mu}^2$, and
$B_{i\mu}^3$ satisfying Conditions~\ref{cond6.1} and~\ref{cond6.2} neither violates the
Fredholm property nor changes the index.

\subsection{}

Now we consider an example of an elliptic problem with distributed nonlocal terms
satisfying Condition~\ref{cond6.2}.

\begin{example}\label{exDistrNonlocalTerm}
Let $Q\subset \mathbb R^3$ be a bounded domain with boundary $\partial Q$ which is a
surface of revolution about the axis $x_3$. Denote $P=\{0,0,1\}\cup\{0,0,-1\}$. Assume
that, outside $P^{1/4}$, the surface $\partial Q$ coincides with the boundary of the
domain
$$
\{x:\ x_3<1-\sqrt{x_1^2+x_2^2}\}\cap\{x:\ x_3>-1+\sqrt{x_1^2+x_2^2}\}.
$$
Denote
$$
\Gamma_1=\{x\in\partial Q:\ x_3<0\},\qquad \Gamma_2=\{x\in\partial Q:\ x_3>0\}.
$$
In this case, we have
$$
\mathcal K_1=\{x:\ x_1^2+x_2^2=1,\ x_3=0\}.
$$
Assume that the boundary $\partial Q$ is infinitely smooth outside the set $\mathcal
K_1$.

We introduce the operators
\begin{equation}\label{6.7}
B_l^1u=-\alpha_l(\xi u)(\omega_l(x))|_{\Gamma_l},\qquad l=1,2.
\end{equation}
The transformations $\omega_l(x)$ in~\eqref{6.7} are defined for $x\in\mathcal
K_1^{\varepsilon_0}$ by the formula
\begin{equation}\label{6.8}
\begin{aligned}
\omega_l(x)=&\left(\cos   \varphi\left[1-\frac{1}{\sqrt 2}((1-r)+(-1)^lx_3)\right],\right.\\
&\ \ \left. \sin  \varphi\left[1-\frac{1}{\sqrt 2}((1-r)+(-1)^lx_3)\right],\
\frac{1}{\sqrt 2}[(-1)^{l+1}(1-r)+x_3]\right),
\end{aligned}
\end{equation}
where $r,\varphi,x_3$ are the cylindrical coordinates of the point $x$, the number
$\varepsilon_0>0$ is sufficiently small, the function $\xi\in C_0^\infty(\mathbb R^3)$ is
such that $\xi(x)=1$ for $x\in\mathcal K_1^{\varepsilon/2}$ and $\supp\xi\subset\mathcal
K_1^\varepsilon$, $0<\varepsilon\le\varepsilon_0$, and $\alpha_1,\alpha_2\in\mathbb R$.
Clearly, we have $\omega_l(\mathcal K_1)=\mathcal K_1$.

Since $\Gamma_l\in C^\infty$, one can find a sufficiently small $\varkappa>0$ possessing
the following property: for any $x\in\Gamma_l^{5\varkappa}\cap Q$, there exists a unique
pair $(y,t)$, $y\in\Gamma_l$, $t>0$, such that $x=y+n_y t$, where $n_y$ denotes the unit
normal to $\Gamma_l$ at the point $y$, directed inside the domain $Q$. One can show that
the transformation $x\mapsto (y,t)$ is a $C^\infty$ diffeomorphism mapping
$\Gamma_l^{5\varkappa}\cap Q$ onto $\Gamma_l\times (0,5\varkappa),$ provided that
$\varkappa>0$ is sufficiently small.

Introduce a function $\eta\in C_0^\infty(\mathbb R)$ such that $\eta(t)=1$ for
$t\in(3\varkappa,4\varkappa)$ and $\supp\eta\subset (2\varkappa,5\varkappa)$. Consider
the operators
\begin{equation}\label{6.9}
B_l^2u= F^{-1}(b_l F(\eta u))|_{t=0},\qquad l=1,2,
\end{equation}
where
$$
F(\eta u)(y,\lambda)=\frac{1}{\sqrt{2\pi}}\int\limits_{\mathbb R} e^{-i\lambda
t}\eta(t)u(y,t)\,dt
$$
is the Fourier transform with respect to $t$, $F^{-1}$ is the inverse Fourier transform,
$b_l(\lambda)$ is a function continuous on $\mathbb R$ and such that
\begin{equation}\label{6.9'}
\sup\limits_{\lambda\in\mathbb R} |b_l(\lambda)|<\infty.
\end{equation}

We consider the following nonlocal boundary-value problem:
\begin{equation}\label{6.10}
-\Delta u=f_0(x),\qquad x\in Q,
\end{equation}
\begin{equation}\label{6.11}
u|_{\Gamma_l}+B_l^1u+B_l^2u=f_l(x),\qquad x\in\Gamma_l,\ l=1,2.
\end{equation}
We set $K=\mathcal K_1$ in the definition of the space $H_a^l(Q)=H_a^l(Q,K)$.

The Jacobian $\dfrac{D\omega_l}{Dx}$ can be calculated on $\mathcal K_1$ as follows:
$$
\left.\frac{D\omega_l}{Dx}\right|_{\mathcal
K_1}=\left.\frac{D\omega_l}{D(r,\varphi,x_3)}\right|_{\mathcal
K_1}\left.\frac{D(r,\varphi,x_3)}{Dx}\right|_{\mathcal K_1}.
$$
Since $\left.\dfrac{D(r,\varphi,x_3)}{Dx}\right|_{\mathcal K_1}=1$, we have
$$
\begin{aligned}
\frac{D\omega_l}{Dx}\Big|_{\mathcal K_1}&=\left. \det
\begin{pmatrix}
\dfrac{\partial\omega_{l1}}{\partial r} & \dfrac{\partial\omega_{l1}}{\partial \varphi} &
\dfrac{\partial\omega_{l1}}{\partial x_3}\\
\dfrac{\partial\omega_{l2}}{\partial r} & \dfrac{\partial\omega_{l2}}{\partial \varphi} &
\dfrac{\partial\omega_{l2}}{\partial x_3}\\
\dfrac{\partial\omega_{l3}}{\partial r} & \dfrac{\partial\omega_{l3}}{\partial \varphi} &
\dfrac{\partial\omega_{l3}}{\partial x_3}
\end{pmatrix}\right|_{\mathcal K_1}\\
&= \det
\begin{pmatrix}
\frac{1}{\sqrt 2}\cos\varphi & -\sin\varphi &
\frac{1}{\sqrt 2}(-1)^{l+1}\cos\varphi\\
\frac{1}{\sqrt 2}\sin\varphi & \cos\varphi &
\frac{1}{\sqrt 2}(-1)^{l+1}\sin\varphi\\
\frac{1}{\sqrt 2}(-1)^{l} & 0 & \frac{1}{\sqrt 2}
\end{pmatrix}
=1.
\end{aligned}
$$
Therefore, we can choose $\varepsilon_0>0$ so small that $\omega_l:\mathcal
K_1^{\varepsilon_0}\to\omega_l(\mathcal K_1^{\varepsilon_0})$ is a $C^\infty$
diffeomorphism. Furthermore, $\omega_l(\mathcal K_1^{\varepsilon_0})=\mathcal
K_1^{\varepsilon_0}$ and $\omega_l(\Gamma_l\cap\mathcal K_1^{\varepsilon_0})=\{x:\
1-\varepsilon_0<r<1,\ x_3=0\}\subset Q$.

Since $\omega_l(g)=g$ for $g\in\mathcal K_1$ and $l=1,2$, the orbit of each point
$g\in\mathcal K_1$ consists of one point $g_1=g$.  For $x\ne 0$, we introduce the new
variables $x'=(y',z')$ given by
$$
y_1'=1-r,\qquad y_2'=x_3,\qquad z'=\varphi-\varphi_1,
$$
where  $y'=(y_1',y_2')$, while $r,\varphi,x_3$ and $1,\varphi_1,0$ are the cylindrical
coordinates of the points $x$ and $g_1$, respectively.

Set $V(0)=\{x':\ |y'|<\varepsilon_1,\ |z'|<\varepsilon_1\}$, where
$\varepsilon_1<\min\{\varepsilon_0,2\varkappa\}$. Let $\hat V(g_1)=V(g_1)$ be the
pre-image of the set $V(0)$ under the change of variables $x\mapsto x'$. Assume that
$\supp u\subset V(g_1)\cap\overline Q$. Introduce the function $v(x')=u(x(x'))$. Denote
$x'=(y',z')$ by $x=(y_1,y_2,z)$. Then the boundary-value problem~\eqref{6.10},
\eqref{6.11} takes the form
\begin{equation}\label{6.12}
-\dfrac{\partial^2 v}{\partial y_1^2}-\dfrac{\partial^2 v}{\partial
y_2^2}-\dfrac{1}{(1-y_1)^2}\dfrac{\partial^2 v}{\partial
z^2}-\dfrac{1}{1-y_1}\dfrac{\partial v}{\partial y_1}=f_0'(x),\qquad x\in\Theta,
\end{equation}
\begin{equation}\label{6.13}
v(y,z)|_{\Gamma_{1l}}-\alpha_l v(\mathcal G_l y,z)|_{\Gamma_{1l}}=f_l'(x),\qquad
x\in\Gamma_{1l},\ l=1,2,
\end{equation}
where $\mathcal G_l$ is the operator of rotation by the angle $(-1)^{l+1}\pi/4$,
$$
\Theta=\{x:\ r>0,\ |\varphi|<\pi/4,\ z\in\mathbb R\},\qquad \Gamma_{1l}=\{x:\ r>0,\
\varphi=(-1)^l\pi/4,\ z\in\mathbb R\},
$$
and $r,\varphi$ are the polar coordinates of the point $y$.

Clearly, Conditions~\ref{cond1.1}--\ref{cond1.4} are fulfilled in this example.

Passing to the principal homogeneous part in Eq.~\eqref{6.12} with the coefficients
freezed at the origin, we obtain
\begin{equation}\label{6.14}
-\Delta v=f_0'(x),\qquad x\in\Theta.
\end{equation}
Nonlocal boundary conditions~\eqref{6.13} do not change.

In the case of problem~\eqref{6.14}, \eqref{6.13}, the operator $\mathcal L_g=\mathcal L:
H_a^2(\Theta)\to\mathcal H_a^0(\Theta,\Gamma)$ given by~\eqref{1.11} takes the form
\begin{equation}\label{6.14'}
\mathcal L v=(-\Delta v,\
v(\varphi,r,z)|_{\Gamma_{11}}-\alpha_1v(\varphi+\pi/4,r,z)|_{\Gamma_{11}},\
v(\varphi,r,z)|_{\Gamma_{12}}-\alpha_2v(\varphi-\pi/4,r,z)|_{\Gamma_{12}}).
\end{equation}

Hence, the operators $$\mathcal L_g(\omega)=\mathcal L(\omega):E_a^2(\theta)\to\mathcal
E_a^0(\theta,\gamma)$$ and
$$\hat{\mathcal L}_g(\lambda)=\hat{\mathcal
L}(\lambda):W^2(-\pi/4,\pi/4)\to \mathcal W^0[-\pi/4,\pi/4]$$ given by~\eqref{2.1}
and~\eqref{2.4} have the form
\begin{equation}\label{6.14''}
\begin{aligned}
\mathcal L(\omega) V&=(-\Delta_y V+V,\\
&\quad V(\varphi,r)|_{\gamma_{11}}-\alpha_1V(\varphi+\pi/4,r)|_{\gamma_{11}},\
V(\varphi,r)|_{\gamma_{12}}-\alpha_2V(\varphi-\pi/4,r)|_{\gamma_{12}})
\end{aligned}
\end{equation}
and
\begin{equation}\label{6.14'''}
\hat{\mathcal L}(\lambda) w=(-w_{\varphi\varphi}+\lambda^2w,\ w(-\pi/4)-\alpha_1w(0),\
w(\pi/4)-\alpha_2w(0)),
\end{equation}
respectively, where
$$
\theta=\{y\in\mathbb R^2:\ r>0,\ |\varphi|<\pi/4\},\qquad \gamma_{1l}=\{y\in\mathbb R^2:\
r>0,\ \varphi=(-1)^l\pi/4\},
$$
and $\omega=\pm 1$ (cf. Example~\ref{ex2.1} for $d=\pi/2$).

It follows from Example~\ref{ex2.1} that the strip $-1\le\Im\lambda\le 1$ contains no
eigenvalues of the operator-valued function $\hat{\mathcal L}(\lambda)$ and the operator
${\mathcal L}(\omega)$, $\omega=\pm 1$, is an isomorphism for $0\le a\le 2$ and
$\alpha_1,\alpha_2\in \mathbb R$ such that $0<|\alpha_1+\alpha_2|<2$ and
$\pi/4<\arctan\sqrt{4(\alpha_1+\alpha_2)^{-2}-1}$. Therefore, by Theorem~\ref{t2.2}, the
operator $\mathcal L$ is an isomorphism for the above $a$ and $\alpha_l$.

Suppose that $a>1$ and prove that the operators $B_l^2$ satisfy Condition~\ref{cond6.2}
with $\varkappa_1=2\varkappa$, $\varkappa_2=\varkappa$, and $\sigma=\varkappa$. Using
Lemma~\ref{l4.3}, we have
\begin{multline*}
\|B_l^2u\|_{H_a^{3/2}(\Gamma_l)}\le k_1\|F^{-1}(b_l F(\eta
u))\|_{W^2(\Gamma_l\times(-\infty,\infty))}\\
=\dfrac{k_1}{\sqrt{2\pi}}\sum\limits_{|\alpha|+\beta\le 2}\Big\|D^\alpha_y
D^\beta_t\int\limits_{\mathbb R}e^{i\lambda t}b_l(\lambda)F(\eta
u)(y,\lambda)\,d\lambda\Big\|_{L_2(\Gamma_l\times(-\infty,\infty))}\\
=\dfrac{k_1}{\sqrt{2\pi}}\sum\limits_{|\alpha|+\beta\le 2}\Big\| \int\limits_{\mathbb
R}e^{i\lambda t}\lambda^\beta b_l(\lambda)D^\alpha_yF(\eta
u)(y,\lambda)\,d\lambda\Big\|_{L_2(\Gamma_l\times(-\infty,\infty))},
\end{multline*}
where $D_t=-i\partial/\partial t$. Applying the Plancherel theorem and using
property~\eqref{6.9'}, we obtain
\begin{multline*}
\|B_l^2u\|_{H_a^{3/2}(\Gamma_l)}\le
\dfrac{k_1}{\sqrt{2\pi}}\sum\limits_{|\alpha|+\beta\le 2}\|\lambda^\beta
b_l(\lambda)D^\alpha_yF(\eta
u)(y,\lambda)\|_{L_2(\Gamma_l\times(-\infty,\infty))}\\
\le k_2\sum\limits_{|\alpha|+\beta\le 2}\|\lambda^\beta D^\alpha_yF(\eta
u)(y,\lambda)\|_{L_2(\Gamma_l\times(-\infty,\infty))}.
\end{multline*}
Applying the Plancherel theorem once more and taking into account that
$\supp\eta\subset(2\varkappa,5\varkappa)$, we have
$$
\|B_l^2u\|_{H_a^{3/2}(\Gamma_l)}\le k_3\|u\|_{W^2(\Omega_{l\varkappa}^1)},
$$
where
$$
\Omega_{l\varkappa}^1=\{x=y+n_y t:\ y\in\Gamma_l,\ t\in(2\varkappa,5\varkappa)\}
$$
(see Fig.~\ref{fig6.1}).
\begin{figure}[ht]
{ \hfill\epsfxsize120mm\epsfbox{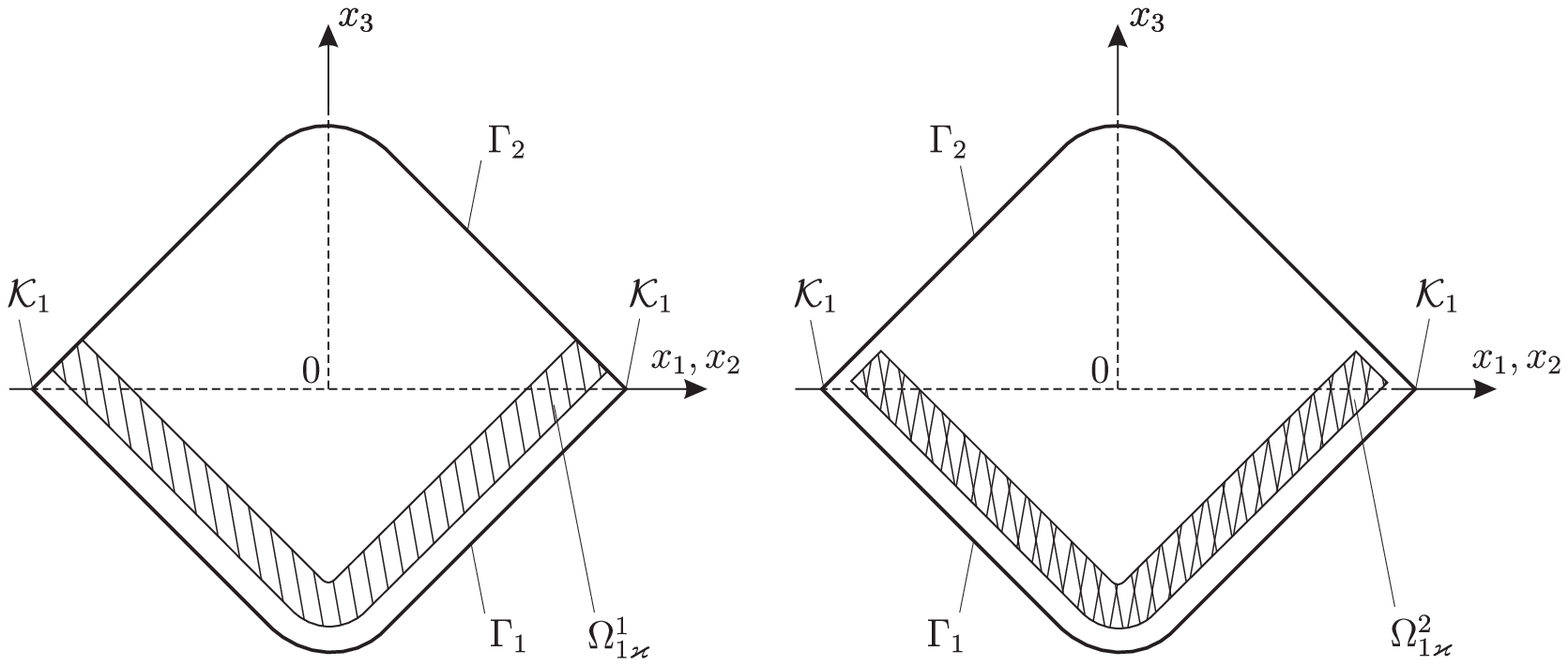}\hfill\ } \caption{Problem~\eqref{6.10},
\eqref{6.11}}\label{fig6.1}
\end{figure}
Since $\Omega_{l\varkappa}^1\subset Q\setminus\overline{\mathcal K_1^{2\varkappa}}$ and
the norms in $W^2(Q\setminus\overline{\mathcal K_1^{2\varkappa}})$ and
$H_a^2(Q\setminus\overline{\mathcal K_1^{2\varkappa}})$ are equivalent, the last
inequality yields
\begin{equation}\label{6.18}
\|B_l^2u\|_{H_a^{3/2}(\Gamma_l)}\le k_4\|u\|_{H_a^2(Q\setminus\overline{\mathcal
K_1^{2\varkappa}})}.
\end{equation}

Denote
$$
\Omega_{l\varkappa}^2=\{x=y+n_y t:\ y\in\Gamma_l\setminus\overline{\mathcal
K_1^\varkappa},\ t\in(2\varkappa,5\varkappa)\}.
$$
Since $\Omega_{l\varkappa}^2\subset Q_\varkappa$, similarly to~\eqref{6.18}, we obtain
\begin{equation}\label{6.19}
\|B_l^2u\|_{H_a^{3/2}(\Gamma_l\setminus\overline{\mathcal K_1^\varkappa})}\le
k_5\|u\|_{H_a^2(Q_\varkappa)}.
\end{equation}

It follows from~\eqref{6.18} and~\eqref{6.19} that the operators $B_l^2$ satisfy
Condition~\ref{cond6.2} for $a>1$.

We consider the linear bounded operators
$$
\mathbf L,\mathbf L^1: H_a^2(Q)\to H_a^0(Q)\times H_a^{3/2}(\Gamma_1)\times
H_a^{3/2}(\Gamma_2)
$$
given by
$$
\mathbf L u=\{-\Delta u,\ u|_{\Gamma_l}+B_l^1u+B_l^2u\},\qquad \mathbf L^1 u=\{-\Delta
u,\ u|_{\Gamma_l}+B_l^1u\}.
$$

It follows from Theorem~\ref{t5.1} that the operator $\mathbf L^1$ has the Fredholm
property for $0\le a\le 2$ and $\alpha_1,\alpha_2\in \mathbb R$ such that
$0<|\alpha_1+\alpha_2|<2$ and $\pi/4<\arctan\sqrt{4(\alpha_1+\alpha_2)^{-2}-1}$. By
Theorem~\ref{t6.3}, the operator $\mathbf L$ has the Fredholm property and $\ind\mathbf
L=\ind\mathbf L^1$ for $1< a\le 2$, $\alpha_1,\alpha_2\in \mathbb R$ such that
$0<|\alpha_1+\alpha_2|<2$ and $\pi/4<\arctan\sqrt{4(\alpha_1+\alpha_2)^{-2}-1}$, and a
continuous function $b(\lambda)$ satisfying relation~\eqref{6.9'}.
\end{example}

\section{Setting of Nonlocal Elliptic Problems with a Parameter. Model Operators}\label{sec7}

\subsection{}

In Secs.~\ref{sec7} and~\ref{sec8}, we prove the unique solvability of nonlocal elliptic
problems with a parameter $p=(p_1,\dots,p_d)\in\mathbb R^d$, where $d\ge 1$. Similarly to
the above, we first establish the unique solvability of model nonlocal problems with a
parameter in dihedral angles. Combining these results with those in~\cite{AV}
and~\cite{MP} and making use of a partition of unity, we will consider nonlocal problems
in bounded domains.

Let the domain $Q$, the transformations $\omega_{is}$, and the sets $\mathcal K_1$,
$K_2$, and $K$ be the same as in Sec.~\ref{sec6}.

To consider nonlocal problems with a parameter, we introduce norms for the weighted
spaces, depending on this parameter. First, we introduce the norms on the dihedral angle
\begin{equation}\label{eqTheta}
\Theta=\{x=(y,z)\in\mathbb R^n:\ d_1<\varphi<d_2,\ z\in\mathbb R^{n-2}\}
\end{equation}
and on the half-plane
\begin{equation}\label{eqGamma}
 \Gamma=\{x=(y,z)\in\mathbb R^n:\
\varphi=d',\ z\in\mathbb R^{n-2}\},\qquad d_1\le d'\le d_2.
\end{equation}
Consider the space $V_a^k(\Theta)=H_a^k(\Theta)\cap H_a^0(\Theta)$ with the norm
\begin{equation}\label{eqTripleNorm}
\n u \n_{V_a^k(\Theta)}=
(\|u\|_{H_{a}^k(\Theta)}^2+|p|^{2k}\|u\|_{H_{a}^0(\Theta)}^2)^{1/2}\quad (u\in
V_a^k(\Theta)),
\end{equation}
where $k\ge0$ is an integer.

For an integer $\nu\ge0$, we denote
$$
\|\psi\|_{H_{a}^\nu(\Gamma)}^2=\sum\limits_{|\alpha|\le\nu} \int\limits_\Gamma
r^{2(a-\nu+|\alpha|)}|D^\alpha \psi|^2\, d\Gamma.
$$
Consider the space $V_a^{k-1/2}(\Gamma)=H_a^{k-1/2}(\Gamma)\cap H_a^0(\Gamma)$ with the
norm
\begin{equation}\label{eqTripleNormTrace}
\n \psi \n_{V_a^{k-1/2}(\Gamma)}= (\|\psi\|_{H_a^{k-1/2}(\Gamma)}^2+|p|^{2(k-1/2)}\|
\psi\|_{H_{a}^0(\Gamma)}^2)^{1/2}\quad (\psi\in V_a^{k-1/2}(\Gamma),\ k\ge1).
\end{equation}

Now we introduce the norms for the domain $Q$ and for the manifolds $\Gamma_i$.
Set\footnote{We do not introduce the spaces $V_a^k(Q)=H_a^k(Q)\cap H_a^0(Q)$ and
$V_a^{k-1/2}(\Gamma_i)=H_a^{k-1/2}(\Gamma_i)\cap H_a^0(\Gamma_i)$ because they coincide
with $H_a^k(Q)$ and $H_a^{k-1/2}(\Gamma_i)$, respectively, in the case of bounded domain
$Q$.}
$$
\n u \n_{H_a^k(Q)}= (\|u\|_{H_{a}^k(Q)}^2+|p|^{2k}\|u\|_{H_{a}^0(Q)}^2)^{1/2}\qquad (u\in
H_a^k(Q)),
$$
$$
\n \psi \n_{H_a^{k-1/2}(\Gamma_i)}=
(\|\psi\|_{H_a^{k-1/2}(\Gamma_i)}^2+|p|^{2(k-1/2)}\|\psi\|_{H_{a}^0(\Gamma_i)}^2)^{1/2}\qquad
(\psi\in H_a^{k-1/2}(\Gamma_i),\ k\ge1),
$$
where
$$
\|\psi\|_{H_{a}^0(\Gamma_i)}^2= \int\limits_{\Gamma_i} \rho^{2a}| \psi|^2\, d\Gamma_i
$$
and $\rho(x)$ is the   function occurring in the definition of the spaces $H_a^k(Q)$.

We also set
$$
\n f \n_{\mathcal H_a^l(Q,\Gamma)}=\Big(\n f_0 \n_{H_a^l(Q)}^2+\sum\limits_{i,\mu}\n
f_{i\mu}\n_{H_a^{l+2m-m_{i\mu}-1/2}(\Gamma_i)}^2 \Big)^{1/2}
$$
for $f=\{f_0,f_{i\mu}\}\in \mathcal H_a^l(Q,\Gamma).$

\begin{lemma}\label{l7.1-new}
\begin{enumerate}
\item
For all $u\in V_a^{k}(\Theta)$, $k\ge2$ is an integer, $p\in\mathbb R^d$, and integer
$s$, $0<s<k$, we have
\begin{equation}\label{7.3-new'}
|p|^{k-s}\|u\|_{H_{a}^{s}(\Theta)}\le c_1 \n u\n_{V_{a}^{k}(\Theta)},
\end{equation}
where $\Theta$ is defined in~\eqref{eqTheta} and $c_1=c_1(k,s)>0$ does not depend on $u$
and $p.$
\item
For all $u\in H_a^{k}(Q)$, $k\ge2$ is an integer, $p\in\mathbb R^d$, and integer $s$,
$0<s<k$, we have
\begin{equation}\label{7.3-new}
|p|^{k-s}\|u\|_{H_{a}^{s}(Q)}\le c_2\n u\n_{H_{a}^{k}(Q)},
\end{equation}
where $c_2=c_2(k,s)>0$ does not depend on $u$ and $p$.
\end{enumerate}
\end{lemma}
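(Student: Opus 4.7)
The plan is to derive both inequalities from the standard multiplicative interpolation estimate for weighted Sobolev spaces,
\begin{equation}\label{7.interp-plan}
\|u\|_{H_{a}^{s}(\Omega)} \le C\,\|u\|_{H_{a}^{k}(\Omega)}^{s/k}\,\|u\|_{H_{a}^{0}(\Omega)}^{1-s/k},\qquad \Omega\in\{\Theta,Q\},
\end{equation}
and then to eliminate the parameter $p$ by means of Young's inequality. Granting \eqref{7.interp-plan}, one multiplies by $|p|^{k-s}$ and rewrites the right-hand side as
$$
|p|^{k-s}\|u\|_{H_a^s(\Omega)} \le C\,\|u\|_{H_a^k(\Omega)}^{s/k}\,\bigl(|p|^{k}\|u\|_{H_a^0(\Omega)}\bigr)^{(k-s)/k}.
$$
Young's inequality with conjugate exponents $k/s$ and $k/(k-s)$ then gives
$$
|p|^{k-s}\|u\|_{H_a^s(\Omega)} \le \tfrac{s}{k}C\,\|u\|_{H_a^k(\Omega)} + \tfrac{k-s}{k}C\,|p|^{k}\|u\|_{H_a^0(\Omega)} \le C'\,\n u\n,
$$
which is exactly \eqref{7.3-new'} for $\Omega=\Theta$ and \eqref{7.3-new} for $\Omega=Q$.

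To prove \eqref{7.interp-plan} on the dihedral angle $\Theta$, I would pass to the coordinates $\tau=\ln r$, $\varphi$, $z$, where the measure $dx=r\,dr\,d\varphi\,dz$ becomes $e^{2\tau}\,d\tau\,d\varphi\,dz$ and each weighted derivative norm of $u$ becomes an unweighted Sobolev norm of $e^{(a+1)\tau}u$ on the translation-invariant cylinder $(d_1,d_2)\times\mathbb R^{n-1}_{(\tau,z)}$, as in \cite{Kondr}. The classical Fourier-analytic interpolation $\|v\|_{H^s}\le C\|v\|_{H^k}^{s/k}\|v\|_{L_2}^{1-s/k}$ then yields \eqref{7.interp-plan} for $\Theta$. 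An equivalent route is to apply ordinary Sobolev interpolation to the function $\rho^{a-k+|\alpha|}D^{\alpha}u$ and absorb the commutators $[\rho^{b},D^{\beta}]$ into lower-order terms of the weighted norm.

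For the bounded domain $Q$ I would use a finite partition of unity $\{\chi_{j}\}$ whose supports split into two types: those contained in a ``bulk'' set $Q_{b}=\{x\in Q:\dist(x,K)>b\}$, on which $\rho$ is bounded above and below and the weighted norms are equivalent to the classical Sobolev norms (so \eqref{7.interp-plan} reduces to the standard Sobolev interpolation), and those contained in a neighborhood of a point of $K$ that is diffeomorphic, under a smooth chart flattening the singular manifold, to a piece of a dihedral angle. On each chart of the second type the previous dihedral-angle estimate applies directly, and summing over $j$ recovers \eqref{7.interp-plan} on $Q$.

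The only point where care is needed is the weighted interpolation near the singular set $K$, because the exponent of $\rho$ in the definition of $H_a^k$ depends on the order of the derivative; every other ingredient is a routine application of Young's inequality, the chain rule, and a partition of unity. I therefore expect this part—specifically the bookkeeping of the weight under the substitution $\tau=\ln r$ (or the commutator estimates for $\rho^b D^\beta$)—to be the main technical obstacle, while the parameter-dependent passage from \eqref{7.interp-plan} to \eqref{7.3-new'}--\eqref{7.3-new} is immediate.
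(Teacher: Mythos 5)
Your overall architecture---prove the multiplicative interpolation inequality $\|u\|_{H_a^s}\le C\|u\|_{H_a^k}^{s/k}\|u\|_{H_a^0}^{1-s/k}$ and convert it to the parametrized additive form by Young's inequality---is sound, and that multiplicative inequality is in fact true. The gap is in your proof of it on the dihedral angle $\Theta$. The substitution $\tau=\ln r$ does \emph{not} turn $\|\cdot\|_{H_a^k(\Theta)}$ into an unweighted Sobolev norm of the single function $e^{(a+1)\tau}u$ on a cylinder. Two things go wrong. First, the exponential factor that renormalizes the $H_a^k$-norm is $e^{(a-k+1)\tau}$, which depends on $k$, so the three spaces $H_a^0$, $H_a^s$, $H_a^k$ correspond to three \emph{different} exponentially weighted copies of $u$ and the classical Fourier interpolation for a single fixed function does not apply. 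Second, and specific to $n\ge3$: for $\alpha=(\alpha_y,\alpha_z)$ the weight $r^{2(a-k+|\alpha_y|+|\alpha_z|)}$ only has its $r^{2|\alpha_y|}$ part absorbed by rewriting $D_y^{\alpha_y}$ through $D_\varphi$ and $rD_r=D_\tau$; the residual factor $e^{2(a-k+|\alpha_z|+1)\tau}$ still varies with $|\alpha_z|$, so no single exponential makes the norm translation-invariant in $\tau$. (This is precisely why the paper must work with the inhomogeneously weighted spaces $E_a^k$ and the Fourier transform in $z$ elsewhere.) Your fallback via the commutators $[\rho^b,D^\beta]$ is not carried out, and those commutator terms have exactly the same weighted order as the quantities being estimated, so there is no smallness with which to absorb them.

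The missing ingredient---and the mechanism of the paper's proof---is a dyadic decomposition in $r$. One takes a partition of unity $\{\xi_l\}$ subordinate to the annuli $\{2^{l-1}<r<2^{l+1}\}$ with $|D^\alpha\xi_l|\le k_\alpha 2^{-|\alpha|l}$, uses the homogeneity of the weight to get $\|u\|^2_{H_a^s(\Theta)}\approx\sum_l\|\xi_l u\|^2_{H_a^s(\Theta_l)}$, rescales each annulus to a reference annulus by $x'=2^{-l}x$, and applies there the extension theorem together with the parametrized Sobolev inequality $q^{2(k-s)}\|v\|^2_{W^s}\le C(\|v\|^2_{W^k}+q^{2k}\|v\|^2_{L_2})$ of Agranovich--Vishik with the scale-adapted choice $q=|p|2^l$; summing over $l$ gives \eqref{7.3-new'} directly, with no detour through Young's inequality. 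If you prefer to keep your multiplicative route, the same rescaling combined with the multiplicative Sobolev inequality on the reference annulus and H\"older's inequality in $l$ does yield the multiplicative inequality on $\Theta$. Your treatment of $Q$ by a partition of unity splitting into a bulk part (where weighted and ordinary norms are equivalent) and charts onto dihedral angles near $K$ is essentially what the paper does and is fine once the estimate on $\Theta$ is secured.
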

\begin{proof}
First, we prove the interpolation inequality~\eqref{7.3-new'}.

Let $\{\xi_l\}_{l=-\infty}^{+\infty}$ be a partition of unity subordinated to the
covering of the angle $\theta=\{y\in\mathbb R^2:\ d_1<\varphi<d_2\}$ by the sets
$\theta_l=\{y\in\theta:\ 2^{l-1}<r<2^{l+1}\}$ such that
\begin{equation}\label{7.5-new}
|D^\alpha\xi_l(y)|\le k_\alpha 2^{-|\alpha|l},\qquad y\in\theta_l,\ l=0,\pm1,\pm2,\dots,
\end{equation}
where $k_\alpha>0$ does not depend on $l$.

Denote $\Theta_l=\theta_l\times\mathbb R^{n-2}$, $l=0,\pm1,\pm2,\dots$.

Using~\eqref{7.5-new} and the fact that $\supp\xi_l\cap\overline{\Theta_j}=\varnothing$
for $j\ne l-1,l,l+1$, one can easily verify  that
\begin{equation}\label{7.6-new}
\|u\|_{H_a^{s}(\Theta)}\approx\left(\sum\limits_{l=-\infty}^{+\infty}\|\xi_l
u\|^2_{H_a^{s}(\Theta_l)}\right)^{1/2}
\end{equation}
for $s=0,1,2,\dots$, where the symbol $\approx$ means the equivalence of the norms.

Using the theorem about  extension of functions from a domain with Lipshitz boundary to
$\mathbb R^n$ and applying the interpolation inequality for the Sobolev space
$W^k(\mathbb R^n)$ (see~\cite[Sec.~1]{AV}), we obtain
\begin{equation}\label{7.7-new}
q^{2(k-s)}\|v\|^2_{W^{s}(\Theta_0)}\le
k_1(\|v\|^2_{W^{k}(\Theta_0)}+q^{2k}\|v\|^2_{L_2(\Theta_0)})
\end{equation}
for all $v\in W^k(\Theta_0)$ and $q>0$,  where $k_1>0$ does not depend on $v$ and $q$.

We introduce the new variables $x'=2^{-l}x$. Using the equivalence of the
norms~\eqref{7.6-new} and interpolation inequality~\eqref{7.7-new} with $q=|p|2^l$ and
passing back to the variables $x=2^lx'$, we have
\begin{multline*}
|p|^{2(k-s)}\|u\|^2_{H_a^{s}(\Theta)}\le k_2|p|^{2(k-s)}
\sum\limits_{l=-\infty}^{+\infty}2^{(2(a-s)+n)l}\sum\limits_{|\alpha|\le
s}\,\int\limits_{\Theta_0}|D^\alpha_{x'}(\xi_l u)(x')|^2dx'  \\
\le k_3 \sum\limits_{l=-\infty}^{+\infty}2^{(2(a-k)+n)l}\left(\,\sum\limits_{|\alpha|\le
k}\,\int\limits_{\Theta_0}|D^\alpha_{x'}(\xi_l
u)(x')|^2dx'+|p|^{2k}2^{2lk}\,\int\limits_{\Theta_0}|(\xi_l u)(x')|^2dx'\right)  \\ \le
k_4 \sum\limits_{l=-\infty}^{+\infty}(\|\xi_l u\|_{H_a^k(\Theta_l)}^2+|p|^{2k}\|\xi_l
u\|^2_{H_a^0(\Theta_l)}),
\end{multline*}
where   $k_2,k_3,k_4>0$ do not depend on  $u$ and $p$. Combining this inequality
with~\eqref{7.6-new} yields~\eqref{7.3-new'}.

2. Note that the relation $u\in H_a^k(\Theta)$ implies that $u\in V_a^k(\Theta)$,
provided that $u$ is compactly supported. Therefore, using a partition of unity,
interpolation inequality~\eqref{7.3-new'}, and the interpolation inequality of the
kind~\eqref{7.7-new} for Sobolev spaces, we obtain~\eqref{7.3-new} for all $u\in
H_a^k(Q)$ and $p\in\mathbb R^d$.
\end{proof}

\begin{lemma}\label{lInterpWeightedTrace}
\begin{enumerate}
\item
For all $u\in V_a^{1}(\Theta)$ and $p\in\mathbb R^d$, we have
\begin{equation}\label{7.8-new'}
|p|^{1/2}\|u|_{\Gamma}\|_{H_a^0(\Gamma)}\le c_1 \n u\n_{V_a^1(\Theta)},
\end{equation}
where $\Theta$ and $\Gamma$ are defined in~\eqref{eqTheta} and~\eqref{eqGamma}
respectively, while $c_1>0$ does not depend on $u$ and $p$.
\item
For all $u\in H_a^{1}(Q)$ and $p\in\mathbb R^d$, we have
\begin{equation}\label{7.8-new}
|p|^{1/2}\|u|_{\Gamma_i}\|_{H_a^0(\Gamma_i)}\le c_2 \n u\n_{H_a^1(Q)},
\end{equation}
where $c_2>0$ does not depend on $u$ and $p$.
\end{enumerate}
\end{lemma}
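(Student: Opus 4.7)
The plan is to mimic the dyadic-scaling argument used in the proof of Lemma~\ref{l7.1-new}, replacing the interpolation inequality in Sobolev spaces by the standard trace inequality with a parameter. Specifically, on a reference domain with Lipschitz boundary, the usual trace inequality combined with Young's inequality yields, for all $v\in W^1(\Theta_0)$ and all $q>0$,
\begin{equation}\label{eqPlanTrace}
q\,\|v|_{\Gamma_0}\|_{L_2(\Gamma_0)}^2 \le c_0\bigl(\|v\|_{W^1(\Theta_0)}^2+q^2\|v\|_{L_2(\Theta_0)}^2\bigr),
\end{equation}
where $\Theta_0$ is a fixed reference annular piece of $\Theta$ and $\Gamma_0=\Gamma\cap\Theta_0$, with $c_0$ independent of $v$ and $q$. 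This is the scalar analogue of~\eqref{7.7-new}.

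For part~(1), I would take the dyadic partition $\{\xi_l\}_{l\in\mathbb{Z}}$ on $\theta$ subordinated to the covering by annuli $\theta_l=\{2^{l-1}<r<2^{l+1}\}$ used in the proof of Lemma~\ref{l7.1-new}, so that $\|u|_\Gamma\|_{H_a^0(\Gamma)}^2\approx\sum_l\|\xi_l u|_{\Gamma\cap\Theta_l}\|_{H_a^0(\Gamma\cap\Theta_l)}^2$. Setting $x'=2^{-l}x$ to pass to the reference annulus $\Theta_0$, and applying~\eqref{eqPlanTrace} with the rescaled parameter $q=|p|2^l$, I would get
\begin{equation*}
|p|\,\|\xi_l u|_{\Gamma\cap\Theta_l}\|_{H_a^0(\Gamma\cap\Theta_l)}^2
\le k_1\bigl(\|\xi_l u\|_{H_a^1(\Theta_l)}^2+|p|^2\|\xi_l u\|_{H_a^0(\Theta_l)}^2\bigr),
\end{equation*}
exactly as in the derivation of~\eqref{7.3-new'}, after tracking the factors $2^{(2a-1)l+n-2}$ on $\Gamma$ and $2^{2al+n}$ on $\Theta_l$ coming from the change of variables. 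Summing over $l$ and invoking the equivalence of norms in the style of~\eqref{7.6-new} yields~\eqref{7.8-new'}.

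For part~(2), I would pass from the dihedral angle to the bounded domain $Q$ by a finite partition of unity $\{\varphi_s\}$ on $\overline Q$. Each $\varphi_s$ is supported either in a small neighborhood of a point of $\mathcal{K}_1$, where Condition~\ref{cond1.4} and the local diffeomorphism reduce $\varphi_s u$ to a function in $V_a^1(\Theta_j)$ with boundary trace on $\Gamma_{j\rho}$, so part~(1) applies directly, or in a neighborhood disjoint from $\mathcal{K}_1$, where the weight $\rho$ is bounded above and below and the inequality reduces to the parameter-dependent Sobolev trace estimate~\eqref{eqPlanTrace} on a fixed bounded Lipschitz domain (no rescaling needed). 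Summing the contributions produces~\eqref{7.8-new}.

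The only real obstacle is the bookkeeping of the scaling factors: one must verify that the powers of $2^l$ on $\Gamma\cap\Theta_l$ and on $\Theta_l$ fit together so that multiplying~\eqref{eqPlanTrace} by $2^{(2a-1)l+n-2}$ reproduces exactly the weighted $H_a^0$ norms, and that the resulting terms assemble into $\n u\n_{V_a^1(\Theta)}^2$ via the equivalence~\eqref{7.6-new}. This is essentially the same arithmetic as in Lemma~\ref{l7.1-new} with $k=1$, $s=0$, but with the dimension reduced by one on the trace side, so no new difficulty arises.
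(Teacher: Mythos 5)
Your proposal follows essentially the same route as the paper: a dyadic partition $\{\xi_l\}$ of the angle, the parameter-dependent trace inequality on a fixed reference annulus (the paper's~\eqref{7.11-new}, your~\eqref{eqPlanTrace}), rescaling with $q=|p|2^l$, and summation via the norm equivalences of the type~\eqref{7.6-new} and~\eqref{7.10-new}, with part~(2) reduced to part~(1) by a partition of unity exactly as the paper indicates. The scaling exponents you quote are slightly garbled (the correct common factor is $2^{(2a+n-2)l}$ after extracting $|p|2^l$), but you correctly identify this as routine bookkeeping, and the arithmetic does close as in Lemma~\ref{l7.1-new}.
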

\begin{proof}
Similarly to the proof of Lemma~\ref{l7.1-new}, it suffices to prove
inequality~\eqref{7.8-new'}.

Denote $\Gamma_l=\{x=(y,z)\in\Gamma:\ 2^{l-1}<r<2^{l+1}\}$, $l=0,\pm1,\pm2,\dots$. Let
$\xi_l$ be the same functions as in the proof of Lemma~\ref{l7.1-new}.

Similarly to~\eqref{7.6-new}, we have
\begin{equation}\label{7.10-new}
\|u|_\Gamma\|^2_{H_a^{0}(\Gamma)}\approx\sum\limits_{l=-\infty}^{+\infty}\|(\xi_l
u)|_{\Gamma_l}\|^2_{H_a^{0}(\Gamma_l)},
\end{equation}
where the symbol $\approx$ means the equivalence of the norms.

Using the theorem about  extension of functions from a domain with Lipshitz boundary to
$\mathbb R^n$ and applying the interpolation inequality for the Sobolev space
$W^1(\mathbb R^n)$ (see~\cite[Sec.~1]{AV}), we obtain
\begin{equation}\label{7.11-new}
q\|v|_{\Gamma_0}\|^2_{L_2(\Theta_0)}\le
k_1(\|v\|^2_{W^{1}(\Theta_0)}+q^{2}\|v\|^2_{L_2(\Theta_0)}),
\end{equation}
for all $v\in W^k(\Theta_0)$ and $q>0$, where $k_1>0$ does not depend on $v$ and $q$.

We introduce the new variables $x'=2^{-l}x$. Using the equivalence of the
norms~\eqref{7.10-new} and the interpolation inequality~\eqref{7.11-new} with $q=|p| 2^l$
and passing back to the variables $x=2^lx'$, we have
\begin{multline*}
|p|\cdot\|u|_\Gamma\|^2_{H_a^{0}(\Gamma)}\le k_2|p|
\sum\limits_{l=-\infty}^{+\infty}2^{(2a+n-1)l}\,\int\limits_{\Gamma_0}|(\xi_l u)(x')|_{\Gamma_0}|^2d\Gamma_0  \\
\le k_3 \sum\limits_{l=-\infty}^{+\infty}2^{(2a+n-2)l}\left\{\sum\limits_{|\alpha|\le
1}\,\int\limits_{\Theta_0}|D^\alpha_{x'}(\xi_l
u)(x')|^2dx'+|p|^{2}2^{2l}\,\int\limits_{\Theta_0}|(\xi_l u)(x')|^2dx'\right\}  \\ \le
k_4 \sum\limits_{l=-\infty}^{+\infty}(\|\xi_l u\|_{H_a^1(\Theta_l)}^2+|p|^{2}\|\xi_l
u\|^2_{H_a^0(\Theta_l)}),
\end{multline*}
where  $k_2,k_3,k_4>0$ do not depend on  $u$ and $p$. Combining this inequality
with~\eqref{7.6-new} yields~\eqref{7.8-new'}.
\end{proof}

In particular, it follows from Lemmas~\ref{l7.1-new} and~\ref{lInterpWeightedTrace} that
\begin{align}
\n u|_{\Gamma} \n_{V_a^{k-1/2}(\Gamma)}&\le c \n u
\n_{V_a^{k}(\Theta)},\label{eqBoundedTrace'}\\
 \n u|_{\Gamma_i}
\n_{H_a^{k-1/2}(\Gamma_i)}&\le C \n u \n_{H_a^{k}(Q)},\label{eqBoundedTrace}
\end{align}
where $c,C>0$ do not depend on $u$ and $p$.

\begin{lemma}\label{lInterpWeightedTraceGamma}
For all $\psi\in V_a^{k-1/2}(\Gamma)$, $k\ge2$ is an integer, $p\in\mathbb R^d$, and
integer $s$, $0<s<k$, we have
\begin{equation}\label{eqInterpWeightedTraceGamma1}
|p|^{k-s-1/2}\|\psi\|_{H_a^s(\Gamma)}\le c \n \psi\n_{V_a^{k-1/2}(\Gamma)},
\end{equation}
where  $\Gamma$ is defined by~\eqref{eqGamma} and $c>0$ does not depend on $\psi$ and
$p$.
\end{lemma}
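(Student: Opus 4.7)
The plan is to adapt the dyadic-scaling argument used in Lemmas~\ref{l7.1-new} and~\ref{lInterpWeightedTrace}. Let $\{\xi_l\}_{l\in\mathbb Z}$ be the partition of unity on $\theta$ from the proof of Lemma~\ref{l7.1-new}, subordinated to $\theta_l=\{2^{l-1}<r<2^{l+1}\}$ and satisfying~\eqref{7.5-new}, and set $\Gamma_l=\Gamma\cap\{2^{l-1}<r<2^{l+1}\}$. The first step is to establish the norm equivalence
$$\|\psi\|_{H_a^{\nu}(\Gamma)}^2 \;\approx\; \sum_{l=-\infty}^{+\infty}\|\xi_l\psi\|_{H_a^{\nu}(\Gamma_l)}^2$$
for the three exponents $\nu\in\{0,s,k-\tfrac12\}$. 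For integer $\nu$ this is obtained exactly as in~\eqref{7.10-new}. For the trace exponent $\nu=k-\tfrac12$ one combines the definition of the trace norm as an infimum over $H_a^k(\Theta)$-extensions with the integer-order equivalence~\eqref{7.6-new} on $\Theta$: the direction $\gtrsim$ is obtained by multiplying a near-optimal extension of $\psi$ by $\xi_l$ and applying Leibniz with~\eqref{7.5-new}, while $\lesssim$ is obtained by gluing near-optimal extensions of the individual pieces $\xi_l\psi$.

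The second step is to pass to the reference manifold $\Gamma_0$ by setting $\tilde\psi_l(x')=(\xi_l\psi)(2^l x')$. Since $r'=2^{-l}r\in(\tfrac12,2)$ on $\Gamma_0$, a direct change of variables gives
$$\|\xi_l\psi\|_{H_a^{\nu}(\Gamma_l)}^2 \;\approx\; 2^{(2(a-\nu)+n-1)l}\,\|\tilde\psi_l\|_{W^{\nu}(\Gamma_0)}^2,\qquad \nu\in\{0,s,k-\tfrac12\},$$
with constants independent of $l$; the case $\nu=k-\tfrac12$ is handled by performing the same change of variables on extensions in $H_a^k(\Theta_l)$ and then taking the infimum.

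The third step is to apply to $\tilde\psi_l$ the parametric Sobolev interpolation inequality on the Lipschitz manifold $\Gamma_0$,
$$q^{2(k-s-1/2)}\|\varphi\|_{W^s(\Gamma_0)}^2 \;\le\; C\bigl(\|\varphi\|_{W^{k-1/2}(\Gamma_0)}^2+q^{2(k-1/2)}\|\varphi\|_{L_2(\Gamma_0)}^2\bigr),\qquad q\ge 0,$$
which follows from the Gagliardo-Nirenberg inequality on the Sobolev scale and Young's inequality, in direct analogy with~\eqref{7.7-new}. Taking $q=|p|\,2^l$ and multiplying both sides by $2^{(2(a-k+1/2)+n-1)l}$, the powers of $2^l$ balance so that the $l$-th inequality becomes
$$|p|^{2(k-s-1/2)}\|\xi_l\psi\|_{H_a^s(\Gamma_l)}^2 \;\le\; C\bigl(\|\xi_l\psi\|_{H_a^{k-1/2}(\Gamma_l)}^2+|p|^{2(k-1/2)}\|\xi_l\psi\|_{H_a^0(\Gamma_l)}^2\bigr).$$
Summing over $l\in\mathbb Z$ and invoking the three norm equivalences from the first step yields~\eqref{eqInterpWeightedTraceGamma1}.

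The main obstacle is the first step for the trace exponent $k-\tfrac12$: since $\|\cdot\|_{H_a^{k-1/2}(\Gamma)}$ is defined only implicitly as an infimum over $H_a^k(\Theta)$-extensions, neither the localization by $\xi_l$ nor the reassembly from the pieces $\xi_l\psi$ follows from Leibniz alone, and both have to be reduced to the integer-order equivalence~\eqref{7.6-new} on $\Theta$. Once this is in place, the scaling calculation of the second step and the interpolation of the third are routine and mirror those already carried out in Lemmas~\ref{l7.1-new} and~\ref{lInterpWeightedTrace}.
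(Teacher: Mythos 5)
Your proof is correct and follows the same skeleton as the paper's: dyadic localization by the $\xi_l$, rescaling to a reference piece, the parametric Sobolev interpolation inequality with $q=|p|2^l$, and resummation; the exponent bookkeeping in your second and third steps matches the paper's computation exactly. The one place where you genuinely diverge is the treatment of the fractional norm $H_a^{k-1/2}$. The paper works with explicit Gagliardo--Slobodetskii seminorm representations: it takes the localization equivalence \eqref{eqInterpWeightedTraceGamma3'} for noninteger exponents from Lemma~1.1 in \cite{MP}, applies the interpolation inequality in the form \eqref{eqInterpWeightedTraceGamma2} using the equivalent norm \eqref{eqInterpWeightedTraceGamma3}, and then invokes Lemma~1.3 in \cite{MP} to re-identify the resulting weighted double integrals as $\|\xi_l\psi\|^2_{H_a^{k-1/2}(\Gamma_l)}$. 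You instead keep the infimum-over-extensions definition throughout and reduce every trace-level statement to the integer-order equivalence \eqref{7.6-new} on $\Theta$. That is a legitimate alternative: it spares you the weight manipulation inside the double-integral seminorm (arguably the least transparent step of the paper's computation), but it forces you to prove from scratch the localization and reassembly equivalence at level $k-\tfrac12$, which the paper simply cites. Your sketch of that step is sound; just note that the gluing direction requires a second family of cutoffs $\chi_l$ with $\chi_l\equiv1$ on $\supp\xi_l$, $\supp\chi_l\subset\theta_l\times\mathbb R^{n-2}$, and derivative bounds of the type \eqref{7.5-new}, so that $\sum_l\chi_l v_l$ is a well-defined extension of $\psi$ with bounded-overlap control of its $H_a^k(\Theta)$-norm. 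With that detail supplied the argument closes.
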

\begin{proof}
The proof is based on the following interpolation inequality for Sobolev spaces in
$\mathbb R^{n-1}$ (see~\cite[Sec.~1]{AV}):
\begin{equation}\label{eqInterpWeightedTraceGamma2}
q^{2(k-s-1/2)}\|v\|^2_{W^s(\mathbb R^{n-1})}\le k_1(\|v\|^2_{W^{k-1/2}(\mathbb
R^{n-1})}+q^{2(k-1/2)}\|v\|^2_{L_2(\mathbb R^{n-1})})
\end{equation}
for all $v\in W^{k-1/2}(\mathbb R^{n-1})$,  $q>0$, and integer $s$, $0<s<k$, where
$k_1>0$ does not depend on $v$ and $q$. We will use the following equivalent norm in the
space $W^{k-1/2}(\mathbb R^{n-1})$:
\begin{equation}\label{eqInterpWeightedTraceGamma3}
\left(\sum\limits_{|\alpha|=k-1}\,\int\limits_{\mathbb R^{n-1}}\int\limits_{\mathbb
R^{n-1}}|D^\alpha v(x_1)-D^\alpha v(x_2)|^2\dfrac{dx_1 dx_2}{|x_1-x_2|^n}+
\sum\limits_{|\alpha|\le k-1}\,\int\limits_{\mathbb R^{n-1}}|D^\alpha v(x)|^2
dx\right)^{1/2}
\end{equation}
(see, e.g., \cite{Slob}).

Denote $\Gamma_l=\{x=(y,z)\in\Gamma:\ 2^{l-1}<r<2^{l+1}\}$, $l=0,\pm1,\pm2,\dots$. Let
$\xi_l$ be the same functions as in the proof of Lemmas~\ref{l7.1-new}
and~\ref{lInterpWeightedTrace}.

Using relations~\eqref{7.5-new} and the fact that
$\supp\xi_l\cap\overline{\Theta_j}=\varnothing$ for $j\ne l-1,l,l+1$, one can easily
verify that
\begin{equation}\label{eqInterpWeightedTraceGamma3'}
\|\psi\|_{H_a^s(\Gamma)}\approx
\left(\sum\limits_{l=-\infty}^{+\infty}\|\xi_l\psi\|^2_{H_a^{s}(\Gamma_l)}\right)^{1/2}
\end{equation}
for $s=0,1/2,1,3/2,\dots$, where the symbol $\approx$ means the equivalence of the norms
(in particular, see Lemma~1.1 in~\cite{MP} for noninteger $s$).

Further, introducing the new variables $x'=2^{-l}x$ and using
equivalence~\eqref{eqInterpWeightedTraceGamma3'}, the interpolation
inequality~\eqref{eqInterpWeightedTraceGamma2} with $q=|p|2^l$, and the equivalent
norm~\eqref{eqInterpWeightedTraceGamma3} in the Sobolev space (which is possible because
the functions $\xi_l$ are compactly supported),  we obtain
\begin{multline*}
|p|^{2(k-s-1/2)}\|\psi\|^2_{H_a^s(\Gamma)}\\
\le k_2\sum\limits_{l=-\infty}^{+\infty}2^{2l(a-k+1/2)}2^{l(n-1)}\cdot |p|^{2(k-s-1/2)}
2^{2l(k-s-1/2)}
\sum\limits_{|\alpha|\le s}\, \int\limits_{\Gamma_0}|D^\alpha_{x'}(\xi_l\psi)(x')|^2dx' \\
\le
k_3\sum\limits_{l=-\infty}^{+\infty}2^{2l(a-k+1/2)}2^{l(n-1)}\\
\times\left(\sum\limits_{|\alpha|=k-1}\,\int\limits_{
\Gamma}\int\limits_{\Gamma}|D^\alpha_{x'} (\xi_l\psi)(x_1')-D^\alpha_{x'}
(\xi_l\psi)(x_2')|^2\dfrac{dx_1' dx_2'}{|x_1'-x_2'|^n}\right.\\
+ \left.\sum\limits_{|\alpha|\le
k-1}\,\int\limits_{\Gamma_0}|D^\alpha_{x'}(\xi_l\psi)(x')|^2 d x'+
|p|^{2(k-1/2)}2^{2l(k-1/2)}\int\limits_{\Gamma_0}|(\xi_l\psi)(x')|^2
d x'\right)\\
\le k_4
\sum\limits_{l=-\infty}^{+\infty}2^{2l(a-k+1/2)}2^{l(n-1)}\\
\times\left(\sum\limits_{|\alpha|=k-1}\,\int\limits_{
\Gamma}\int\limits_{\Gamma}\big||y_1'|^aD^\alpha_{x'} (\xi_l\psi)(x_1')-|y_2'|^a
D^\alpha_{x'}
(\xi_l\psi)(x_2')\big|^2\dfrac{dx_1' dx_2'}{|x_1'-x_2'|^n}\right.\\
+ \sum\limits_{|\alpha|\le
k-1}\,\int\limits_{\Gamma_0}|y'|^{2(a+|\alpha|-k+1/2)}|D^\alpha_{x'}(\xi_l\psi)(x')|^2 d
x'\\
+ \left. |p|^{2(k-1/2)}2^{2l(k-1/2)}\int\limits_{\Gamma_0}|y'|^{2a}|(\xi_l\psi)(x')|^2 d
x'\right),
\end{multline*}
where $k_2,k_3,{\dots}>0$ do not depend on $\psi$ and $p$, $x_i'=(y_i',z_i')\in\Gamma$,
$y_i'\in\mathbb R^2$, and $z_i'\in\mathbb R^{n-2}$, $i=1,2$. In the last inequality, we
have also used the fact that $\supp\xi_l\cap\overline{Q_j}=\varnothing$ for $j\ne
l-1,l,l+1$. Passing back to the variables $x=2^l x'$, we have
\begin{multline*}
|p|^{2(k-s-1/2)}\|\psi\|^2_{H_a^s(\Gamma)}\\
\le k_5 \sum\limits_{l=-\infty}^{+\infty}\left(\sum\limits_{|\alpha|=k-1}\,\int\limits_{
\Gamma}\int\limits_{\Gamma}\big||y_1|^a D^\alpha (\xi_l\psi)(x_1)- |y_2|^a D^\alpha
(\xi_l\psi)(x_2)\big|^2\dfrac{dx_1 dx_2}{|x_1-x_2|^n}\right.\\
+ \left.\sum\limits_{|\alpha|\le
k-1}\,\int\limits_{\Gamma_l}|y|^{2(a+|\alpha|-k+1/2)}|D^\alpha(\xi_l\psi)(x)|^2 dx+
|p|^{2(k-1/2)}\int\limits_{\Gamma_l}|y|^{2a}|(\xi_l\psi)(x)|^2 dx\right),
\end{multline*}
where $x_i=(y_i,z_i)\in\Gamma$, $y_i\in\mathbb R^2$, and $z_i\in\mathbb R^{n-2}$,
$i=1,2$. It follows from this inequality and from Lemma~1.3 in~\cite{MP} (about the
equivalent norms in the weighted trace spaces) that
\begin{equation}\label{eqInterpWeightedTraceGamma4}
|p|^{2(k-s-1/2)}\|\psi\|^2_{H_a^s(\Gamma)}\le k_6
\sum\limits_{l=-\infty}^{+\infty}(\|\xi_l\psi\|_{H_a^{k-1/2}(\Gamma_l)}^2+
|p|^{2(k-1/2)}\|\xi_l\psi\|^2_{H_a^0(\Gamma_l)}).
\end{equation}
Combining~\eqref{eqInterpWeightedTraceGamma4} with  the equivalence of the
norms~\eqref{eqInterpWeightedTraceGamma3'}, we
obtain~\eqref{eqInterpWeightedTraceGamma1}.
\end{proof}

\subsection{}

Consider the differential operators
$$
A^0(p)\equiv A^0(x,D,p)=\sum\limits_{|\alpha|+|\beta|=2m}a_{\alpha\beta}(x)p^\beta
D^\alpha,
$$
$$
B_{i\mu s}^0(p)\equiv B_{i\mu s}^0(x,D,p)=\sum\limits_{|\alpha|+|\beta|=m_{i\mu}}b_{i\mu
s \alpha\beta}(x)p^\beta D^\alpha,
$$
where $a_{\alpha\beta}, b_{i\mu s \alpha\beta}\in C^\infty(\mathbb R^n)$ are
complex-valued functions ($i=1,\dots,N_0$; $\mu=1,\dots,m$; $s=0,\dots, S_i$),
$\beta=(\beta_1,\dots,\beta_d)$, $|\beta|=|\beta_1|+\dots+|\beta_d|$,
$p^\beta=p_1^{\beta_1}\dots p_d^{\beta_d}$, and $m_{i\mu}\le 2m-1$.

We study the following nonlocal elliptic problem:
\begin{equation}\label{7.A}
A(p)u\equiv A^0(p)u+A^1(p)u=f_0(x),\qquad x\in Q,
\end{equation}
\begin{equation}\label{7.B}
B_{i\mu}(p)u\equiv \sum\limits_{j=0}^3 B_{i\mu}^j(p)u=f_{i\mu}(x),\qquad x\in \Gamma_i;\
i=1,\dots,N_0;\ \mu=1,\dots, m.
\end{equation}
Here
$$
B_{i\mu}^0(p)u=B_{i\mu0}^0(p)u|_{\Gamma_i},\qquad
B_{i\mu}^1(p)u=\sum\limits_{s=1}^{S_i}\big( B_{i\mu s}^0(x,D,p)(\xi
u)\big)\big(\omega_{is}(x)\big)\big|_{\Gamma_i},
$$
the function $\xi$ and the transformations $\omega_{is}$ are the same as in
Sec.~\ref{sec6}. In particular, we assume that Conditions~\ref{cond1.3} and~\ref{cond1.4}
hold.

Introduce the variable $t=(t_1,\dots,t_d)\in\mathbb R^d$ and formally replace the
expressions $p^\beta$ in the operators $A^0(x,D_x,p)$ and $B_{i\mu 0}^0(x,D_x,p)$ by the
differential operators $D_t^\beta$. Assume that the following conditions hold
(cf.~\cite{AV,MP}).
\begin{condition}\label{cond7.1}
The operator $A^0(x,D_x,D_t)$ is properly elliptic for $(x,t)\in\overline{Q}\times\mathbb
R^d$.
\end{condition}
\begin{condition}\label{cond7.2}
The system $\{B_{i\mu 0}^0(x,D_x,D_t)\}_{\mu=1}^m$ covers the operator $A^0(x,D_x,D_t)$
and is normal for all $i=1,\dots,N_0$ and $(x,t)\in\overline{\Gamma_i}\times\mathbb R^d$.
\end{condition}

We also assume that the following conditions for the operators $A^1(p)$, $B_{i\mu}^2(p)$,
and $B_{i\mu}^3(p)$ hold.

\begin{condition}[smallness of perturbations]\label{cond7.3}
We have
$$
\n A^1(p) u\n_{H_a^{l}(Q)}\le c_1 \n u\n_{H_a^{l+2m-1}(Q)},
$$
$$
\n B_{i\mu}^3(p) u\n_{H_a^{l+2m-m_{i\mu}-1/2}(\Gamma_i)}\le c_2 \n u\n_{H_a^{l+2m-1}(Q)},
$$
where $i=1,\dots,N_0,$ $\mu=1,\dots,m$, and $c_1,c_2>0$ do not depend on $u$ and $p$.
\end{condition}

\begin{condition}[separability from the conjugation points]\label{cond7.4}
There exist numbers $\sigma>0$ and $\varkappa_1>\varkappa_2>0$ such that
\begin{equation}\label{eqSepar1}
\n B_{i\mu}^2(p)u\n_{H_a^{l+2m-m_{i\mu}-1/2}(\Gamma_i)}\le c_1\n
u\n_{H_a^{l+2m}(Q\setminus\overline{\mathcal K_1^{\varkappa_1}})}
\end{equation}
for all $u\in H_a^{l+2m}(Q\setminus\overline{\mathcal K_1^{\varkappa_1}})$ and
\begin{equation}\label{eqSepar2}
\n B_{i\mu}^2(p)u\n_{H_a^{l+2m-m_{i\mu}-1/2}(\Gamma_i\setminus\overline{\mathcal
K_1^{\varkappa_2}})}\le c_2\n u \n_{H_a^{l+2m}(Q_\sigma)}
\end{equation}
for all $u\in H_a^{l+2m}(Q_\sigma);$ here $i=1,\dots,N_0;$ $\mu=1,\dots,m;$ $c_1,c_2>0$
do not depend on $u$ and $p$.
\end{condition}

\begin{remark} 1. It follows from Condition~\ref{cond7.3} and from
Lemma~\ref{l7.1-new} that
$$
\n A^1(p) u\n_{H_a^{l}(Q)}\le c_1 |p|^{-1}\n u\n_{H_a^{l+2m}(Q)},
$$
$$
\n B_{i\mu}^3(p) u\n_{H_a^{l+2m-m_{i\mu}-1/2}(\Gamma_i)}\le c_2 |p|^{-1} \n
u\n_{H_a^{l+2m}(Q)}.
$$
Therefore, the norms of the operators $A^1(p)$ and $B_{i\mu}^3(p)$ are small, provided
that $|p|$ is large.

2. Condition~\ref{cond7.4} is an analog of Condition~\ref{cond6.2}.
\end{remark}

\begin{remark}\label{remRepresentationParam}
Let the transformations $\omega_{is}$ and the set $K$ be the same as in
Secs.~\ref{sec1}--\ref{sec5}. Consider the problem
$$
\sum\limits_{|\alpha|+|\beta|\le 2m}a_{\alpha\beta}(x)p^\beta D^\alpha u(x)=f_0(x),\qquad
x\in Q,
$$
$$
\sum\limits_{|\alpha|+|\beta|\le m_{i\mu}}\sum\limits_{s=0}^{S_i} b_{i\mu
s\alpha\beta}(\omega_{is}(x))p^\beta(D^\alpha u
)(\omega_{is}(x))|_{\Gamma_i}=f_{i\mu}(x), $$ $$ x\in\Gamma_i;\ i=1,\dots,N_0;\
\mu=1,\dots,m.
$$

This problem can be represented in the form~\eqref{7.A}, \eqref{7.B}. Indeed, set
$$
A^0(p)=\sum\limits_{|\alpha|+|\beta|=2m}a_{\alpha\beta}(x)p^\beta D^\alpha,\qquad
A^1(p)=\sum\limits_{|\alpha|+|\beta|\le 2m-1}a_{\alpha\beta}(x)p^\beta D^\alpha,
$$
$$
B_{i\mu}^0(p)u=\sum\limits_{|\alpha|+|\beta|=m_{i\mu}}b_{i\mu 0 \alpha\beta}(x)p^\beta
D^\alpha u|_{\Gamma_i},
$$
$$
B_{i\mu}^1(p)u=\sum\limits_{|\alpha|+|\beta|=m_{i\mu}}\sum\limits_{s=1}^{S_i} b_{i\mu
s\alpha\beta}(\omega_{is}(x))p^\beta(D^\alpha (\xi u))(\omega_{is}(x))|_{\Gamma_i},
$$
$$
B_{i\mu}^2(p)u=\sum\limits_{|\alpha|+|\beta|=m_{i\mu}}\sum\limits_{s=1}^{S_i} b_{i\mu
s\alpha\beta}(\omega_{is}(x))p^\beta(D^\alpha ((1-\xi) u))(\omega_{is}(x))|_{\Gamma_i},
$$
$$
B_{i\mu}^3(p)u=\sum\limits_{|\alpha|+|\beta|\le m_{i\mu}-1}\sum\limits_{s=0}^{S_i}
b_{i\mu s\alpha\beta}(\omega_{is}(x))p^\beta(D^\alpha u)(\omega_{is}(x))|_{\Gamma_i}.
$$
Clearly, the operator $A^1(p)$ satisfies Condition~\ref{cond7.3}.

Let $s=0,\dots S_i$ and $|\alpha|+|\beta|\le m_{i\mu}-1$. Denote by $u_1$ an extension of
the function $u$ to $Q\cup\omega_{is}(\Omega_i)$, defined by Lemma~\ref{l4.2} and
satisfying the inequalities
\begin{equation}\label{eqRepresentationParam1}
\|u_1\|_{H_a^{\nu}(Q\cup\omega_{is}(\Omega_i))}\le k_1\|u\|_{H_a^{\nu}(Q)},\qquad
\nu=0,\dots,l+2m-1.
\end{equation}
Clearly,
$$
b_{i\mu s\alpha\beta}(\omega_{is}(x))p^\beta(D^\alpha
u)(\omega_{is}(x))|_{\Gamma_i}=b_{i\mu s\alpha\beta}(\omega_{is}(x))p^\beta(D^\alpha
u_1)(\omega_{is}(x))|_{\Gamma_i}.
$$
Therefore, using~\eqref{eqBoundedTrace} and~\eqref{eqRepresentationParam1}, we have
\begin{multline*}
\n b_{i\mu s\alpha\beta}(\omega_{is}(x))p^\beta(D^\alpha u)(\omega_{is}(x))|_{\Gamma_i}
\n_{H_a^{l+2m-m_{i\mu}-1/2}(\Gamma_i)}\\
\le k_2 \n u_1\n_{H_a^{l+2m-1}(\omega_{is}(\Omega_i))}\le k_3\n u\n_{H_a^{l+2m-1}(Q)}
\end{multline*}
for $\alpha$ and $\beta$ such that $|\alpha|+|\beta|\le m_{i\mu}-1$. This inequality
proves that the operators $B_{i\mu}^3(p)$ satisfy Condition~\ref{cond7.3}.

To show that the operators $B_{i\mu}^2(p)$ satisfy Condition~\ref{cond7.4}, one must
repeat the proof of Lemma~\ref{l4.6} with the norms $\|\cdot\|$ replaced by the norms
$\n\cdot\n$, taking into account inequality~\eqref{eqBoundedTrace}.
\end{remark}

We consider the linear bounded operators $\mathbf L^0(p), \mathbf L^1(p), \mathbf
L(p):H_a^{l+2m}(Q)\to\mathcal H_a^l(Q,\Gamma)$ given by
$$
\mathbf L^0(p)=\{A(p),B_{i\mu}^0(p)\},\quad \mathbf
L^1(p)=\{A(p),B_{i\mu}^0(p)+B_{i\mu}^1(p)\},\quad \mathbf L(p)=\{A(p),B_{i\mu}(p)\}.
$$

The invertibility  of the operator $\mathbf L^0(p)$ was proved in~\cite{MP}. Our aim is
to study the operator  $\mathbf L^1(p)$ and then $\mathbf L(p)$. First, we will consider
model problems with a parameter corresponding to the points of the sets $\mathcal K_1$
and $K_2$.

\subsection{}

Fix a point $g\in\mathcal K_1$. Using the reasoning similar to that in Sec.~\ref{sec1},
we arrive at the following model problem (cf.~\eqref{1.9}, \eqref{1.10}):
\begin{equation}\label{7.AModelx'}
A_j(x,D_{y},D_{z},p)v_j(x)=f_j(x),\qquad x\in \Theta_j;\ j=1,\dots,N,
\end{equation}
\begin{equation}\label{7.BModelx'}
\begin{aligned}
\sum\limits_{k=1}^N\sum\limits_{s=0}^{S_{j\rho k}}(B_{j\rho\mu
ks}(x,D_{y},D_{z},p)v_k)(\mathcal G_{j\rho
ks}y,z)|_{\Gamma_{j\rho}}=f_{j\rho\mu}(x),\qquad
x\in\Gamma_{j\rho};\\
j=1,\dots,N;\ \rho=1,2;\ \mu=1,\dots,m,
\end{aligned}
\end{equation}
where $A_j(x,D_{y},D_{z},p)$ and $B_{j\rho\mu ks}(x,D_{y},D_{z},p)$ are differential
operators of order $2m$ and $m_{j\rho\mu}$, respectively, with the parameter $p$, having
variable coefficients of class $C^\infty$, while $\Theta_j$, $\Gamma_{j\rho}$, and
$\mathcal G_{j\rho ks}$ are the same as in~\eqref{1.9}, \eqref{1.10}.

Introduce the spaces of vector-valued functions
$$
\mathcal V_a^k(\Theta)=\prod\limits_{j=1}^N V_a^k(\Theta_j),\quad \mathcal
V_a^l(\Theta,\Gamma)=\mathcal
V_a^l(\Theta)\times\prod\limits_{j=1}^N\prod\limits_{\rho=1,2}\prod\limits_{\mu=1}^m
V_a^{l+2m-m_{j\rho\mu}-1/2}(\Gamma_{j\rho}),
$$
where $m_{j\rho\mu}$ is the order of the operator $B_{j\rho\mu ks}(x,D_{y},D_{z},p)$.
Consider the linear bounded operator $\mathcal L_g(p): \mathcal
V_a^{l+2m}(\Theta)\to\mathcal V_a^l(\Theta,\Gamma)$ given by
\begin{equation}\label{eqLg(p)}
\mathcal L_g(p) v=\Big\{A_j(D_{y},D_{z},p)v_j(y,z),\ \sum\limits_{k,s}(B_{j\rho\mu
ks}(D_{y},D_{z},p)v_k)(\mathcal G_{j\rho ks}y,z)|_{\Gamma_{j\rho}}\Big\},
\end{equation}
where $A_j(D_{y},D_{z},p)$ and $B_{j\rho\mu ks}(D_{y},D_{z},p)$ are principal homogeneous
parts\footnote{In this section, the notion ``principal homogeneous part'' takes into
account the parameter $p$, e.g., the operator $A_j(D_{y},D_{z},p)$ consists of the terms
$a_{j\alpha\beta}(x)p^\beta D^\alpha$, where $|\alpha|+|\beta|=2m$.} of the operators
$A_j(0,D_{y},D_{z},p)$ and $B_{j\rho\mu ks}(0,D_{y},D_{z},p)$, respectively. Clearly, if
we replace $p^\beta$ by $D_t^\beta$, then each of the obtained operators
$A_j(D_{y},D_{z},D_t)$ will be properly elliptic for
$(x,t)\in\overline{\Theta_{j}}\times\mathbb R^d$, while the system $\{B_{j\rho\mu
j0}(D_{y},D_{z},D_t)\}_{\mu=1}^m$ will cover the operator $A_j(D_{y},D_{z},D_t)$ and be
normal for all $(x,t)\in\Gamma_{j\rho}\times\mathbb R^d$, $j=1,\dots,N$, and  $\rho=1,2$.

We also set
$$
\mathcal L_g'(p) v=\Big\{A_j^0(x,D_{y},D_{z},p)v_j(y,z),\ \sum\limits_{k,s}(B_{j\rho\mu
ks}^0(x,D_{y},D_{z},p) v_k)(\mathcal G_{j\rho ks}y,z)|_{\Gamma_{j\rho}}\Big\},
$$
where $A_j^0(x,D_{y},D_{z},p)$ and $B_{j\rho\mu ks}^0(x,D_{y},D_{z},p)$ are principal
homogeneous parts of the operators $A_j(x,D_{y},D_{z},p)$ and $B_{j\rho\mu
ks}(x,D_{y},D_{z},p)$, respectively.

Further, we set
$$
\mathcal L_g(\eta,p) V=\Big\{A_j(D_{y},\eta,p)V_j(y),\ \sum\limits_{k,s}(B_{j\rho\mu
ks}(D_{y},\eta,p)V_k)(\mathcal G_{j\rho ks}y)|_{\Gamma_{j\rho}}\Big\},\qquad
\eta\in\mathbb R^{n-2}.
$$
Replacing $(\eta,p)$ by $\omega=(\eta,p)/|(\eta,p)|$, we obtain the bounded operator
$$\mathcal L_g(\omega):\mathcal E_a^{l+2m}(\theta)\to\mathcal E_a^l(\theta,\gamma)$$ given
by
\begin{equation}\label{eqLgOmegaP}
\mathcal L_g(\omega) V=\Big\{A_j(D_{y},\omega)V_j(y),\ \sum\limits_{k,s}(B_{j\rho\mu
ks}(D_{y},\omega)V_k)(\mathcal G_{j\rho ks}y)|_{\Gamma_{j\rho}}\Big\},\qquad \omega\in
S^{n+d-3}.
\end{equation}

Finally, we consider the analytic operator-valued function
$$\hat{\mathcal L}_g(\lambda): \mathcal
W^{l+2m}(d_1,d_2)\to\mathcal W^l[d_1,d_2]$$ given by~\eqref{2.4}.

In this subsection, we prove that the absence of eigenvalues of $\hat{\mathcal
L}_g(\lambda)$ on the line $\Im\lambda=a+1-l-2m$ and the triviality of the kernel and
cokernel of $\mathcal L_g(\omega)$ guarantees the existence of the inverse operators
$\mathcal L_g^{-1}(p)$ for $p\in\mathbb R^d\setminus\{0\}$, uniformly bounded in the
corresponding norms $\n\cdot\n$. We introduce these norms by setting
$$
\n u\n_{\mathcal V_a^{k}(\Theta)}=\Big(\sum\limits_j \n u\n_{
V_a^{k}(\Theta_j)}^2\Big)^{1/2},
$$
$$
\n f \n_{\mathcal V_a^l(\Theta,\Gamma)}=\Big(\sum\limits_j\n f_j\n_{
V_a^{l}(\Theta_j)}^2+\sum\limits_{j,\rho,\mu}\n f_{j\rho\mu}\n_{
V_a^{l+2m-m_{j\rho\mu}-1/2}(\Gamma_j)}^2\Big)^{1/2},\qquad f=\{f_j,f_{j\rho\mu}\}.
$$

\begin{theorem}\label{tLpIsomH}
Let Conditions~$\ref{cond7.1}$, $\ref{cond7.2}$, $\ref{cond1.3}$, and $\ref{cond1.4}$
hold. Assume that the line $\Im\lambda=a+1-l-2m$ contains no eigenvalues of
$\hat{\mathcal L}_g(\lambda)$ and $\dim\mathcal N(\mathcal L_g(\omega))=\codim\mathcal
R(\mathcal L_g(\omega))=0$ for any $\omega\in S^{n+d-3}$. Then the operator $\mathcal
L_g(p)$ is an isomorphism for $p\in\mathbb R^d\setminus\{0\}$ and
\begin{equation}\label{eqLpIsomH0}
\n u\n_{\mathcal V_a^{l+2m}(\Theta)}\le c\n \mathcal L_g(p)u\n_{\mathcal
V_a^l(\Theta,\Gamma)},\qquad p\in\mathbb R^d\setminus\{0\},
\end{equation}
where $c>0$ does not depend on $u$ and $p$.
\end{theorem}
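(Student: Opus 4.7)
The plan is to reduce the problem, via Fourier transform in $z$ and a dilation absorbing $(\eta,p)$, to a uniform family of operators on the compact sphere $S^{n+d-3}$, and then invoke the hypothesis together with compactness.

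First, I would apply the Fourier transform $F_{z\to\eta}$ to the equation $\mathcal L_g(p)u=f$. Because the differential operators in~\eqref{eqLg(p)} involve $z$ only through $D_z$ and the mappings $\mathcal G_{j\rho ks}$ act only in the $y$-plane, this produces, for every $(\eta,p)\in\mathbb R^{n-2+d}$, a nonlocal boundary-value problem on the plane angles $\{\theta_j\}$ of the form $\mathcal L_g(\eta,p)\tilde u(\cdot,\eta)=\tilde f(\cdot,\eta)$, where the operators are obtained by replacing $D_z$ by $\eta$.

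Second, for each $(\eta,p)\ne 0$ I would introduce $Y=|(\eta,p)|y$, $\omega=(\eta,p)/|(\eta,p)|\in S^{n+d-3}$, and $V(Y)=\tilde u(Y/|(\eta,p)|,\eta)$ (with the rescaled right-hand sides carrying the appropriate factors $|(\eta,p)|^{-2m}$ and $|(\eta,p)|^{-m_{j\rho\mu}}$). Since $A_j(D_y,D_z,p)$ and $B_{j\rho\mu ks}(D_y,D_z,p)$ are homogeneous of total degree $2m$ and $m_{j\rho\mu}$, respectively, in $(D_y,D_z,p)$, this dilation reduces the problem to $\mathcal L_g(\omega)V=F$ on the plane angles, exactly the operator of~\eqref{eqLgOmegaP}.

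Third, by Theorem~\ref{t2.1} the absence of eigenvalues of $\hat{\mathcal L}_g(\lambda)$ on the line $\Im\lambda=a+1-l-2m$ ensures that $\mathcal L_g(\omega)$ is Fredholm for every $\omega\in S^{n+d-3}$; combined with the assumed triviality of the kernel and cokernel, this makes each $\mathcal L_g(\omega)$ an isomorphism. Because $\mathcal L_g(\omega)$ depends continuously on $\omega$ in the operator norm and $S^{n+d-3}$ is compact, the norms of the inverses are uniformly bounded, yielding
$$
\|V\|_{\mathcal E_a^{l+2m}(\theta)}\le c_0\|\mathcal L_g(\omega)V\|_{\mathcal E_a^l(\theta,\gamma)}\qquad(\omega\in S^{n+d-3}).
$$

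Fourth, I would translate this uniform estimate back to $\Theta$ by undoing the dilation and applying Parseval in $z$. The $\n\cdot\n_{\mathcal V_a^{l+2m}}$-norm is designed precisely so that, after $F_{z\to\eta}$ followed by $Y=|(\eta,p)|y$, it becomes equivalent to $\int_{\mathbb R^{n-2}}\|V(\cdot,\eta)\|^2_{\mathcal E_a^{l+2m}(\theta)}\,d\eta$: unpacking the $E_a^k$-norm, the $r^{2(|\alpha|-k)}$ weight regenerates the homogeneous piece $\|u\|^2_{H_a^{l+2m}(\Theta)}$, while the unweighted $r^{2a}$ piece, after the dilation produces a factor $|(\eta,p)|^{2(l+2m)}$, assembles (together with the contributions from powers of $|\eta|$ returned by Parseval) the term $|p|^{2(l+2m)}\|u\|^2_{H_a^0(\Theta)}$ in $\n u\n^2_{\mathcal V_a^{l+2m}(\Theta)}$. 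The analogous calculation for the right-hand side, using the interpolation inequalities of Lemmas~\ref{l7.1-new}, \ref{lInterpWeightedTrace} and~\ref{lInterpWeightedTraceGamma} to control the trace components, then yields~\eqref{eqLpIsomH0}; since the whole construction is reversible, one simultaneously obtains injectivity and surjectivity of $\mathcal L_g(p)$ for each $p\ne 0$.

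The main obstacle is the fourth step: proving, as the parameter-dependent analogue of Theorem~\ref{t2.2}, the two-sided norm equivalence between the $\n\cdot\n_{\mathcal V_a^k}$-norms on $\Theta$ and the $\eta$-integrated $E_a^k$-norms on $\theta$ after dilation by $|(\eta,p)|$. Once this equivalence is in place, the transition from the uniform sphere estimate to~\eqref{eqLpIsomH0} is immediate, as is the equivalence between invertibility of $\mathcal L_g(p)$ and that of the family $\{\mathcal L_g(\omega)\}_{\omega\in S^{n+d-3}}$.
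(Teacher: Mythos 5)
Your strategy is the same as the paper's: Fourier transform in $z$, dilation by $|(\eta,p)|$ to reduce to the family $\mathcal L_g(\omega)$, $\omega\in S^{n+d-3}$, uniform invertibility of that family from Theorem~\ref{t2.1} plus the kernel/cokernel hypothesis and compactness of the sphere, and then a norm equivalence to return to $\Theta$. The one substantive difference is that you attempt the whole reduction in a single step for arbitrary $p\ne0$, and you correctly flag the resulting uniform-in-$p$ norm equivalence as the main obstacle without resolving it. The paper's device for resolving exactly this obstacle is to split the scaling into two stages: first it proves the statement for $p\in S^{d-1}$ in the spaces $\mathcal E_a^k(\Theta)$ (Lemma~\ref{lLpIsomE}, after lifting the boundary data via Lemma~\ref{lLpEReductHomog}), where the equivalences~\eqref{eqLpIsomE8}--\eqref{eqLpIsomE7} are manageable precisely because $|p|=1$ forces $|(\eta,p)|\ge1$ --- note that the lower bound there still requires splitting into $|\eta|<1$ and $|\eta|>1$, since $\sum_{|\beta|=\nu}|\eta^\beta|^2/|(\eta,p)|^{2|\beta|}$ is not bounded below near $\eta=0$; in your one-step version this degeneracy occurs for all small $|\eta|$ relative to $|p|$ and must be handled with the same care. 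The second stage is the elementary global dilation $v(x)=u(|p|^{-1}x)$, which converts the $E_a$-norms into the parameter-dependent norms $\n\cdot\n_{V_a^k}$ (inequalities~\eqref{eqLpIsomH1}--\eqref{eqLpIsomH3}, using Lemmas~\ref{l7.1-new} and~\ref{lInterpWeightedTraceGamma} and the Maz'ya--Plamenevskii equivalent trace norms). If you adopt this two-stage factorization, each equivalence becomes a routine computation and your outline closes.
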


To prove Theorem~\ref{tLpIsomH}, we preliminary consider the bounded operator $$\mathcal
L_g(p): \mathcal E_a^{l+2m}(\Theta)\to\mathcal E_a^l(\Theta,\Gamma)$$ given
by~\eqref{eqLg(p)} for $p\in S^{d-1}$, where
$$
\mathcal E_a^k(\Theta)=\prod\limits_{j=1}^N E_a^k(\Theta_j),\quad \mathcal
E_a^l(\Theta,\Gamma)=\mathcal E_a^l(\Theta)\times\prod\limits_{j=1}^N
\prod\limits_{\rho=1,2}\prod\limits_{\mu=1}^m
E_a^{l+2m-m_{j\rho\mu}-1/2}(\Gamma_{j\rho}),
$$
while $E_a^k(\Theta_j)$ is the completion of the set
$C_0^\infty(\overline{\Theta_j}\setminus\{0\})$ with respect to the norm
$$
\|v\|_{E_a^k(\Theta_j)}=\bigg(\sum\limits_{|\alpha|\le k}\,\int\limits_{\Theta_j}
|y|^{2a}(|y|^{2(|\alpha|-k)}+1)|D_x^\alpha v(x)|^2\, dx\bigg)^{1/2}
$$
and $E_a^{k-1/2}(\Gamma_{j\rho})$ ($k\ge1$ is an integer) is the space of traces on
$\Gamma_{j\rho}$ with the norm
$$
\|\psi\|_{E_a^{k-1/2}(\Gamma_{j\rho})}=\inf\|v\|_{E_a^k(\Theta_j)}\qquad (v\in
E_a^k(\Theta_j):\ v|_{\Gamma_{j\rho}}=\psi).
$$

\begin{lemma}\label{lLpEReductHomog}
Let Conditions~$\ref{cond7.2}$, $\ref{cond1.3}$, and $\ref{cond1.4}$ hold, and let
$f_{j\rho\mu}\in E_a^{l+2m-m_{j\rho\mu}-1/2}(\Gamma_{j\rho})$. Then there exists a
function $u\in \mathcal E_a^{l+2m}(\Theta)$ such that
$$
\sum\limits_{k,s}(B_{j\rho\mu ks}(D_{y},D_{z},p)u_k)(\mathcal G_{j\rho
ks}y,z)|_{\Gamma_{j\rho}}=f_{j\rho\mu},\qquad p\in S^{d-1},
$$
$$
\|u\|_{\mathcal E_a^{l+2m}(\Theta)}\le
c\sum\limits_{j,\rho,\mu}\|f_{j\rho\mu}\|_{E_a^{l+2m-m_{j\rho\mu}-1/2}(\Gamma_{j\rho})},\qquad
p\in S^{d-1},
$$
where $c>0$ does not depend on $u$ and $p$.
\end{lemma}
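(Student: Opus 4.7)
The plan is to construct $u=(u_1,\ldots,u_N)$ with angularly narrow supports, so that the nonlocal boundary conditions at each $\Gamma_{j\rho}$ decouple into purely local ones, which can then be solved on a half-space via classical parameter-elliptic theory. Concretely, I set $u_j = v_{j1}+v_{j2}$ with $v_{j\rho}$ supported in the sector $\{|\varphi-d_{j\rho}|<d_0/4\}\cap\Theta_j$, where $d_0>0$ is defined by~\eqref{eqd0}. By the inequality $d_{k1}<d_{j\rho}+\varphi_{j\rho ks}<d_{k2}$ for $(k,s)\ne(j,0)$ together with~\eqref{eqd0}, the shifted ray $\mathcal G_{j\rho ks}\Gamma_{j\rho}$ lies at angular distance at least $2d_0$ from both $\Gamma_{k1}$ and $\Gamma_{k2}$. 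Hence $v_{k\rho'}(\mathcal G_{j\rho ks}y,z)=0$ for $y\in\Gamma_{j\rho}$, $\rho'=1,2$, and $(k,s)\ne(j,0)$, so on $\Gamma_{j\rho}$ only the local term survives and~\eqref{7.BModelx'} collapses to
$$B_{j\rho\mu j0}(D_y,D_z,p)\,v_{j\rho}\bigr|_{\Gamma_{j\rho}}=f_{j\rho\mu},\qquad \mu=1,\ldots,m.$$

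For each pair $(j,\rho)$, I would straighten $\Gamma_{j\rho}$ to a coordinate hyperplane so that a sector of opening $d_0/2$ around $\Gamma_{j\rho}$ sits inside a half-space. By Condition~\ref{cond7.2}, the system $\{B_{j\rho\mu j0}(D_y,D_z,D_t)\}_{\mu=1}^m$ is normal (in particular its orders are distinct and strictly less than $2m$, and the coefficient of the highest normal derivative never vanishes). The classical Agranovich--Vishik right-inverse construction for normal boundary operators with a parameter (\cite{AV}; see also~\cite{MP} for the weighted setting) then yields $w_{j\rho}$ on the half-space with $B_{j\rho\mu j0}(D_y,D_z,p)w_{j\rho}|_{\Gamma_{j\rho}}=f_{j\rho\mu}$ and
$$\|w_{j\rho}\|_{E_a^{l+2m}}\le c\sum_{\mu}\|f_{j\rho\mu}\|_{E_a^{l+2m-m_{j\rho\mu}-1/2}(\Gamma_{j\rho})},$$
uniformly in $p\in S^{d-1}$. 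I would then define $v_{j\rho}(y,z)=\chi(\varphi-d_{j\rho})\,w_{j\rho}(y,z)$ with $\chi\in C_0^\infty((-d_0/4,d_0/4))$ identically $1$ on $[-d_0/8,d_0/8]$. Since $\chi\equiv 1$ near $\varphi=d_{j\rho}$, the traces $B_{j\rho\mu j0}(D_y,D_z,p)v_{j\rho}|_{\Gamma_{j\rho}}$ agree with those of $w_{j\rho}$, and boundedness of multiplication by $\chi$ on $E_a^{l+2m}$ follows from Leibniz' rule and the uniform bound on $|\chi^{(k)}|$.

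The main technical obstacle is verifying the parameter-uniform right-inverse estimate in the nonhomogeneous weighted spaces $E_a^k$ rather than in the classical Sobolev setting of~\cite{AV}; the weight $|y|^{2a}(|y|^{2(|\alpha|-k)}+1)$ is a hybrid of homogeneous and nonhomogeneous parts. I would treat this by a dyadic decomposition in $r=|y|$ modelled on the proofs of Lemmas~\ref{l7.1-new}--\ref{lInterpWeightedTraceGamma}: on each shell $\{2^\ell\le r<2^{\ell+1}\}$, rescale by $r\mapsto 2^\ell r$ (which turns the inhomogeneous weight into an ordinary one and replaces $p$ by $2^\ell p$), apply the classical parameter-elliptic half-space estimate to the system $\{B_{j\rho\mu j0}(D_y,D_z,2^\ell p)\}$, and sum back over $\ell$. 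Uniformity in $p\in S^{d-1}$ is immediate from the smoothness of the symbols and compactness of the sphere. Once the half-space bound is established, the final assembly $u_j=v_{j1}+v_{j2}$ uses the disjointness of the angular supports of $v_{j1}$ and $v_{j2}$ to yield the claimed global estimate with a single constant.
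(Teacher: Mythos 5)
Your construction is essentially the one in the paper: the key idea in both is to first lift the purely local data $f_{j\rho\mu}$ through the normal system $\{B_{j\rho\mu j0}\}$ alone, and then multiply by a cutoff depending only on the polar angle $\varphi$, supported within distance $<d_0$ of the faces, so that by the definition~\eqref{eqd0} the shifted rays $\varphi=d_{j\rho}+\varphi_{j\rho ks}$, $(k,s)\ne(j,0)$, miss the support and all nonlocal terms vanish identically. (The paper uses one cutoff $\xi_k$ per angle equal to $1$ near both faces rather than two pieces $v_{j1},v_{j2}$; this is cosmetic.) The only genuine divergence is in the lifting step: the paper simply cites Lemma~$9.2'$ of~\cite{MP}, which already provides $v\in\mathcal E_a^{l+2m}(\Theta)$ with $B_{j\rho\mu j0}(D_y,D_z,p)v_j|_{\Gamma_{j\rho}}=f_{j\rho\mu}$ and the bound uniform in $p\in S^{d-1}$, whereas you propose to reprove this half-space trace lifting in the hybrid spaces $E_a^k$ by dyadic rescaling. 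Your sketch of that step is the standard technique (and mirrors the proofs of Lemmas~\ref{l7.1-new}--\ref{lInterpWeightedTraceGamma}), but it is precisely the part of your argument that would require real work to complete — in particular controlling the rescaled parameter $2^\ell p$ as $\ell\to-\infty$ and matching the two halves of the weight $r^{2a}(r^{2(|\alpha|-k)}+1)$ — so citing the existing result is the shorter route.
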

\begin{proof}
By Lemma~$9.2'$ in~\cite{MP}, there exists a function $v=(v_1,\dots,v_N)\in \mathcal
E_a^{l+2m}(\Theta)$ such that $ B_{j\rho\mu
j0}(D_{y},D_{z},p)v_j|_{\Gamma_{j\rho}}=f_{j\rho\mu}$ and
$$
\|v\|_{\mathcal E_a^{l+2m}(\Theta)}\le
k_1\sum\limits_{j,\rho,\mu}\|f_{j\rho\mu}\|_{E_a^{l+2m-m_{j\rho\mu}-1/2}(\Gamma_{j\rho})},\qquad
p\in S^{d-1}.
$$
Let $d_0$ be the number defined in~\eqref{eqd0}. Consider functions $\xi_k\in
C^\infty(\mathbb R)$ depending on the polar angle $\varphi$ of the point $y\in\mathbb
R^2$, such that $\xi_k(\varphi)=1$ for $d_{k1}\le\varphi\le d_{k1}+d_0/2$ and
$d_{k2}-d_0/2\le\varphi\le d_{k2}$ and $\xi_k(\varphi)=0$ for $d_{k1}+d_0\le\varphi\le
d_{k2}-d_0$.

Since the operator of multiplication by $\xi_j$ is bounded in $E_a^{l+2m}(\Theta_j)$, we
see that the function $u=(\xi_1v_1,\dots,\xi_Nv_N)$ is the desired one.
\end{proof}

\begin{lemma}\label{lLpIsomE}
Let the conditions of Theorem~$\ref{tLpIsomH}$ be fulfilled. Then the operator $\mathcal
L_g(p): \mathcal E_a^{l+2m}(\Theta)\to\mathcal E_a^l(\Theta,\Gamma)$ is an isomorphism
for $p\in S^{d-1}$ and
$$
\|u\|_{\mathcal E_a^{l+2m}(\Theta)}\le c\| \mathcal L_g(p)u\|_{\mathcal
E_a^l(\Theta,\Gamma)},\qquad p\in S^{d-1},
$$
where $c>0$ does not depend on $u$ and $p$.
\end{lemma}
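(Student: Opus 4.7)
The plan is to reduce the parametric operator $\mathcal L_g(p)$ on the dihedral angles $\Theta_j$ to the parameter-free operator $\mathcal L_g(\omega)$ on the plane angles $\theta_j$ via the Fourier transform in $z\in\mathbb R^{n-2}$, and then exploit the uniform invertibility of $\mathcal L_g(\omega)$ over the compact sphere $S^{n+d-3}$. This is the same mechanism that underlies Theorem~\ref{t2.2}, but now carried out in the presence of the extra parameter~$p$.

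First, for $v\in\mathcal E_a^{l+2m}(\Theta)$ with $p\in S^{d-1}$ fixed, I would apply $F_{z\to\eta}$ componentwise to obtain $\tilde v_j(y,\eta)$. Since the principal parts $A_j(D_y,D_z,p)$ and $B_{j\rho\mu ks}(D_y,D_z,p)$ in~\eqref{eqLg(p)} are differential in $z$ and homogeneous of degrees $2m$ and $m_{j\rho\mu}$ in the joint variable $(D_y,D_z,p)$, the equation $\mathcal L_g(p)v=f$ transforms into a family of plane-angle problems
\[
\mathcal L_g(\eta,p)\tilde v(\cdot,\eta)=\tilde f(\cdot,\eta),\qquad \eta\in\mathbb R^{n-2},
\]
where $\mathcal L_g(\eta,p)$ acts on the spaces $\mathcal E_a^{l+2m}(\theta)$. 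For $p\in S^{d-1}$ one has $|(\eta,p)|\ge 1>0$, so the change of variables $Y=|(\eta,p)|y$ together with $\omega=(\eta,p)/|(\eta,p)|\in S^{n+d-3}$ converts $\mathcal L_g(\eta,p)$ into $|(\eta,p)|^{\bullet}\mathcal L_g(\omega)$ acting on $V(Y)=\tilde v(Y/|(\eta,p)|,\eta)$, with the appropriate power of $|(\eta,p)|$ on each component.

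Next, using the hypothesis that $\mathcal L_g(\omega):\mathcal E_a^{l+2m}(\theta)\to\mathcal E_a^l(\theta,\gamma)$ is an isomorphism for every $\omega\in S^{n+d-3}$, together with the continuous dependence of $\mathcal L_g(\omega)$ on $\omega$ and the compactness of $S^{n+d-3}$, I obtain a uniform bound $\|\mathcal L_g(\omega)^{-1}\|\le C_0$ independent of~$\omega$. Transporting this bound back through the rescaling $Y\mapsto y$ and integrating over $\eta$ by the Plancherel theorem yields the desired estimate $\|v\|_{\mathcal E_a^{l+2m}(\Theta)}\le c\|\mathcal L_g(p)v\|_{\mathcal E_a^l(\Theta,\Gamma)}$ with $c$ independent of $p\in S^{d-1}$. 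Bijectivity of $\mathcal L_g(p)$ follows from the same parametric correspondence, using surjectivity of each $\mathcal L_g(\omega)$.

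The main obstacle is the norm bookkeeping in the fourth step: the nonhomogeneous weight $r^{2a}(r^{2(|\alpha|-k)}+1)$ in the definition of $E_a^k$ is precisely tailored so that after the rescaling $Y=|(\eta,p)|y$ the factor $r^{2(|\alpha|-k)}+1$ becomes $|(\eta,p)|^{-2(|\alpha|-k)}(|Y|^{2(|\alpha|-k)}+|(\eta,p)|^{2(|\alpha|-k)})$, and the resulting weight matches the squared symbol $|(\eta,p)|^{2k}$ expected from the parametric ellipticity. One must then verify that integration in $\eta$ of the rescaled $E_a^k(\theta)$-norms (via Plancherel) is equivalent, uniformly in $p\in S^{d-1}$, to the $\mathcal E_a^k(\Theta)$-norm. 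This computation is a direct parametric generalization of the unparametrized case in Theorem~\ref{t2.2}; once it is in place, the uniform invertibility of $\mathcal L_g(\omega)$ on $S^{n+d-3}$ transfers at once to the uniform invertibility of $\mathcal L_g(p)$ on $S^{d-1}$.
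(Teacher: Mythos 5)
Your route is the same as the paper's: Fourier transform in $z$, the rescaling $Y=|(\eta,p)|y$ with $\omega=(\eta,p)/|(\eta,p)|\in S^{n+d-3}$, uniform invertibility of $\mathcal L_g(\omega)$ over the compact sphere, and a Plancherel-type identification of the $\mathcal E_a^k(\Theta)$-norms with weighted integrals over $\eta$ of $\mathcal E_a^k(\theta)$-norms. Two points in your sketch, however, conceal real work that the paper does explicitly. First, you propose to carry the trace components $f_{j\rho\mu}$ through the Fourier transform and rescaling ``with the appropriate power of $|(\eta,p)|$ on each component.'' The spaces $E_a^{l+2m-m_{j\rho\mu}-1/2}(\gamma_{j\rho})$ are defined as quotient (infimum) norms, and a Fourier-side characterization of them uniform in $p$ is not established anywhere and is not routine. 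The paper avoids this entirely by first invoking Lemma~\ref{lLpEReductHomog} (built on Lemma~$9.2'$ of~\cite{MP} plus angular cut-offs respecting the separation constant $d_0$) to reduce to homogeneous nonlocal boundary conditions, so that only the equivalence for the interior norms $\|\{f_j\}\|_{\mathcal E_a^l(\Theta)}$ and $\|u\|_{\mathcal E_a^{l+2m}(\Theta)}$ is ever needed. Without this reduction, your argument as written has a gap at the boundary terms.

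Second, your claim that after rescaling the weight ``matches the squared symbol'' is only half true: the upper bound in the equivalence~\eqref{eqLpIsomE8} does follow easily because $|(\eta,p)|\ge 1$ for $p\in S^{d-1}$ and the exponents $|\alpha|+|\beta|-l$ are nonpositive, but the lower bound does not, since the factors $\sum_{|\beta|=\nu}|\eta^\beta|^2/|(\eta,p)|^{2|\beta|}$ degenerate to $0$ as $\eta\to 0$ for $\nu\ge 1$ and hence are not uniformly bounded below. The paper handles this by splitting the $\eta$-integral into $|\eta|<1$ (where one retains only the $\beta=0$ term and uses $|(\eta,p)|^{2(|\alpha|-l)}\ge k_5$) and $|\eta|>1$ (where the ratio is bounded below by~\eqref{eq-new7.39}). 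You have correctly identified where the difficulty lies, but the statement that the verification ``is a direct parametric generalization of the unparametrized case'' undersells it: this case analysis, and the prior reduction of the boundary data, are precisely the content of the paper's proof.
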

\begin{proof}
1. Due to Lemma~\ref{lLpEReductHomog}, it suffices to prove the unique solvability of the
problem
\begin{equation}\label{eqLpIsomE1}
A_j(D_{y},D_{z},p)u_j(y,z)=f_j(y,z),\qquad (y,z)\in\Theta_j,
\end{equation}
\begin{equation}\label{eqLpIsomE2}
\sum\limits_{k,s}(B_{j\rho\mu ks}(D_{y},D_{z},p)u_k)(\mathcal G_{j\rho
ks}y,z)|_{\Gamma_{j\rho}}=0
\end{equation}
and to show that
$$
\|u\|_{\mathcal E_a^{l+2m}(\Theta)}\le k_1\| \{f_j\}\|_{\mathcal E_a^l(\Theta)},\qquad
p\in S^{d-1},
$$
where $k_1>0$ does not depend on $u$ and $p$.

2. Making the Fourier transform with respect to $z$, we see that
problem~\eqref{eqLpIsomE1}, \eqref{eqLpIsomE2} is equivalent to the following one:
\begin{equation}\label{eqLpIsomE3}
A_j(D_{y},\eta,p)\tilde u_j(y,\eta)=\tilde f_j(y,\eta),\qquad y\in\theta_j,\
\eta\in\mathbb R^{n-2},
\end{equation}
\begin{equation}\label{eqLpIsomE4}
\sum\limits_{k,s}(B_{j\rho\mu ks}(D_{y},\eta,p)\tilde u_k)(\mathcal G_{j\rho
ks}y,\eta)|_{\gamma_{j\rho}}=0,\qquad \eta\in\mathbb R^{n-2},
\end{equation}
where $\tilde u_j(y,\eta)$ is the Fourier transform of $u_j(y,z)$ with respect to $z$.

Set $\tilde u_j(y,\eta)=|(\eta,p)|^{-2m}U_j(|(\eta,p)|y,\eta)$ and $\tilde
f_j(y,\eta)=F_j(|(\eta,p)|y,\eta)$. Then problem~\eqref{eqLpIsomE3}, \eqref{eqLpIsomE4}
takes the form
\begin{equation}\label{eqLpIsomE5}
A_j(D_{Y},\omega)U_j(Y,\eta)=F_j(Y,\eta),\qquad y\in\theta_j,\ \eta\in\mathbb R^{n-2},
\end{equation}
\begin{equation}\label{eqLpIsomE6}
\sum\limits_{k,s}(B_{j\rho\mu ks}(D_{Y},\omega)U_k)(\mathcal G_{j\rho
ks}Y,\eta)|_{\gamma_{j\rho}}=0,\qquad \eta\in\mathbb R^{n-2},
\end{equation}
where $\omega=(\eta,p)/|(\eta,p)|\in S^{n+d-3}$ and $Y=|(\eta,p)|y$.

It follows from the conditions of the lemma and from Theorem~\ref{t2.1} that
problem~\eqref{eqLpIsomE5}, \eqref{eqLpIsomE6} has a unique solution $U\in \mathcal
E_a^{l+2m}(\theta)$ for any right-hand side $\{F_j\}\in \mathcal E_a^{l}(\theta)$ and
$$
\|U\|_{\mathcal E_a^{l+2m}(\theta)}\le k_2\| \{F_j\}\|_{\mathcal E_a^l(\theta)},\qquad
\omega\in S^{n+d-3},
$$
where $k_2>0$ does not depend on $u$ and $\omega$.

Thus, the lemma will be proved if we show that
\begin{equation}\label{eqLpIsomE8}
\|\{f_j\}\|_{\mathcal E_a^l(\Theta)}^2\approx \int\limits_{\mathbb
R^{n-2}}|(\eta,p)|^{-2(a-l+1)}\|\{F_j(\cdot,\eta)\}\|_{\mathcal E_a^{l}(\theta)}^2 d\eta,
\end{equation}
\begin{equation}\label{eqLpIsomE7}
\|u\|_{\mathcal E_a^{l+2m}(\Theta)}^2\approx \int\limits_{\mathbb
R^{n-2}}|(\eta,p)|^{-2(a-l+1)}\|U(\cdot,\eta)\|_{\mathcal E_a^{l+2m}(\theta)}^2 d\eta;
\end{equation}
here the symbol $\approx$ between two expressions means that the first expression can be
estimated from the below and from the above by the second expression with positive
constants independent of $p\in S^{d-1}$.

3. Let us prove relation~\eqref{eqLpIsomE8}. Using the Parseval equality, we have
$$
\begin{aligned}
\|f_j\|_{E_a^l(\Theta_j)}^2&=\sum\limits_{|\alpha|+|\beta|\le
l}\,\int\limits_{\theta_j}\int\limits_{\mathbb
R^{n-2}}|y|^{2a}(|y|^{2(|\alpha|+|\beta|-l)}+1)|D^\alpha_y
D^\beta_z f_j(y,z)|^2 dydz\\
&=\sum\limits_{|\alpha|+|\beta|\le l}\,\int\limits_{\theta_j}\int\limits_{\mathbb
R^{n-2}}|y|^{2a}(|y|^{2(|\alpha|+|\beta|-l)}+1)|\eta^\beta|^2 |D^\alpha_y \tilde
f_j(y,\eta)|^2 dyd\eta.
\end{aligned}
$$
Using Fubini's theorem and making the change of variables $Y=|(\eta,p)|y$ for each fixed
$\eta$, we obtain
\begin{multline}\label{eqLpIsomE9}
\|f_j\|_{E_a^l(\Theta_j)}^2 =\sum\limits_{|\alpha|+|\beta|\le l}\,\int\limits_{\mathbb
R^{n-2}}\int\limits_{\theta_j}|(\eta,p)|^{-2(a+1-|\alpha|)}|Y|^{2a}\\
\times\big(|(\eta,p)|^{-2(|\alpha|+|\beta|-l)}|Y|^{2(|\alpha|+|\beta|-l)}+1\big)
|\eta^\beta|^2  |D^\alpha_Y  F(Y,\eta)|^2 dYd\eta\\
=\sum\limits_{|\alpha|\le l}\sum\limits_{\nu=0}^{l-|\alpha|}\,\int\limits_{\mathbb
R^{n-2}}\int\limits_{\theta_j}\Phi_{\alpha\nu}(\eta,p,Y)
  |D^\alpha_Y  F(Y,\eta)|^2 dYd\eta,
\end{multline}
where
$$
\Phi_{\alpha\nu}(\eta,p,Y)=\sum\limits_{|\beta|=\nu}|(\eta,p)|^{-2(a-l+1)}|Y|^{2a}\dfrac{|\eta^\beta|^2}{|(\eta,p)|^{2|\beta|}}
\big(|Y|^{2(|\alpha|+|\beta|-l)}+|(\eta,p)|^{2(|\alpha|+|\beta|-l)}\big).
$$

Note that $p\in S^{d-1}$ and $|\alpha|+|\beta|-l\le 0$, which implies that
$$
|(\eta,p)|^{2(|\alpha|+|\beta|-l)}\le 1.
$$
Therefore,
\begin{equation}\label{eqLpIsomE10}
|Y|^{2(|\alpha|+|\beta|-l)}+|(\eta,p)|^{2(|\alpha|+|\beta|-l)}\le
|Y|^{2(|\alpha|+|\beta|-l)}+1.
\end{equation}

Furthermore, since $p\in S^{d-1}$, we have
\begin{equation}\label{eq-new7.33}
\sum\limits_{|\beta|=\nu}\dfrac{|\eta^\beta|^2}{|(\eta,p)|^{2|\beta|}}\le k_3\qquad
(\eta\in\mathbb R^{n-2},\ p\in S^{d-1},\ \nu=0,\dots,l),
\end{equation}
where $k_3>0$ does not depend on $\eta$ and $p$.

Relations~\eqref{eqLpIsomE9}--\eqref{eq-new7.33} imply that
\begin{multline}\label{eq-new7.34}
\|f_j\|_{E_a^l(\Theta_j)}^2\\
 \le k_3 \sum\limits_{|\alpha|\le
l}\sum\limits_{\nu=0}^{l-|\alpha|}\,\int\limits_{\mathbb
R^{n-2}}\int\limits_{\theta_j}|(\eta,p)|^{-2(a-l+1)}|Y|^{2a}
(|Y|^{2(|\alpha|+\nu-l)}+1)|D^\alpha_Y  F_j(Y,\eta)|^2 dYd\eta.
\end{multline}

Clearly,
\begin{equation}\label{eq-new7.35}
|Y|^{2(|\alpha|-l)}+1\le\sum\limits_{\nu=0}^{l-|\alpha|}|Y|^{2(|\alpha|+\nu-l)}\le k_4(
|Y|^{2(|\alpha|-l)}+1),\qquad Y\in\mathbb R^2,
\end{equation}
where $k_4>0$ does not depend on $Y$.

Inequalities \eqref{eq-new7.34} and \eqref{eq-new7.35} imply that
\begin{equation}\label{eq-new7.36}
\|f_j\|_{E_a^l(\Theta_j)}^2\le k_3k_4 \int\limits_{\mathbb
R^{n-2}}|(\eta,p)|^{-2(a-l+1)}\|F_j(\cdot,\eta)\|_{ E_a^{l}(\theta_j)}^2 d\eta.
\end{equation}

Now we estimate the norm $\|f_j\|_{E_a^l(\Theta_j)}^2$ from below. To do this, we write
it as follows:
\begin{multline}\label{eq-new7.37}
\|f_j\|_{E_a^l(\Theta_j)}^2=\sum\limits_{|\alpha|\le l}\sum\limits_{\nu=0}^{l-|\alpha|}
\left(\,\int\limits_{|\eta|<1}\int\limits_{\theta_j}\Phi_{\alpha\nu}(\eta,p,Y)
  |D^\alpha_Y  F(Y,\eta)|^2 dYd\eta\right.\\
  + \left.\int\limits_{|\eta|>1}\int\limits_{\theta_j}\Phi_{\alpha\nu}(\eta,p,Y)
  |D^\alpha_Y  F(Y,\eta)|^2 dYd\eta\right).
\end{multline}

For $|\eta|<1$, we have
 \begin{multline*}
\sum\limits_{\nu=0}^{l-|\alpha|}\sum\limits_{|\beta|=\nu}\dfrac{|\eta^\beta|^2}{|(\eta,p)|^{2|\beta|}}
\big(|Y|^{2(|\alpha|+|\beta|-l)}+|(\eta,p)|^{2(|\alpha|+|\beta|-l)}\big)\\
\ge |Y|^{2(|\alpha|-l)}+|(\eta,p)|^{2(|\alpha|-l)} \ge k_5(|Y|^{2(|\alpha|-l)}+1),
\end{multline*}
where $k_5>0$ does not depend on $Y$, $\eta$, and $p$. Therefore,
\begin{multline}\label{eq-new7.38}
\sum\limits_{|\alpha|\le
l}\sum\limits_{\nu=0}^{l-|\alpha|}\int\limits_{|\eta|<1}\int\limits_{\theta_j}\Phi_{\alpha\nu}(\eta,p,Y)
|D^\alpha_Y  F(Y,\eta)|^2 dYd\eta\\
\ge k_5 \sum\limits_{|\alpha|\le l}\,
\int\limits_{|\eta|<1}\int\limits_{\theta_j}|(\eta,p)|^{-2(a-l+1)}|Y|^{2a}(|Y|^{2(|\alpha|-l)}+1)
  |D^\alpha_Y  F(Y,\eta)|^2 dYd\eta.
\end{multline}

For $|\eta|>1$, we have
\begin{equation}\label{eq-new7.39}
\sum\limits_{|\beta|=\nu}\dfrac{|\eta^\beta|^2}{|(\eta,p)|^{2|\beta|}}\ge k_6\qquad (p\in
S^{d-1},\ \nu=0,\dots,l),
\end{equation}
where $k_6>0$ does not depend on $\eta$ and $p$. It follows from~\eqref{eq-new7.35} and
\eqref{eq-new7.39} that
\begin{multline}\label{eq-new7.40}
 \sum\limits_{|\alpha|\le
l}\sum\limits_{\nu=0}^{l-|\alpha|}\,\int\limits_{|\eta|>1}\int\limits_{\theta_j}\Phi_{\alpha\nu}(\eta,p,Y)
  |D^\alpha_Y  F(Y,\eta)|^2 dYd\eta\\
 \ge k_6\sum\limits_{|\alpha|\le
l}\sum\limits_{\nu=0}^{l-|\alpha|}\,\int\limits_{|\eta|>1}\int\limits_{\theta_j}|(\eta,p)|^{-2(a-l+1)}|Y|^{2(a+|\alpha|+\nu-l)}
|D^\alpha_Y  F(Y,\eta)|^2 dYd\eta\\
 \ge k_6\sum\limits_{|\alpha|\le
l}\,\int\limits_{|\eta|>1}\int\limits_{\theta_j}|(\eta,p)|^{-2(a-l+1)}|Y|^{2a}(|Y|^{2(|\alpha|-l)}+1)
|D^\alpha_Y  F(Y,\eta)|^2 dYd\eta.
\end{multline}
 Inequalities \eqref{eq-new7.37}, \eqref{eq-new7.38}, and
\eqref{eq-new7.40} imply that
\begin{equation}\label{eq-new7.41}
\|f_j\|_{E_a^l(\Theta_j)}^2\ge k_7 \int\limits_{\mathbb
R^{n-2}}|(\eta,p)|^{-2(a-l+1)}\|F_j(\cdot,\eta)\|_{ E_a^{l}(\theta_j)}^2 d\eta,
\end{equation}
where $k_7>0$ does not depend on $p$.

Relation~\eqref{eqLpIsomE8} follows from~\eqref{eq-new7.36} and~\eqref{eq-new7.41}.
Similarly, one can prove~\eqref{eqLpIsomE7}.
\end{proof}

Now we can prove Theorem~\ref{tLpIsomH}.
\begin{proof}[Proof of Theorem~$\ref{tLpIsomH}$]
It is easy to see that $u(x)$ is a solution of the problem
$$
A_j(D_{y},D_{z},p)u_j(y,z)=f_j(y,z),\qquad (y,z)\in\Theta_j,
$$
$$
B_{j\rho\mu}(p)u\equiv\sum\limits_{k,s}(B_{j\rho\mu ks}(D_{y},D_{z},p)u_k)(\mathcal
G_{j\rho ks}y,z)|_{\Gamma_{j\rho}}=f_{j\rho\mu}(y,z),\qquad (y,z)\in\Gamma_{j\rho},
$$
iff the function $v(x)=u(|p|^{-1}x)$ is a solution of the problem
$$
A_j(D_{y},D_{z},p|p|^{-1})v_j(y,z)=|p|^{-2m}f_j(|p|^{-1}y,|p|^{-1}z),\qquad
(y,z)\in\Theta_j,
$$
$$
\begin{aligned}
B_{j\rho\mu}(p|p|^{-1})v\equiv\sum\limits_{k,s}(B_{j\rho\mu
ks}(D_{y},D_{z},p|p|^{-1})v_k)(\mathcal G_{j\rho
ks}y,z)|_{\Gamma_{j\rho}}&\\=|p|^{-m_{j\rho\mu}}f_{j\rho\mu}(|p|^{-1}y,|p|^{-1}z),\quad
(y,z)\in\Gamma_{j\rho}&,
\end{aligned}
$$
where $p\in\mathbb R^d\setminus\{0\}$.

Further, we shall use the following inequalities:
\begin{equation}\label{eqLpIsomH1}
\|v_j\|^2_{E_a^{l+2m}(\Theta_j)}\ge |p|^{2a+n-2(l+2m)}\n u_j
\n^2_{V_a^{l+2m}(\Theta_j)},\qquad p\in\mathbb R^d\setminus\{0\},
\end{equation}
\begin{multline}\label{eqLpIsomH2}
\|A_j(D_{y},D_{z},p|p|^{-1})v_j\|^2_{E_a^{l}(\Theta_j)}\\
\le k_1|p|^{2a+n-2(l+2m)}\n A_j(D_{y},D_{z},p)u_j \n^2_{ V_a^{l}(\Theta_j)},\qquad
p\in\mathbb R^d\setminus\{0\},
\end{multline}
\begin{multline}\label{eqLpIsomH3}
\|B_{j\rho\mu}(p|p|^{-1})v\|^2_{E_a^{l+2m-m_{j\rho\mu}-1/2}(\Gamma_{j\rho})}\\
\le k_2|p|^{2a+n-2(l+2m)}\n B_{j\rho\mu}(p)u
\n_{V_a^{l+2m-m_{j\rho\mu}-1/2}(\Gamma_{j\rho})}^2,\qquad p\in\mathbb R^d\setminus \{0\},
\end{multline}
where $k_1,k_2>0$ do not depend on $u$ and $p$. To obtain inequality~\eqref{eqLpIsomH1},
we introduce the new variables $x'=|p|^{-1}x$ and $y'=|p|^{-1}y$, where $x=(y,z)$,
$x'=(y',z')$, $y,y'\in\mathbb R^2$, and $z,z'\in\mathbb R^{n-2}$. Then we have
\begin{multline*}
\|v_j\|_{E_a^{l+2m}(\Theta_j)}^2 =\sum\limits_{|\alpha|\le
l+2m}\,\int\limits_{\Theta_j}|y|^{2a}(|y|^{2(|\alpha|-l-2m)}+1)|D_x^\alpha
u_j(|p|^{-1}x)|^2dx\\
 =|p|^{2a+n-2(l+2m)}\sum\limits_{|\alpha|\le
l+2m}\,\int\limits_{\Theta_j}|y'|^{2a}(|y'|^{2(|\alpha|-l-2m)}+|p|^{2(l+2m-|\alpha|)})|D^\alpha_{x'}
u_j(x')|^2dx'\\
 \ge |p|^{2a+n-2(l+2m)}\n u_j\n_{V_a^{l+2m}(\Theta_j)}^2.
\end{multline*}

Using Lemma~\ref{l7.1-new}, similarly to~\eqref{eqLpIsomH1}, we
derive~\eqref{eqLpIsomH2}.

To obtain inequality~\eqref{eqLpIsomH3}, one can use the equivalent norms in
$E_a^{k-1/2}(\Gamma_{j\rho})$ and $H_a^{k-1/2}(\Gamma_j\rho)$ given by
\begin{multline*}
\|u\|'_{E_a^{k-1/2}(\Gamma_{j\rho})}=\left(\sum\limits_{|\alpha|=k-1}\,\int\limits_{\Gamma_{j\rho}}\int\limits_{\Gamma_{j\rho}}\big||y_1|^a
D^\alpha u(x_1)-|y_2|^a D^\alpha
u(x_2)\big|^2\dfrac{dx_1dx_2}{|x_1-x_2|^n}\right. \\
\left. +\sum\limits_{|\alpha|\le
k-1}\,\int\limits_{\Gamma_{j\rho}}|y|^{2a}(|y|^{2(|\alpha|-k+1/2)}+1)|D^\alpha
u(x)|^2dx\right)^{1/2},
\end{multline*}
\begin{multline*}
\|u\|'_{H_a^{k-1/2}(\Gamma_{j\rho})}=\left(\sum\limits_{|\alpha|=k-1}\,\int\limits_{\Gamma_{j\rho}}
\int\limits_{\Gamma_{j\rho}}\big||y_1|^a D^\alpha u(x_1)-|y_2|^a D^\alpha
u(x_2)\big|^2\dfrac{dx_1dx_2}{|x_1-x_2|^n}\right. \\
\left. +\sum\limits_{|\alpha|\le
k-1}\,\int\limits_{\Gamma_{j\rho}}|y|^{2(a+|\alpha|-k+1/2)} |D^\alpha
u(x)|^2dx\right)^{1/2}
\end{multline*}
(see Lemmas~9.1 and 1.3 in~\cite{MP}) and Lemma~\ref{lInterpWeightedTraceGamma}.

Now the assertion of the theorem follows from Lemma~\ref{lLpIsomE} and
inequalities~\eqref{eqLpIsomH1}, \eqref{eqLpIsomH2}, and~\eqref{eqLpIsomH3}.
\end{proof}

Further, we prove an analog of Corollary~\ref{cort2.2'}. Introduce the linear bounded
operator $\mathcal L_g''(p): \mathcal V_a^{l+2m}(\Theta)\to\mathcal V_a^l(\Theta,\Gamma)$
by the formula
$$
\mathcal L_g''(p) v=\mathcal L_g(p) v+\eta(\mathcal L_g'(p)-\mathcal L_g(p))v,
$$
where $\eta$ is the same function as in Sec.~\ref{subsec2.3}.

\begin{corollary}\label{cor-theorem71a}
Let the conditions of Theorem~$\ref{tLpIsomH}$ hold. Then the operator $\mathcal
L_g''(p): \mathcal V_a^{l+2m}(\Theta)\to\mathcal V_a^l(\Theta,\Gamma)$ is an isomorphism
for all sufficiently small $\delta>0$ and $p\in\mathbb R^d\setminus\{0\}$ and
$$
\n u\n_{\mathcal V_a^{l+2m}(\Theta)}\le c\n \mathcal L_g''(p)u\n_{\mathcal
V_a^l(\Theta,\Gamma)},\qquad p\in\mathbb R^d\setminus\{0\},
$$
where   $c>0$ does not depend on $u$ and $p$.
\end{corollary}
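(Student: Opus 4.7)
The plan is to mimic the proof of Corollary~\ref{cort2.2'} but in the parameter-dependent norms $\n\cdot\n$. By Theorem~\ref{tLpIsomH} the model operator $\mathcal L_g(p)$ is an isomorphism with $\n \mathcal L_g(p)^{-1}\n$ bounded uniformly in $p\in\mathbb R^d\setminus\{0\}$. Writing
$$\mathcal L_g''(p)=\mathcal L_g(p)\bigl(I+\mathcal L_g(p)^{-1}\eta(\mathcal L_g'(p)-\mathcal L_g(p))\bigr),$$
a standard Neumann series argument will deliver invertibility of $\mathcal L_g''(p)$ together with the uniform estimate stated in the corollary as soon as I establish
\begin{equation}\label{eqPlanBound}
\n \eta(\mathcal L_g'(p)-\mathcal L_g(p))\n_{\mathcal V_a^{l+2m}(\Theta)\to \mathcal V_a^l(\Theta,\Gamma)}\le C\delta,
\end{equation}
with $C$ independent of $\delta>0$ and $p\in\mathbb R^d\setminus\{0\}$.

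The bulk of the work will be proving~\eqref{eqPlanBound}. A typical term of the difference $B_{j\rho\mu ks}^0(x,D,p)-B_{j\rho\mu ks}(D,p)$ has the form $b(x)p^\beta D^\alpha$ with $|\alpha|+|\beta|=m_{j\rho\mu}$ and $b(0)=0$. As in the proof of Corollary~\ref{cort2.2'}, the pointwise estimate $|r^{|\gamma|}D^\gamma(b\eta_1)|\le k\delta$ for $|\gamma|\le l+2m-m_{j\rho\mu}$ (where $\eta_1(x)=\eta(\mathcal G_{j\rho ks}^{-1}y,z)$) continues to hold uniformly in $\delta$, so by Leibniz and the definition of the weighted norm
$$\|b\eta_1 p^\beta D^\alpha u\|_{H_a^{l+2m-m_{j\rho\mu}}(\Theta_k)}\le k'\delta\,|p|^{|\beta|}\|u\|_{H_a^{l+2m-|\beta|}(\Theta_k)}.$$
The new feature relative to Corollary~\ref{cort2.2'} is the factor $|p|^{|\beta|}$, which I would absorb into $\n u\n_{V_a^{l+2m}(\Theta_k)}$ via the interpolation inequality of Lemma~\ref{l7.1-new}, applied with $k=l+2m$ and $s=l+2m-|\beta|$ (permissible since $|\beta|\le m_{j\rho\mu}\le 2m-1<l+2m$). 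The second contribution $|p|^{l+2m-m_{j\rho\mu}}\|b\eta_1 p^\beta D^\alpha u\|_{H_a^0(\Theta_k)}$ to the $\n\cdot\n_{V_a^{l+2m-m_{j\rho\mu}}(\Theta_k)}$ norm will be handled by the same interpolation with $s=|\alpha|$. The interior terms coming from $A_j^0(x,D,p)-A_j(D,p)$ are treated identically, and the passage to the trace norms on $\Gamma_{j\rho}$ is supplied by inequality~\eqref{eqBoundedTrace'}.

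The main obstacle is the uniformity in $p$ of the perturbation bound~\eqref{eqPlanBound}: without it, the Neumann series step would only give a pointwise-in-$p$ conclusion, not the uniform estimate the corollary asserts. This is precisely what the weighted interpolation inequality of Lemma~\ref{l7.1-new} was designed for, since it lets one trade each power of $|p|$ for a corresponding loss of smoothness at a cost controlled by the single parameter-dependent norm $\n u\n_{V_a^{l+2m}}$. Once~\eqref{eqPlanBound} is secured, the remainder of the argument is a verbatim repetition of the Neumann series step from Corollary~\ref{cort2.2'}, yielding both invertibility of $\mathcal L_g''(p)$ for all sufficiently small $\delta$ and the required $p$-uniform a priori estimate.
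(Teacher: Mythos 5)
Your proposal is correct and follows essentially the same route as the paper: a Neumann-series perturbation of the isomorphism $\mathcal L_g(p)$ from Theorem~\ref{tLpIsomH}, with the trace part handled by~\eqref{eqBoundedTrace'}. In fact you supply more detail than the paper does on the key point — the paper simply says the bound $\n\mathcal M(p)f\n\le k\delta\n\mathcal L_g^{-1}(p)f\n$ is obtained ``similarly to Corollary~\ref{cort2.2'}'', whereas you correctly identify that the new $|p|^{|\beta|}$ factors must be absorbed via the interpolation inequality of Lemma~\ref{l7.1-new}.
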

\begin{proof}
By Theorem~\ref{tLpIsomH}, there is a bounded operator $\mathcal L_g^{-1}(p)$ and
estimate~\eqref{eqLpIsomH0} holds. We have
$$
\mathcal L_g''(p)\mathcal L_g^{-1}(p)=\mathcal I+\mathcal M(p),
$$
where $\mathcal I$ denotes the identity operator on $\mathcal V_a^l(\Theta,\Gamma)$ and
$$
\mathcal M(p)=\eta(\mathcal L_g'(p)-\mathcal L_g(p))\mathcal L_g^{-1}(p).
$$

It follows from~\eqref{eqBoundedTrace'} that
$$
\n u|_{\mathcal G_{j\rho k s}(\Gamma_i)} \n_{V_a^{l+2m-m_{j\rho\mu}-1/2}(\mathcal
G_{j\rho ks}(\Gamma_i))}\le k_1 \n u \n_{V_a^{l+2m-m_{j\rho\mu}}(\Theta_k)},
$$
where $k_1,k_2,{\dots}>0$ do not depend on $u$ and $p$. Therefore, similarly to the proof
of Corollary~\ref{cort2.2'}, we obtain
$$
\n \mathcal M(p)f\n_{\mathcal V_a^l(\Theta,\Gamma)}\le k_2\delta\n \mathcal
L_g^{-1}(p)f\n_{\mathcal V_a^{l+2m}(\Theta)}.
$$
Combining this inequality with~\eqref{eqLpIsomH0} yields
$$
\n \mathcal M(p)f\n_{\mathcal V_a^l(\Theta,\Gamma)}\le k_3\delta\n f\n_{\mathcal
V_a^l(\Theta,\Gamma)}.
$$
If $\delta>0$ is so small that $k_3\delta\le 1/2$, then there exists the inverse operator
$(\mathcal I+\mathcal M(p))^{-1}$ bounded in the norms $\n\cdot\n_{\mathcal
V_a^l(\Theta,\Gamma)}$ uniformly with respect to $p\in\mathbb R^d\setminus\{0\}$.

Clearly, the operator $\mathcal L_g^{-1}(p)(\mathcal I+\mathcal M(p))^{-1}$ is the right
inverse for $\mathcal L_g''(p)$ and
$$
\n \mathcal L_g^{-1}(p)(\mathcal I+\mathcal M(p))^{-1}f\n_{\mathcal
V_a^{l+2m}(\Theta)}\le k_4\n f\n_{\mathcal V_a^l(\Theta,\Gamma)},\qquad p\in\mathbb
R^d\setminus\{0\}.
$$
Similarly, one can prove the existence of a left inverse operator for $\mathcal
L_g''(p)$.
\end{proof}

\subsection{}
Now we fix an arbitrary point $g\in K_2$. Similarly to Sec.~\ref{subsec1.4}, we arrive at
the following model operator:
\begin{multline*}
\mathcal L_g(p): V_a^{l+2m}(\mathbb R_+^n)\to\mathcal V_a^l(\mathbb
R_+^n,\Gamma)\\
=V_a^l(\mathbb R_+^n)\times V_a^{l+2m-m_{i\mu}-1/2}(\mathbb R_-^{n-1})\times
V_a^{l+2m-m_{i\mu}-1/2}(\mathbb R_+^{n-1})
\end{multline*}
given by
$$
 \mathcal L_g(p)u=\big(A(D_{y},D_{z},p)u,\
B_{i\mu0}(D_{y},D_{z},p)u|_{\varphi=-\pi/2},\
B_{i\mu0}(D_{y},D_{z},p)u|_{\varphi=\pi/2}\big)
$$
(cf.~\eqref{1.12}). We assume that the space $V_a^{l+2m}(\mathbb R_+^n)$ is equipped with
the norm~\eqref{eqTripleNorm} and $\mathcal V_a^l(\mathbb R_+^n,\Gamma)$ with the norm
$$
\n f\n_{\mathcal V_a^l(\mathbb R_+^n,\Gamma)}=\big(\n f_0\n_{V_a^l(\mathbb R_+^n)}^2+\n
f_{-}\n_{V_a^{l+2m-m_{i\mu}-1/2}(\mathbb R_-^{n-1})}^2+ \n
f_{+}\n_{V_a^{l+2m-m_{i\mu}-1/2}(\mathbb R_+^{n-1})}^2\big)^{1/2},
$$
where $f=(f_0,f_-,f_+)$ and the norm $\n f_{\pm}\n_{V_a^{l+2m-m_{i\mu}-1/2}(\mathbb
R_\pm^{n-1})}$ is defined by~\eqref{eqTripleNormTrace}.

Analogously to Sec.~\ref{subsec2.5}, we consider the linear bounded operator $$\mathcal
L_g(\omega): E_a^{l+2m}(\mathbb R_+^2)\to\mathcal E_a^l(\mathbb R_+^2,\gamma)$$ given by
$$
\mathcal L_g(\omega) V=\big(A(D_{y},\omega)V,\ B_{i\mu0}(D_y,\omega)V|_{\mathbb R_-},\
B_{i\mu0}(D_y,\omega)V|_{\mathbb R_+}\big),
$$
where $\omega=(\eta,p)/|(\eta,p)|\in S^{n+d-3}$ (cf.~\eqref{2.19}
and~\eqref{eqLgOmegaP}).

Finally, we consider the analytic operator-valued function
$$\hat{\mathcal L}_g(\lambda): W^{l+2m}(-\pi/2,\pi/2)\to\mathcal
W^l[-\pi/2,\pi/2]$$ given by~\eqref{2.22}.

The following theorem is an analog of Theorem~\ref{t2.R^n_+Isomorphism} (cf. Theorem~9.1
and Corollary~9.1 in~\cite{MP}).
\begin{theorem}\label{tR^n_+IsomorphismP}
Let Conditions~$\ref{cond7.1}$ and $\ref{cond7.2}$ hold. Assume that the line
$\Im\lambda=a+1-l-2m$ contains no eigenvalues of $\hat{\mathcal L}_g(\lambda)$ and
$\dim\mathcal N(\mathcal L_g(\omega))=\codim\mathcal R(\mathcal L_g(\omega))=0$ for any
$\omega\in S^{n+d-3}$. Then the operator $\mathcal L_g(p): \mathcal V_a^{l+2m}(\mathbb
R^n_+)\to\mathcal V_a^l(\mathbb R^n_+,\Gamma)$ is an isomorphism for $p\in\mathbb
R^d\setminus\{0\}$ and
$$
\n u\n_{\mathcal V_a^{l+2m}(\mathbb R^n_+)}\le c\n \mathcal L_g(p)u\n_{\mathcal
V_a^l(\mathbb R^n_+,\Gamma)},\qquad p\in\mathbb R^d\setminus\{0\},
$$
where   $c>0$ does not depend on $u$ and $p$.
\end{theorem}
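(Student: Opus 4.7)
The plan is to follow the same scheme that was used in the proof of Theorem~\ref{tLpIsomH}, with the dihedral angle $\Theta$ replaced by the half-space $\mathbb R_+^n$ and with the operator-valued function $\hat{\mathcal L}_g(\lambda)$ and the operator $\mathcal L_g(\omega)$ now referring to the half-plane model introduced in Sec.~\ref{subsec2.5}. The absence of nonlocal terms at the point $g\in K_2$ actually simplifies matters compared to the dihedral case, but the parameter-dependent estimates are obtained in an identical way.

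First I would perform the parameter scaling $x\mapsto|p|^{-1}x$. If $v(x)=u(|p|^{-1}x)$, then $u$ solves $\mathcal L_g(p)u=f$ iff $v$ solves the rescaled problem with the direction parameter $p/|p|\in S^{d-1}$ and right-hand side obtained by a simple rescaling of $f$. Exactly as in \eqref{eqLpIsomH1}--\eqref{eqLpIsomH3}, changing variables in the integrals defining $\n\cdot\n_{V_a^k}$ and using Lemmas~\ref{l7.1-new} and~\ref{lInterpWeightedTraceGamma} (in the form adapted to the half-space and its half-hyperplane boundary) yields the two-sided bounds
\begin{equation*}
\|v\|^2_{E_a^{l+2m}(\mathbb R^n_+)}\ge |p|^{2a+n-2(l+2m)}\n u\n^2_{V_a^{l+2m}(\mathbb R^n_+)},
\end{equation*}
together with the analogous upper bounds for $\n A(D_y,D_z,p)u\n_{V_a^l}$ and $\n B_{i\mu0}(D_y,D_z,p)u|_{\mathbb R_\pm^{n-1}}\n_{V_a^{l+2m-m_{i\mu}-1/2}}$. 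These reduce the theorem to the uniform invertibility of $\mathcal L_g(p)$ from $\mathcal E_a^{l+2m}(\mathbb R_+^n)$ to $\mathcal E_a^l(\mathbb R_+^n,\Gamma)$ for $p\in S^{d-1}$.

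Next I would handle the boundary data by a lifting lemma in the half-space analogous to Lemma~\ref{lLpEReductHomog} (this is the classical fact that, under the Lopatinskii condition with a parameter, one can choose $u\in\mathcal E_a^{l+2m}(\mathbb R_+^n)$ realizing prescribed $\mathcal B$-traces with the norm control uniform in $p\in S^{d-1}$; see Lemma~$9.2'$ of \cite{MP}). This reduces the problem to $f=(f_0,0,0)$. Taking the Fourier transform $F_{z\to\eta}$ and then making the rescaling $Y=|(\eta,p)|y$, $U(Y,\eta)=|(\eta,p)|^{2m}\tilde u(y,\eta)$, $F_0(Y,\eta)=\tilde f_0(y,\eta)$, the problem becomes
\begin{equation*}
A(D_Y,\omega)U=F_0\ \text{in}\ \mathbb R^2_+,\qquad B_{i\mu0}(D_Y,\omega)U|_{\mathbb R_\pm}=0,\qquad \omega=(\eta,p)/|(\eta,p)|\in S^{n+d-3}.
\end{equation*}
By the hypotheses of the theorem (no eigenvalues of $\hat{\mathcal L}_g(\lambda)$ on $\Im\lambda=a+1-l-2m$ and trivial kernel/cokernel of $\mathcal L_g(\omega)$ for all $\omega\in S^{n+d-3}$), the results of Sec.~\ref{subsec2.5} (i.e., the half-plane analog of Theorem~\ref{t2.1}) give a unique solution $U(\cdot,\eta)\in E_a^{l+2m}(\mathbb R_+^2)$ with
\begin{equation*}
\|U(\cdot,\eta)\|_{E_a^{l+2m}(\mathbb R_+^2)}\le c\|F_0(\cdot,\eta)\|_{E_a^l(\mathbb R_+^2)},
\end{equation*}
where $c$ does not depend on $\omega$.

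Finally, it remains to integrate this pointwise (in $\eta$) bound back to a norm estimate on $u$. The key is to establish norm equivalences of the form
\begin{equation*}
\|f_0\|^2_{E_a^l(\mathbb R_+^n)}\approx \int_{\mathbb R^{n-2}}|(\eta,p)|^{-2(a-l+1)}\|F_0(\cdot,\eta)\|^2_{E_a^l(\mathbb R_+^2)}\,d\eta,
\end{equation*}
and the same with $(l,f_0)$ replaced by $(l+2m,u)$, uniformly for $p\in S^{d-1}$. These are derived exactly as in the proof of Lemma~\ref{lLpIsomE} by applying Parseval, Fubini, changing variables $Y=|(\eta,p)|y$, and separating the regions $|\eta|<1$ and $|\eta|>1$ to obtain upper and lower bounds on the weight factors $|\eta^\beta|^2/|(\eta,p)|^{2|\beta|}$ and $|Y|^{2(|\alpha|-l)}+|(\eta,p)|^{2(|\alpha|-l)}$ that are uniform in $p\in S^{d-1}$. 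The main obstacle, and where all of the work lies, is in pushing through these two-sided bounds in the half-space (as opposed to dihedral angle) setting; this is routine but tedious, since one must verify the counterparts of both \eqref{eq-new7.37}--\eqref{eq-new7.40} and the trace-space version using the equivalent norms on $E_a^{k-1/2}$ from Sec.~9 of \cite{MP}. Combining the norm equivalence with the pointwise (in $\eta$) estimate on $U$ and undoing the parameter scaling completes the proof.
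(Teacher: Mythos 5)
Your proposal is correct and is precisely the argument the paper intends: its entire proof of this theorem is the single remark that it is ``similar to the proof of Theorem~\ref{tLpIsomH},'' and you have carried out exactly that adaptation — parameter rescaling to $p\in S^{d-1}$, reduction of the boundary data via the half-space analog of Lemma~\ref{lLpEReductHomog}, Fourier transform in $z$ with the rescaling $Y=|(\eta,p)|y$ to invoke the half-plane operator $\mathcal L_g(\omega)$ from Sec.~\ref{subsec2.5}, and the uniform-in-$p$ norm equivalences as in \eqref{eqLpIsomE8}, \eqref{eqLpIsomE7}. You also rightly note that the absence of nonlocal terms at points of $K_2$ only simplifies the dihedral-angle argument.
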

The proof  is similar to the proof of Theorem~\ref{tLpIsomH}.

\section{Solvability of Nonlocal Elliptic Problems with a Parameter}\label{sec8}

\subsection{}
In this section, we prove the unique solvability of nonlocal elliptic problems  with a
parameter in bounded domains.
\begin{lemma}\label{lI+T+Mp}
Let $H$ be a Hilbert space and $I$ the identity operator on $H$. Let $M_\varepsilon(p)$
and $S_\varepsilon(p)$ {\rm (}$\varepsilon>0$, $p\in\mathbb R^d$, and $|p|$ is
sufficiently large{\rm )} be families of bounded operators on $H$, such that
\begin{equation}\label{eqI+T+Mp}
\|M_\varepsilon(p)\|\le c_1\varepsilon,\qquad \|S_\varepsilon(p)\|\le c_2,\qquad
\|S^2_\varepsilon(p)\|\le c_3|p|^{-1},
\end{equation}
where $c_1,c_2,c_3>0$ do not depend on $\varepsilon$ and $p$. Then the operators
$$
L_\varepsilon(p)=I+M_\varepsilon(p)+S_\varepsilon(p)
$$
have bounded inverse operators $L_\varepsilon^{-1}(p)$ and the estimate
$$
\|L_\varepsilon^{-1}(p)\|\le c_4
$$
holds for sufficiently small $\varepsilon>0$ and sufficiently large $|p|$, where $c_4>0$
does not depend on $\varepsilon$ and $p$.
\end{lemma}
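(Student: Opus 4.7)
The plan is to adapt the two-sided regularizer argument of Lemma~\ref{lI+T+M}, but to exploit the quantitative bound $\|S^2_\varepsilon(p)\|\le c_3|p|^{-1}$ instead of the mere compactness of $S^2$. Since compactness is replaced by an explicit norm bound that tends to zero as $|p|\to\infty$, the Fredholm conclusion of Lemma~\ref{lI+T+M} will be upgraded to actual invertibility of $L_\varepsilon(p)$ via a Neumann series, with a uniform bound on the inverse.

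First I would compute, exactly as in Lemma~\ref{lI+T+M},
$$
L_\varepsilon(p)(I-M_\varepsilon(p)-S_\varepsilon(p))
= I - M_\varepsilon^2(p) - M_\varepsilon(p)S_\varepsilon(p) - S_\varepsilon(p)M_\varepsilon(p) - S^2_\varepsilon(p).
$$
Applying the three bounds in~\eqref{eqI+T+Mp} term by term, the norm of the perturbation is at most
$$
c_1^2\varepsilon^2 + 2c_1c_2\varepsilon + c_3|p|^{-1}.
$$
I would then fix $\varepsilon_0>0$ so small that $c_1^2\varepsilon^2+2c_1c_2\varepsilon<1/4$ for $\varepsilon\le\varepsilon_0$, and afterwards fix $p_0$ so large that $c_3|p|^{-1}<1/4$ for $|p|\ge p_0$. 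For such $\varepsilon$ and $p$ the perturbation has norm $\le 1/2$, so the operator $I - M_\varepsilon^2 - M_\varepsilon S_\varepsilon - S_\varepsilon M_\varepsilon - S^2_\varepsilon$ is invertible by the geometric series, with inverse of norm $\le 2$.

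Hence $R_\varepsilon(p):=(I-M_\varepsilon(p)-S_\varepsilon(p))\bigl(I-M_\varepsilon^2-M_\varepsilon S_\varepsilon-S_\varepsilon M_\varepsilon-S^2_\varepsilon\bigr)^{-1}$ is a bounded right inverse of $L_\varepsilon(p)$ with $\|R_\varepsilon(p)\|\le 2(1+c_1\varepsilon_0+c_2)$. The same computation with the factors swapped produces, for the same range of $\varepsilon$ and $p$, a bounded left inverse of $L_\varepsilon(p)$ with the same uniform bound. Since left and right inverses must coincide, $L_\varepsilon^{-1}(p)$ exists and satisfies $\|L_\varepsilon^{-1}(p)\|\le c_4$ with $c_4$ independent of the parameters.

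The only subtlety, and the one thing worth checking carefully, is that the two smallness conditions on $\varepsilon$ and the largeness condition on $|p|$ can be decoupled so that the final constant $c_4$ depends on neither. This works here precisely because the cross-terms $M_\varepsilon S_\varepsilon$ and $S_\varepsilon M_\varepsilon$ are controlled by $c_1c_2\varepsilon$, i.e.\ by $\varepsilon$ alone (using $\|S_\varepsilon(p)\|\le c_2$ uniformly in $p$), so the $\varepsilon$-smallness kills all mixed terms and the $|p|$-largeness is needed only to absorb the single term $S^2_\varepsilon(p)$. Thus no genuine analytic obstacle arises beyond this bookkeeping.
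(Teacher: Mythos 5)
Your proof is correct and follows essentially the same route as the paper: the identity $L_\varepsilon(p)(I-M_\varepsilon(p)-S_\varepsilon(p))=I-M_\varepsilon^2-M_\varepsilon S_\varepsilon-S_\varepsilon M_\varepsilon-S_\varepsilon^2$, smallness of $\varepsilon$ to absorb all terms involving $M_\varepsilon$, largeness of $|p|$ to absorb $S_\varepsilon^2$, a Neumann series, and a matching left inverse. The only difference from the paper is immaterial bookkeeping of the constants ($1/4+1/4$ versus the paper's $1/6$ and $1/12$ splits).
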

\begin{proof}
To prove the lemma, we will construct a right and a left inverse operators for
$L_\varepsilon(p)$. We have
$$
L_\varepsilon(p)\big(I-(M_\varepsilon(p)+S_\varepsilon(p))\big)=I-M_\varepsilon^2(p)-M_\varepsilon(p)
S_\varepsilon(p)-S_\varepsilon(p) M_\varepsilon(p)-S_\varepsilon^2(p).
$$
It follows from~\eqref{eqI+T+Mp} that
$$
\|S_\varepsilon^2(p)\|\le 1/6
$$
for sufficiently large $|p|$ and
$$
\|M_\varepsilon(p)\|\le \min\big(1\big/\sqrt{6},\ 1/(12 c_2)\big)
$$
for sufficiently small $\varepsilon$. Therefore,
$$
\|M_\varepsilon^2(p)+M_\varepsilon(p) S_\varepsilon(p)+S_\varepsilon(p)
M_\varepsilon(p)+S_\varepsilon^2(p)\|\le 1/2.
$$

Thus, the operators $\big(I-M_\varepsilon^2(p)-M_\varepsilon(p)
S_\varepsilon(p)-S_\varepsilon(p) M_\varepsilon(p)-S_\varepsilon^2(p)\big)^{-1}$ exist
and are uniformly bounded with respect to $\varepsilon$ and $p$. Combining this fact with
the uniform boundedness of the operators $I-(M_\varepsilon(p)+S_\varepsilon(p))$, we see
that the operators
$$
L_\varepsilon^{-1}(p)=\big(I-(M_\varepsilon(p)+S_\varepsilon(p))\big)\big(I-M_\varepsilon^2(p)-M_\varepsilon(p)
S_\varepsilon(p)-S_\varepsilon(p) M_\varepsilon(p)-S_\varepsilon^2(p)\big)^{-1}
$$
are the right inverse for the operators $L_\varepsilon(p)$ and
$$
\|L_\varepsilon^{-1}(p)\|\le c_4.
$$
Similarly, one can prove that there exist uniformly bounded left inverse operators for
the operators $L_\varepsilon(p)$.
\end{proof}

\begin{lemma}\label{l-t8.1}
Let Conditions~$\ref{cond7.1}$, $\ref{cond7.2}$, $\ref{cond1.3}$, and $\ref{cond1.4}$
hold. Assume that the line $\Im\lambda=a+1-l-2m$ contains no eigenvalues of
$\hat{\mathcal L}_g(\lambda)$ for any $g\in K$ and $\dim\mathcal N(\mathcal
L_g(\omega))=\codim\mathcal R(\mathcal L_g(\omega))=0$ for any $g\in K$ and $\omega\in
S^{n+d-3}$. Then there is a number $p_0>0$ such that the operator $\mathbf L^1(p)$,
$|p|\ge p_0$, has a bounded inverse and
\begin{equation}\label{8.1}
c_1\n\mathbf L^1(p) u\n_{\mathcal H_a^l(Q,\Gamma)}\le \n u \n_{H_a^{l+2m}(Q)}\le
c_2\n\mathbf L^1(p) u\n_{\mathcal H_a^l(Q,\Gamma)},\qquad |p|\ge p_0,
\end{equation}
where $c_1,c_2>0$ do not depend on $u$ and $p$.
\end{lemma}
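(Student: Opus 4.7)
The plan is to mimic the partition-of-unity construction of the right regularizer in Lemma~$\ref{l5.1}$ and Theorem~$\ref{t5.2}$, but to replace every appeal to the Fredholm perturbation results from~\cite{Kr} by the uniform invertibility statements of Theorem~$\ref{tLpIsomH}$, Corollary~$\ref{cor-theorem71a}$, and Theorem~$\ref{tR^n_+IsomorphismP}$, and to replace every compact-embedding argument by the interpolation inequalities of Lemmas~$\ref{l7.1-new}$ and~$\ref{lInterpWeightedTraceGamma}$ (which turn lower-order terms into operators of norm $O(|p|^{-1})$ in the $\n\cdot\n$-norms). The key observation is that, for the conjugation points, the model operators $\mathcal L_{g^t}''(p)$ are isomorphisms uniformly in $p\in\mathbb R^d\setminus\{0\}$ and in $\delta>0$ small, so the local inverses can be glued together without loss of control in $p$.

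First I would fix $\varepsilon_1'$ and $\delta(g^t)$ as in Sec.~$\ref{sec5}$, choose the partition of unity $\{\xi_j^t,\hat\xi^t\}$ near $\mathcal K_1$ exactly as in the proof of Lemma~$\ref{l5.1}$, and define
$$
R_{\mathcal K_1}(p)f=\sum_t(U^t)^{-1}\Big(\hat\xi^t(\mathcal L_{g^t}''(p))^{-1}F^t\Big(\sum_q\xi_q^tf\Big)\Big),
$$
so that by Corollary~$\ref{cor-theorem71a}$ the operator $R_{\mathcal K_1}(p):\mathcal H_a^l(Q,\Gamma)\to H_a^{l+2m}(Q)$ is bounded with $\n R_{\mathcal K_1}(p)f\n_{H_a^{l+2m}(Q)}\le c\n f\n_{\mathcal H_a^l(Q,\Gamma)}$ uniformly in $p$. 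Repeating the Leibniz-rule computation~\eqref{eqVstavka5.14} shows that
$$
\mathbf L^1(p) R_{\mathcal K_1}(p)f=\xi_0 f+T_{\mathcal K_1}(p)f,
$$
where $T_{\mathcal K_1}(p)=T_1(p)+T_2(p)+T_3(p)$ is decomposed as in the proof of Lemma~$\ref{l5.1}$. The bounded pieces $T_1(p)$ and $T_3(p)$ arise from commutators $[\mathcal L_{g^t}',\hat\xi^t]$ and support-shift factors $\hat\xi^t(\mathcal G_{j\rho ks}y',z')-\hat\xi^t(y',z')$ vanishing near $\mathcal P$, so by Lemma~$\ref{l3.10}$ plus Lemmas~$\ref{l7.1-new}$ and~$\ref{lInterpWeightedTraceGamma}$ their squares satisfy $\n T_i^2(p)\n\le c|p|^{-1}$; the remaining piece $T_2(p)$ involves a lower-order operator and by the same interpolation inequalities has norm $O(|p|^{-1})$ in the $\n\cdot\n$-topology, i.e., it is already small for large $|p|$.

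Next I would patch in the pieces away from $\mathcal K_1$ by choosing the cut-offs $\zeta,\zeta^\tau$ as in~\eqref{eqZetaZetaTau} and using the right inverses with parameter for elliptic problems near a smooth part of the boundary and in the interior (these are standard, see~\cite{AV,Volevich}; the relevant norms are~\eqref{eqBoundedTrace}) and, near $K_2$, the uniform invertibility given by Theorem~$\ref{tR^n_+IsomorphismP}$. Setting
$$
Rf=R_{\mathcal K_1}(p)f+R_{\mathcal K_1}(p)(\eta f)+\sum_\tau R_{\tau0}(p)(\zeta^\tau f)
$$
as in~\eqref{eqRf} and repeating the manipulation~\eqref{eqVstavka5.22} yields
$$
\mathbf L^1(p) R f=f+T_{\mathcal K_1}(p)f+M_\varepsilon(p)f+T(p)f,
$$
where $\n M_\varepsilon(p)f\n\le c\varepsilon\n f\n$ (via Lemmas~$\ref{lzeta0v}$, $\ref{lZetaDeltaU}$ and Remark~$\ref{rzeta0v}$, which hold verbatim in the parameter norms since the weight factors are unchanged) and $T(p)$ is supported away from $\mathcal K_1\cup K_2$ and comes from a lower-order perturbation, hence $\n T(p)\n\le c|p|^{-1}$ by the interpolation lemmas, plus a contribution from $B^2(p)$ and $A^1(p)+B^3(p)$ which, by Conditions~$\ref{cond7.3}$ and~$\ref{cond7.4}$ combined with Lemma~$\ref{l7.1-new}$, is also $O(|p|^{-1})$.

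The main obstacle is to show that $\mathbf I+T_{\mathcal K_1}(p)+M_\varepsilon(p)+T(p)$ is invertible with uniformly bounded inverse for sufficiently small $\varepsilon$ and sufficiently large $|p|$. This is exactly the setting of Lemma~$\ref{lI+T+Mp}$: write the sum as $I+M_\varepsilon(p)+S_\varepsilon(p)$ with $S_\varepsilon(p)=T_{\mathcal K_1}(p)+T(p)$; the norm of $M_\varepsilon(p)$ is $O(\varepsilon)$, while $\|S_\varepsilon(p)\|$ is uniformly bounded and $\|S_\varepsilon^2(p)\|\le c|p|^{-1}$ by the computation of $T_{\mathcal K_1}^2(p)$ sketched above (cf.~\eqref{eqVstavka5.15}--\eqref{eqStar44}, where compactness of the embedding is now replaced by the quantitative $|p|^{-1}$-bound coming from Lemma~$\ref{l7.1-new}$). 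Applying Lemma~$\ref{lI+T+Mp}$ yields the right inverse of $\mathbf L^1(p)$ with uniformly bounded norm for $|p|\ge p_0$; a symmetric argument (or the a priori estimate analogous to Theorem~$\ref{t6.1}$ with the lower-order term $\n u\n_{H_a^{l+2m-1}(Q)}$ absorbed into the left-hand side by Lemma~$\ref{l7.1-new}$) gives the left inverse and the two-sided bound~\eqref{8.1}.
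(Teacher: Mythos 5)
Your proposal follows essentially the same route as the paper: the same operator $R_{\mathcal K_1}(p)$ built from the uniformly invertible model operators $(\mathcal L_{g^t}''(p))^{-1}$ of Corollary~\ref{cor-theorem71a}, the same decomposition $\mathbf L^1(p)R(p)=\mathbf I+T_{\mathcal K_1}(p)+M(p)+T(p)$ with $\n T_{\mathcal K_1}^2(p)\n=O(|p|^{-1})$ and $\n M(p)\n=O(\varepsilon)$, the same appeal to Lemma~\ref{lI+T+Mp}, and the a priori estimate obtained by rerunning Lemma~\ref{l4.4} in the $\n\cdot\n$-norms and absorbing the lower-order term via $\n u\n_{H_a^{l+2m-1}(Q)}\le|p|^{-1}\n u\n_{H_a^{l+2m}(Q)}$. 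The only (harmless) deviation is that you absorb $T(p)$, whose $O(|p|^{-1})$ bound carries an $\varepsilon$-dependent constant, into $S_\varepsilon(p)$ when invoking Lemma~\ref{lI+T+Mp}, whereas the paper first inverts $\mathbf I+T_{\mathcal K_1}(p)+M(p)$ uniformly in $\varepsilon$ and then removes $T(p)$ by a Neumann series after fixing $\varepsilon$; your version is fine provided $\varepsilon$ is fixed before $|p|$ is taken large.
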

\begin{proof}
1. The first inequality in~\eqref{8.1} follows from the definition of the norms
$\n\cdot\n$, from Lemma~\ref{l7.1-new}, and from estimate~\eqref{eqBoundedTrace}. To
prove the second inequality in~\eqref{8.1}, we repeat the proof of Lemma~\ref{l4.4},
replacing there the norms $\|\cdot\|$ by the norms $\n\cdot\n$, Corollary~\ref{cort2.2'}
and Theorem~\ref{t2.R^n_+Isomorphism} by Corollary~\ref{tLpIsomH} and
Theorem~\ref{tR^n_+IsomorphismP}, respectively, and the results on elliptic problems in
the interior of the domain and near a smooth part of the boundary by the corresponding
results on elliptic problems with a parameter~\cite{AV} and taking into account
estimate~\eqref{eqBoundedTrace}. Then we obtain the following a priori estimate:
$$
\n u \n_{H_a^{l+2m}(Q)}\le k_1(\n\mathbf L^1(p) u\n_{\mathcal H_a^l(Q,\Gamma)}+\n u
\n_{H_a^{l+2m-1}(Q)}),\qquad p\in\mathbb R^d\setminus\{0\},
$$
where $k_1>0$ does not depend on $u$ and $p$. Combining this estimate with the relation
$$
\n u \n_{H_a^{l+2m-1}(Q)}\le |p|^{-1}\n u \n_{H_a^{l+2m}(Q)}
$$
and taking $|p|\ge p'$, where $p'>0$ is sufficiently large, we obtain the second
inequality in~\eqref{8.1}.

2. It remains to prove the existence of a right inverse operator for $\mathbf L^1(p)$.

Using the notation from the proof of Lemma~\ref{l5.1}, we introduce the operator
$$
R_{\mathcal K_1}(p)f=\sum\limits_t(U^t)^{-1}\Big(\hat\xi^t(\mathcal
L_{g^t}''(p))^{-1}F^t\Big(\sum\limits_q\xi_q^tf\Big)\Big)
$$
(cf.~\eqref{eqVstavka5.10}). Similarly to~\eqref{eqVstavka5.18}, we  prove that
\begin{equation}\label{eq8.1_1}
\mathbf L^1(p) R_{\mathcal K_1}(p)f=\xi_0 f+ T_{\mathcal K_1}(p)f,
\end{equation}
where $T_{\mathcal K_1}(p): \mathcal H_a^l(Q,\Gamma)\to \mathcal H_a^l(Q,\Gamma)$ is a
bounded operator such that
\begin{equation}\label{eq8.1_2}
\n T_{\mathcal K_1}(p) f\n_{\mathcal H_a^l(Q,\Gamma)}\le c_1\n f\n_{\mathcal
H_a^l(Q,\Gamma)},
\end{equation}
\begin{equation}\label{eq8.1_3}
\n T_{\mathcal K_1}^2(p) f\n_{\mathcal H_a^l(Q,\Gamma)}\le c_2|p|^{-1}\n f\n_{\mathcal
H_a^l(Q,\Gamma)},
\end{equation}
and $c_1,c_2,{\dots}>0$ do not depend on $f$, $p$, and on the number $\varepsilon$ in the
definition of the function $\xi$, $|p|\ge p'$.

Estimate~\eqref{eq8.1_2} follows from Corollary~\ref{cor-theorem71a} and
inequalities~\eqref{7.3-new} and~\eqref{eqBoundedTrace}. Let us prove~\eqref{eq8.1_3}. By
analogy with the operators $\mathcal T_{j\rho\mu}^t$, $A_k''$, and $T_i$, $i=1,2,3$, from
the proof of Lemma~\ref{l5.1}, we consider the corresponding operators $\mathcal
T_{j\rho\mu}^t(p)$, $A_k''(p)$ and $T_i(p)$, $i=1,2,3$, depending on the parameter $p$.
First, we estimate the norm of $T_3^2(p)f$. Introduce functions $\psi_k^t\in
C_0^\infty(\Theta_k)$ such that $\psi_k^t(x')=1$ for $x'\in\Omega_k^t$. Using
inequality~\eqref{eqVstavka5.16} with the norms $\|\cdot\|$ replaced by the norms
$\n\cdot\n$,  the equivalence of the norms $\n\cdot\n$ in the subspaces of
$H_a^l(\Theta_k)$ and $W^l(\Theta_k)$ consisting of compactly supported functions
vanishing near the edge $\mathcal P$, Theorem~4.1 in~\cite{AV},
equality~\eqref{eqStar44}, Leibniz' formula, inequality~\eqref{7.3-new}, and
Corollary~\ref{cor-theorem71a}, we have
\begin{multline*}
\Bn\mathcal T_{j\rho\mu}^t(p)(\mathcal
L_{g^t}''(p))^{-1}F^t\Big(\sum\limits_q\xi_q^tT_3(p)
f\Big)\Bn_{V_a^{l+2m-m_{j\rho\mu}-1/2}(\Gamma_{j\rho})}\\
 \le
k_1\sum\limits_k \Bn A_k''(p)\Big(\psi_k^t\Big[(\mathcal
L_{g^t}''(p))^{-1}F^t\Big(\sum\limits_q\xi_q^tT_3(p)
f\Big)\Big]_k\Big)\Bn_{V_a^l(\Theta_k)}\\
\le k_2 \Bn  (\mathcal L_{g^t}''(p))^{-1}F^t\Big(\sum\limits_q\xi_q^tT_3(p) f\Big)
\Bn_{\mathcal V_a^{l+2m-1}(\Theta)} \le k_3 |p|^{-1}\n f\n_{\mathcal H_a^l(Q,\Gamma)},
\end{multline*}
where $p\in\mathbb R^d\setminus\{0\}$ and $k_1,\dots,k_4>0$ do not depend on $f$, $p$,
and $\varepsilon$.

The latter inequality implies that
$$
\n T_3^2(p)f\n_{\mathcal H_a^l(Q,\Gamma)}\le k_4|p|^{-1}\n f\n_{\mathcal
H_a^l(Q,\Gamma)},\qquad p\in\mathbb R^d\setminus\{0\}.
$$
Similarly, we estimate the norm of $T_1^2(p)f$. The analogous estimate for $T_2(p)f$ is
evident. Thus, we obtain inequality~\eqref{eq8.1_3} for $T_{{\mathcal
K}_1}(p)=T_1(p)+T_2(p)+T_3(p)$.

Let $\zeta$ be a function defined in~\eqref{eqZetaZetaTau}. Set $\zeta_1=1-\zeta$. Since
$\zeta(x)=1$ for $x\in\mathcal K_1^{2\varepsilon}$, it follows that
$\supp\zeta_1\subset\overline Q\setminus\mathcal K_1^{2\varepsilon}$. Introduce a
function $\hat\zeta_1\in C^\infty(\mathbb R^n)$ such that $\hat\zeta_1(x)=1$ for
$x\in\overline Q\setminus\mathcal K_1^{2\varepsilon}$ and
$\supp\hat\zeta_1\subset\overline Q\setminus\mathcal K_1^{\varepsilon}$.

Due to Theorem~10.4 in~\cite{MP}, there exists a bounded operator $\mathbf R^0(p)$ such
that $$\mathbf L_0(p)\mathbf R_0(p)f=f$$ for $f\in\mathcal H_a^l(Q,\Gamma)$, $\supp
f\subset\overline Q\setminus\mathcal K_1^{2\varepsilon}$, provided that $|p|\ge p''$,
where $p''\ge p'$ is sufficiently large. Thus, we can set
$$
R(p) f=R_{\mathcal K_1}(p)f+R_{\mathcal K_1}(p)(\eta f)+\hat\zeta_1\mathbf R^0(p)(\zeta_1
f),
$$
where $\eta(x)={\zeta_0(x)(1-\xi_0(x))}/{\xi_0(x)}$ for $x\in\mathcal K_1^{4\varepsilon}$
and $\eta(x)=0$ for $x\notin\mathcal K_1^{4\varepsilon}$ (cf.~\eqref{eqRf}). Since
$\supp\hat\zeta_1\subset\overline Q\setminus\mathcal K_1^{\varepsilon}$, we have
$$
B_{i\mu}^1(p)\big(\hat\zeta_1\mathbf R^0(p)(\zeta_1 f)\big)=0
$$
and hence
\begin{equation}\label{eq8.1_4}
\mathbf L^1(p) R(p) f=\mathbf L^1(p) R_{\mathcal K_1}(p)f+\mathbf L^1(p)R_{\mathcal
K_1}(p)(\eta f) +\mathbf L^0(p)\big(\hat\zeta_1\mathbf R^0(p)(\zeta_1 f)\big).
\end{equation}

Combining this relation with~\eqref{eq8.1_1} and using Leibniz' formula and
Lemmas~\ref{l7.1-new} and~\ref{lInterpWeightedTrace}, we obtain
\begin{multline*}
\mathbf L^1(p) R(p) f=\xi_0 f+T_{\mathcal K_1}(p)f+\zeta_0(1-\xi_0)f+T_{\mathcal
K_1}(p)(\eta f)
+\zeta_1 f+T(p)f\\
=f+T_{\mathcal K_1}(p)f+M(p)f+T(p)f
\end{multline*}
or, equivalently,
\begin{equation}\label{eq8.1_5}
\mathbf L^1(p) R(p)=\mathbf I+T_{\mathcal K_1}(p)+M(p)+T(p),
\end{equation}
 where
$$
M(p)f=T_{\mathcal K_1}(p)(\eta f),
$$
while $T(p):\mathcal H_a^l(Q,\Gamma)\to\mathcal H_a^l(Q,\Gamma)$ is a bounded operator
such that
\begin{equation}\label{eq8.1_6}
\n T(p)f\n_{\mathcal H_a^l(Q,\Gamma)}\le k_1|p|^{-1}\n f\n_{\mathcal H_a^l(Q,\Gamma)},
\end{equation}
where $k_1=k_1(\varepsilon)>0$ does not depend on $f$ and $p$.

By inequality~\eqref{eq8.1_2}, we have
$$
\n M(p)f\n_{\mathcal H_a^l(Q,\Gamma)}\le c_{3}\n \eta f\n_{\mathcal H_a^l(Q,\Gamma)}.
$$
However, $(1-\xi_0(x))/\xi_0(x)=0$ for $x\in\mathcal K_1$, while the function $\zeta_0$
is supported in $\mathcal K_1^{4\varepsilon}$ and satisfies the inequality
in~\eqref{eqVstavka5.19}. Therefore, it follows from the last estimate, from
Lemmas~\ref{lzeta0v} and~\ref{lZetaDeltaU} and from Remark~\ref{rzeta0v} that
\begin{equation}\label{eq8.1_7}
\n M(p)f\n_{\mathcal H_a^l(Q,\Gamma)}\le c_{4}\varepsilon\n f\n_{\mathcal
H_a^l(Q,\Gamma)}.
\end{equation}

By virtue of inequalities~\eqref{eq8.1_2}, \eqref{eq8.1_3}, and~\eqref{eq8.1_7} and
Lemma~\ref{lI+T+Mp}, the operator
\begin{equation}\label{eq8.1_8}
 \big(\mathbf I+T_{\mathcal K_1}(p)+M(p)\big)^{-1}
\end{equation}
exists and is bounded in the norms $\n\cdot\n_{\mathcal H_a^l(Q,\Gamma)}$, uniformly with
respect to $p$, $|p|\ge p'''$, where $p'''\ge p''$ is sufficiently large, provided that
$\varepsilon>0$ is a sufficiently small fixed number. Therefore, relation~\eqref{eq8.1_5}
is equivalent to the following one:
$$
\mathbf L^1(p) R(p)\big(\mathbf I+T_{\mathcal K_1}(p)+M(p)\big)^{-1}=\mathbf I+T'(p),
$$
where
$$
T'(p)=T(p)\big(\mathbf I+T_{\mathcal K_1}(p)+M(p)\big)^{-1}.
$$
By virtue of the uniform boundedness of the operator~\eqref{eq8.1_8} and
estimate~\eqref{eq8.1_6}, there is a sufficiently large number $p_0\ge p'''$ such that
$\n T'(p)\n\le 1/2$ for $|p|\ge p_0$ (recall that $\varepsilon$ is fixed) and hence
$$
\mathbf L^1(p) R(p)\big(\mathbf I+T_{\mathcal K_1}(p)+M(p)\big)^{-1}\big(\mathbf
I+T'(p)\big)^{-1}=\mathbf I.
$$

Thus, we have proved the existence of the right inverse operator for $\mathbf L^1(p)$,
$|p|\ge p_0$. Combining this with the second estimate in~\eqref{8.1}, we complete the
proof.
\end{proof}

\subsection{}

In this subsection, we generalize the result of the previous subsection to the operator
$\mathbf L(p)$.

\begin{theorem}\label{t8.2}
Let Conditions~$\ref{cond7.1}$--$\ref{cond7.4}$, $\ref{cond1.3}$, and $\ref{cond1.4}$
hold. Assume that the line $\Im\lambda=a+1-l-2m$ contains no eigenvalues of
$\hat{\mathcal L}_g(\lambda)$ for any $g\in K$ and $\dim\mathcal N(\mathcal
L_g(\omega))=\codim\mathcal R(\mathcal L_g(\omega))=0$ for any $g\in K$ and $\omega\in
S^{n+d-3}$. Then the following assertions are true{\rm :}
\begin{enumerate}
\item
there is a number $p_1>0$ such that the operator $\mathbf L(p)$, $|p|\ge p_1$, has a
bounded inverse and
\begin{equation}\label{eq8.8_0}
c_1\n\mathbf L(p) u\n_{\mathcal H_a^l(Q,\Gamma)}\le \n u \n_{H_a^{l+2m}(Q)}\le
c_2\n\mathbf L(p) u\n_{\mathcal H_a^l(Q,\Gamma)},\qquad |p|\ge p_1,
\end{equation}
where $c_1,c_2>0$ do not depend on $u$ and $p${\rm ;}
\item
the operator $\mathbf L(p)$ has the Fredholm property and $\ind\mathbf L(p)=0$ for
$p\in\mathbb R^d$.
\end{enumerate}
\end{theorem}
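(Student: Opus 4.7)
The plan is to deduce Part~1 from the uniform invertibility of $\mathbf L^1(p)$ established in Lemma~\ref{l-t8.1} by replaying, in the parameter-dependent setting, the regularizer construction from the proof of Theorem~\ref{t5.2}; Part~2 will then follow by a homotopy argument combined with the Fredholm property furnished by Theorem~\ref{t6.3}.

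For Part~1, I would first prove a parameter-dependent analog of Lemma~\ref{lVstavka5.2}: for all $|p|\ge p_0$, one constructs a bounded $\mathbf R_1'(p)\colon\mathcal H_a^l(\partial Q)\to H_a^{l+2m}(Q)$ (uniformly in $p$ with respect to the norms $\n\cdot\n$) and a remainder $\mathbf T_1'(p)$ with
\begin{equation*}
\mathbf L^1(p)\mathbf R_1'(p)f'=\{0,f'\}+\mathbf T_1'(p)f',\qquad \n\mathbf T_1'(p)f'\n_{\mathcal H_a^l(Q,\Gamma)}\le c|p|^{-1}\n f'\n_{\mathcal H_a^l(\partial Q)},
\end{equation*}
enjoying the support properties \eqref{eqVstavka1}--\eqref{eqVstavka2}. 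This would mimic the proof of Lemma~\ref{lVstavka5.2}, with $(\mathbf L^1(p))^{-1}$ from Lemma~\ref{l-t8.1} playing the role of $\mathbf R_1$, with parameter-dependent right inverses near smooth pieces of $\partial Q$ (from~\cite{AV,MP}) playing the role of $R_{\tau 0}'$, and with every compact correction converted into an $O(|p|^{-1})$ perturbation via Condition~\ref{cond7.3}, Lemma~\ref{l7.1-new}, and the trace estimate~\eqref{eqBoundedTrace} (each extra derivative traded for one factor $|p|^{-1}$).

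Setting $\mathbf R_1(p):=(\mathbf L^1(p))^{-1}$ and $\Phi(p):=B^2(p)\mathbf R_1(p)f$, I would then define
\begin{equation*}
\mathbf R(p)f=\mathbf R_1(p)f-\mathbf R_1'(p)\Phi(p)+\mathbf R_1'(p)B^2(p)\mathbf R_1'(p)\Phi(p).
\end{equation*}
The computation in Step~1 of the proof of Theorem~\ref{t5.2} should transfer almost verbatim: the critical cancellation $B^2(p)\mathbf R_1'(p)B^2(p)\mathbf R_1'(p)\Phi(p)=0$ is a pure support argument based on~\eqref{eqSepar1}--\eqref{eqSepar2} and the support properties of $\mathbf R_1'(p)$, identical to~\eqref{eqRegL_5}; the remaining error terms satisfy $O(|p|^{-1})$ bounds in the norms $\n\cdot\n$ thanks to Conditions~\ref{cond7.3}--\ref{cond7.4} combined with the interpolation inequalities in Lemmas~\ref{l7.1-new} and~\ref{lInterpWeightedTrace}. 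Thus $\mathbf L(p)\mathbf R(p)=\mathbf I+\mathbf M(p)$ with $\n\mathbf M(p)\n\le 1/2$ for $|p|\ge p_1$ sufficiently large, and a Neumann series yields a uniformly bounded right inverse; a symmetric left-inverse construction, together with the a~priori bound obtained as in Lemma~\ref{l-t8.1} (now absorbing the additional perturbations via Condition~\ref{cond7.3} and Lemma~\ref{l7.1-new}), produces the two-sided estimate~\eqref{eq8.8_0}.

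For Part~2, Conditions~\ref{cond7.3}--\ref{cond7.4} imply Conditions~\ref{cond6.1}--\ref{cond6.2} for $\mathbf L(p)$ at each fixed $p$, so Theorem~\ref{t6.3} yields the Fredholm property for every $p\in\mathbb R^d$. Fixing $p^*$ with $|p^*|\ge p_1$, the path $t\mapsto\mathbf L((1-t)p+tp^*)$, $t\in[0,1]$, is norm-continuous because $A^0(p)$ and $B_{i\mu s}^0(p)$ depend polynomially on $p$ and $A^1,B^2,B^3$ depend continuously on $p$; continuity and integer-valuedness of the Fredholm index then force $\ind\mathbf L(p)=\ind\mathbf L(p^*)=0$, since $\mathbf L(p^*)$ is an isomorphism by Part~1. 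The hardest part will be the parameter-dependent version of Lemma~\ref{lVstavka5.2}: one must ensure that every compact remainder appearing in the original proof—in particular the commutators arising from Leibniz' formula—is of order $O(|p|^{-1})$ rather than merely compact in the $\n\cdot\n$ norms, which demands careful bookkeeping of the way each extra derivative is exchanged for a factor $|p|^{-1}$ through the interpolation inequalities in Lemmas~\ref{l7.1-new}, \ref{lInterpWeightedTrace}, and~\ref{lInterpWeightedTraceGamma}.
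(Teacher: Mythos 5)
Your Part~1 takes a genuinely different and considerably heavier route than the paper. The paper does not rebuild the regularizer of Theorem~\ref{t5.2} in the parameter setting at all: it introduces the family $L_t(p)=\mathbf L^1(p)+t(\mathbf L(p)-\mathbf L^1(p))$, proves the two-sided estimate \eqref{eqEstimateParamT0} uniformly in $t\in[0,1]$ and $|p|\ge p_1$ (Lemma~\ref{lEstimateParamT}, obtained by combining Lemma~\ref{l-t8.1} with the argument of \eqref{4.26'}--\eqref{4.27} rewritten in the norms $\n\cdot\n$), and then passes from the invertible $L_0(p)=\mathbf L^1(p)$ to $L_1(p)=\mathbf L(p)$ by continuation along $t$ in finitely many Neumann-series steps of fixed length $c_1/(4c_2)$. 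This completely avoids the parameter-dependent analogue of Lemma~\ref{lVstavka5.2} and the cancellation \eqref{eqRegL_5}, which is precisely the part you yourself identify as the hardest and leave unproved. Your route is not obviously wrong, but as written it rests on an unestablished lemma whose proof is the bulk of the work; the continuation argument buys you Part~1 using only what Lemma~\ref{l-t8.1} and Conditions~\ref{cond7.3}--\ref{cond7.4} already provide.

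There is a genuine gap in your Part~2. The homotopy $t\mapsto\mathbf L((1-t)p+tp^*)$ is norm-continuous only if the operators $A^1(p)$, $B_{i\mu}^2(p)$, $B_{i\mu}^3(p)$ depend continuously on $p$, and no such hypothesis appears in the theorem: Conditions~\ref{cond7.3} and~\ref{cond7.4} assert uniform boundedness and the separability estimates, nothing more. These are abstract operators (cf.\ Sec.~\ref{sec6}), so your claim that they ``depend continuously on $p$'' is an added assumption. The paper circumvents this by never homotoping the full operator $\mathbf L(p)$ in $p$: it writes $\mathbf L^1(p)=\mathbf L^1(\hat p)(\mathbf I+\mathbf T(p))$ for a fixed $\hat p$ with $|\hat p|\ge p_0$, where $\mathbf T(p)=(\mathbf L^1(\hat p))^{-1}(\mathbf L^1(p)-\mathbf L^1(\hat p))$ is compact because $\mathbf L^1(p)-\mathbf L^1(\hat p)$ involves only $x$-derivatives of order $\le 2m-1$ (the top-order terms with $\beta=0$ cancel) and the embedding $H_a^{l+2m}(Q)\subset H_a^{l+2m-1}(Q)$ is compact; hence $\ind\mathbf L^1(p)=\ind\mathbf L^1(\hat p)+\ind(\mathbf I+\mathbf T(p))=0$ for every $p$, and then Theorem~\ref{t6.3} gives $\ind\mathbf L(p)=\ind\mathbf L^1(p)=0$. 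You already invoke Theorem~\ref{t6.3} for the Fredholm property; you should also use its index equality $\ind\mathbf L(p)=\ind\mathbf L^1(p)$ and compute the index of $\mathbf L^1(p)$ by the compact-perturbation argument above, rather than deforming the abstract perturbations in $p$.
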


To prove Theorem~\ref{t8.2}, we preliminarily consider the operators
$$
L_t(p)=\mathbf L^1(p)+t\big(\mathbf L(p)-\mathbf L^1(p)\big),\qquad 0\le t\le 1.
$$
Clearly, $L_0(p)=\mathbf L^1(p)$, $L_1(p)=\mathbf L(p)$.

\begin{lemma}\label{lEstimateParamT}
Let the conditions of Theorem~$\ref{t8.2}$ hold. Then there is a number $p_1>0$ such that
the following estimates hold for $u\in H_a^{l+2m}(Q)$$:$
\begin{equation}\label{eqEstimateParamT0}
c_1\n L_t(p) u\n_{\mathcal H_a^l(Q,\Gamma)}\le \n u \n_{H_a^{l+2m}(Q)}\le c_2\n L_t(p)
u\n_{\mathcal H_a^l(Q,\Gamma)},\qquad |p|\ge p_1,\ 0\le t\le 1,
\end{equation}
where $c_1,c_2>0$ do not depend on $u$, $p$, and $t$.
\end{lemma}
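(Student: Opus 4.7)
My strategy is to treat $L_t(p)$ as a perturbation of $\mathbf{L}^1(p) = L_0(p)$ and transfer the two-sided estimate of Lemma~\ref{l-t8.1}. Since $\mathbf{L}(p) - \mathbf{L}^1(p) = \bigl(0,\,B_{i\mu}^2(p)+B_{i\mu}^3(p)\bigr)$, only the boundary components vary with $t$. The left inequality of~\eqref{eqEstimateParamT0} is immediate from the uniform boundedness of $L_t(p)$ for $|p|\ge 1$: Lemma~\ref{l-t8.1} bounds $\mathbf{L}^1(p)$, Condition~\ref{cond7.3} combined with the nesting $\n\cdot\n_{H_a^{l+2m-1}(Q)}\le \n\cdot\n_{H_a^{l+2m}(Q)}$ handles $B_{i\mu}^3(p)$, and the first inequality of Condition~\ref{cond7.4} handles $B_{i\mu}^2(p)$. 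The bulk of the work concerns the right inequality.

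Using Lemma~\ref{l-t8.1} and the identity $\mathbf{L}^1(p) u = L_t(p) u - t\bigl(0,\, B_{i\mu}^2(p)u + B_{i\mu}^3(p)u\bigr)$, the first step yields
$$\n u\n_{H_a^{l+2m}(Q)} \le c_2\n L_t(p) u\n_{\mathcal H_a^l(Q,\Gamma)} + c_2 t\sum_{i,\mu}\n B_{i\mu}^2(p)u + B_{i\mu}^3(p)u\n_{H_a^{l+2m-m_{i\mu}-1/2}(\Gamma_i)}.$$
The $B_{i\mu}^3(p)u$ terms are dispatched immediately by Condition~\ref{cond7.3} and the interpolation inequality~\eqref{7.3-new}, producing a bound by $c|p|^{-1}\n u\n_{H_a^{l+2m}(Q)}$ that is absorbable for large $|p|$.

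The principal difficulty is $B_{i\mu}^2(p)u$, which carries no intrinsic $|p|^{-1}$ smallness and must be handled by localization away from $\mathcal K_1$. I will choose a cutoff $\eta\in C^\infty(\mathbb R^n)$ with $\eta\equiv 1$ on $Q\setminus\overline{\mathcal K_1^{\varkappa_1}}$ and $\mathrm{supp}\,\eta\subset Q\setminus\overline{\mathcal K_1^{\varkappa_1/2}}$, and shrink $\varepsilon$ in the definition of $\xi$ so that $\xi\eta\equiv 0$ and the support of $B_{i\mu}^1(p)u$ on $\Gamma_i$ is disjoint from $\mathrm{supp}\,\eta|_{\Gamma_i}$. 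Then $\mathbf{L}^1(p)(\eta u)$ reduces to a purely local elliptic problem with parameter on the smooth part of $\partial Q$, and the first inequality of Condition~\ref{cond7.4} gives $\n B_{i\mu}^2(p)u\n\le c\n\eta u\n_{H_a^{l+2m}(Q)}$, reducing the task to controlling $\n\eta u\n_{H_a^{l+2m}(Q)}$.

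Applying Lemma~\ref{l-t8.1} to $\eta u$ and expanding via Leibniz' formula gives
$$\n\eta u\n_{H_a^{l+2m}(Q)}\le c\,\n\eta\,\mathbf{L}^1(p)u\n + c\,\n[A(p),\eta]u\n_{H_a^l(Q)} + c\sum_{i,\mu}\n[B_{i\mu 0}^0(p),\eta]u\n_{H_a^{l+2m-m_{i\mu}-1/2}(\Gamma_i)}.$$
The commutators $[A(p),\eta]$ and $[B_{i\mu 0}^0(p),\eta]$ have respective total orders $2m-1$ and $m_{i\mu}-1$ in $(D,p)$, so by~\eqref{7.3-new}, \eqref{eqInterpWeightedTraceGamma1}, and~\eqref{eqBoundedTrace} each is bounded by $c|p|^{-1}\n u\n_{H_a^{l+2m}(Q)}$. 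For the surviving $\eta\,\mathbf{L}^1(p)u$ term I will write $\mathbf{L}^1(p)u = L_t(p)u - t(0,B_{i\mu}^2(p)u + B_{i\mu}^3(p)u)$: the $\eta B_{i\mu}^3$ piece is again $|p|^{-1}$-small, while $\eta B_{i\mu}^2(p)u$, being supported in $\Gamma_i\setminus\overline{\mathcal K_1^{\varkappa_2}}$ (for $\varkappa_2<\varkappa_1/2$), is bounded by $c\n u\n_{H_a^{l+2m}(Q_\sigma)}$ via the second inequality of Condition~\ref{cond7.4}, and then by a standard interior a priori estimate with parameter (cf.~\cite{AV}) by $c\n L_t(p)u\n + c|p|^{-1}\n u\n_{H_a^{l+2m}(Q)}$. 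Collecting all bounds and choosing $p_1$ so large that every $|p|^{-1}$ coefficient can be absorbed on the left uniformly in $t\in[0,1]$ yields the right inequality of~\eqref{eqEstimateParamT0}. The hard part is checking that the cutoff argument kills all nonlocal boundary terms cleanly and that the residual $\eta B_{i\mu}^2(p)u$ piece admits a bound with $|p|^{-1}$ smallness through the interior parameter-elliptic estimate — everything else is a perturbation argument built on top of Lemma~\ref{l-t8.1}.
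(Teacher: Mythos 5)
Your proposal is correct and follows essentially the same route as the paper: the paper's proof applies Lemma~\ref{l-t8.1}, then literally ``repeats the proof of inequalities~\eqref{4.26'} and~\eqref{4.27}'' with the parameter norms $\n\cdot\n$, Condition~\ref{cond7.4} in place of Lemma~\ref{l4.6}, and the parameter-elliptic estimates of~\cite{AV}, before absorbing the $\n u\n_{H_a^{l+2m-1}(Q)}$ remainder via the $|p|^{-1}$ interpolation of Lemma~\ref{l7.1-new} --- which is exactly the localization-plus-absorption scheme you spell out. Your only deviation is cosmetic: you arrange $\xi\eta\equiv0$ so that $B_{i\mu}^1(p)(\eta u)$ vanishes outright, whereas the paper keeps the resulting Leibniz terms $\Psi_{i\mu s}$ and disposes of them by showing they are supported in $Q_b$ and applying interior parameter-elliptic estimates; both work, and your cutoff should vanish on $\mathcal K_1^{\varkappa_2}$ (not $\mathcal K_1^{\varkappa_1/2}$) so that the second inequality of Condition~\ref{cond7.4} applies without an extra assumption on $\varkappa_2$.
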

\begin{proof}
The definition of the norms $\n\cdot\n$,  Lemma~\ref{l7.1-new},
inequality~\eqref{eqBoundedTrace}, and Conditions~\ref{cond7.3} and~\ref{cond7.4} imply
the  first estimate in~\eqref{eqEstimateParamT0}.

Let us prove the second estimate in~\eqref{eqEstimateParamT0}. Applying
Lemma~\ref{l-t8.1} and using Condition~\ref{cond7.3} and the fact that $0\le t\le 1$, we
have
\begin{multline}\label{eqEstimateParamT1}
\n u \n_{H_a^{l+2m}(Q)}\le k_1\n L_0(p) u\n_{\mathcal
H_a^l(Q,\Gamma)}\\
\le k_2\big(\n L_t(p) u\n_{\mathcal H_a^l(Q,\Gamma)}+\sum\limits_{i,\mu}\n
B_{i\mu}^2(p)\n_{H_a^{l+2m-m_{i\mu}-1/2}(\Gamma_i)}+\n u \n_{H_a^{l+2m-1}(Q)}\Big),
\end{multline}
where $|p|\ge p_0$ and $k_1,k_2,{\dots}>0$ do not depend on $u$, $p$, and $t$.

Further, we can repeat the proof of inequalities~\eqref{4.26'} and~\eqref{4.27}, using
Lemma~\ref{l7.1-new} and estimate~\eqref{eqBoundedTrace} and replacing the operators
$B_{i\mu}^2$ by $B_{i\mu}^2(p)$, the norms $\|\cdot\|$ by the norms $\n\cdot\n$,
Lemma~\ref{l4.6} by Condition~\ref{cond7.4}, and the results on elliptic problems in the
interior of the domain and near a smooth part of  boundary by the corresponding results
for elliptic problems with a parameter~\cite{AV}. Thus, we obtain
\begin{equation}\label{eqEstimateParamT2}
\n B_{i\mu}^2(p) u\n_{H_a^{l+2m-m_{i\mu}-1/2}(\Gamma_i)}\le k_3(\n L_t u\n_{\mathcal
H_a^l(Q,\Gamma)}+\n u\n_{H_a^{l+2m-1}(Q)}).
\end{equation}

Combining estimates~\eqref{eqEstimateParamT1} and~\eqref{eqEstimateParamT2} with
Lemma~\ref{l7.1-new} and taking $|p|\ge p_1$, where $p_1\ge p_0$ is sufficiently large,
we complete the proof.
\end{proof}

Now we will prove Theorem~\ref{t8.2}, using Lemmas~\ref{l-t8.1}
and~\ref{lEstimateParamT}, and the method of continuation along the parameter $t$.
\begin{proof}[Proof of Theorem~$\ref{t8.2}$]
1. Applying Lemmas~\ref{l-t8.1} and~\ref{lEstimateParamT}, we see that the operator
$$
L_t(p)=L_0(p)\big(\mathbf I+t L_0^{-1}(p)(L_1(p)-L_0(p))\big)
$$
has a bounded inverse for $0\le t\le t_1=c_1/(4 c_2)$ with the norm $\n L_t^{-1}(p)\n\le
c_2$. Therefore, the operator
$$
L_t(p)=L_{t_1}(p)\big(\mathbf I+(t-t_1) L_{t_1}^{-1}(p)(L_1(p)-L_0(p))\big)
$$
has a bounded inverse for $t_1\le t\le 2t_1$ with the norm $\n L_t^{-1}(p)\n\le c_2$.
Repeating this procedure finitely many times, we see that the operator $L_1(p)=\mathbf
L(p)$ has a bounded inverse. Estimate~\eqref{eq8.8_0} follows
from~\eqref{eqEstimateParamT0} for $t=1$.

2. Fix $\hat p\subset\mathbb R^d$ such that $|\hat p|\ge p_0$. In this case, there exists
a bounded operator $(\mathbf L^1(\hat p))^{-1}:\mathcal H_a^l(Q,\Gamma)\to H_a^{l+2m}(Q)$
due to Lemma~\ref{l-t8.1}. Thus, we have
$$
\mathbf L^1(p)=\mathbf L^1(\hat p)\big(\mathbf I+\mathbf T(p)\big),
$$
where
$$
\mathbf T(p)=(\mathbf L^1(\hat p))^{-1}(\mathbf L^1(p)-\mathbf L^1(\hat p)).
$$

Clearly, the operator $\mathbf L^1(p)-\mathbf L^1(\hat p): H_a^{l+2m-1}(Q)\to\mathcal
H_a^l(Q,\Gamma)$ is bounded. It follows from the compactness of the embedding
$H_a^{l+2m}(Q)\subset H_a^{l+2m-1}(Q)$ that the operator $\mathbf L^1(p)-\mathbf L^1(\hat
p): H_a^{l+2m}(Q)\to\mathcal H_a^l(Q,\Gamma)$ is compact. Therefore, the operator
$\mathbf T(p):H_a^{l+2m}(Q)\to H_a^{l+2m}(Q)$ is also compact. By Theorem~13.2
in~\cite{Kr}, the operator $\mathbf I+\mathbf T(p)$ has the Fredholm property and
$\ind\big(\mathbf I+\mathbf T(p)\big)=0$. Now, applying Theorem~12.2 in~\cite{Kr}, we see
that the operator $\mathbf L^1(p)$ has the Fredholm property and
$$
\ind\mathbf L^1(p)=\ind\mathbf L^1(\hat p)+\ind\big(\mathbf I+\mathbf T(p)\big)=0.
$$

Finally, we note that the fulfillment of Conditions~\ref{cond7.3} and~\ref{cond7.4}
implies the fulfillment of Conditions~\ref{cond6.1} and~\ref{cond6.2}, respectively.
Therefore, by Theorem~\ref{t6.3}, the operator $\mathbf L(p)$ has the Fredholm property
and
$$
\ind\mathbf L(p)=\ind\mathbf L^1(p)=0.
$$
\end{proof}

\subsection{}
In this subsection, we consider an example of an elliptic problem with a parameter,
having distributed nonlocal terms and satisfying Conditions~\ref{cond7.1}--\ref{cond7.4},
$\ref{cond1.3}$, and $\ref{cond1.4}$.
\begin{example}\label{exDistrNonlocalTermP}
1. In the notation of Example~\ref{exDistrNonlocalTerm}, we consider the following
nonlocal problem:
\begin{equation}\label{eqDistrNonlocalTermP1}
-\Delta u+e^{ih}p^2u=f_0(x),\qquad x\in Q,
\end{equation}
\begin{equation}\label{eqDistrNonlocalTermP2}
u|_{\Gamma_l}+B_l^1u+B_l^2u=f_l(x),\qquad x\in\Gamma_l,\ l=1,2,
\end{equation}
where $-\pi/2<h<\pi/2$ and $p\ge 0$.

For each point $g\in\mathcal K_1$, the operator $\mathcal L_g(p)=\mathcal L(p):
H_a^2(\Theta)\to\mathcal H_a^0(\Theta,\Gamma)$ given by~\eqref{eqLg(p)} takes the form
(cf.~\eqref{6.14'})
\begin{multline*}
\mathcal L(p) v=(-\Delta v+e^{ih}p^2,\\
v(\varphi,r,z)|_{\Gamma_{11}}-\alpha_1v(\varphi+\pi/4,r,z)|_{\Gamma_{11}},\
v(\varphi,r,z)|_{\Gamma_{12}}-\alpha_2v(\varphi-\pi/4,r,z)|_{\Gamma_{12}}).
\end{multline*}
Hence, the operators $$\mathcal L_g(\omega)=\mathcal L(\omega):E_a^2(\theta)\to\mathcal
E_a^0(\theta,\gamma),$$  $$\hat{\mathcal L}_g(\lambda)=\hat{\mathcal
L}(\lambda):W^2(-\pi/4,\pi/4)\to \mathcal W^0[-\pi/4,\pi/4]$$ given by~\eqref{eqLgOmegaP}
and~\eqref{2.4} have the form
\begin{multline*}
\mathcal L(\omega) V=(-\Delta_y V+(\omega_1^2+e^{ih}\omega_2^2)
V,\\
V(\varphi,r)|_{\gamma_{11}}-\alpha_1V(\varphi+\pi/4,r)|_{\gamma_{11}},\
V(\varphi,r)|_{\gamma_{12}}-\alpha_2V(\varphi-\pi/4,r)|_{\gamma_{12}})
\end{multline*}
and
$$
\hat{\mathcal L}(\lambda) w=(-w_{\varphi\varphi}+\lambda^2w,\ w(-\pi/4)-\alpha_1w(0),\
w(\pi/4)-\alpha_2w(0)),
$$
respectively, where $\omega=(\omega_1,\omega_2)\in S^1$.

Let the numbers $a,\alpha_1,\alpha_2$ satisfy the following relations:
\begin{equation}\label{eqDistrNonlocalTermP2'}
0\le a\le2,\qquad 0<|\alpha_1+\alpha_2|<2,\qquad
\pi/4<\arctan\sqrt{4(\alpha_1+\alpha_2)^{-2}-1}.
\end{equation}

2. We claim that there is a number $h_1=h_1(\alpha_1,\alpha_2)>0$ such that the operator
$\mathcal L(\omega)$, $\omega\in S^1$, is an isomorphism for $|h|\le h_1$. To prove this
fact, one must repeat the reasoning of Example~\ref{ex2.1}, where the sesquilinear
form~\eqref{eqSesquilForm} is replaced by the form
$$
b_{\mathcal R}[u,v]=\int\limits_\theta\Big(\sum\limits_{i=1,2}(\mathcal R_\theta
u)_{y_i}\overline{v_{y_i}}+(\omega_1^2+e^{ih}\omega_2^2)\mathcal R_\theta
u\overline{v}\Big)dy
$$
with the same domain $\Dom(b_{\mathcal R})=\mathaccent23W^1(\theta)$. Let us show that
this sesquilinear form remains to be a closed sectorial form.

It follows from the Schwarz inequality and from~\eqref{2.10} that
\begin{equation}\label{eqDistrNonlocalTermP3}
|b_{\mathcal R}[u,v]|\le
k_1\|u\|_{\mathaccent23W^1(\theta)}\|v\|_{\mathaccent23W^1(\theta)},
\end{equation}
where $k_1>0$ does not depend on $u$ and $v$.

Introduce the isomorphism $\mathcal U:L_2(\theta)\to L_2(\theta_1)\times L_2(\theta_1)$
by the formula
$$
(\mathcal U u)_i(y)=u(\varphi+(i-1)d/2,r),\qquad y\in\theta_1,\ i=1,2,
$$
and let $R_1=\begin{pmatrix} 1&\alpha_1\\ \alpha_2&1
\end{pmatrix}$. Then, using~\eqref{2.10} and~\eqref{2.11'}, we obtain
\begin{multline}\label{eqDistrNonlocalTermP3'}
{\rm Re\,}b_{\mathcal R}[u,u]=
\int\limits_{\theta_1}\Big\{\sum\limits_{i}\Big(\dfrac{(R_1+R_1^*)}{2}(\mathcal U
u_{y_i}),\mathcal U u_{y_i}\Big)_{\mathbb C^2}
\\
+\omega_1^2\Big(\dfrac{(R_1+R_1^*)}{2}\mathcal U u,\mathcal U u\Big)_{\mathbb
C^2}+\omega_2^2\Big(\dfrac{e^{ih}R_1+(e^{ih}R_1)^*)}{2}\mathcal U u,\mathcal U
u\Big)_{\mathbb C^2}\Big\}dy.
\end{multline}
Since $|\alpha_1+\alpha_2|<2$, it follows that the matrix
\begin{equation}\label{eqDistrNonlocalTermP4}
R_1+R_1^*=\begin{pmatrix} 2&\alpha_1+\alpha_2\\
\alpha_1+\alpha_2 & 2\end{pmatrix}
\end{equation}
is positively definite. Using the Silvester criterion, we will show that the matrix
\begin{equation}\label{eqDistrNonlocalTermP5}
e^{ih}R_1+(e^{ih}R_1)^*=\begin{pmatrix} e^{ih}+e^{-ih}&e^{ih}\alpha_1+e^{-ih}\alpha_2\\
e^{ih}\alpha_2+e^{-ih}\alpha_1 & e^{ih}+e^{-ih}\end{pmatrix}
\end{equation}
is also positively definite. Since $-\pi/2<h<\pi/2$, it follows that $e^{ih}+e^{-ih}>0$.
Thus, we have to prove that $\det(e^{ih}R_1+(e^{ih}R_1)^*)>0$. Let $e^{ih}=\mu+i\nu$,
where $\mu>0$. Using the equality $\nu^2=1-\mu^2$, we obtain
$$
\det(e^{ih}R_1+(e^{ih}R_1)^*)=4\mu^2-[(\alpha_1+\alpha_2)^2+(4\mu^2-4)\alpha_1\alpha_2].
$$
Since $|\alpha_1+\alpha_2|<2$, it follows that, for $h=0$ (i.e., $\mu=1$),
$$
\det(e^{ih}R_1+(e^{ih}R_1)^*)=4-(\alpha_1+\alpha_2)^2>0.
$$
Therefore, there is a number $h_1=h_1(\alpha_1,\alpha_2)>0$ such that
$$
\det(e^{ih}R_1+(e^{ih}R_1)^*)>0,\qquad |h|\le h_1.
$$

It follows from~\eqref{eqDistrNonlocalTermP3'}, from the positive definiteness of the
matrices~\eqref{eqDistrNonlocalTermP4} and~\eqref{eqDistrNonlocalTermP5}, and from the
relation $\omega_1^2+\omega_2^2=1$ that
\begin{equation}\label{eqDistrNonlocalTermP6}
{\rm Re\,}b_{\mathcal R}[u,u]\ge k_2\int\limits_{\theta_1}\Big\{\sum\limits_{i}(\mathcal
U u_{y_i},\mathcal U u_{y_i})_{\mathbb C^2}+(\mathcal U u,\mathcal U u)_{\mathbb
C^2}\Big\}dy= k_2\|u\|^2_{\mathaccent23W^1(\theta)},
\end{equation}
where $k_2>0$ does not depend on $u$.

Inequalities~\eqref{eqDistrNonlocalTermP3} and~\eqref{eqDistrNonlocalTermP6} imply that
$b_{\mathcal R}$ is a closed sectorial form on $L_2(\theta)$, with the domain
$\Dom(b_{\mathcal R})=\mathaccent23W^1(\theta)$ and vertex $k_2>0$
(see~\cite[Chap.~6]{Kato}).

Thus, repeating the reasoning of Example~\ref{ex2.1}, we see that $\mathcal L(\omega)$,
$\omega\in S^1$, is an isomorphism for the above $a,\alpha_1,\alpha_2$, and $h$.
Moreover, since $h$ and $\omega$ belong to the compact sets, it follows that the
inequality
\begin{equation}\label{eqDistrNonlocalTermP7}
\|V\|_{E_a^{2}(\theta)}\le k_3\|\mathcal L(\omega)V\|_{\mathcal
E_a^0(\theta,\gamma)},\qquad \omega\in S^{1},
\end{equation}
holds with a constant $k_3>0$ which does not depend\footnote{Otherwise, denoting
$L_h(\omega)=L(\omega)$, we see that there are sequences $h^{(k)},\omega^{(k)},$ and
$V^{(k)}$, $k=1,2,\dots$, such that $h^{(k)}\to h$, $\omega^{(k)}\to\omega$, $\|\mathcal
L_{h^{(k)}}(\omega^{(k)})V^{(k)}\|_{\mathcal E_a^0(\theta,\gamma)}\to 0$, and
$\|V^{(k)}\|_{E_a^{2}(\theta)}=1$. This leads to contradiction because we have
$1=\|V^{(k)}\|_{E_a^{2}(\theta)}\le c \|\mathcal L_h(\omega)V^{(k)}\|_{\mathcal
E_a^0(\theta,\gamma)}\le c (\|\mathcal L_{h^{(k)}}(\omega^{(k)})V^{(k)}\|_{\mathcal
E_a^0(\theta,\gamma)}+\|(\mathcal L_{h^{(k)}}(\omega^{(k)})V^{(k)}-\mathcal
L_h(\omega)V^{(k)})\|_{\mathcal E_a^0(\theta,\gamma)})\to 0$.} on $V$, $\omega$, and $h$.

Now Theorem~\ref{t8.2} implies that the operator $\mathcal L(p)$ is also an isomorphism
for $p\ge p_0$, where $p_0>0$. Moreover, it follows from~\eqref{eqDistrNonlocalTermP7}
that the inequality
\begin{equation}\label{eqDistrNonlocalTermP8}
\|u\|_{H_a^{2}(\Theta)}\le k_4\|\mathcal L(p)u\|_{\mathcal H_a^0(\Theta,\Gamma)},\qquad
p>p_0,
\end{equation}
holds with a constant $k_4>0$ which does not depend on $u$ and $h$.

3. Similarly to~\eqref{6.18} and~\eqref{6.19}, using Lemma~\ref{l7.1-new},
estimate~\eqref{eqBoundedTrace}, and the interpolation inequalities in Sobolev spaces
(see~\cite[Chap.~1, Sec.~1]{AV}), we obtain
$$
\n B_l^2u\n_{H_a^{3/2}(\Gamma_l)}\le k_5\n u\n_{H_a^2(Q\setminus\overline{\mathcal
K_1^{2\varkappa}})},
$$
$$
\n B_l^2u\n_{H_a^{3/2}(\Gamma_l\setminus\overline{\mathcal K_1^\varkappa})}\le k_6\n
u\n_{H_a^2(Q_\varkappa)},
$$
where $a>1$ and $k_5,k_6>0$ do not depend on $u$. Thus, the operators $B_l^2$ satisfy
Condition~\ref{cond7.4} for $a>1$.

We consider the linear bounded operators
$$
\mathbf L(p),\mathbf L^1(p): H_a^2(Q)\to H_a^0(Q)\times H_a^{3/2}(\Gamma_1)\times
H_a^{3/2}(\Gamma_2)
$$
given by
$$
\mathbf L(p) u=\{-\Delta u+e^{ih}p^2u,\ u|_{\Gamma_l}+B_l^1u+B_l^2u\},\qquad \mathbf
L^1(p) u=\{-\Delta u+e^{ih}p^2u,\ u|_{\Gamma_l}+B_l^1u\}.
$$

The two results below follow from Lemma~\ref{l-t8.1} and Theorem~\ref{t8.2}.
\begin{corollary}\label{cor-tEx8.1}
Let the numbers $a,\alpha_1,\alpha_2$ satisfy relations~\eqref{eqDistrNonlocalTermP2'}.
Then there exist a number $h_1=h_1(\alpha_1,\alpha_2)>0$ and a number $p_0>0$,
independent of $h$, such that the operator $\mathbf L^1(p)$, $p\ge p_0$, $|h|\le h_1$,
has a bounded inverse and
$$
c_1\n\mathbf L^1(p) u\n_{\mathcal H_a^0(Q,\Gamma)}\le \n u \n_{H_a^{2}(Q)}\le
c_2\n\mathbf L^1(p) u\n_{\mathcal H_a^0(Q,\Gamma)},
$$
where $c_1,c_2>0$ do not depend on $u$, $h$, and $p$.
\end{corollary}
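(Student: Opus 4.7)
The plan is to apply Lemma~\ref{l-t8.1} directly to $\mathbf{L}^1(p)$ and to track carefully the $h$-dependence of the constants appearing in the chain Theorem~\ref{tLpIsomH} $\Rightarrow$ Corollary~\ref{cor-theorem71a} $\Rightarrow$ Lemma~\ref{l-t8.1}. Under the conditions~\eqref{eqDistrNonlocalTermP2'} on $(a,\alpha_1,\alpha_2)$, the structural hypotheses are easily verified: Condition~\ref{cond7.1} holds for $|h|<\pi/2$ because the polynomial $|\xi|^2+e^{ih}t^2$ has positive real part for $(\xi,t)\ne0$; Condition~\ref{cond7.2} holds trivially since the Dirichlet operator covers any elliptic operator and is normal; Conditions~\ref{cond1.3} and~\ref{cond1.4} were already verified in Example~\ref{exDistrNonlocalTerm}. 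In particular, none of these conditions involve $h$.

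Next, I would check the two spectral hypotheses of Theorem~\ref{tLpIsomH} at each $g\in\mathcal{K}_1$. The operator-valued function $\hat{\mathcal{L}}(\lambda)$ depends only on the angular part of the principal homogeneous part freezed at the origin, so it is independent of $h$; by the analysis in Example~\ref{ex2.1} (which applies because the opening of the model angle is exactly $\pi/2$ and the third inequality in~\eqref{eqDistrNonlocalTermP2'} gives $\pi/2 < 2\arctan\sqrt{4(\alpha_1+\alpha_2)^{-2}-1}$), the strip $-1\le\Im\lambda\le 1$ contains no eigenvalues, and the line $\Im\lambda=a-1$ lies in this strip for every $0\le a\le 2$. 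The isomorphism property for $\mathcal{L}(\omega)$ on $S^1$, together with the uniform-in-$h$ bound~\eqref{eqDistrNonlocalTermP7}, was already secured in step~2 of the example for $|h|\le h_1$.

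With these ingredients, Lemma~\ref{l-t8.1} (applied with $l=0$, $m=1$, $n=3$, $d=1$) yields a threshold $p_0$ and a two-sided estimate of the form~\eqref{eq8.8_0} (without the $B^2_l$-terms, since we are dealing with $\mathbf{L}^1$). What must be verified is that $p_0$ and the constants $c_1,c_2$ can be taken independent of $h$. I would trace the proof of Lemma~\ref{l-t8.1} backwards: the regularizer construction, the smallness estimates~\eqref{eq8.1_2}--\eqref{eq8.1_3}, and the final application of Lemma~\ref{lI+T+Mp} depend quantitatively on $h$ only through the bound $\|\mathcal{L}_g(\omega)^{-1}\|\le k_3$ from~\eqref{eqDistrNonlocalTermP7}, which propagates through Theorem~\ref{tLpIsomH} (via the scaling argument there) into Corollary~\ref{cor-theorem71a} with the same $k_3$. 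Because $k_3$ is uniform in $|h|\le h_1$, so are all downstream constants.

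The main obstacle is the bookkeeping of this uniformity: verifying that every constant entering the proof of Lemma~\ref{l-t8.1} (including the one controlling $T_{\mathcal{K}_1}(p)$, the regularizer for the local part near the smooth boundary via Theorem~10.4 of~\cite{MP}, and the perturbation $M(p)$) indeed has a uniform-in-$h$ majorant. The $h$-independence of $T_{\mathcal{K}_1}(p)$ follows from the above remarks; the $h$-uniformity of the local regularizer from~\cite{MP} is automatic because the coefficients are bounded uniformly in $h$ and proper ellipticity/Lopatinskii hold uniformly in $|h|\le h_1$. Once uniformity of every constant is confirmed, the conclusion of Corollary~\ref{cor-tEx8.1} follows directly.
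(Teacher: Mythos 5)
Your proposal is correct and follows essentially the same route as the paper: the paper proves this corollary simply by invoking Lemma~\ref{l-t8.1}, with its hypotheses (absence of eigenvalues of $\hat{\mathcal L}(\lambda)$ on $\Im\lambda=a-1$ and the isomorphism of $\mathcal L(\omega)$ for $\omega\in S^1$, $|h|\le h_1$) supplied by steps 1--2 of Example~\ref{exDistrNonlocalTermP}, and with the $h$-uniformity of $p_0,c_1,c_2$ coming from the uniform bound~\eqref{eqDistrNonlocalTermP7}, exactly as you trace it. Your explicit bookkeeping of how that uniform constant propagates through Theorem~\ref{tLpIsomH}, Corollary~\ref{cor-theorem71a}, and the regularizer construction of Lemma~\ref{l-t8.1} merely makes explicit what the paper leaves implicit.
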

\begin{corollary}\label{cor-tEx8.2}
Let $1<a\le 2$, while the numbers $\alpha_1,\alpha_2$, and $h_1$ be the same as in
Corollary~$\ref{cor-tEx8.1}$. Then there is a number $p_1>0$, independent of $h$, such
that the operator $\mathbf L(p)$, $p\ge p_1$, $|h|\le h_1$, has a bounded inverse and
$$
c_1\n\mathbf L(p) u\n_{\mathcal H_a^0(Q,\Gamma)}\le \n u \n_{H_a^{2}(Q)}\le c_2\n\mathbf
L(p) u\n_{\mathcal H_a^0(Q,\Gamma)},
$$
where $c_1,c_2>0$ do not depend on $u$, $h$, and $p$.
\end{corollary}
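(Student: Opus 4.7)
The plan is to obtain the corollary as a direct application of Theorem~\ref{t8.2}, with the extra effort consisting of verifying that every constant in the hypotheses can be chosen uniformly in $h \in [-h_1, h_1]$. First I would check Conditions~\ref{cond7.1}--\ref{cond7.4}, \ref{cond1.3}, and~\ref{cond1.4}: parameter-ellipticity of $-\Delta + e^{ih}p^2$ holds for $|h|<\pi/2$ with constants depending continuously on $h$; Conditions~\ref{cond1.3} and~\ref{cond1.4} were already verified for the geometry of this problem in Example~\ref{exDistrNonlocalTerm}; Condition~\ref{cond7.3} is vacuous since $A^1=0$ and $B^3_{i\mu}=0$; and Condition~\ref{cond7.4} for $B_l^2$ was established in part~3 of Example~\ref{exDistrNonlocalTermP} with constants independent of $h$ because the operator $B_l^2$ itself is $h$-independent.

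Next I would verify the two spectral hypotheses. The operator pencil $\hat{\mathcal L}_g(\lambda)$ is built from the boundary operators only and is independent of $h$; the absence of its eigenvalues on the line $\Im\lambda = a-1$ for $1 < a \le 2$ was established in Example~\ref{exDistrNonlocalTerm} via Example~\ref{ex2.1}. The triviality of $\mathcal N(\mathcal L_g(\omega))$ and $\mathcal R(\mathcal L_g(\omega))^\perp$ for $\omega \in S^1$ and $|h|\le h_1$ was shown in part~2 of Example~\ref{exDistrNonlocalTermP} by adapting the sesquilinear-form argument of Example~\ref{ex2.1}; the decisive point is that the symmetric part $e^{ih}R_1 + (e^{ih}R_1)^*$ is positive-definite at $h=0$ and remains so on a closed interval $[-h_1, h_1]$ by continuity of its determinant.

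With the hypotheses in place, Theorem~\ref{t8.2} produces, for each fixed $h$, a threshold $p_1(h)$ above which $\mathbf L(p)$ is invertible and satisfies a two-sided bound. The main obstacle is to show that $p_1$ and the constants $c_1,c_2$ can be chosen uniformly in $h$. To this end I would trace uniformity through the estimates used inside the proof of Theorem~\ref{t8.2}: the uniform a~priori bound~\eqref{eqDistrNonlocalTermP7} for the model operator $\mathcal L(\omega)$ already gives, via a compactness argument of the type indicated in the footnote after~\eqref{eqDistrNonlocalTermP7}, uniform norms for the inverses in Theorem~\ref{tLpIsomH} and Corollary~\ref{cor-theorem71a}; Theorem~\ref{tR^n_+IsomorphismP} is $h$-independent modulo the continuous dependence of the coefficients on $h$; and the separability estimate for $B_l^2$ is exactly $h$-independent. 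Feeding these uniform bounds into Lemma~\ref{l-t8.1} (uniformly invertible $\mathbf L^1(p)$ from Corollary~\ref{cor-tEx8.1}) and then into the continuation-in-$t$ argument of Lemma~\ref{lEstimateParamT} and Theorem~\ref{t8.2} yields an $h$-independent threshold $p_1$ and uniform constants $c_1, c_2$, completing the proof.
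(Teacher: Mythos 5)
Your proposal is correct and follows essentially the same route as the paper: the corollary is obtained by verifying the hypotheses of Theorem~\ref{t8.2} (with Condition~\ref{cond7.4} for $B_l^2$ requiring $a>1$, the spectral conditions coming from Example~\ref{ex2.1}, and the isomorphism of $\mathcal L(\omega)$ from the sectorial-form argument of part~2 of the example), with uniformity in $h$ supplied by estimate~\eqref{eqDistrNonlocalTermP7} and the compactness argument in the accompanying footnote. The only nitpick is that $\hat{\mathcal L}(\lambda)$ is built from the interior operator as well as the boundary operators — it is $h$-independent because the definition~\eqref{2.4} freezes $\eta=0$ and $p=0$ — but this does not affect the argument.
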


\begin{remark}\label{remEx8_1}
Let $\alpha_1=\alpha_2$ and $|\alpha_1|<1$. In this case, Corollaries~\ref{cor-tEx8.1}
and~\ref{cor-tEx8.2} are true for any $h_1$ satisfying the relation $0<h_1<\pi/2$.
Indeed, the matrix~\eqref{eqDistrNonlocalTermP5} remains positively definite because
$$
e^{ih}+e^{-ih}=2\mu>0,\qquad \det(e^{ih}R_1+(e^{ih}R_1)^*)=4\mu^2(1-\alpha_1^2)>0,
$$
where $\mu=\Re e^{ih}>0$. Therefore, the form $b_{\mathcal R}$ remains to be a closed
sectorial form on $L_2(\theta)$ with the domain $\Dom(b_{\mathcal
R})=\mathaccent23W^1(\theta)$ and vertex $k_2>0$. Further consideration does not change.
\end{remark}

4. Consider the unbounded operators $\mathbf A, \mathbf A^1: H_a^0(Q)\to H_a^0(Q)$ given
by
$$
\mathbf A u=-\Delta u,\qquad u\in\Dom(\mathbf A)=\{u\in H_a^2(Q):\
u|_{\Gamma_l}+B_l^1u+B_l^2 u=0\},
$$
$$
\mathbf A^1 u=-\Delta u,\qquad u\in\Dom(\mathbf A)=\{u\in H_a^2(Q):\
u|_{\Gamma_l}+B_l^1u=0\}.
$$
Corollary~\ref{cor-tEx8.1} implies the following result (see Fig.~\ref{fig8.1}).
\begin{figure}[ht]
{ \hfill\epsfbox{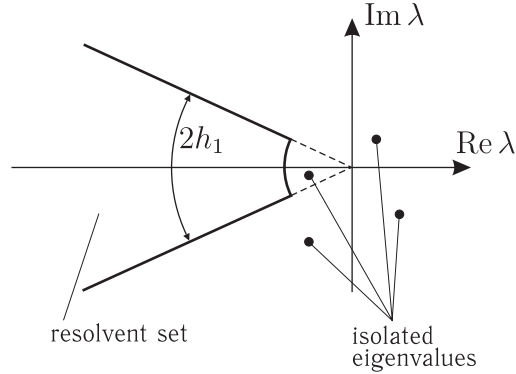}\hfill\ } \caption{Spectra of the operators $\mathbf A^1$ and
$\mathbf A$}\label{fig8.1}
\end{figure}

\begin{corollary}\label{corEx8.1}
Let the conditions of Corollary~$\ref{cor-tEx8.1}$ hold. Then the following assertions
are true{\rm :}
\begin{enumerate}
\item
the spectrum $\sigma(\mathbf A^1)$ is discrete\footnote{This means that the spectrum
consists of a finite or a countable set of isolated eigenvalues of finite
multiplicity.}{\rm ;}
\item
there exist numbers $h_1=h_1(\alpha_1,\alpha_2)>0$ and $\lambda_1>0$ such that
$\sigma(\mathbf A^1)\subset\mathbb C\setminus\Omega_{h_1,\lambda_1}$, where
$$
\Omega_{h_1,\lambda_1}=\{\lambda\in\mathbb C:\ |\arg(\lambda-\pi)|\le h_1,\ |\lambda|\ge
\lambda_1\};
$$
\item the estimate
$$
\|(\mathbf A^1-\lambda\mathbf I)^{-1}\|_{H_a^0(Q)\to H_a^0(Q)}\le c_1/|\lambda|,\qquad
\lambda\in\Omega_{h_1,\lambda_1},
$$
holds with a constant $c_1=c_1(h_1)>0$.
\end{enumerate}
\end{corollary}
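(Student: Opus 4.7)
The plan is to deduce all three items from Corollary~\ref{cor-tEx8.1} via the substitution $\lambda = -e^{ih}p^2$, which identifies the spectral parameter of $\mathbf A^1$ with the pair $(h,p)$ entering the definition of $\mathbf L^1(p)$. First I would note that, writing $p\ge 0$ and $|h|\le \pi/2$, the map $(h,p)\mapsto -e^{ih}p^2$ carries $\{|h|\le h_1,\ p\ge \sqrt{\lambda_1}\}$ onto the sector $\Omega_{h_1,\lambda_1}=\{\lambda:|\arg\lambda-\pi|\le h_1,\ |\lambda|\ge\lambda_1\}$. For $u\in\Dom(\mathbf A^1)$, the resolvent equation $(\mathbf A^1-\lambda\mathbf I)u=f_0$ is precisely $\mathbf L^1(p)u=\{f_0,0\}\in\mathcal H_a^0(Q,\Gamma)$. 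Setting $\lambda_1:=p_0^2$ with $p_0,h_1$ as in Corollary~\ref{cor-tEx8.1}, that corollary provides a unique solution $u\in H_a^2(Q)$ for every $\lambda\in\Omega_{h_1,\lambda_1}$, giving items~2 and the existence half of item~3.

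To extract the bound $\|(\mathbf A^1-\lambda\mathbf I)^{-1}\|_{H_a^0(Q)\to H_a^0(Q)}\le c_1/|\lambda|$, I would invoke the norm inequality from Corollary~\ref{cor-tEx8.1} in the form $\n u\n_{H_a^2(Q)}\le c\,\n\{f_0,0\}\n_{\mathcal H_a^0(Q,\Gamma)}=c\|f_0\|_{H_a^0(Q)}$, where the last equality uses that the $\n\cdot\n$-norm at level $k=0$ coincides with the ordinary $H_a^0$-norm. By the very definition of the parameter norm, $\n u\n_{H_a^2(Q)}^2\ge |p|^{4}\|u\|_{H_a^0(Q)}^2=|\lambda|^2\|u\|_{H_a^0(Q)}^2$, so combining these yields $\|u\|_{H_a^0(Q)}\le (c/|\lambda|)\|f_0\|_{H_a^0(Q)}$, which is item~3.

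For the discreteness in item~1, I would observe that the previous step already exhibits at least one point $\lambda_0=-p_0^2\in\rho(\mathbf A^1)$ at which $(\mathbf A^1-\lambda_0\mathbf I)^{-1}:H_a^0(Q)\to H_a^0(Q)$ is bounded with range contained in $\Dom(\mathbf A^1)\subset H_a^2(Q)$. Since the embedding $H_a^2(Q)\hookrightarrow H_a^0(Q)$ is compact (Lemma~3.5 in~\cite{Kondr}), this resolvent is a compact operator on $H_a^0(Q)$, and the Riesz--Schauder theorem gives that $\sigma(\mathbf A^1)$ consists of isolated eigenvalues of finite multiplicity. The only delicate bookkeeping is checking that the factor $|p|^{2k}$ built into the triple norm with $k=2$ produces exactly the power $|\lambda|^{-1}$ in the resolvent estimate; since $|\lambda|=p^2$, the arithmetic $|p|^4=|\lambda|^2$ matches without interpolation. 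Uniformity of the constants over $h\in[-h_1,h_1]$ is already incorporated in Corollary~\ref{cor-tEx8.1}, so no additional continuity argument in $h$ is required.
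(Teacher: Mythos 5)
Your proposal is correct and follows essentially the same route as the paper's proof: the substitution $-\lambda=e^{ih}p^2$, $\lambda_1=p_0^2$, with items~2 and~3 read off from Corollary~\ref{cor-tEx8.1} (you merely make explicit the arithmetic $|p|^4=|\lambda|^2$ hidden in the definition of the norm $\n\cdot\n_{H_a^2(Q)}$, which the paper leaves implicit), and item~1 obtained from the compactness of the embedding $H_a^2(Q)\subset H_a^0(Q)$ together with the standard theorem on operators with compact resolvent (the paper cites Theorem~6.29 of~\cite{Kato}, which is the same Riesz--Schauder argument you invoke). No gaps.
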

\begin{proof}
Set $-\lambda=e^{ih}p^2$ and $\lambda_1=p_0^2$, where $p_0$ is the constant occurring in
Corollary~\ref{cor-tEx8.1}. In this case, assertions~2 and~3 follow from
Corollary~\ref{cor-tEx8.1}. By the same corollary, the operator $(\mathbf
A-\lambda\mathbf I)^{-1}:H_a^0(Q)\to H_a^2(Q)$ is bounded for
$\lambda\in\Omega_{h_1,\lambda_1}$. Combining this fact with the compactness of the
embedding $H_a^2(Q)\subset H_a^0(Q)$, we see that the resolvent $(\mathbf
A-\lambda\mathbf I)^{-1}:H_a^0(Q)\to H_a^0(Q)$ is compact for
$\lambda\in\Omega_{h_1,\lambda_1}$. Therefore, by Theorem~6.29 in~\cite[Chap.~3,
Sec.~6]{Kato}, assertion~1 is true.
\end{proof}

Using Corollary~\ref{cor-tEx8.2} instead of Corollary~\ref{cor-tEx8.1}, we obtain the
following result (see Fig.~\ref{fig8.1}).
\begin{corollary}\label{corEx8.2}
Let the conditions of Corollary~$\ref{cor-tEx8.2}$ hold. Then the following assertions
are true{\rm :}
\begin{enumerate}
\item
the spectrum $\sigma(\mathbf A)$ is discrete{\rm ;}
\item
there exists a number $\lambda_2>0$ such that $\sigma(\mathbf A)\subset\mathbb
C\setminus\Omega_{h_1,\lambda_2}${\rm ;}
\item the estimate
$$
\|(\mathbf A-\lambda\mathbf I)^{-1}\|_{H_a^0(Q)\to H_a^0(Q)}\le c_2/|\lambda|,\qquad
\lambda\in\Omega_{h_1,\lambda_2},
$$
holds with a constant $c_2=c_2(h_1)>0$, where $h_1>0$ is the constant from
Corollary~$\ref{corEx8.1}$.
\end{enumerate}
\end{corollary}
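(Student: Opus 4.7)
The plan is to mirror the proof of Corollary \ref{corEx8.1} exactly, substituting Corollary \ref{cor-tEx8.2} (which handles the full operator $\mathbf L(p)$ including the distributed nonlocal part $B_l^2$) for Corollary \ref{cor-tEx8.1}. The only conceptual content is the change of spectral parameter, and the fact that the uniform two-sided estimate in the parameter-dependent norms $\n\cdot\n$ automatically encodes the resolvent bound with factor $1/|\lambda|$.

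First, I would make the parameter substitution $-\lambda=e^{ih}p^{2}$ with $p\ge 0$ and $|h|\le h_{1}$, and set $\lambda_{2}=p_{1}^{2}$, where $p_{1}$ is the constant from Corollary \ref{cor-tEx8.2}. Then $\lambda\in\Omega_{h_{1},\lambda_{2}}$ is equivalent to the pair $(p,h)$ lying in the admissible range of Corollary \ref{cor-tEx8.2}. For $u_{0}\in H_{a}^{0}(Q)$ the equation $(\mathbf A-\lambda\mathbf I)u=u_{0}$ in $\Dom(\mathbf A)$ is precisely $\mathbf L(p)u=\{u_{0},0,0\}$, so Corollary \ref{cor-tEx8.2} gives the bounded inverse $(\mathbf A-\lambda\mathbf I)^{-1}:H_{a}^{0}(Q)\to H_{a}^{2}(Q)$ for $\lambda\in\Omega_{h_{1},\lambda_{2}}$, which is assertion 2.

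Second, to extract assertion 3 from the $\n\cdot\n$-estimate, I would use the definition
\[
\n v\n_{H_{a}^{2}(Q)}^{2}=\|v\|_{H_{a}^{2}(Q)}^{2}+|p|^{4}\|v\|_{H_{a}^{0}(Q)}^{2},
\qquad
\n\{u_{0},0,0\}\n_{\mathcal H_{a}^{0}(Q,\Gamma)}=\|u_{0}\|_{H_{a}^{0}(Q)},
\]
together with the estimate $\n u\n_{H_{a}^{2}(Q)}\le c_{2}\n\mathbf L(p)u\n_{\mathcal H_{a}^{0}(Q,\Gamma)}$ from Corollary \ref{cor-tEx8.2}. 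This yields
\[
|p|^{2}\,\|(\mathbf A-\lambda\mathbf I)^{-1}u_{0}\|_{H_{a}^{0}(Q)}\le \n (\mathbf A-\lambda\mathbf I)^{-1}u_{0}\n_{H_{a}^{2}(Q)}\le c_{2}\|u_{0}\|_{H_{a}^{0}(Q)},
\]
and since $|p|^{2}=|\lambda|$, this is exactly the bound in assertion 3 with $c_{2}=c_{2}(h_{1})>0$.

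Finally, for assertion 1, I would observe that the resolvent factors through $H_{a}^{2}(Q)$ and that the embedding $H_{a}^{2}(Q)\subset H_{a}^{0}(Q)$ is compact by Lemma~3.5 in~\cite{Kondr}. Hence $(\mathbf A-\lambda\mathbf I)^{-1}:H_{a}^{0}(Q)\to H_{a}^{0}(Q)$ is compact for at least one $\lambda\in\Omega_{h_{1},\lambda_{2}}$, and Theorem~6.29 in \cite[Chap.~3, Sec.~6]{Kato} then gives that the spectrum of $\mathbf A$ is discrete. No real obstacle arises; everything of substance was done in Corollary \ref{cor-tEx8.2}, and the argument here is the same short bookkeeping that proves Corollary \ref{corEx8.1}.
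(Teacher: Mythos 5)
Your proposal is correct and follows essentially the same route as the paper: the paper proves Corollary~\ref{corEx8.1} by exactly this substitution $-\lambda=e^{ih}p^2$, the two-sided estimate in the parameter-dependent norms, and compactness of the resolvent via the embedding $H_a^2(Q)\subset H_a^0(Q)$ together with Theorem~6.29 of Kato, and then obtains Corollary~\ref{corEx8.2} by replacing Corollary~\ref{cor-tEx8.1} with Corollary~\ref{cor-tEx8.2}, which is precisely what you do. Your explicit unpacking of how the weighted norm $\n\cdot\n_{H_a^2(Q)}$ yields the factor $|\lambda|^{-1}=|p|^{-2}$ is a faithful elaboration of the step the paper leaves implicit.
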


\begin{remark}
Let $\alpha_1=\alpha_2$ and $|\alpha_1|<1$. In this case, Corollaries~\ref{corEx8.1}
and~\ref{corEx8.2} are true for any $h_1$ satisfying the relation $0<h_1<\pi/2$ (cf.
Remark~\ref{remEx8_1}).
\end{remark}

The following questions are unanswered. Do there exist a number $h_1$, $\pi/2\le
h_1<\pi$, and numbers $\lambda_1,\lambda_2>0$ such that
\begin{equation}\label{eqQuestion}
\sigma(\mathbf A^1)\subset\mathbb C\setminus\Omega_{h_1,\lambda_1},\qquad \sigma(\mathbf
A)\subset\mathbb C\setminus\Omega_{h_1,\lambda_2}?
\end{equation}
Can one find, for any $h_1$, $0<  h_1<\pi$, numbers $\lambda_1,\lambda_2>0$ such that
relations~\eqref{eqQuestion} hold (cf. Problem~13.1 in~\cite[Sec.~13]{SkBook})?
\end{example}

\end{document}